\numberwithin{equation}{section}
\newcommand{\A}{\mathcal{A}}
\newcommand{\B}{\mathbf{B}}
\newcommand{\BB}{\mathfrak{B}}
\newcommand{\C}{\mathbf{C}}
\newcommand{\CC}{\mathscr{C}}
\newcommand{\D}{\mathcal{D}}
\newcommand{\E}{\mathscr{E}}
\newcommand{\FF}{\mathcal{F}}
\newcommand{\HH}{\mathcal{H}}
\newcommand{\K}{\mathbf{K}}
\renewcommand{\L}{\mathscr{L}}
\newcommand{\N}{\mathbb{N}}
\newcommand{\NN}{\mathscr{N}}
\newcommand{\NNN}{\mathcal{N}}
\newcommand{\R}{\mathbb{R}}
\newcommand{\T}{\mathcal{T}}
\newcommand{\V}{\mathscr{V}}
\newcommand{\X}{\mathcal{X}}
\newcommand{\Z}{\mathbb{Z}}
\newcommand{\loc}{{\rm loc}}
\newcommand{\etaini}{\eta^{(1)}}
\newcommand{\etadis}{\eta^{(2)}}
\newcommand{\PV}{{\mbox{\normalfont P.V.}}}
\newcommand{\Adot}{{\dot{\A}}}
\theoremstyle{plain}
\newtheorem{theorem}{Theorem}[section]
\newtheorem{proposition}[theorem]{Proposition}
\newtheorem{lemma}[theorem]{Lemma}
\newtheorem{definition}[theorem]{Definition}
\theoremstyle{definition}
\newtheorem{remark}[theorem]{Remark}
\renewcommand{\le}{\leqslant}
\renewcommand{\ge}{\geqslant}
\newdimen\rh@wd
\newdimen\rh@hta
\newdimen\rh@htb
\newbox\rh@box
\def\rh@measure#1{\setbox\rh@box=\hbox{$#1$}\rh@wd=\wd\rh@box \rh@hta=\ht\rh@box}
\def\widecheck#1{\rh@measure{#1}%
  \setbox\rh@box=\hbox{$\widehat{\vrule height \rh@hta width\z@ \kern\rh@wd}$}%
  \rh@htb=\ht\rh@box \advance\rh@htb\rh@hta \advance\rh@htb\p@
  \ooalign{$\vrule height \ht\rh@box width\z@ #1$\cr
           \raise\rh@htb\hbox{\scalebox{1}[-1]{\box\rh@box}}\cr}}
\long\def\/*#1*/{}
\title[Long-time asymptotics for evolutionary crystal dislocation models]{Long-time asymptotics for evolutionary\\ crystal dislocation models}
\author{Matteo Cozzi}
\author{Juan D\'avila}
\author{Manuel del Pino}
\address{
\vspace{-\baselineskip}
\newline
\textit{Matteo Cozzi}
\newline
University of Bath, Department of Mathematical Sciences, Claverton Down, Bath BA2 7AY, UK
\newline
\textit{E-mail address}: \textit{\tt m.cozzi@bath.ac.uk}
}
\address{
\vspace{-\baselineskip}
\newline
\textit{Juan D\'avila}
\newline
Universidad de Antioquia, Instituto de Matem\'aticas, Calle 67, No. 53-108, Medell\'in, Colombia
\newline
Universidad de Chile, Departamento de Ingenier\'ia Matematica-CMM, Santiago 837-0456, Chile
\newline
\textit{E-mail address}: \textit{\tt jdavila@dim.uchile.cl}
}
\address{
\vspace{-\baselineskip}
\newline
\textit{Manuel del Pino}
\newline
University of Bath, Department of Mathematical Sciences, Claverton Down, Bath BA2 7AY, UK
\newline
Universidad de Chile, Departamento de Ingenier\'ia Matematica-CMM, Santiago 837-0456, Chile
\newline
\textit{E-mail address}: \textit{\tt m.delpino@bath.ac.uk}
}
\thanks{The first author has been supported by a Royal Society Newton International Fellowship~(UK), the second author by grants Fondecyt~1130360 and PAI~AFB-170001~(Chile), and the third author by a Royal Society Research Professorship~(UK) and grant PAI~AFB-170001~(Chile). Part of this work has been carried out while the first author was visiting the Universidad de Chile, which he thanks for the warm hospitality}
\keywords{Peierls-Nabarro model, fractional Laplacian, nonlocal parabolic equation, long-time asymptotics}
\subjclass[2010]{35Q74, 35R11, 35S10, 37N15, 74H40, 74N05}
\begin{document}

\begin{abstract}
We consider a family of evolution equations that generalize the Peierls-Nabarro model for crystal dislocations. They can be seen as semilinear parabolic reaction-diffusion equations in which the diffusion is regulated by a fractional Laplace operator of order~$2 s \in (0, 2)$ acting in one space dimension and the reaction is determined by a~$1$-periodic multi-well potential. We construct solutions of these equations that represent the typical propagation of~$N \ge 2$ equally oriented dislocations of size~$1$. For large times, the dislocations occur around points that evolve according to a repulsive dynamical system. When~$s \in (1/2, 1)$, these solutions are shown to be asymptotically stable with respect to odd perturbations.
\end{abstract}

\maketitle

\section{Introduction and main results}

\noindent
In the '40s, Peierls and Nabarro developed a microscopic theory to describe plastic deformations in crystalline materials. Starting from discrete considerations, they devised a continuum theory for the presence of edge dislocations in atomic structures. The resulting model characterizes equilibrium configurations as solutions of a stationary nonlinear integro-differential equation. We refer the reader to~\cite{P40,N47} for the original papers and to~\cite{L05} for a more recent survey on the topic.

In recent years, a few time-dependent models have been proposed to study the dynamics of crystal dislocations and of related phenomena. Following~\cite{MBW98,EHM09,GM12,DPV15,DFV14}, we consider here the~$1$-dimensional evolution equation
\begin{equation} \label{maineq}
\partial_t u + (-\Delta)^s u + W'(u) = 0 \quad \mbox{in } \R \times (0, +\infty).
\end{equation}
Here,~$u = u(x, t) \in \R$ can be thought of as the dislocation (i.e., the displacement from its rest position) of an atom located at the point~$x \in \R$ at time~$t > 0$. Its time derivative is assumed to be equal to the sum of two forces. The term~$W'(u)$ represents a restorative force, responsible for the tendency of the atoms to occupy their original rest position or one located at an integer distance from it---we are assuming the interatomic distance to be equal to~$1$. It is given as the derivative of an even,~$1$-periodic, non-degenerate, multi-well potential~$W$. More precisely,~$W$ is assumed to be of class~$C^{4, 1}(\R)$ and to satisfy
\begin{equation} \label{Wprop}
\begin{alignedat}{3}
& W(r + k) = W(r) && \quad \mbox{for all } r \in \R, \, k \in \Z, \\
& W(r) = W(1 - r) && \quad \mbox{for all } r \in (0, 1), \\
& W(r) > 0 && \quad \mbox{for all } r \in (0, 1), \\
& W(0) = W'(0) = 0, && \quad \mbox{and} \quad W''(0) > 0.
\end{alignedat}
\end{equation}
The quantity~$(-\Delta)^s u$, for~$s \in (0, 1)$, encodes the presence of elastic interactions within the crystal. It is given by the~$1$-dimensional fractional Laplacian of order~$2 s$ in the variable~$x$, defined on a sufficiently smooth and bounded function~$v = v(x)$ as
\begin{equation} \label{fracLap}
(-\Delta)^s v(x) := \frac{1}{2} \int_{\R} \frac{2 v(x) - v(x + z) - v(x - z)}{|z|^{1 + 2 s}} \, dz \quad \mbox{for } x \in \R.
\end{equation}
Note that a positive factor should be included in front of this integral in order for~$(-\Delta)^s$ to really correspond to the~$s$-th power of the (negative) Laplacian---or, better, of (minus) the second derivative---see, e.g.,~\cite{S07,DPV12}. Such a factor does not play any role in our analysis and we therefore assume the above normalization for simplicity of exposition.

Notice that, when~$s = 1/2$, equation~\eqref{maineq} boils down to the evolutionary Peierls-Nabarro model proposed by Movchan, Bullough \& Willis~\cite{MBW98} (in the absence of external stress).

It is known that~\eqref{maineq} admits a bounded, non-constant, monotone, stationary solution~$w = w(x)$, which, at least when~$W$ does not have local minima in~$(0, 1)$, is unique up to translations in the independent variable~$x$ and integer translations and reflections in the dependent variable~$u$. In the model case of~$s = 1/2$ and~$W(r) = 1 - \cos(2 \pi r)$, this solution is explicit and was already found in~\cite{P40}---see the work~\cite{T97} of Toland for considerations on its uniqueness. In the more general setting considered here, similar results were obtained by Cabr\'e \& Sol\`a-Morales~\cite{CS05} (for~$s = 1/2$) and by Cabr\'e \& Sire~\cite{CS14, CS15} and Palatucci, Savin \& Valdinoci~\cite{PSV13} (for any~$s \in (0, 1)$). In these papers, the authors showed in particular that there exists a unique monotone increasing function~$w \in C^2(\R)$ that solves
\begin{equation} \label{ellPNeq}
(-\Delta)^s w + W'(w) = 0 \quad \mbox{in } \R
\end{equation}
and satisfies
\begin{equation} \label{wcond}
\lim_{x \rightarrow -\infty} w(x) = 0, \quad \lim_{x \rightarrow +\infty} w(x) = 1, \quad \mbox{and} \quad w(0) = \frac{1}{2}.
\end{equation}
This solution, often referred to as the (increasing) \emph{layer solution} of~\eqref{ellPNeq}, represents a dislocation of size~$1$ in the crystal. See also~\cite{CP16} and~\cite{CMY17} for related results.

In this paper, we are interested in studying dislocations of the size of any integer~$N \ge 2$ and oriented in the same direction, i.e., monotone solutions of~\eqref{maineq} which connect, say, the values~$0$ (as~$x \rightarrow -\infty$) and~$N$ (as~$x \rightarrow +\infty$). These kinds of solutions cannot be stationary (see Appendix~\ref{nomultiapp}) and represent therefore dislocations that evolve in time. In recent years, they have been investigated by several authors, for instance in~\cite{GM12,DPV15,DFV14,PV17}---see also~\cite{PV15,PV16} for studies on the case when the dislocations are not all equally oriented. In~\cite{PV17}, Patrizi \& Valdinoci showed that solutions of~\eqref{maineq} which at time~$0$ are equal to
\begin{equation} \label{initdatumPV}
\sum_{i = 1}^N w(x - x_i^0),
\end{equation}
with~$x_1^0 < x_2^0 < \ldots < x_N^0$ sufficiently spread apart and~$N$ even, converge as~$t \rightarrow +\infty$ to the constant~$N/2$---for~$N$ odd, their result suggests that the convergence should be instead to (a translation of) the layer solution~$w$. This statement can be deduced from a particular case of~\cite[Theorem~1.6]{PV17}.

We take this result as the starting point of our analysis, in which we address the fine asymptotic behavior of solutions of~\eqref{maineq} as~$t \rightarrow +\infty$. More precisely, we will construct solutions~$u$ of the evolutionary Peierls-Nabarro equation that, at large times, look like the superposition of~$N \ge 2$ dislocations of size~$1$ centered around~$N$ points~$\xi_1^0(t) < \xi_2^0(t) < \ldots < \xi_N^0(t)$ evolving according to the repulsive dynamical system
\begin{equation} \label{systemforxi0}
\dot{\xi}^0_i(t) = \frac{\gamma}{2 s} \sum_{j \ne i} \frac{\xi_i^0(t) - \xi_j^0(t)}{|\xi_i^0(t) - \xi_j^0(t)|^{1 + 2 s}} \quad \mbox{for all } t > 0 \mbox{ and } i = 1, \ldots, N,
\end{equation}
where we set
\begin{equation} \label{gammadef}
\gamma := \| w' \|_{L^2(\R)}^{-2} > 0.
\end{equation}
Around each~$\xi_i(t)$ the dislocation resembles a translation of the layer solution~$w$. That is, our solution~$u$ will be written as
\begin{equation} \label{udecomp}
u(x, t) = z(x, t) + \psi(x, t),
\end{equation}
with
\begin{equation} \label{zdef}
z(x, t) := \sum_{i = 1}^N w(x - \xi_i(t)) 
\end{equation}
and
\begin{equation} \label{xidecomp}
\xi_i(t) = \xi_i^0(t) + h_i(t) \quad \mbox{for } i = 1, \ldots, N,
\end{equation}
for some suitable perturbations~$\psi$ and~$h = (h_i)_{i = 1}^N$.

The connection between the dynamical system~\eqref{systemforxi0} and the Peierls-Nabarro equation has been already highlighted by Gonz\'alez \& Monneau~\cite{GM12} (for~$s = 1/2$), Dipierro, Palatucci \& Valdinoci~\cite{DPV15} (for~$s > 1/2$), and Dipierro, Figalli \& Valdinoci~\cite{DFV14} (for~$s < 1/2$)---see the forthcoming Remark~\ref{epsrmk} for more precise information. It turns out that system~\eqref{systemforxi0} has a solution~$\xi^0: (0, +\infty) \to \R^N$ of the form
\begin{equation} \label{xi0def}
\xi^0_i(t) := \beta_i \, t^{\frac{1}{1 + 2 s}} \quad \mbox{for } t > 0 \mbox{ and } i = 1, \ldots, N,
\end{equation}
for a vector~$\beta = (\beta_1, \ldots, \beta_N) \in \R^N$ whose components satisfy~$\beta_1 < \beta_2 < \ldots < \beta_N$. Solutions of this kind are unique and they characterize the long-time dynamics of \emph{all} solutions of~\eqref{systemforxi0}---see Propositions~\ref{explicitprop} and~\ref{xi0charprop} in Section~\ref{dynsystsec}. For this reason, when dealing with the long-time asymptotics of equation~\eqref{maineq}, it is not restrictive to assume the solution~$\xi^0$ of~\eqref{systemforxi0} to be precisely the one in~\eqref{xi0def}. In the remainder of the paper, we will always make this assumption.

Our main result is as follows.

\begin{theorem} \label{mainthm}
Let~$s \in (0, 1)$,~$W \in C^{4, 1}(\R)$ be a potential satisfying~\eqref{Wprop}, and~$N \ge 2$ be an integer. Then, for every~$T \ge 1$ sufficiently large, there exists a solution~$u$ of
\begin{equation} \label{maineqfromT}
\partial_t u + (-\Delta)^s u + W'(u) = 0 \quad \mbox{in } \R \times (T, +\infty),
\end{equation}
in the form~\eqref{udecomp}-\eqref{xidecomp}, with~$\psi: \R \times [T, +\infty) \to \R$ satisfying~$\psi(\cdot, T) = 0$ in~$\R$ and
\begin{equation} \label{psidecayintro}
|\psi(x, t)| \le C \min \left\{ t^{- \frac{2 s}{1 + 2 s}}, |x|^{- 2 s} \right\} \quad \mbox{for all } x \in \R, \, t > T,
\end{equation}
for some constant~$C > 0$, and~$h: [T, +\infty) \to \R^N$ such that~$h(T) = 0$ and~$\lim_{t \rightarrow +\infty} t^{- \frac{1}{1 + 2 s}} |h(t)| = 0$. When~$s \in (1/2, 1)$, it actually holds~$\lim_{t \rightarrow +\infty} |h(t)| = 0$.
\end{theorem}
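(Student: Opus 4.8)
The plan is to construct the solution $u$ via a fixed-point / Lyapunov--Schmidt type scheme built around the ansatz~\eqref{udecomp}--\eqref{xidecomp}. First I would substitute $u = z + \psi$ into~\eqref{maineqfromT} and Taylor-expand $W'(z + \psi) = W'(z) + W''(z)\psi + O(\psi^2)$, producing a nonlocal parabolic equation for $\psi$ of the schematic form $\partial_t \psi + (-\Delta)^s \psi + W''(z)\psi = E[\xi] + N[\psi] + (\text{terms linear in } \dot h)$, where $E[\xi] := -(\partial_t z + (-\Delta)^s z + W'(z))$ is the \emph{error} generated by the approximate solution and $N[\psi]$ collects the quadratic nonlinearity. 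The key preliminary computation is to estimate $E[\xi]$: since $(-\Delta)^s w(x - \xi_i) + W'(w(x-\xi_i)) = 0$ for each single layer, the error comes from (i) the mismatch of $W'$ on a sum of layers versus the sum of $W'(w(\cdot-\xi_i))$, which at the far tails behaves like the pairwise interaction $\sim \sum_{j\ne i} (\xi_i-\xi_j)^{-1-2s}$ weighted by $w'$, and (ii) the transport term $\partial_t z = -\sum_i \dot\xi_i w'(\cdot - \xi_i)$. Projecting this error against $w'(\cdot - \xi_i)$ and using $\gamma = \|w'\|_{L^2}^{-2}$, the choice~\eqref{systemforxi0} for $\xi^0$ is \emph{exactly} what cancels the leading-order, non-decaying part of $E$; the residual (after also using $\dot h$ to absorb the next correction) then decays like $t^{-1 - \frac{2s}{1+2s}}$ pointwise, with the spatial decay $|x|^{-1-2s}$ inherited from $w'$.

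Next I would set up the functional-analytic framework. Introduce the weighted norm that is suggested by the target bound~\eqref{psidecayintro}, namely $\|\psi\| := \sup_{x,t} \left( \min\{ t^{-2s/(1+2s)}, |x|^{-2s}\}\right)^{-1} |\psi(x,t)|$, together with an analogous weighted norm for $h$ (with weight $t^{1/(1+2s)}$, or $O(1)$ when $s > 1/2$). The core linear theory is a solvability statement for the projected linear problem: for a given right-hand side $f$ in the weighted class, the equation $\partial_t \psi + (-\Delta)^s \psi + W''(z)\psi = f + \sum_i c_i(t) w'(\cdot - \xi_i)$, $\psi(\cdot, T) = 0$, admits a solution $(\psi, c)$ with $\|\psi\| \lesssim \|f\|$, where the Lagrange multipliers $c_i(t)$ are chosen so that $\psi(\cdot,t) \perp w'(\cdot - \xi_i(t))$ for all $t$. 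This rests on the nondegeneracy of the layer: the operator $\mathcal L := (-\Delta)^s + W''(w)$ has a one-dimensional kernel spanned by $w' > 0$ and a spectral gap above $0$ (this is where $s$ enters; for $s > 1/2$ the decay $w' \sim |x|^{-1-2s}$ is integrable enough to give a genuine coercivity gap on the orthogonal complement in $L^2$, which is what yields $h(t) \to 0$ rather than merely $t^{-1/(1+2s)} h(t) \to 0$). One then builds a parabolic a priori estimate for $\mathcal L$-perturbed evolution by combining energy estimates, the comparison principle for~\eqref{maineq} (using that $W''(z)$ is bounded and that near the layers $W''(z) > 0$ outside a compact set), and a barrier of the form $A \min\{t^{-2s/(1+2s)}, |x|^{-2s}\}$ — checking that $(-\Delta)^s$ of this barrier is controlled, which is the standard but delicate computation for nonlocal barriers with algebraic decay.

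With the linear theory in hand, the construction is closed by a contraction mapping. Given $(\psi, h)$ in a small ball of the weighted space, solve the projected linear problem with $f = E[\xi^0 + h] + N[\psi] + (\dot h\text{-terms})$ to get a new $\widetilde\psi$; then impose the $N$ reduced ODEs $c_i(t) = 0$ for all $t$, which, after expanding, take the form $\dot h = \mathbb A(t) h + \text{(lower order in } h, \psi)$ with $\mathbb A(t) = O(t^{-1})$ coming from linearizing the interaction system~\eqref{systemforxi0} about $\xi^0$; solving this ODE system with $h(T) = 0$ produces a new $\widetilde h$. The map $(\psi, h) \mapsto (\widetilde\psi, \widetilde h)$ is shown to be a contraction on the weighted ball for $T$ large, using that the error $E$ is $O(T^{-2s/(1+2s)})$ small in the weighted norm and that the nonlinear and coupling terms gain a power of $T$. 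The fixed point gives the desired $u$; the decay of $h$ follows from analyzing the linear ODE system — here $\mathbb A(t)t$ converges to a fixed matrix whose structure (a repulsive linearization with one zero eigenvalue along the overall-translation direction, controlled because $h(T) = 0$) forces $t^{-1/(1+2s)}|h| \to 0$ in general and, when $s > 1/2$, the sharper coercivity upgrades this to $|h| \to 0$.

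The main obstacle I expect is the linear parabolic theory with the precise spatial-temporal weight in~\eqref{psidecayintro}: one must prove an $L^\infty$ a priori estimate for the nonlocal operator $\partial_t + (-\Delta)^s + W''(z)$ on the $w'$-orthogonal subspace that is uniform as the centers $\xi_i(t)$ drift apart like $t^{1/(1+2s)}$, simultaneously capturing the $t^{-2s/(1+2s)}$ temporal rate (dictated by the self-similar spreading) and the $|x|^{-2s}$ spatial tail (dictated by the nonlocal diffusion of a localized bump) and matching them at the crossover $|x| \sim t^{1/(1+2s)}$. Constructing a single supersolution barrier that is valid across all three regimes, and verifying that the moving projections do not destroy the spectral gap, is the technical heart of the argument; the $s > 1/2$ versus $s \le 1/2$ dichotomy in the conclusion is a direct reflection of whether $w' \in L^2$ with enough room to spare to close the coercivity estimate without borrowing decay from the weight.
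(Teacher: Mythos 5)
Your overall architecture (ansatz $u=z+\psi$, projected linear problem with multipliers $c_i$ and orthogonality to $w'(\cdot-\xi_i)$, the weight $\min\{t^{-2s/(1+2s)},|x|^{-2s}\}$, barriers plus nondegeneracy of $L_w=(-\Delta)^s+W''(w)$, and a reduced system for $h$) matches the paper's strategy, but there are two genuine gaps. The first is quantitative and fatal as written: your claim that, after choosing $\xi^0$ by \eqref{systemforxi0} and absorbing a correction into $\dot h$, the residual decays like $t^{-1-\frac{2s}{1+2s}}$ is not justified. Pointwise the error $\E$ is only $O(t^{-2s/(1+2s)})$ near the cores; the cancellation happens only in the \emph{projections}, and even there the known expansion \eqref{wasymptDPFV} of the layer (with $\vartheta\le 1+2s$) yields a remainder in the reduced equation of size only $O(t^{-1})$, which is exactly the borderline rate for the ODE $\dot h + t^{-1}\mathscr{M}h = F$ and does not give decay of $h$. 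The paper has to prove a new asymptotic estimate, $|w-\chi_{(0,+\infty)}+\tfrac{1}{2sW''(0)}\tfrac{x}{|x|^{1+2s}}|\le C|x|^{-4s}$ (Proposition~\ref{uimprovasymptprop}, which crucially uses the evenness of $W$ and odd barriers), precisely to upgrade the forcing to $|F(t)|\le C\,t^{-4s/(1+2s)}$. Your proposal contains no substitute for this step, so the conclusion $\lim_{t\to\infty}|h(t)|=0$ for $s\in(1/2,1)$ does not follow from what you wrote. Relatedly, your explanation of the $s>1/2$ versus $s\le 1/2$ dichotomy via ``enough room'' in the coercivity of $L_w$ is off: the nondegeneracy inequality holds for all $s\in(0,1)$; the dichotomy comes solely from the exponent $1-\tfrac{4s}{1+2s}=\tfrac{1-2s}{1+2s}$ governing solutions of the reduced ODE.

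The second gap is the closing argument. You propose a contraction for the coupled map $(\psi,h)\mapsto(\widetilde\psi,\widetilde h)$, but for $s\le 1/2$ the elements $h$ of the natural ball are allowed to be unbounded (growing like $t^{1-\mu}$), and the solution map $h\mapsto\psi[h]$ is then only H\"older, not Lipschitz, in the relevant weighted norms (see Remark~\ref{LipvsHolrmk}); a contraction estimate with a small factor $T^{-\epsilon}$ cannot be obtained this way. The paper avoids this by splitting the scheme: first a contraction in $\psi$ for fixed $h$ (Theorem~\ref{nonlinearthm}), and then a \emph{Schauder} fixed point for $h$, which requires establishing compactness of $h\mapsto F[h]$ in a scale of weighted H\"older spaces (Lemmas~\ref{compemblem}--\ref{Fcompactlem}) together with H\"older-in-time bounds on $F$. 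Your sketch would go through for $s\in(1/2,1)$ but needs this replacement (or some other non-contractive device) to cover the full range $s\in(0,1)$ claimed in the statement.
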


Unless otherwise specified, by a solution of~\eqref{maineqfromT} or of other related evolution equations, we always mean a \emph{mild} solution, obtained, by Duhamel's principle, via convolution with the heat kernel associated to~$(-\Delta)^s$---see Definition~\ref{strongsoldef} in Section~\ref{solsec}. In the case of~\eqref{maineqfromT}, mild solutions are sufficiently regular for the equation to make sense pointwise. However, at several other points in the paper we will consider equations with weaker regularization properties and for which it is important to stipulate a notion of solution.

According to the final part of the statement of Theorem~\ref{mainthm}, when~$s \in (1/2, 1)$ the error term~$h(t)$ goes to zero as~$t \rightarrow +\infty$, ensuring that the points~$\{ \xi_i(t) \}$ at which the dislocations are centered converge to the exact solution~\eqref{xi0def} of~\eqref{systemforxi0}. For~$s \in (0, 1/2]$, our result is less precise, but still shows that the~$\xi_i$'s are asymptotical to the~$\xi_i^0$'s. See Theorem~\ref{mainthm2} in Section~\ref{outsec} for a more general and detailed reformulation of Theorem~\ref{mainthm}, containing, in particular, quantitative information on the growth/decay rate of~$h$.

To explain the difference between the two cases~$s \in (0, 1/2]$ and~$s \in (1/2, 1)$, we proceed to outline the general strategy of the proof of Theorem~\ref{mainthm}. We point out that our approach is inspired by similar ones of perturbative nature used for instance in~\cite{DdS18} and~\cite{dG18} to construct ancient solutions to the Yamabe flow and Allen-Cahn equation, respectively.

In order for~$u$, given by~\eqref{udecomp}-\eqref{xidecomp}, to be a solution of equation~\eqref{maineqfromT}, it is immediate to see that we need to find~$\psi$ and~$h$ satisfying
\begin{equation} \label{eqforpsiintro}
\partial_t \psi + (-\Delta)^s \psi +  W''(z) \psi + \NN[\psi] + \E = 0 \quad \mbox{in } \R \times (T, +\infty),
\end{equation}
for a nonlinear term~$\NN[\psi]$, quadratic in~$\psi$, and an error term~$\E$, independent of~$\psi$---see the forthcoming~\eqref{Ndef} and~\eqref{Edef} for their definitions. Both these functions (and~$z$) depend on~$\xi$, and thus on~$h$. To find a solution~$\psi$ of~\eqref{eqforpsiintro} with the decay prescribed in~\eqref{psidecayintro}, we implement the following Lyapunov-Schmidt type reduction.

First, we let~$h$ be fixed and infinitesimal at~$+\infty$ with respect to~$\xi^0$. To solve~\eqref{eqforpsiintro}, it is convenient to consider a projected version of it, in which we require~$\psi$ to be~$L^2$-orthogonal in space to the functions~$Z_i(x, t) := w'(x - \xi_i(t))$. As a result, we need to modify~\eqref{eqforpsiintro} in order for some compatibility conditions to be fulfilled. The result is that we look for a function~$\psi$ satisfying
\begin{equation} \label{psiortintro}
\int_\R \psi(x, t) Z_i(x, t) \, dx = 0 \quad \mbox{for all } t > T \mbox{ and } i = 1, \ldots, N
\end{equation}
and
\begin{equation} \label{projectedeqforpsi}
\partial_t \psi + (-\Delta)^s \psi +  W''(z) \psi + \NN[\psi] + \E = \sum_{i = 1}^N c_i Z_i \quad \mbox{in } \R \times (T, +\infty),
\end{equation}
for a uniquely determined vector of coefficients~$c: (T, +\infty) \to \R^N$. Through a fixed point argument, one shows that~\eqref{psiortintro}-\eqref{projectedeqforpsi} has a solution~$\psi$ satisfying the decay estimate~\eqref{psidecayintro}. The reason for considering~\eqref{psiortintro}-\eqref{projectedeqforpsi} instead of~\eqref{eqforpsiintro} is the following. Equation~\eqref{eqforpsiintro} can be understood, for~$x$ close to each~$\xi_i(t)$ and for a fixed large~$t$, as a perturbation of the linearization at~$w(\cdot - \xi_i(t))$ of the stationary Peierls-Nabarro equation~\eqref{ellPNeq}, which, after a translation of vector~$\xi_i(t)$ in the space variable, is determined by the operator~$L_w := (-\Delta)^s + W''(w)$. It turns out that~$L_w$ has a~$1$-dimensional kernel, spanned by~$w'$---this fact, called \emph{non-degeneracy} of the layer solution~$w$, follows, e.g., from~\cite[Lemma~5.3]{DPV15}. In view of this, to guarantee that a solution~$v$ of~$L_w v = f$ is small if the right-hand side~$f$ is small, we need~$v$ to be orthogonal to~$w'$. Going back to the old variables, it is natural to impose~\eqref{psiortintro} in order for~$\psi$ to fulfill~\eqref{psidecayintro}. The prescription of~\eqref{psiortintro} comes at the price of exchanging equation~\eqref{eqforpsiintro} for its projected version~\eqref{projectedeqforpsi}.

Up to now, we have shown that, for any~$h$, there exists a solution~$\psi$ of~\eqref{projectedeqforpsi} satisfying~\eqref{psidecayintro}. The problem of solving our initial equation~\eqref{eqforpsiintro} has then been reduced to that of finding~$h$ for which the coefficients~$\{ c_i \}$ in~\eqref{projectedeqforpsi} vanish identically. It can be seen that this is equivalent to having~$h$ solve a nonlinear system of the form
\begin{equation} \label{eqforh}
\dot{h}(t) + \frac{1}{t} \, \mathscr{M} h(t) = F[h](t) \quad \mbox{for all } t > T,
\end{equation}
for some constant positive semi-definite matrix~$\mathscr{M}$ and a right-hand side~$F = F[h]$ depending on~$h$ and~$t$---see~\eqref{Fdef}-\eqref{F2def} for the definition of~$F$, while~$t^{-1} \mathscr{M}$ is the matrix~$D_\xi \mathscr{R}(\xi^0(t))$ appearing in~\eqref{hdot=f}. Notice now that the reason we chose the quantity on the right-hand side of~\eqref{psidecayintro} to control~$\psi$ is that the error term~$\E$ appearing in~\eqref{projectedeqforpsi} can be bound in terms of it. In consequence of this and other facts,~$F$ satisfies the decay estimate
\begin{equation} \label{Fdecayintro}
|F(t)| \le C \, t^{- \frac{4 s}{1 + 2 s}} \quad \mbox{for all } t > T.
\end{equation}
From this, it can be seen that the solutions~$h$ of~\eqref{eqforh}---or, at least, those that are orthogonal, at each time, to the~$1$-dimensional kernel of~$\mathscr{M}$---are bounded by~$C_\varepsilon t^{1 - 4 s / (1 + 2 s) + \varepsilon} = C_\varepsilon t^{(1 - 2 s)/(1 + 2 s) + \varepsilon}$, for any arbitrarily small~$\varepsilon > 0$ and some positive constant~$C_\varepsilon$. Thus, for~$s \in (1/2, 1)$ they decay, while for~$s \in (0, 1/2]$ they might be unbounded---but still lower order with respect to~$\xi^0$.

A second ingredient needed to get~\eqref{Fdecayintro} is a good understanding of the asymptotic behavior of the layer solution. In~\cite{GM12,DPV15,DFV14}, it is shown that~$w$ has asymptotic expansion at infinity determined by the estimate
\begin{equation} \label{wasymptDPFV}
\left| w(x) - \chi_{(0, +\infty)}(x) + \frac{1}{2 s W''(0)} \frac{x}{|x|^{1 + 2 s}} \right| \le \frac{C}{|x|^{\vartheta}} \quad \mbox{for all } x \in \R,
\end{equation}
for some~$\vartheta > 2 s$. When~$s \in (1/2, 1)$, the exponent~$\vartheta$ is equal to~$1 + 2 s$---see~\cite[Proposition~7.2]{DPV15}. If we used this bound to estimate the decay of~$F$, we would have only gotten that~$|F(t)| \lesssim t^{-1}$, which is not enough to ensure that the solutions of~\eqref{eqforh} decay. In order to obtain the stronger bound~\eqref{Fdecayintro}, one therefore needs to improve~\eqref{wasymptDPFV}. We do this in Proposition~\ref{uimprovasymptprop}, where we show that~\eqref{wasymptDPFV} holds with~$\vartheta = 4 s$. This result, which actually holds for all~$s \in (0, 1)$, strongly uses the parity of the potential~$W$---an hypothesis that is not made in~\cite{GM12,DPV15,DFV14}.

A consequence of the fact that the error term~$h(t)$ is infinitesimal when~$s \in (1/2, 1)$ is the asymptotic stability of the solution~$u$ built in Theorem~\ref{mainthm}, with respect to small odd perturbations of its initial datum.

\begin{theorem} \label{asymptstabthm}
Let~$s \in (1/2, 1)$,~$W \in C^{4, 1}(\R)$ be a potential satisfying~\eqref{Wprop}, and~$N \ge 2$ be an integer. There exists~$\delta_0 > 0$ such that, for every~$T \ge 1$ sufficiently large, if~$u$ is the solution of~\eqref{maineqfromT} given by Theorem~\ref{mainthm} and~$\widetilde{u}$ is another solution of~\eqref{maineqfromT} satisfying~$\widetilde{u}(\cdot, T) = u(\cdot, T) + \eta$ in~$\R$, for some odd function~$\eta: \R \to \R$ such that
$$
|\eta(x)| \le \delta_0 \min \left\{ T^{- \frac{2 s}{1 + 2 s}}, |x|^{- 2 s} \right\} \quad \mbox{for all } x \in \R,
$$
then,
\begin{equation} \label{u-utildeto0}
\lim_{t \rightarrow +\infty} \| \widetilde{u}(\cdot, t) - u(\cdot, t) \|_{L^\infty(\R)} = 0.
\end{equation}
\end{theorem}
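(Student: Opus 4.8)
The plan is to prove Theorem~\ref{asymptstabthm} by a perturbative argument built on top of the solution $u$ from Theorem~\ref{mainthm}, exploiting the decay $h(t) \to 0$ (valid precisely because $s > 1/2$) together with the spectral gap of the linearized operator away from its neutral directions. Writing $\widetilde{u} = u + \varphi$, the difference $\varphi$ solves a linear-in-$\varphi$ equation with a time-dependent potential,
\begin{equation} \label{eqforvarphi-plan}
\partial_t \varphi + (-\Delta)^s \varphi + W''(u) \varphi + Q[\varphi] = 0 \quad \mbox{in } \R \times (T, +\infty),
\end{equation}
with $\varphi(\cdot, T) = \eta$ and $Q[\varphi]$ quadratic in $\varphi$. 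The first step is to observe that, since $\eta$ is odd and both $u$ and $W'$ respect the symmetry $v \mapsto N - v(-\cdot)$ in an appropriate sense (coming from the evenness of $W$ and the structure of the ansatz~\eqref{udecomp}-\eqref{zdef} with the $\xi_i^0$ antisymmetrically placed), the solution $\widetilde u$ stays within the odd-perturbation class; hence $\varphi(\cdot, t)$ remains odd for all $t$. This is crucial because the neutral modes $Z_i$ of the linearization are even around each $\xi_i$, and the \emph{odd} combinations that survive are controlled: oddness kills the dangerous translational zero modes of the full multi-soliton configuration at leading order, leaving a coercive quadratic form.

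The core estimate is a differential inequality for a suitably weighted norm of $\varphi$. I would introduce the weight $\rho(x,t) := \min\{ t^{-2s/(1+2s)}, |x|^{-2s} \}$ appearing in~\eqref{psidecayintro} and work with $\| \varphi(\cdot, t) / \rho(\cdot, t) \|_{L^\infty}$, or, alternatively, combine an energy estimate (multiplying~\eqref{eqforvarphi-plan} by $\varphi$ and using $L^2$-coercivity of $(-\Delta)^s + W''(u)$ on the odd subspace orthogonal to the neutral modes) with a barrier argument to upgrade to the pointwise weighted bound. The linear operator $(-\Delta)^s + W''(z)$, where $z$ is the superposition~\eqref{zdef}, is, for $t$ large, a small perturbation of $N$ decoupled copies of $L_w = (-\Delta)^s + W''(w)$; by the non-degeneracy of $w$ (kernel spanned by $w'$, cited from~\cite[Lemma~5.3]{DPV15}) together with the $L^2$ gap, one gets a uniform coercivity constant $\mu > 0$ on the orthogonal complement of $\mathrm{span}\{Z_i\}$, and oddness of $\varphi$ forces its projection onto that span to be $o(1)$ as $t \to \infty$ (the overlaps $\langle \varphi, Z_i \rangle$ satisfy an ODE with coefficients going to zero because $\dot\xi_i \to 0$ is \emph{not} quite true, but $\dot\xi_i(t) = O(t^{-2s/(1+2s)})$, which integrates against the decay of $\varphi$ to give the needed smallness). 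Choosing $\delta_0$ small makes the quadratic term $Q[\varphi]$ absorbable, and a Gronwall-type argument then yields $\| \varphi(\cdot, t) \|_{L^\infty(\R)} \le C \delta_0 \, t^{-2s/(1+2s)} \to 0$, which is~\eqref{u-utildeto0}.

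More concretely, the steps in order are: (i) set up~\eqref{eqforvarphi-plan} and verify, via a fixed-point/continuation argument in the space of functions bounded by $\delta_0 \rho$, that a solution $\widetilde u$ exists on $[T,+\infty)$ and stays odd; (ii) establish the uniform coercivity of the frozen-time linearized operator on the odd subspace modulo neutral modes, using non-degeneracy of $w$ and the fact that the solitons are asymptotically far apart ($|\xi_i^0(t) - \xi_j^0(t)| \sim c\, t^{1/(1+2s)} \to \infty$); (iii) derive an ODE system for the projections $\langle \varphi(\cdot,t), Z_i(\cdot,t)\rangle$, showing they decay, using $|\dot h(t)| \to 0$ and~\eqref{Fdecayintro}-type bounds together with the orthogonality structure; (iv) combine a weighted energy inequality with a barrier/comparison argument against the supersolution $C\delta_0 \rho$ to close the estimate, choosing $T$ large and $\delta_0$ small; (v) conclude~\eqref{u-utildeto0}. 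I expect step~(iii)—or rather the interplay between the slowly-moving centers and the neutral-mode projections—to be the main obstacle: because for $s \in (1/2,1)$ the shift $h(t) \to 0$ but the base centers $\xi_i^0(t)$ still drift to infinity, one must carefully track how the time-dependence of the $Z_i(\cdot,t)$ feeds back into the energy estimate without generating an uncontrolled $\dot\xi_i \, \partial_\xi Z_i$ term; the saving grace, and the reason the theorem is restricted to $s > 1/2$, is exactly that $\dot\xi_i^0(t) = O(t^{-2s/(1+2s)})$ is integrable against the decay rate of $\varphi$ only when this exponent exceeds that which guarantees $h \to 0$, i.e.\ precisely in the regime where Theorem~\ref{mainthm} already gives $h(t) \to 0$.
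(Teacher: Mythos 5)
Your plan founders on the treatment of the translational neutral modes, and the mechanism you invoke to dispose of them is not correct. Oddness of $\varphi$ does \emph{not} kill the projections $\langle \varphi(\cdot,t), Z_i(\cdot,t)\rangle$: since $Z_i(-x,t)=Z_{N-i+1}(x,t)$, oddness only forces the antisymmetric pairing $\langle \varphi, Z_i\rangle = -\langle \varphi, Z_{N-i+1}\rangle$, so the ``breathing'' combinations of shifts (push $\xi_1$ left, $\xi_N$ right, etc.) survive in full. These are exactly the modes allowed by the symmetry~\eqref{h0odd}, and on them the frozen-time quadratic form of $(-\Delta)^s+W''(z)$ is degenerate, not coercive: their decay is governed not by a spectral gap but by the repulsive soliton interaction, i.e.\ by the system $\dot a + \tfrac1t\,\mathscr{M}a = O(t^{-4s/(1+2s)})$ with damping only of size $\delta/t$ (cf.~\eqref{systemiselliptic} and~\eqref{deltadef}). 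Consequently this component decays only like $t^{1-\mu}$ with $\mu<\min\{4s/(1+2s),\,1+\delta\}$, which for $s$ close to $1/2$ (or $\delta$ small) is far slower than the rate $t^{-2s/(1+2s)}$ at which you try to close your bootstrap; the claimed Gronwall/barrier estimate $\|\varphi(\cdot,t)\|_{L^\infty}\le C\delta_0\,t^{-2s/(1+2s)}$ therefore cannot hold and the argument does not close. Relatedly, the forcing terms generated by the drifting centers, $\dot\xi^0_i=O(t^{-2s/(1+2s)})$, are not integrable in time by themselves, so they cannot be absorbed without first knowing the decay of $\varphi$ along the shift directions---which is precisely the point at issue. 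To control these modes one must modulate the centers, i.e.\ re-expand $\widetilde u$ as $\sum_i w(x-\xi^0_i(t)-\widetilde h_i(t))+\widetilde\psi$ and run the reduced dynamics for $\widetilde h$; at that point you are reproducing the construction of Theorem~\ref{mainthm2} rather than giving an independent energy argument.

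This is in fact how the paper proceeds, and the resulting proof is essentially soft: Lemma~\ref{reparalem} shows, via a Brouwer fixed point in the shift parameter, that the perturbed initial datum $u(\cdot,T)+\eta$ can be rewritten as $\sum_j w(\cdot-\xi^0_j(T)-h^{(0)}_j)+\psi_0$ with $h^{(0)}$ small and antisymmetric and $\psi_0$ odd, small, and orthogonal to the $Z_j$'s; by uniqueness of mild solutions, $\widetilde u$ is then \emph{itself} one of the solutions produced by Theorem~\ref{mainthm2}, so $\widetilde u-u$ is bounded by $C(|h(t)|+|\widetilde h(t)|)+C\Phi(\cdot,t)\le C\,t^{1-\mu}\to 0$, using $\mu>1$ (available exactly when $s>1/2$); a final mollification step removes the extra $C^1$ regularity of $\eta$ needed to apply Theorem~\ref{mainthm2}. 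If you wish to rescue a direct perturbative proof, you would need to incorporate this modulation step (or an equivalent decomposition of $\varphi$ along the $Z_i$'s with the $\tfrac1t\mathscr{M}$ dynamics) and settle for the slower rate $t^{1-\mu}$; the oddness hypothesis buys you only the absence of the common-translation mode (the kernel of $\mathscr{M}$), not coercivity.
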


Theorem~\ref{asymptstabthm} gives the asymptotic stability of~$u$ with respect to odd perturbations. The requirement on the oddness of the perturbation~$\eta$ is related to a technical limitation of the construction leading to Theorem~\ref{mainthm}, where we require the~$N$ components of the error~$h$ to satisfy a symmetry condition---identities~\eqref{hodd} in Section~\ref{notsec}. It might be the case that such a restriction on~$\eta$ could be relaxed---this would probably entail carrying through the construction of Theorem~\ref{mainthm} under a weaker hypothesis than~\eqref{hodd}, such as the requirement that~$h$ has null barycenter~$N^{-1} \sum_{i = 1}^N h_i(t)$ for all~$t > T$. However, it cannot be completely removed, as one can easily see by considering as~$\widetilde{u}(x, t)$ the translation~$u(x + \delta, t)$, for a small~$\delta$.

Note that, although the case~$N = 1$ is not formally included in our framework, it is possible to adapt the arguments of the proof of Theorem~\ref{asymptstabthm} to this case and establish the asymptotic dynamical stability of the layer solution~$w$ with respect to odd perturbations.

\begin{remark} \label{epsrmk}
The works~\cite{GM12,DPV15,DFV14} were mostly focused on the large-scale limit of the solutions of~\eqref{maineq} and of more general Peierls-Nabarro type equations that include the presence of an external stress. In the simplest case of a vanishing stress, they proved that a solution~$u_\varepsilon = u_\varepsilon(x, t)$ of
\begin{equation} \label{epsPNeq}
\partial_t u_\varepsilon + \frac{1}{\varepsilon} \left( (-\Delta)^s u_\varepsilon + \frac{1}{\varepsilon^{2 s}} W'(u_\varepsilon) \right) = 0 \quad \mbox{in } \R \times (0, +\infty),
\end{equation}
with initial datum given by~\eqref{initdatumPV}, for some~$x_1^0 < x_2^0 < \ldots < x_N^0$, converges, as~$\varepsilon \rightarrow 0^+$, to the function
\begin{equation} \label{roughtrans}
\sum_{i = 1}^N \chi_{(0, +\infty)}(x - \xi^0_i(t)),
\end{equation}
a.e.~in~$\R \times (0, +\infty)$, for some solution~$\xi^0$ of~\eqref{systemforxi0} such that~$\xi^0(0) = x^0$.

The connection between equations~\eqref{epsPNeq} and~\eqref{maineq} is realized by the fact that~$u$ is a solution of~\eqref{maineq} if and only if
\begin{equation} \label{uepsdef}
u_\varepsilon(x, t) := u \! \left( \frac{x}{\varepsilon}, \frac{t}{\varepsilon^{1 + 2 s}} \right)
\end{equation}
solves~\eqref{epsPNeq}. Hence,~\eqref{epsPNeq} represents a blown-down version of~\eqref{maineq}, and the results of~\cite{GM12,DPV15,DFV14} show that solutions of~\eqref{maineq} having initial data like~\eqref{initdatumPV}, when viewed at large scales, look like~$N$ equally oriented rough dislocations centered around points that evolve according to~\eqref{systemforxi0}.

We point out that, via the rescaling~\eqref{uepsdef}, it is possible to recover this result using Theorem~\ref{mainthm}, at least for a restricted class of ``well-prepared'' initial data~$x^0$---this limitation could be partially overcome by shaping~$u$ around an appropriate solution~$\xi^0$ of~\eqref{systemforxi0} which may differ from~\eqref{xi0def}. In addition to this, through estimate~\eqref{psidecay} one can deduce quantitative information on the rate of convergence of~$u^\varepsilon$ to~\eqref{roughtrans}, away from the trajectories of~$\xi^0$.

In the series of papers~\cite{PV15,PV16,PV17}, Patrizi \& Valdinoci studied the evolution of dislocations which may not be all equally oriented---i.e., having initial data given by the superpositions of both increasing and decreasing layer solutions, centered at the points~$x^0_i$'s. In~\cite{PV15}, they proved that a large-scale limit result analogous to the one described before also holds true in this case. However, the system that regulates the evolution of the dislocation points~$\xi^0$ need not be repulsive anymore---unlike~\eqref{systemforxi0}---and, therefore, collisions in finite time may occur. Articles~\cite{PV16,PV17} were then mainly devoted to the analysis of the behavior of the solution~$u_\varepsilon$ past the first collision time~$T_c$.

We believe that a construction similar to the one performed in Theorem~\ref{mainthm} could lead to quantitative information on the profile of~$u_\varepsilon$ right before~$T_c$, at least under some assumptions on the initial orientations of the dislocations. For example, by modifying our techniques in agreement with the arguments of, say,~\cite{dG18}, it should be possible to construct ancient solutions of~$\partial_t u + (-\Delta)^s u + W'(u) = 0$ modeling, at large negative times, the propagation of~$N = 2$ dislocations oriented in opposite directions. Such solutions would have the form
$$
u(x, t) \approx w(x - \xi_1(t)) - w(x - \xi_2(t)) \quad \mbox{for } x \in \R, \, t < - T,
$$
with~$\xi_1 < \xi_2$ approximately satisfying the attractive dynamical system
$$
\dot{\xi}_i(t) \approx - \frac{\gamma}{2 s} \sum_{j \ne i} \frac{\xi_i(t) - \xi_j(t)}{|\xi_i(t) - \xi_j(t)|^{1 + 2 s}} \quad \mbox{for all } t < - T \mbox{ and } i = 1, 2,
$$
for some large~$T \ge 1$. When rescaled according to~\eqref{uepsdef}, these solutions would describe the blow-down profile right before the collision time~$T_c$.
\end{remark}

\smallskip

The remainder of the paper is organized as follows.

In Section~\ref{notsec}, we define some relevant quantities for our analysis, including the norms used to measure the error terms~$\psi$ and~$h$, as well as the corresponding function spaces.

Section~\ref{outsec} contains a detailed account of the strategy that we follow to establish Theorem~\ref{mainthm}---actually, in the more general and precise form of Theorem~\ref{mainthm2}, also stated there.

In Section~\ref{dynsystsec}, we carry out the analysis of the dynamical system~\eqref{systemforxi0}, showing in particular the existence and uniqueness of a solution in the form~\eqref{xi0def}.

The brief Section~\ref{layersec} is devoted to the asymptotic properties of~$w$. There, we state in particular our result on the improvement of expansion~\eqref{wasymptDPFV} for odd layer transitions. The proof of this and other related estimates is postponed to Appendix~\ref{layerapp}.

Section~\ref{solsec} is mostly a review of fractional parabolic equations. It contains the definition of the notion of mild solutions that we adopt throughout the paper, as well as some well-known properties which immediately follow from it---such as the existence and uniqueness of solutions, maximum principles, and basic regularity estimates (the proof of these estimates is deferred to Appendix~\ref{estforstrongapp}). At the end of the section, we also include the construction of a couple of barriers and a decay estimate for a particular class of solutions of the linearization of equation~\eqref{maineqfromT} at~$w$.

Sections~\ref{psisec} and~\ref{hsec} represent the core part of the paper. There, we finalize the proof of Theorem~\ref{mainthm} by establishing the main results stated in Section~\ref{outsec}.

Section~\ref{stabsec} is devoted to the proof of the stability properties of the solution~$u$, as stated in Theorem~\ref{asymptstabthm}.

The paper is closed by three appendices. As anticipated before, the first two contain proofs of results stated in previous sections. Conversely, Appendix~\ref{nomultiapp} is self-contained and concerned with the classification of bounded solutions to the stationary Peierls-Nabarro equation~\eqref{ellPNeq}.

\section{Notation} \label{notsec}

\noindent
In this section, we present some non-standard definitions that will be used in the sequel. We begin by introducing the functional setting for the perturbations~$h$ and~$\psi$. The last subsection contains the definitions of the functions~$\NN$ and~$\E$ which already appeared in~\eqref{eqforpsiintro}.

\subsection{Norms for~$h$.}

Let~$\mu$ be any number satisfying
\begin{equation} \label{mulimit0}
\mu > \frac{2 s}{1 + 2 s}
\end{equation}
and define, given~$T \ge 1$ and~$h \in C^1([T, +\infty); \R^N)$, the norm
\begin{equation} \label{normforh}
\| h \|_{\HH_{T, \mu}} := \sup_{t > T} \Big( t^{\mu - 1} |h(t)| \Big) + \sup_{t > T} \Big( t^\mu |\dot{h}(t)| \Big).
\end{equation}
We will always work with perturbations~$h$ that are evenly distributed, meaning that
\begin{equation} \label{hodd}
h_i(t) = - h_{N - i + 1}(t) \quad \mbox{for all } t > T \mbox{ and } i = 1, \ldots, N.
\end{equation}
As a result, we consider the spaces
\begin{equation} \label{HTmudef}
\HH_{T, \mu} := \Big\{ h \in C^1([T, +\infty); \R^N)  : h \mbox{ satisfies } \eqref{hodd} \mbox{ and } \| h \|_{\HH_{T, \mu}} < + \infty \Big\}
\end{equation}
along with their closed unit balls
$$
\bar{B}_1(\HH_{T, \mu}) := \Big\{ h \in \HH_{T, \mu} : \| h \|_{\HH_{T, \mu}} \le 1 \Big\}.
$$
Notice that assumption~\eqref{mulimit0} on~$\mu$ guarantees that the~$h_i$'s have strictly lower growth rates at~$+\infty$ than the~$\xi_i^0$'s given by~\eqref{xi0def}. In particular, up to taking~$T$ sufficiently large, any~$h \in \bar{B}_1(\HH_{T, \mu})$ gives rise to a family of trajectories~$\xi = \xi^0 + h$ satisfying
$$
\xi_1(t) < \xi_2(t) < \ldots < \xi_N(t) \quad \mbox{for all } t > T.
$$
In what follows, we will always assume this to hold.

As we shall see later on, the perturbation~$h$ leading to the desired solution~$u$ of~\eqref{maineqfromT} is obtained as the solution of a nonlinear system of equations. Its right-hand side will belong to the following spaces. Given,~$T \ge 1$,~$\mu > 0$, and~$\alpha \in (0, 1)$, we define the spaces
\begin{equation} \label{FTmudef}
\begin{aligned}
\FF_{T, \mu} & := \Big\{ f \in L^\infty((T, +\infty); \R^N) : f \mbox{ satisfies } \eqref{fodd} \mbox{ and } \| f \|_{\FF_{T, \mu}} < +\infty \Big\}, \\
\FF_{T, \mu}^{\alpha} & := \Big\{ f \in L^\infty((T, +\infty); \R^N) : f \mbox{ satisfies } \eqref{fodd} \mbox{ and } \| f \|_{\FF_{T, \mu}^{\alpha}} < +\infty \Big\},
\end{aligned}
\end{equation}
characterized by the property
\begin{equation} \label{fodd}
f_i(t) = - f_{N - i + 1}(t) \quad \mbox{for all } t > T \mbox{ and } i = 1, \ldots, N
\end{equation}
and by the norms
\begin{align*}
\| f \|_{\FF_{T, \mu}} & := \sup_{t > T} \Big( t^{\mu} \, |f(t)| \Big), \\
\| f \|_{\FF_{T, \mu}^{\alpha}} & := \| f \|_{\FF_{T, \mu}} + \sup_{t > T} \Big( t^{\mu} \, [f]_{C^{\alpha}((t, t + 1); \R^N)} \Big).
\end{align*}

\subsection{Norms for~$\psi$.}

We measure the decay of the corrector~$\psi$ through the weight
\begin{equation} \label{Phidef}
\Phi(x, t) := \min \left\{ |x|^{- 2 s}, t^{- \frac{2 s}{1 + 2 s}} \right\} \quad \mbox{for } x \in \R, \, t \ge 1.
\end{equation}
Given any open interval~$I \subseteq [1, +\infty)$, we define the Banach space
$$
\A_I := \Big\{ \psi \in L^\infty(\R \times I) : \| \psi \|_{\A_I} < +\infty \Big\},
$$
with norm
$$
\| \psi \|_{\A_I} := \left\| \frac{\psi}{\Phi} \right\|_{L^\infty(\R \times I)}.
$$
The reason for considering this particular norm has been already anticipated in the introduction and is related to the fact that the error term~$\E$ (defined below in~\eqref{Edef}) satisfies~$\| \E \|_{\A_I} < +\infty$---see Lemma~\ref{ElePhilem} in Section~\ref{psisec}.

A subset of~$\A_I$ that will be of key importance in our analysis is composed by the elements of~$\A_I$ which are~$L^2$-orthogonal in space to the~$N$ functions
\begin{equation} \label{Zidef}
Z_i(x, t) := w'(x - \xi_i(t)) \quad \mbox{for } i = 1, \ldots, N,
\end{equation}
for a.e.~time~$t \in I$. That is, those functions~$\psi \in \A_I$ for which
\begin{equation} \label{psiort}
\int_\R \psi(x, t) Z_i(x, t) \, dx = 0 \quad \mbox{for a.e.~} t \in I \mbox{ and every } i = 1, \ldots, N.
\end{equation}
We call this subset~$\Adot_I$, i.e., we define
$$
\Adot_I := \Big\{ \psi \in \A_I : \psi \mbox{ satisfies } \eqref{psiort} \Big\}.
$$
Observe that the integral in~\eqref{psiort} is well-defined and finite for every bounded~$\psi$, as~$Z_i(\cdot, t) \in L^1(\R)$ for every~$t$---see estimate~\eqref{w'asympt} in Section~\ref{layersec}. Also notice that~$\Adot_I$ is closed in~$\A_I$.

The case we are mostly interested in is that of a time interval~$I$ of the form~$(T, +\infty)$, with~$T \ge 1$. In this situation, we simply write
$$
\A_T := \A_{(T, +\infty)} \quad \mbox{and} \quad \Adot_T := \Adot_{(T, +\infty)},
$$
and similarly for their norm~$\| \cdot \|_{\A_T}$. Sometimes, we will need to measure the continuity of the corrector function~$\psi$. Given~$T \ge 1$ and~$\alpha \in (0, 1)$, we consider the weighted H\"older space
\begin{equation} \label{Atnualphadef}
\A_T^{\alpha} := \Big\{ \psi \in L^\infty(\R \times (T, +\infty)) : \| \psi \|_{\A_T^{\alpha}} < +\infty \Big\},
\end{equation}
determined by the norm
\begin{equation} \label{CTnualphadef}
\| \psi \|_{\A_T^\alpha} := \| \psi \|_{\A_T} + \sup_{t > T} \left( t^{\frac{2 s}{1 + 2 s}} [ \psi ]_{C^\alpha(\R \times (t, t + 1))} \right).
\end{equation}

We also define an appropriate space for the initial datum~$\psi_0$ for~$\psi$ that will be considered in Theorem~\ref{mainthm2}. Taking into account the orthogonality conditions
\begin{equation} \label{psi0ort}
\int_\R \psi_0(x) Z_i(x, T) \, dx = 0 \quad \mbox{for all } i = 1, \ldots, N
\end{equation}
and the norm
$$
\| \psi_0 \|_{\A_{\{ T \}}} := \left\| \frac{\psi_0}{\Phi(\cdot, T)} \right\|_{L^\infty(\R)},
$$
we introduce the spaces
$$
\A_{\{ T \}} := \Big\{ \psi_0 \in L^\infty(\R) : \| \psi_0 \|_{\A_{\{ T \}}} < +\infty \Big\} \quad \mbox{and} \quad \Adot_{\{ T \}} := \Big\{ \psi_0 \in \A_{\{ T \}} : \psi_0 \mbox{ satisfies } \eqref{psi0ort} \Big\},
$$
As we will sometimes need~$\psi_0$ to be regular, we also consider the weighted~$C^1$ norm
\begin{equation} \label{ATalphainit}
\| \psi_0 \|_{\A_{\{ T \}}^1} := \| \psi_0 \|_{\A_{\{ T \}}} + T^{\frac{2 s}{1 + 2 s}} \| \psi_0' \|_{L^\infty(\R)}
\end{equation}
as well as the corresponding spaces
$$
\A_{\{ T \}}^1 := \Big\{ \psi \in C^1(\R) : \| \psi_0 \|_{\A_{\{ T \}}^1} < +\infty \Big\} \quad \mbox{and} \quad \Adot_{\{ T \}}^1 := \A_{\{ T \}}^1 \cap \Adot_{\{ T \}}.
$$

\subsection{Additional terminology}

Recalling definition~\eqref{zdef} of~$z = z(x, t)$, we introduce the following two functions, that will play an important role in the continuation of the paper. Given~$\psi: \R \times (T, +\infty)$, we define
\begin{equation} \label{Ndef}
\NN[\psi](x, t) := W'(z(x, t) + \psi(x, t)) - W'(z(x, t)) - W''(z(x, t)) \psi(x, t) \quad \mbox{for } x \in \R, \, t > T
\end{equation}
and
\begin{equation} \label{Edef}
\E(x, t) := W'(z(x, t)) - \sum_{j = 1}^N W'(w(x - \xi_j(t))) - \sum_{j = 1}^N w'(x - \xi_j(t)) \dot{\xi}_j(t) \quad \mbox{for } x \in \R, \, t > T.
\end{equation}
Sometimes, it will be convenient to decompose~$\E$ as
\begin{equation} \label{Edecomp}
\E = \E_1 + \E_2,
\end{equation}
with
\begin{align}
\label{E1def}
\E_1(x, t) & := W'(z(x, t)) - \sum_{j = 1}^N W'(w(x - \xi_j(t))), \\
\label{E2def}
\E_2(x, t) & := - \sum_{j = 1}^N Z_j(x, t) \dot{\xi}_j(t).
\end{align}
Recall definition~\eqref{Zidef}. We also define the functions
\begin{equation} \label{E0idef}
\E_{0, j}(x, t) := W''(z(x, t)) - W''(w(x - \xi_j(t))) \quad \mbox{for } j = 1, \ldots, N.
\end{equation}

Finally, throughout the paper we denote with~$C$ any~\emph{generic} positive constant. The value of~$C$ is usually large (greater than~$1$) and may change from line to line. Generic here means that~$C$ only depends on the \emph{structural} parameters of the model under analysis, which are~$s$,~$W$,~$N$, and~$\mu$. When some constant depends on further, non-structural quantities, we will typically stress it by means of subscripts---e.g., the notation~$C_{\varepsilon, T}$ indicates dependence on~$\varepsilon$ and~$T$.

\section{Outline of the proof of Theorem~\ref{mainthm}} \label{outsec}

\noindent
We present here in detail the strategy of the proof of Theorem~\ref{mainthm}. Thanks to the definitions introduced in the previous section, we can give a more precise statement of it, providing in particular information on the decay/growth rate of the perturbation~$h$. This rate is encoded in the norm~$\| \cdot \|_{\HH_{T, \mu}}$ defined in~\eqref{normforh} and corresponding to a number~$\mu$ satisfying
\begin{equation} \label{mulimit1}
\mu < \min \left\{ \frac{4 s}{1 + 2 s}, 1 + \delta \right\} \quad \mbox{and} \quad \mu > \begin{dcases}
\frac{3 s}{1 + 2 s} & \quad \mbox{if } s \in (0, 1/2],\\
1 & \quad \mbox{if } s \in (1/2, 1),
\end{dcases}
\end{equation}
where~$\delta > 0$ is given by
\begin{equation} \label{deltadef}
\delta := \frac{N \gamma}{16 \, \beta_N^{1 + 2 s}},
\end{equation}
with~$\gamma$ as in~\eqref{gammadef}. Notice that such a~$\mu$ satisfies in particular assumption~\eqref{mulimit0}.

We have the following statement.

\begin{theorem} \label{mainthm2}
Let~$\mu$ be a real number fulfilling conditions~\eqref{mulimit1}-\eqref{deltadef}. Then, there exist three generic constants~$T_0, C \ge 1$ and~$\varepsilon_0 \in (0, 1)$ such that, given any~$T \ge T_0$, any odd function~$\psi_0 \in \Adot^1_{\{ T \}}$, and any vector~$h^{(0)} \in \R^N$ satisfying
\begin{equation} \label{h0odd}
h^{(0)}_i = - h^{(0)}_{N - i + 1} \quad \mbox{for all } i = 1, \ldots, N
\end{equation}
and
\begin{equation} \label{h0psi0small}
\| \psi_0 \|_{\A_{\{ T \}}} + T^{\mu - 1} |h^{(0)}| \le \varepsilon_0,
\end{equation}
there exist~$h \in \bar{B}_1(\HH_{T, \mu})$ with~$h(T) = h^{(0)}$ and~$\psi \in \A_T$ with~$\psi(\cdot, T) = \psi_0$ and~$\| \psi \|_{\A_T} \le C$ for which the function~$u$ given by~\eqref{udecomp}-\eqref{xidecomp} with~$\xi = \xi^0 + h$ is a solution of equation~\eqref{maineqfromT}.
\end{theorem}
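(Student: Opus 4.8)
\emph{Strategy.} The plan is to run a two-level fixed point scheme, of Lyapunov--Schmidt type, exactly along the lines sketched in Sections~\ref{notsec}--\ref{outsec}. The inner level produces, for each frozen perturbation $h$, a corrector $\psi=\Psi[h]$ solving the \emph{projected} equation \eqref{psiortintro}--\eqref{projectedeqforpsi} with initial datum $\psi_0$; the outer level selects the $h$ for which all the Lagrange multipliers $c_i$ in \eqref{projectedeqforpsi} vanish, which by construction makes $u$ in \eqref{udecomp}--\eqref{xidecomp} a (mild, in the sense of Definition~\ref{strongsoldef}) solution of \eqref{maineqfromT}.

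\emph{Step 1: the corrector for frozen $h$.} Fix $h\in\bar B_1(\HH_{T,\mu})$; by the discussion after \eqref{HTmudef} this gives well-ordered trajectories $\xi=\xi^0+h$ once $T$ is large, hence determines $z$, $\E$, $\NN[\cdot]$, and the $Z_i$. I would first record, via Lemma~\ref{ElePhilem}, that $\|\E\|_{\A_T}\le C$ uniformly over $\bar B_1(\HH_{T,\mu})$ --- this is precisely why the weight $\Phi$ was chosen --- together with the Hölder control needed for the parabolic regularity theory. Next I would develop the linear theory for the projected operator $\psi\mapsto\partial_t\psi+(-\Delta)^s\psi+W''(z)\psi$ subject to the time-dependent constraints \eqref{psiort}: given $g$ with $\|g\|_{\A_T}<\infty$ and $\psi_0\in\Adot^1_{\{T\}}$, there is a unique $\psi\in\Adot_T$ and a unique coefficient vector $c(t)$ with $\partial_t\psi+(-\Delta)^s\psi+W''(z)\psi+g=\sum_i c_iZ_i$, $\psi(\cdot,T)=\psi_0$, and $\|\psi\|_{\A_T}\le C(\|g\|_{\A_T}+\|\psi_0\|_{\A_{\{T\}}})$. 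Solvability for $c(t)$ follows because near each $\xi_i$ the potential $W''(z)$ is a small perturbation of the non-degenerate linearized potential $W''(w(\cdot-\xi_i))$, so the Gram-type matrix obtained by testing against the $Z_j$'s is invertible for $T$ large (the $Z_j$'s being almost $L^2$-orthogonal, each almost in the kernel of its own linearization, by \cite[Lemma~5.3]{DPV15}); the decay estimate comes from using $\Phi$, up to the lower-order corrections constructed at the end of Section~\ref{solsec}, as a supersolution away from the dislocation cores, combined with the non-degeneracy estimate near each core. With this linear theory, the full projected problem is solved by a contraction in a ball of $\Adot_T$, the quadratic term $\NN[\psi]$ contributing a small Lipschitz constant once $\|\psi\|_{\A_T}$ is bounded and $T$ is large. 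This gives $\psi=\Psi[h]$ with $\|\Psi[h]\|_{\A_T}\le C$, and, using that the lower bound on $\mu$ in \eqref{mulimit1} makes the $h$-dependence of $z$, $\E$, $W''(z)$ genuinely lower order, $h\mapsto(\Psi[h],c[h])$ is Lipschitz with small constant from $\bar B_1(\HH_{T,\mu})$ into $\A_T\times\FF_{T,\mu}$.

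\emph{Step 2: the reduced system.} Testing \eqref{projectedeqforpsi} against the $Z_i$'s and unraveling, the condition $c[h]\equiv0$ becomes the ODE system \eqref{eqforh}, $\dot h+t^{-1}\mathscr{M}h=F[h]$, where $t^{-1}\mathscr{M}=D_\xi\mathscr{R}(\xi^0(t))$ is the linearization at $\xi^0$ of the field in \eqref{systemforxi0} and $F=F[h]$, defined in \eqref{Fdef}--\eqref{F2def}, gathers the $\E_{0,j}$-type terms, the inter-layer interactions, and the coupling with $\psi=\Psi[h]$. The crucial input is \eqref{Fdecayintro}: $F[h]\in\FF^\alpha_{T,\mu}$ with norm bounded by a generic constant uniformly over $\bar B_1(\HH_{T,\mu})$, plus a small Lipschitz bound $\|F[h_1]-F[h_2]\|_{\FF_{T,\mu}}\le\tfrac14\|h_1-h_2\|_{\HH_{T,\mu}}$. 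This is what forces the hypotheses: the exponent $4s/(1+2s)$, rather than the $t^{-1}$ one obtains from \eqref{wasymptDPFV}, requires the improved layer expansion with $\vartheta=4s$ of Proposition~\ref{uimprovasymptprop}, which uses the parity of $W$; the bound $\mu<4s/(1+2s)$ is exactly what places $F[h]$ in $\FF_{T,\mu}$; and $\mu<1+\delta$, with $\delta$ as in \eqref{deltadef}, matches the spectral gap of $\mathscr{M}$. I would then prove the linear ODE estimate: for $f\in\FF_{T,\mu}$ obeying \eqref{fodd} and $h^{(0)}$ obeying \eqref{h0odd}, the problem $\dot h+t^{-1}\mathscr{M}h=f$, $h(T)=h^{(0)}$, has a solution $h$ satisfying \eqref{hodd} with $\|h\|_{\HH_{T,\mu}}\le C(\|f\|_{\FF_{T,\mu}}+T^{\mu-1}|h^{(0)}|)$. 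Here the symmetry \eqref{hodd} is essential: it confines $h$ to the orthogonal complement of $\ker\mathscr{M}$ (the barycenter direction $(1,\dots,1)$), on which $\mathscr{M}$ is positive definite with smallest eigenvalue $\gtrsim\delta$; one solves by variation of constants for this homogeneous-degree-$(-1)$ non-autonomous system and estimates the propagator using the eigenvalues of $\mathscr{M}$, the condition $\mu-1<\delta$ guaranteeing convergence of the relevant integrals at the right rate.

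\emph{Step 3: closing the loop, and the main obstacle.} Define $\mathscr{T}[h]$ to be the solution from the Step~2 estimate with right-hand side $f=F[\Psi[h]]$ and initial datum $h^{(0)}$. By Steps~1--2 and \eqref{h0psi0small}, for $T\ge T_0$ large and $\varepsilon_0$ small, $\mathscr{T}$ maps $\bar B_1(\HH_{T,\mu})$ into itself and is a contraction; its fixed point $h$ is the sought perturbation, $\psi:=\Psi[h]$ then solves \eqref{eqforpsiintro}, and hence $u$ given by \eqref{udecomp}--\eqref{xidecomp} with $\xi=\xi^0+h$ solves \eqref{maineqfromT}, with $h(T)=h^{(0)}$, $\psi(\cdot,T)=\psi_0$, $h\in\bar B_1(\HH_{T,\mu})$, and $\|\psi\|_{\A_T}\le C$. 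The symmetry conditions \eqref{hodd}, \eqref{fodd} and the oddness of $\psi_0$ and of $w(\cdot)-\chi_{(0,+\infty)}$ are preserved throughout, so $u$ carries the structure later exploited in Theorem~\ref{asymptstabthm}. I expect the genuine difficulty to lie in Step~1 --- the weighted decay estimate $\|\psi\|_{\A_T}\lesssim\|g\|_{\A_T}+\|\psi_0\|_{\A_{\{T\}}}$ for the \emph{projected} parabolic problem with moving orthogonality --- since it must simultaneously control three scales: the diffusive rate $t^{-2s/(1+2s)}$ at the drifting cores, the spatial tail $|x|^{-2s}$, and the almost-kernel direction at each $\xi_i(t)$, which calls for a careful barrier/blow-up argument and precise bookkeeping of how the non-degeneracy of $w$ degrades as the $\xi_i$ move apart; the estimate \eqref{Fdecayintro} for $F$, resting on Proposition~\ref{uimprovasymptprop}, is a close second.
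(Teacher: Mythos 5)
Your overall strategy is the same as the paper's, and Steps~1 and~2 are correctly identified (the projected/weighted linear theory for~$\psi$ via Proposition~\ref{mainlinprop}, the reduced ODE system~\eqref{eqforh}, the improved layer expansion of Proposition~\ref{uimprovasymptprop} to reach the decay~\eqref{Fdecayintro}, and the role of parity/barycenter in the coercivity of~$\mathscr{M}$). There is, however, a genuine gap in Step~3.

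You close the outer loop by contraction, relying on the asserted small Lipschitz bounds
\[
\|\Psi[h_1]-\Psi[h_2]\|_{\A_T}\lesssim\|h_1-h_2\|_{\HH_{T,\mu}},\qquad
\|F[h_1]-F[h_2]\|_{\FF_{T,\mu}}\le\tfrac14\|h_1-h_2\|_{\HH_{T,\mu}}.
\]
These do not hold for $s\in(0,1/2]$. The difficulty is precisely the one flagged in Remark~\ref{LipvsHolrmk}: when $s\le 1/2$ the hypothesis~\eqref{mulimit1} forces $\mu<4s/(1+2s)\le 1$, so the norm~\eqref{normforh} permits $|h(t)|\lesssim t^{1-\mu}$ to grow unboundedly, and the dependence of $z$, $W''(z)$, $\E$, $\NN[\cdot]$ on $h$ (which enters through nonlinear translations of~$w$) degrades from Lipschitz to merely H\"older in that regime. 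Consequently $\mathscr{T}$ is not a contraction and the Banach fixed point argument breaks down. The paper resolves this by replacing contraction with the Schauder fixed point theorem: it proves that $\mathbf{F}$ maps $\bar B_1(\HH_{T,\mu})$ compactly into a lower-index space $\FF_{T,\mu_0}^{\alpha_0}$ with $\mu_0\in(\mu,\min\{4s/(1+2s),1+\delta\})$ (Lemma~\ref{Fcompactlem}, via the compact embedding of Lemma~\ref{compemblem} and the continuity of $\Psi$ from Lemma~\ref{Psicontlem}), and uses the uniform bound~\eqref{Fhest} plus the gain $T^{-(\mu_0-\mu)}$ to show $\K_{h^{(0)}}$ maps $\bar B_1(\HH_{T,\mu})$ into itself. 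Your argument would only work as stated for $s\in(1/2,1)$, where $h$ stays bounded; to cover the full range you must switch to the compactness/Schauder scheme, which also requires the H\"older-in-time estimates on $F$ (your $\FF^\alpha_{T,\mu}$ norm) not merely as extra regularity but as the source of compactness.

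A second, smaller imprecision: the uniqueness of $h$ is neither claimed by Theorem~\ref{mainthm2} nor obtainable by the Schauder route, so you should drop the implicit uniqueness that comes with calling $\mathscr{T}$ a contraction.
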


Observe that Theorem~\ref{mainthm} is a particular case of Theorem~\ref{mainthm2}, obtained by taking~$\psi_0$ and~$h^{(0)}$ equal to zero. Nevertheless, Theorem~\ref{mainthm2} allows for more general initial data~$\psi_0$ and~$h^{(0)}$ satisfying the smallness assumption~\eqref{h0psi0small}. Note that the requirement on~$\psi_0$ to be of class~$C^1$ is merely technical and could in fact be relaxed by assuming only H\"older continuity. We also remark that the possibility of having non-zero initial data will be crucial to prove the stability result of Theorem~\ref{asymptstabthm}, later in Section~\ref{stabsec}.

The remainder of the section is occupied by the scheme of the proof of Theorem~\ref{mainthm2}. As we will shortly see, the argument rests on a few key results, the proofs of which are postponed to Sections~\ref{psisec} and~\ref{hsec}.

Let~$\xi$ be as in~\eqref{xidecomp}, with~$\xi^0$ being the explicit solution~\eqref{xi0def} of system~\eqref{systemforxi0}---whose existence and main properties will be discussed in Section~\ref{dynsystsec}---and for some~$h \in \bar{B}_1(\HH_{T, \mu})$ to be later determined. It is immediate to see that~$u$ given by~\eqref{udecomp}-\eqref{zdef} is a solution of~\eqref{maineqfromT} if and only if~$\psi$ solves
\begin{equation} \label{eqforpsi}
\partial_t \psi + (-\Delta)^s \psi +  W''(z) \psi + \NN[\psi] + \E = 0 \quad \mbox{in } \R \times (T, +\infty),
\end{equation}
with~$\NN[\psi]$ and~$\E$ respectively given by~\eqref{Ndef} and~\eqref{Edef}. To find a solution~$\psi$ of~\eqref{eqforpsi} belonging to~$\Adot_T$, we consider, at a first stage, the projected Dirichlet problem
\begin{equation} \label{psiDirprob}
\begin{dcases}
\partial_t \psi + (- \Delta)^s \psi + W''(z) \psi + \NN[\psi] + \E = \sum_{j = 1}^N c_j Z_j & \quad \mbox{in } \R \times (T, +\infty) \\
\psi = \psi_0 \vphantom{\sum_{j = 1}^N c_j Z_j} & \quad \mbox{on } \R \times \{ T \},
\end{dcases}
\end{equation}
where~$\psi_0$ lies in~$\Adot_{\{ T \}}$ and~$c = (c_j)_{j = 1}^N : (T, +\infty) \to \R^N$ is a suitable vector of (time-dependent) coefficients---the reasons for considering~\eqref{psiDirprob} instead of~\eqref{eqforpsi} have been explained in the introduction, after the statement of Theorem~\ref{mainthm}.

It turns out that, if~$T$ is sufficiently large, problem~\eqref{psiDirprob} is uniquely solvable in~$\Adot_T$, as shown by the following result.

\begin{theorem} \label{nonlinearthm}
Assume that~$\mu$ satisfies~\eqref{mulimit0}. Then, there exist three generic constants~$T_0, C \ge 1$, and~$\varepsilon_0 \in (0, 1)$ such that, given any~$T \ge T_0$,~$h \in \bar{B}_1(\HH_{T, \mu})$, and~$\psi_0 \in \Adot_{\{ T \}}$ with~$\| \psi_0 \|_{\A_{\{ T \}}} \le \varepsilon_0$, there exists a unique solution~$\psi \in \Adot_T$ of problem~\eqref{psiDirprob} satisfying
\begin{equation} \label{psidecay}
\| \psi \|_{\A_T} \le C.
\end{equation}
The vector of coefficients~$c$ is uniquely determined as the solution of the linear system
\begin{equation} \label{Ac=bt>Tige1}
A(t) \, c(t) = b(t) \quad \mbox{for all } t > T,
\end{equation}
where the matrix~$A(t) = (A_{i j}(t))_{i, j = 1}^N$ is given by
\begin{equation} \label{Aijdef}
A_{i j}(t) := \int_\R Z_i(x, t) Z_j(x, t) \, dx
\end{equation}
and the vector~$b(t) = (b_i(t))_{i = 1}^N$ by
\begin{equation} \label{bdef}
b_i(t) := \int_{\R} \Big\{ \E_{0, i}(x, t) \psi(x, t) + \E(x, t) + \NN[\psi](x, t) \Big\} Z_i(x, t) \, dx + \dot{\xi}_i(t) \int_{\R} \psi(x, t) w''(x - \xi_i(t)) \, dx.
\end{equation}
In addition, if~$\psi_0$ is odd, then~$\psi$ is odd as well in the variable~$x$, meaning that
\begin{equation} \label{psiodd}
\psi(- x, t) = - \psi(x, t) \quad \mbox{for all } x \in \R, \, t > T.
\end{equation}
\end{theorem}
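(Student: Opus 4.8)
The plan is to reduce Theorem~\ref{nonlinearthm} to a linear statement and then recover the nonlinearity by a contraction mapping argument. Throughout, $h \in \bar{B}_1(\HH_{T, \mu})$ is fixed, so that $\xi = \xi^0 + h$ and hence $z$, the $Z_i$'s, $\E$, the $\E_{0, i}$'s, and $\NN$ are all determined. The linear step I would isolate is this: for $T$ large, any $g \in \A_T$ and any $\psi_0 \in \Adot_{\{ T \}}$ with $\| \psi_0 \|_{\A_{\{ T \}}}$ small produce a unique $\psi \in \Adot_T$ solving
$$
\partial_t \psi + (-\Delta)^s \psi + W''(z) \psi + g = \sum_{j = 1}^N c_j Z_j \ \mbox{ in } \R \times (T, +\infty), \qquad \psi(\cdot, T) = \psi_0,
$$
and satisfying $\| \psi \|_{\A_T} \le C ( \| \psi_0 \|_{\A_{\{ T \}}} + \| g \|_{\A_T} )$. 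Here the coefficients are prescribed by $c(t) = A(t)^{-1} \widetilde b(t)$, with $A(t)$ as in~\eqref{Aijdef} and $\widetilde b$ depending on $g$ and, in a small and linear way, on $\psi$ itself (through $\dot{\xi}_i \int_\R \psi \, w''(\cdot - \xi_i) \, dx$ and $\int_\R \E_{0, i} \psi Z_i \, dx$). The matrix $A(t)$ is invertible for $T$ large, since the mutual distances $|\xi_i(t) - \xi_j(t)| \gtrsim t^{1/(1 + 2 s)}$ diverge and thus $A(t)$ converges to $\gamma^{-1}$ times the identity matrix, with $\gamma$ as in~\eqref{gammadef}. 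The specific form of $c$ is exactly what makes $t \mapsto \int_\R \psi(x, t) Z_i(x, t) \, dx$ have vanishing derivative: differentiating in $t$, one uses $\partial_t Z_i = - \dot{\xi}_i w''(\cdot - \xi_i)$, the self-adjointness of $(-\Delta)^s$, and the identity $(-\Delta)^s Z_i = - W''(w(\cdot - \xi_i)) Z_i$, obtained by differentiating~\eqref{ellPNeq}. Since this quantity vanishes at $t = T$ by~\eqref{psi0ort}, it stays zero, so $\psi \in \Adot_T$; the same computation shows that, once $g = \E + \NN[\psi]$, the vector $c$ must satisfy~\eqref{Ac=bt>Tige1}--\eqref{bdef}.

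Existence in this linear step I would obtain by solving on truncated intervals $(T, T_n)$, $T_n \to + \infty$, where one can invoke standard linear parabolic theory (the self-referential terms in $\widetilde b$ being a small perturbation for $T$ large), and then passing to the limit via the uniform decay bound $|\psi(x, t)| \le C ( \| \psi_0 \|_{\A_{\{ T \}}} + \| g \|_{\A_T} ) \Phi(x, t)$, with $\Phi$ as in~\eqref{Phidef}. \textbf{This a priori estimate is the crux of the whole argument.} It combines two mechanisms. Away from the dislocation points, $z$ is close to an integer and $W''(0) > 0$, so $W''(z)$ is bounded below by a positive constant there and a suitable multiple of the barrier $\Phi$ constructed in Section~\ref{solsec} is a supersolution. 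Near each $\xi_i(t)$, instead, $W''(z)$ is close to $W''(w(\cdot - \xi_i))$, which is \emph{not} sign-definite; one then appeals to the non-degeneracy of $w$ and to the decay estimate for the linearization $\partial_t + (-\Delta)^s + W''(w)$ at $w$ from Section~\ref{solsec}, which is available precisely because $\psi$ is kept $L^2$-orthogonal to $Z_i$. Patching these local and global bounds, controlling the (fast-decaying) coefficients $c_j(t)$, and keeping every constant generic is the delicate point.

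With the linear theory at hand, the nonlinear problem is closed by a fixed point in $\bar{B}_C(\Adot_T)$ for a large generic $C$. For $\psi$ in this ball, let $\T[\psi] \in \Adot_T$ be the solution of the linear projected problem above with $g := \E + \NN[\psi]$. By Lemma~\ref{ElePhilem}, $\| \E \|_{\A_T} \le C$, and, since $\NN[\psi]$ is quadratic in $\psi$, $|\NN[\psi](x, t)| \le C \Phi(x, t)^2 \| \psi \|_{\A_T}^2 \le C \, t^{- \frac{2 s}{1 + 2 s}} \Phi(x, t) \| \psi \|_{\A_T}^2$ (using $\Phi \le t^{- 2 s / (1 + 2 s)}$), whence $\| \NN[\psi] \|_{\A_T} \le C \, T^{- \frac{2 s}{1 + 2 s}} \| \psi \|_{\A_T}^2$; an analogous computation gives the Lipschitz bound $\| \NN[\psi_1] - \NN[\psi_2] \|_{\A_T} \le C \, T^{- \frac{2 s}{1 + 2 s}} ( \| \psi_1 \|_{\A_T} + \| \psi_2 \|_{\A_T} ) \| \psi_1 - \psi_2 \|_{\A_T}$. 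Choosing $C$ large (so that the contributions of $\| \psi_0 \|_{\A_{\{ T \}}} \le \varepsilon_0$ and of $\| \E \|_{\A_T}$ are absorbed) and then $T_0$ large enough, $\T$ maps $\bar{B}_C(\Adot_T)$ into itself and is a contraction there; its unique fixed point is the solution $\psi$ of~\eqref{psiDirprob}, and uniqueness within the class~\eqref{psidecay} is exactly uniqueness in that ball. The associated $c$ then solves~\eqref{Ac=bt>Tige1} with $b$ as in~\eqref{bdef}.

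Finally, the oddness claim~\eqref{psiodd}. If $\psi_0$ is odd, the whole configuration is symmetric under $x \mapsto - x$: from $w(-y) = 1 - w(y)$ and the evenness of $w'$ (both consequences of~\eqref{Wprop}, \eqref{wcond} and the uniqueness of the layer solution), the symmetry $\beta_i = - \beta_{N - i + 1}$ of the self-similar solution~\eqref{xi0def} (Section~\ref{dynsystsec}), and~\eqref{hodd}, one gets $\xi_i = - \xi_{N - i + 1}$, and then $z(-x, t) = N - z(x, t)$, $W''(z(-x, t)) = W''(z(x, t))$, $Z_i(-x, t) = Z_{N - i + 1}(x, t)$, and $\E(-x, t) = - \E(x, t)$ (using the evenness and periodicity of $W$). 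It follows that $\widehat\psi(x, t) := - \psi(- x, t)$ again belongs to $\Adot_T$, has $\| \widehat\psi \|_{\A_T} = \| \psi \|_{\A_T}$, has initial datum $\psi_0$, and solves~\eqref{psiDirprob} with coefficient vector $\widehat c_i = - c_{N - i + 1}$ --- which is the one dictated by~\eqref{Ac=bt>Tige1}--\eqref{bdef} for $\widehat\psi$. By the uniqueness just established, $\widehat\psi = \psi$, that is,~\eqref{psiodd} holds.
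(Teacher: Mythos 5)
Your architecture --- develop linear theory for the projected problem with the coefficient vector~$c$ forced by the orthogonality, then close the nonlinear problem by a Banach fixed point exploiting the quadratic smallness of~$\NN$, and finally deduce~\eqref{psiodd} by symmetry and uniqueness --- is the same as the paper's. The formal calculation showing why the prescribed~$c$ makes~$t \mapsto \int_\R \psi Z_i\,dx$ constant is also the right one (the paper's Lemma~\ref{ortequivsystem} just makes it rigorous for mild solutions), and your treatment of~$\NN$ matches Lemma~\ref{Ndecaylem}. The one place your sketch leaves a genuine gap is the a priori estimate that you yourself single out as ``the crux of the whole argument.'' You identify the two ingredients --- the barrier~$\Phi$ where~$W''(z)$ is bounded below by a positive constant, and the non-degeneracy/$L^2$-decay for the linearization at~$w$ near the dislocations --- but you do not explain how to patch them, and that is where the real work lies. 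The paper's Proposition~\ref{weightedLinftytoLinftyprop} obtains the estimate by a compactness/contradiction (blow-up) argument: assume it fails along a sequence~$\psi^{(j)}$, normalize so~$\| \psi^{(j)} / \Phi \|_{L^\infty} = 1$, use the barriers of Lemmas~\ref{etabound0lem}-\ref{etaboundlem} to show the normalized supremum must concentrate in a bounded neighborhood of some dislocation point~$\xi_\ell$, then rescale by~$t^{2 s / (1 + 2 s)}$, recenter at the concentration point, shift along the trajectory~$\xi_\ell$ to absorb the drift, extract (via the regularity estimates of Proposition~\ref{regforstrongprop} and Ascoli-Arzel\`a) a nontrivial limit~$\widehat\psi$ that is an entire bounded solution of the frozen-coefficient equation~$\partial_t \widehat\psi + (-\Delta)^s \widehat\psi + W''(w) \widehat\psi = 0$ orthogonal to~$w'$, and finally apply Lemma~\ref{L2lem} to conclude~$\widehat\psi \equiv 0$, a contradiction. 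Without a blow-up step of this kind, the $L^2$-in-space decay of Lemma~\ref{L2lem} for the model operator at a fixed layer does not directly upgrade to a weighted pointwise bound for the actual operator with moving centers and drift; your ``direct patching'' would have to overcome precisely this, and you explicitly leave it open.
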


Note that, for~$t$ sufficiently large, the matrix~$A(t)$ is invertible and thus~\eqref{Ac=bt>Tige1} admits a unique solution~$c(t) = A(t)^{-1} b(t)$---see Lemma~\ref{clem}.

We will obtain Theorem~\ref{nonlinearthm} via a fixed point argument based on the linearization of problem~\eqref{psiDirprob}, namely
\begin{equation} \label{psiprobthm}
\begin{dcases}
\partial_t \psi + (-\Delta)^s \psi + W''(z) \psi = f + \sum_{j = 1}^N c_j Z_j & \quad \mbox{in } \R \times (T, +\infty) \\
\psi = \psi_0 \vphantom{\sum_{j = 1}^N c_j Z_j} & \quad \mbox{on } \R \times \{ T \},
\end{dcases}
\end{equation}
with~$f \in \A_T$. The solvability of~\eqref{psiprobthm} is addressed by the following result.

\begin{proposition} \label{mainlinprop}
Assume that~$\mu$ satisfies~\eqref{mulimit0}. Then, there exists a generic constant~$T_0 \ge 1$ such that, given any~$T \ge T_0$,~$h \in \bar{B}_1(\HH_{T, \mu})$,~$f \in \A_T$, and~$\psi_0 \in \Adot_{\{ T \}}$, there exist a solution~$\psi \in \Adot_T$ of~\eqref{psiprobthm}. The vector of coefficients~$c$ is the solution of the system~\eqref{Ac=bt>Tige1}, with~$A = (A_{i j})_{i, j = 1}^N$ given by~\eqref{Aijdef} and~$b = (b_i)_{i = 1}^N$ by
\begin{equation} \label{bidef}
b_i(t) := \int_{\R} \Big\{ \E_{0, i}(x, t) \psi(x, t) - f(x, t) \Big\} Z_i(x, t) \, dx + \dot{\xi}_i(t) \int_{\R} \psi(x, t) w''(x - \xi_i(t)) \, dx.
\end{equation}
Finally, the estimate
\begin{equation} \label{psioverPhiestthm}
\| \psi \|_{\A_T} \le C \left( \| f \|_{\A_T} + \| \psi_0 \|_{\A_{\{ T \}}} \right),
\end{equation}
holds true for some generic constant~$C > 0$.
\end{proposition}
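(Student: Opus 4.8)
The plan is to carry out a Lyapunov--Schmidt type argument in three stages: first solve for the Lagrange multipliers~$c$ in terms of~$\psi$; then establish a weighted a priori estimate, uniform in the time horizon, for solutions of the resulting implicit linear problem; and finally produce a solution by approximating on finite time intervals and letting the horizon tend to~$+\infty$. \emph{Step 1 (the multipliers).} Suppose~$\psi \in \Adot_T$ solves~\eqref{psiprobthm}. Testing the equation against~$Z_i(\cdot, t)$, using the self-adjointness of~$(-\Delta)^s$ together with the identity~$(-\Delta)^s w' = - W''(w) \, w'$ (obtained by differentiating~\eqref{ellPNeq}), and differentiating the orthogonality relation~\eqref{psiort} in time via~$\partial_t Z_i = - \dot\xi_i \, w''(\cdot - \xi_i)$, one is led precisely to the algebraic system~\eqref{Ac=bt>Tige1}, with~$A$ and~$b$ as in~\eqref{Aijdef} and~\eqref{bidef}. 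Conversely, the matrix~$A(t)$ is invertible for~$t \ge T_0$ (this is Lemma~\ref{clem}: its off-diagonal entries decay because~$|\xi_i - \xi_j| \gtrsim t^{1/(1 + 2s)}$ and~$w'$ has the tail~\eqref{w'asympt}, while its diagonal entries converge to~$\|w'\|_{L^2(\R)}^2$), and if~$\psi$ solves the Cauchy problem in~\eqref{psiprobthm} with~$c(t) := A(t)^{-1} b(t)$ and initial datum~$\psi_0 \in \Adot_{\{T\}}$, the same computation shows that~$g_i(t) := \int_\R \psi(\cdot, t) Z_i(\cdot, t) \, dx$ satisfies~$\dot g_i \equiv 0$, whence~$g_i \equiv g_i(T) = 0$. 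Thus~\eqref{psiort} is not an extra constraint to be imposed but propagates automatically from~$t = T$, and every solution of the implicit Cauchy problem lies in~$\Adot_T$.

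\emph{Step 2 (the a priori estimate).} This is the core of the proof. Let~$\psi \in \Adot_T$ solve the equation with~$c = A^{-1} b$. Using~$Z_i, w'' = O(|x - \xi_i|^{-1 - 2s})$ away from~$\xi_i$, the bound~$\E_{0, i} = O(t^{- 2s/(1 + 2s)})$ on the support of~$Z_i$ (a consequence of the~$1$-periodicity of~$W''$ together with the layer asymptotics), and~$|\dot\xi_i| \le C t^{- 2s/(1 + 2s)}$, one checks that~$|b(t)| \le C \big( t^{- \frac{2s}{1 + 2s}} \|f\|_{\A_T} + t^{- \frac{4s}{1 + 2s}} \|\psi\|_{\A_T} \big)$ and hence~$|c(t)| \le C |b(t)|$, so that on the right-hand side the term~$\sum_j c_j Z_j$ is controlled by~$C \|f\|_{\A_T} \, \Phi$ plus a contribution that, for~$T$ large, is absorbed into~$\tfrac12 \|\psi\|_{\A_T} \, \Phi$. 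It then suffices to estimate, in~$\|\cdot\|_{\A_T}$ and uniformly in the horizon, a solution~$\psi$ of~$\partial_t \psi + (-\Delta)^s \psi + W''(z) \psi = \tilde f$ with~$|\tilde f| \le \tilde C \, \Phi$ and~$\psi \perp Z_i$. We split~$\R$ into the transition layers around the~$\xi_i$'s and their complement. Away from the layers,~$z$ is close to an integer, so~$W''(z) \ge c_0 > 0$ by~\eqref{Wprop}; since the weight~$\Phi$ from~\eqref{Phidef} is homogeneous of degree~$-2s$ under the parabolic scaling~$x \sim t^{1/(1 + 2s)}$, it is an approximate supersolution of~$\partial_t + (-\Delta)^s + c_0$ with positive margin~$\gtrsim \Phi$, and a comparison argument based on the barriers of Section~\ref{solsec} controls~$\psi$ there. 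Near each~$\xi_i$, where~$W''(z) \approx W''(w(\cdot - \xi_i))$ need not be positive, we instead use the non-degeneracy of the layer solution---the kernel of~$(-\Delta)^s + W''(w)$ is spanned by~$w'$---in the form of the decay estimate for~$\partial_t + (-\Delta)^s + W''(w)$ on the orthogonal complement of~$w'$ stated at the end of Section~\ref{solsec}: the condition~$\int \psi Z_i = 0$ places~$\psi$ in that complement, and the discrepancy~$\E_{0, i} \psi$ is treated as a small lower-order perturbation. Patching the two regions with a cutoff yields~$\|\psi\|_{\A_T} \le C \big( \|f\|_{\A_T} + \|\psi_0\|_{\A_{\{T\}}} \big)$, that is~\eqref{psioverPhiestthm}; the same argument run on~$\R \times (T, T^*)$ gives the analogous bound with a constant independent of~$T^*$.

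\emph{Step 3 (existence).} For fixed~$T^* > T$, the problem~\eqref{psiprobthm} with~$c = A(t)^{-1} b(t)$ posed on~$\R \times (T, T^*)$ is a genuine linear Cauchy problem: its only implicit feature is the finite-rank term~$\sum_j (A^{-1} b[\psi])_j Z_j$, which---since~$A^{-1}$,~$W''(z)$ and the~$Z_j$ are bounded on the compact interval~$[T, T^*]$---is a benign perturbation, handled by writing Duhamel's formula for the heat semigroup of~$(-\Delta)^s$ and contracting on short subintervals, then gluing. By Step~2 the resulting solutions~$\psi^{T^*}$ satisfy~$\|\psi^{T^*}\|_{\A_{(T, T^*)}} \le C \big( \|f\|_{\A_T} + \|\psi_0\|_{\A_{\{T\}}} \big)$ uniformly in~$T^*$; the interior parabolic estimates of Section~\ref{solsec} (proved in Appendix~\ref{estforstrongapp}) then provide local compactness, so along a subsequence~$\psi^{T^*} \to \psi$ locally uniformly, with~$\psi$ a mild solution on~$\R \times (T, +\infty)$. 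Since~$Z_i \in L^1(\R)$ and the~$\psi^{T^*}$ are uniformly bounded, the orthogonality relations pass to the limit (equivalently, apply Step~1), so~$\psi \in \Adot_T$, and the uniform bound yields~\eqref{psioverPhiestthm}.

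The decisive difficulty is Step~2 near the transition layers: the zeroth-order coefficient~$W''(z)$ fails to be positive exactly where the approximate kernel element~$Z_i$ is concentrated, so coercivity cannot come from that term alone. It is recovered by combining the orthogonality~\eqref{psiort} with the spectral gap furnished by the non-degeneracy of~$w$, i.e.\ by the linearized decay estimate of Section~\ref{solsec}; the remaining ingredients (the bound on~$b$, hence on~$c$, and the self-similarity of~$\Phi$) are comparatively routine, and the finite-horizon approximation of Step~3 only serves to avoid working directly in the unbounded-time setting.
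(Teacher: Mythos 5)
Your overall architecture---derive the algebraic system for~$c$, prove a weighted a priori estimate uniform in the time horizon, then solve on finite intervals and glue---is the same as the paper's: your Step~1 is Lemma~\ref{ortequivsystem} (though the paper carries out the testing argument at the level of mild solutions via the kernel representation, since~$\psi$ is merely bounded and the formal integration by parts for~$(-\Delta)^s$ against~$Z_i$ needs justification), your Step~3 corresponds to Lemma~\ref{locweightedLinftytoLinftylem} plus the gluing in the proof of Proposition~\ref{mainlinprop}, and the bounds you quote for~$b$,~$c$ and~$\sum_j c_j Z_j$ are exactly Lemmas~\ref{clem} and~\ref{Cfdecaylem}, so the absorption of the projection term is fine.

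The genuine gap is your Step~2, which is precisely Proposition~\ref{weightedLinftytoLinftyprop}, i.e.\ the heart of the proof. Your plan is a direct region decomposition: comparison far from the cores (acceptable in spirit---this is what the barriers~$\etaini,\etadis$ of Section~\ref{solsec} accomplish, although your claim that~$\Phi$ itself is an approximate supersolution with margin~$\gtrsim \Phi$ is asserted, not checked), and near each~$\xi_i$ an appeal to Lemma~\ref{L2lem} with~$\E_{0,i}\psi$ as a small perturbation, ``patching the two regions with a cutoff''. That patching step does not work as stated. First, Lemma~\ref{L2lem} is an $L^2(\R)$, exponential-in-time decay statement for the exact linearization at a fixed layer~$w$, and it requires exact orthogonality to~$w'$ at every time; once you localize~$\psi$ near~$\xi_i(t)$ by a cutoff, the global orthogonality~\eqref{psiort} is lost, and the commutator of~$(-\Delta)^s$ with the cutoff produces nonlocal error terms with slowly decaying tails that are not obviously controlled in the~$\A$-norm. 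Second, the centers move: after recentering, the equation carries a drift~$-\dot\xi_i\,\partial_x\psi$, and for~$s\le 1/2$ there is no control on~$\partial_x\psi$; the paper explicitly flags this obstruction (footnote in the proof of Proposition~\ref{weightedLinftytoLinftyprop}) and circumvents it only by working with the shifted-kernel mild formulation inside a compactness argument. Third, even granting the spectral gap, one must convert exponential $L^2$ decay into the weighted $L^\infty$ bound~$|\psi|\le C\Phi$ with the algebraic rate~$t^{-2s/(1+2s)}$ forced by the right-hand side; this amounts to uniform weighted decay estimates for the projected evolution around a slowly moving layer, which is essentially the statement to be proved. The paper avoids all of this by a contradiction/blow-up argument: if the uniform estimate fails, suitably renormalized and recentered solutions converge (via uniform H\"older bounds and the mild formulation) to a bounded ancient solution of~$\partial_t\widehat\psi+(-\Delta)^s\widehat\psi+W''(w)\widehat\psi=0$ orthogonal to~$w'$, which is then shown to vanish by barriers plus Lemma~\ref{L2lem}, contradicting the normalization. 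Unless you supply the missing commutator/localization and moving-center analysis, your Step~2 should be replaced by such a compactness argument, after which \eqref{psioverPhiestthm} follows as in the paper.
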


That~$c$ is characterized by system~\eqref{Ac=bt>Tige1} (with~$b$ as in~\eqref{bidef} or~\eqref{bdef}, in either the linear or nonlinear case) can be checked, at least formally, by multiplying the equations solved by~$\psi$ against each~$Z_i$, integrating in space, and taking into account the orthogonality conditions~\eqref{psiort} satisfied by~$\psi$ (recall that~$\psi \in \Adot_T$). See Lemma~\ref{ortequivsystem} for a rigorous derivation of this fact in the context of mild solutions.

Theorem~\ref{nonlinearthm} ensures that, given any~$h \in \bar{B}_1(\HH_{T, \mu})$, there exist a solution~$\psi \in \Adot_T$ of~\eqref{psiDirprob}, for some array of coefficients~$c$. Of course, both~$\psi$ and~$c$ depend on the choice of~$h$. In order to solve equation~\eqref{eqforpsi}, our goal is then to find a suitable~$h$ for which~$c = c[h]$ satisfies
\begin{equation} \label{c=0}
c(t) = 0 \quad \mbox{for all } t > T.
\end{equation}
In view of how~$c$ is defined by~\eqref{Ac=bt>Tige1}-\eqref{bdef} and recalling definitions~\eqref{Edecomp}-\eqref{E2def}, it is immediate to verify that~\eqref{c=0} is equivalent to selecting~$h \in \bar{B}_1(\HH_{T, \mu})$ in a way that~$\xi = \xi^0 + h$ solves
$$
M(t) \, \dot{\xi}(t) = d(t) \quad \mbox{for all } t > T,
$$
where~$M(t) = (M_{i j}(t))_{i, j = 1}^N$ is defined by
\begin{equation} \label{Mdef}
M_{i j}(t) := A_{i j}(t) - \delta_{i j} \int_{\R} \psi(x, t) w''(x - \xi_i(t)) \, dx,
\end{equation}
with~$A_{i j}$ as in~\eqref{Aijdef}, and~$d(t) = (d_i(t))_{i = 1}^N$ by
\begin{equation} \label{didef}
d_i(t) := \int_{\R} \Big\{ \E_{0, i}(x, t) \psi(x, t) + \E_1(x, t) + \NN[\psi](x, t) \Big\} Z_i(x, t) \, dx.
\end{equation}
Once again, we stress that both~$M$ and~$d$ depend (nonlinearly) in~$h$.

It is not hard to see that the matrix~$M(t)$ is invertible, provided~$t$ is sufficiently large---see Lemma~\ref{Mlem}. By this and the fact that~$\xi^0$ solves~\eqref{systemforxi0}, we may equivalently rewrite~\eqref{Mdef} as
\begin{equation} \label{hdot=f}
\dot{h} + D_\xi \mathscr{R}(\xi^0) h = F[h] \quad \mbox{in } (T, +\infty),
\end{equation}
where~$\mathscr{R}$ is the vector-valued function defined by
\begin{equation} \label{Rmapdef}
\mathscr{R}_i(\xi) := - \frac{\gamma}{2 s} \sum_{j \ne i} \frac{\xi_i - \xi_j}{|\xi_i - \xi_j|^{1 + 2 s}} \quad \mbox{for } \xi \in \R^N \mbox{ and } i = 1, \ldots, N,
\end{equation}
while~$F = F[h]: (T, +\infty) \to \R^N$ is given by
\begin{equation} \label{Fdef}
F(t) := F^{(1)}(t) + F^{(2)}(t),
\end{equation}
with
\begin{align}
\label{F1def}
F^{(1)}(t) & := - \Big\{ \, \mathscr{R}(\xi^0(t) + h(t)) - \mathscr{R}(\xi^0(t)) - D_\xi \mathscr{R}(\xi^0(t)) h(t) \Big\}, \\
\label{F2def}
F^{(2)}(t) & := M^{-1}(t) \, d(t) + \mathscr{R}(\xi(t)).
\end{align}

Note that system~\eqref{hdot=f} is nonlinear in~$h$. Its solvability within~$\bar{B}_1(\HH_{T, \mu})$---for any small initial datum~$h^{(0)}$ satisfying~\eqref{h0odd}---is established in the next result, which holds true under the assumptions that~$\psi_0$ is an odd~$C^1$ function and that~$\mu$ fulfills~\eqref{mulimit1}-\eqref{deltadef}.

\begin{theorem} \label{Msystsolvthm}
Assume that~$\mu$ satisfies~\eqref{mulimit1}-\eqref{deltadef}. Then, there exists a generic constant~$T_0 \ge 1$ such that, given any~$T \ge T_0$, any odd~$C^1$ function~$\psi_0 \in \Adot_{\{ T \}}$, and any vector~$h^{(0)} \in \R^N$ satisfying~\eqref{h0odd} and~\eqref{h0psi0small}, there exists a solution~$h \in \bar{B}_1(\HH_{T, \mu})$ of
\begin{equation} \label{hdot+Rh=Fh}
\begin{cases}
\dot{h} + D_\xi \mathscr{R}(\xi^0) h = F[h] & \quad \mbox{in } (T, +\infty) \\
h(T) = h^{(0)}. &
\end{cases}
\end{equation}
\end{theorem}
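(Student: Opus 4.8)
The plan is to solve the initial-value problem \eqref{hdot+Rh=Fh} by a fixed-point argument in the ball $\bar B_1(\HH_{T,\mu})$. Given $g \in \bar B_1(\HH_{T,\mu})$ (satisfying the oddness condition \eqref{hodd}), Theorem \ref{nonlinearthm} provides a unique $\psi = \psi[g] \in \Adot_T$ solving \eqref{psiDirprob}; since $\psi_0$ is odd, $\psi[g]$ is odd in $x$ by \eqref{psiodd}. Plugging this $\psi$ into \eqref{Mdef}--\eqref{didef} defines $M = M[g]$ and $d = d[g]$, hence $F = F[g]$ through \eqref{Fdef}--\eqref{F2def}. We then let $h = \Psi(g)$ be the solution of the \emph{linear} problem $\dot h + D_\xi\mathscr{R}(\xi^0) h = F[g]$ with $h(T) = h^{(0)}$, and seek a fixed point $h = \Psi(h)$. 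The map $\Psi$ acts on the complete metric space $\bar B_1(\HH_{T,\mu})$, so once we show $\Psi$ maps this ball into itself and is a contraction (for $T$ large), Banach's theorem concludes; the oddness \eqref{hodd} of the output is preserved because $D_\xi\mathscr{R}(\xi^0)$ and $F[g]$ respect the symmetry $i \mapsto N-i+1$ (this should be checked from the definitions \eqref{Rmapdef}, \eqref{F1def}, \eqref{F2def}, using that $\xi^0$ is symmetric and $\psi[g]$ is odd).

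The two main ingredients are: (i) a linear estimate for $\dot h + D_\xi\mathscr{R}(\xi^0) h = f$, $h(T)=h^{(0)}$, of the form $\|h\|_{\HH_{T,\mu}} \le C\bigl(T^{\mu-1}|h^{(0)}| + \|f\|_{\FF_{T,\mu+1}}\bigr)$ for $\mu$ in the admissible range \eqref{mulimit1}, valid at least on the orthogonal complement of the kernel of $\mathscr{M}$, and using that the restriction of the symmetry class \eqref{hodd} kills the kernel direction (this is exactly why oddness is imposed); and (ii) the nonlinear estimate $\|F[g]\|_{\FF_{T,\mu+1}} \le C_0$ and the Lipschitz bound $\|F[g_1]-F[g_2]\|_{\FF_{T,\mu+1}} \le \tfrac12\|g_1-g_2\|_{\HH_{T,\mu}}$ for $g,g_1,g_2 \in \bar B_1(\HH_{T,\mu})$ and $T \ge T_0$. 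For (ii), the term $F^{(1)}$ is quadratic in $h$ and is controlled by Taylor expansion of $\mathscr{R}$ together with the decay $|h(t)| \le t^{1-\mu}$ and the lower bound $\xi_{i+1}^0 - \xi_i^0 \gtrsim t^{1/(1+2s)}$, which yields a gain of $t^{-2(1+2s)/(1+2s)} \cdot t^{2(1-\mu)}$-type factors; for $F^{(2)}$ one uses the improved layer asymptotics (Proposition \ref{uimprovasymptprop}, $\vartheta = 4s$), the decay \eqref{psidecay} of $\psi$, and the structure of $d_i$ in \eqref{didef} to obtain the rate $t^{-4s/(1+2s)}$ anticipated in \eqref{Fdecayintro}, which matches $\mu+1 < 1 + 4s/(1+2s)$ thanks to $\mu < 4s/(1+2s)$. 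The Lipschitz bounds follow from the same expansions together with the Lipschitz dependence of $\psi[g]$ on $g$, which in turn comes from the contraction estimate already used in the proof of Theorem \ref{nonlinearthm}.

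The step I expect to be the main obstacle is the linear estimate (i), specifically handling the kernel of $\mathscr{M}$. The matrix $\mathscr{M} = t\, D_\xi\mathscr{R}(\xi^0(t))$ is constant, positive semi-definite, with a one-dimensional kernel spanned by $(1,1,\dots,1)$ (the barycentric/translation mode), so the homogeneous equation $\dot h + \tfrac1t \mathscr{M} h = 0$ has solutions that do not decay along that direction — indeed a constant vector $(1,\dots,1)$ is a stationary solution. The oddness constraint \eqref{hodd} forces $\sum_i h_i \equiv 0$, i.e. $h(t)$ lies in the orthogonal complement of the kernel for all $t$, and one must verify this subspace is invariant under the flow (it is, since $\mathscr{M}$ is symmetric, hence maps $(\ker\mathscr{M})^\perp$ to itself). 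On that complement $\mathscr{M}$ is positive definite with smallest eigenvalue $\lambda_0 > 0$; writing the equation coordinate-wise in an eigenbasis gives scalar ODEs $\dot y + \tfrac{\lambda}{t} y = f_\lambda$ whose solutions satisfy $|y(t)| \lesssim t^{-\lambda}|y(T)|\,T^{\lambda} + \int_T^t (\tau/t)^{\lambda} |f_\lambda(\tau)|\,d\tau$, and a direct computation with $|f_\lambda(\tau)| \lesssim \tau^{-\mu-1}$ yields $|y(t)| \lesssim t^{-\mu}$ provided $\mu < \lambda$ — this is the source of the constraint $\mu < 1+\delta$ in \eqref{mulimit1}, with $\delta$ a lower bound for $\lambda_0$ coming from \eqref{deltadef} and an explicit estimate of $D_\xi\mathscr{R}(\xi^0)$. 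Differentiating the equation and using $\dot h = F[h] - \tfrac1t\mathscr{M}h$ then controls the $t^\mu|\dot h|$ part of the norm, closing the estimate; the remaining care is to ensure all the implicit constants are \emph{generic} (independent of $T$) so that choosing $T_0$ large and $\varepsilon_0$ small makes $\Psi$ a self-map and contraction on $\bar B_1(\HH_{T,\mu})$.
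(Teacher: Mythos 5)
Your overall architecture (reduce to the linear problem $\dot h + D_\xi\mathscr{R}(\xi^0)h=f$, exploit the coercivity of $D_\xi\mathscr{R}(\xi^0)$ on the zero-sum subspace enforced by the oddness constraint, and feed in the decay $|F(t)|\lesssim t^{-4s/(1+2s)}$) matches the paper: your step (i) is essentially Proposition~\ref{linearhprop} (proved there by an energy estimate based on~\eqref{systemiselliptic} rather than diagonalization, but with the same mechanism and the same origin of the restriction $\mu<1+\delta$), and your decay claims for $F^{(1)}$, $F^{(2)}$ are Proposition~\ref{Fdecaylem}. The genuine gap is in how you close the fixed point. You invoke the Banach contraction theorem, and the contraction property of $g\mapsto F[g]$ is made to rest on ``the Lipschitz dependence of $\psi[g]$ on $g$, which in turn comes from the contraction estimate already used in the proof of Theorem~\ref{nonlinearthm}.'' That estimate gives contraction in $\psi$ \emph{for a fixed} $h$; Lipschitz dependence of the fixed point $\Psi[g]$ on the parameter $g$, measured with the norm $\|\cdot\|_{\HH_{T,\mu}}$, is a separate matter and fails (or at least is not available) when $s\in(0,1/2]$: in that regime $\mu<1$, so elements of $\bar B_1(\HH_{T,\mu})$ may grow like $t^{1-\mu}$, and the dependence of $z$, $\E$, $\NN$, $Z_j$ — and hence of $\Psi[g]$ and of $d[g]$, $M[g]$ — on the trajectories $\xi=\xi^0+g$ is then only H\"older with respect to $\|\cdot\|_{\HH_{T,\mu}}$ (this is exactly the point of Remark~\ref{LipvsHolrmk}). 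Since the theorem is asserted for all $s\in(0,1)$, your argument as written only covers $s\in(1/2,1)$.

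The paper circumvents this by abandoning contraction altogether: it proves that $h\mapsto F[h]$ is \emph{compact} from $\bar B_1(\HH_{T,\mu})$ into a weaker weighted H\"older space $\FF_{T,\mu_0}^{\alpha_0}$ with $\mu<\mu_0<4s/(1+2s)$ (the uniform H\"older-in-time bound~\eqref{Fhderest}, the compact embedding of Lemma~\ref{compemblem}, and the mere \emph{continuity} of $h\mapsto\Psi[h]$ from Lemma~\ref{Psicontlem} suffice), establishes the symmetry of $F$ via Lemma~\ref{Foddlem}, and then applies the Schauder fixed point theorem, using the gap $\mu_0-\mu>0$ to absorb constants and get the self-map property for $T$ large and $T^{\mu-1}|h^{(0)}|$ small. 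If you want to rescue your approach, you would either have to restrict to $s\in(1/2,1)$ and actually prove the Lipschitz estimate for $\Psi$, or replace Banach by a topological fixed point theorem and supply the compactness/continuity ingredients as the paper does.
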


The proof of Theorem~\ref{Msystsolvthm} is based on the study of the decay properties of~$F$, on the resolution of the linear problem associated to~\eqref{hdot+Rh=Fh}, and on a suitable application of a fixed point theorem. Note that, in the literature, the existence of a solution to nonlinear reduced problems such as~\eqref{hdot+Rh=Fh} is often proved using the contraction lemma. Here, this strategy does not seem feasible (at least when~$s \le 1/2$), due to the lack of regularity of the map~$\Psi$ that associates to each~$h \in \bar{B}_1(\HH_{T, \mu})$ the corresponding solution~$\psi = \Psi[h]$ of problem~\eqref{psiDirprob} given by Theorem~\ref{nonlinearthm}---see Remark~\ref{LipvsHolrmk} at the end of Section~\ref{psisec}. To circumvent this issue, we use instead the Schauder fixed point theorem, whose application is justified after a careful inspection of the compactness properties of the map~$h \mapsto F[h]$ with respect to an appropriate target space---chosen within the scale~\eqref{FTmudef}.

The analysis needed for the proof of Theorem~\ref{Msystsolvthm} is conducted in Section~\ref{hsec}, while the arguments leading to Theorem~\ref{nonlinearthm} and Proposition~\ref{mainlinprop} are contained in Section~\ref{psisec}. Pending the verification of these results, the proof of Theorem~\ref{mainthm2}---and, thus, of Theorem~\ref{mainthm}---is concluded.

\section{Analysis of the dynamical system~\eqref{systemforxi0}.} \label{dynsystsec}

\noindent
In this section, we provide some results concerning system~\eqref{systemforxi0}. Specifically, we prove that there exists a unique solution~$\xi^0$ in the form~\eqref{xi0def} and we show that any other solution of~\eqref{systemforxi0} with vanishing barycenter behaves as~$\xi^0$ as~$t \rightarrow +\infty$. The results stated here do not require the constant~$\gamma$ to assume the specific value prescribed by~\eqref{gammadef} and hold in fact for any~$\gamma > 0$.

We begin with the following result. To state it, we introduce the notation
$$
\BB := \Big\{ b \in \R^N : b_1 < b_2 < \ldots < b_N \Big\}.
$$

\begin{proposition} \label{explicitprop}
There exists a unique~$\beta \in \BB$ for which~$\xi^0$ defined by~\eqref{xi0def} solves system~\eqref{systemforxi0}. The vector~$\beta$ is the unique solution of
\begin{equation} \label{betaeq}
\beta_i = \frac{(1 + 2 s) \gamma}{2 s} \sum_{j \ne i} \frac{\beta_i - \beta_j}{|\beta_i - \beta_j|^{1 + 2 s}} \quad \mbox{for every } i = 1, \ldots, N
\end{equation}
that belongs to~$\BB$. In particular, it satisfies
\begin{equation} \label{betaodd}
\beta_i = - \beta_{N - i + 1} \quad \mbox{for all } i = 1, \ldots, N.
\end{equation}
\end{proposition}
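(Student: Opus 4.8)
The plan is to split the statement into a bookkeeping step, in which the differential system~\eqref{systemforxi0} is reduced to the algebraic system~\eqref{betaeq}, and a variational step, in which~\eqref{betaeq} is solved. For the first step, insert the ansatz~\eqref{xi0def} into~\eqref{systemforxi0}: since $\dot\xi^0_i(t)=\frac{\beta_i}{1+2s}\,t^{-\frac{2s}{1+2s}}$ and $\xi^0_i(t)-\xi^0_j(t)=(\beta_i-\beta_j)\,t^{\frac{1}{1+2s}}$, one gets, for every $t>0$,
\[
\frac{\xi^0_i(t)-\xi^0_j(t)}{|\xi^0_i(t)-\xi^0_j(t)|^{1+2s}}=\frac{\beta_i-\beta_j}{|\beta_i-\beta_j|^{1+2s}}\,t^{-\frac{2s}{1+2s}},
\]
so that, cancelling the common factor $t^{-\frac{2s}{1+2s}}>0$, system~\eqref{systemforxi0} holds if and only if~\eqref{betaeq} does (the hypothesis $\beta\in\BB$ being exactly what keeps all denominators nonzero and the trajectories ordered). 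Hence the first two assertions amount to proving that~\eqref{betaeq} has a unique solution in~$\BB$; writing $c:=\frac{(1+2s)\gamma}{2s}>0$, that equation reads $\beta_i=c\sum_{j\ne i}\mathrm{sgn}(\beta_i-\beta_j)\,|\beta_i-\beta_j|^{-2s}$.

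To solve it, I would view the right-hand side as minus a gradient. Let $\phi$ be the even primitive of $r\mapsto\mathrm{sgn}(r)|r|^{-2s}$, i.e.\ $\phi(r)=\frac{|r|^{1-2s}}{1-2s}$ for $s\ne1/2$ and $\phi(r)=\log|r|$ for $s=1/2$, so that $\phi'(r)=r^{-2s}$ and $\phi''(r)=-2s\,r^{-2s-1}<0$ for $r>0$ in every regime. Setting
\[
\Phi(\beta):=\frac12\,|\beta|^2-\frac{c}{2}\sum_{i\ne j}\phi(\beta_i-\beta_j)\qquad\text{for }\beta\in\BB,
\]
a direct computation gives $\partial_{\beta_i}\Phi(\beta)=\beta_i-c\sum_{j\ne i}\mathrm{sgn}(\beta_i-\beta_j)|\beta_i-\beta_j|^{-2s}$, so critical points of $\Phi$ in $\BB$ are precisely the solutions of~\eqref{betaeq} lying in $\BB$. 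The crucial observation is that $\Phi$ is \emph{strictly convex} on the convex set $\BB$: there each difference $\beta_i-\beta_j$ has a fixed sign, so $\beta\mapsto|\beta_i-\beta_j|$ is linear, $\phi$ is concave on $(0,+\infty)$, hence $-\frac c2\phi(\beta_i-\beta_j)$ is a convex function of $\beta$, and adding the strictly convex quadratic $\frac12|\beta|^2$ makes $\Phi$ strictly convex. Consequently $\Phi$ has at most one critical point in $\BB$, which yields the uniqueness part of the statement.

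It remains to produce a critical point, i.e.\ to show that $\Phi$ attains its infimum over $\BB$ at an interior point. Coercivity at infinity is immediate from $|\beta_i-\beta_j|\le2|\beta|$, which makes the interaction term negligible ($O(|\beta|^{1-2s})$, resp.\ $O(\log|\beta|)$) with respect to $\frac12|\beta|^2$. When $s\in[1/2,1)$ one also has $\phi(\beta_i-\beta_j)\to-\infty$ as $\beta_i-\beta_j\to0$, so $\Phi\to+\infty$ on $\partial\BB$; a minimizing sequence is then bounded and bounded away from $\partial\BB$, hence converges to an interior minimizer, which solves~\eqref{betaeq}. When $s\in(0,1/2)$ the functional $\Phi$ instead extends continuously to $\overline{\BB}$ and attains its minimum at some $\bar\beta\in\overline{\BB}$, and the point I expect to be the main obstacle is to rule out $\bar\beta\in\partial\BB$. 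I would do this by a separation argument: if $\bar\beta_i=\bar\beta_{i+1}$, replacing $\bar\beta_{i+1}$ by $\bar\beta_{i+1}+\varepsilon$ (for small $\varepsilon>0$, keeping the perturbed point in $\BB$; several colliding pairs are separated simultaneously) changes the smooth part of $\Phi$ by $O(\varepsilon)$ while changing the contribution of the pair $\{i,i+1\}$ by $-\frac{c}{1-2s}\,\varepsilon^{1-2s}<0$, which dominates since $1-2s<1$; this contradicts minimality of $\bar\beta$, so $\bar\beta\in\BB$ and $\nabla\Phi(\bar\beta)=0$.

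Finally, \eqref{betaodd} follows by symmetry: both the equation~\eqref{betaeq} and the functional $\Phi$ are invariant under the involution $\beta\mapsto\beta^{\ast}$, $\beta^{\ast}_i:=-\beta_{N-i+1}$, which maps $\BB$ onto itself. Therefore $\beta^{\ast}$ is another solution of~\eqref{betaeq} in $\BB$, and the uniqueness proved above forces $\beta^{\ast}=\beta$, that is, $\beta_i=-\beta_{N-i+1}$.
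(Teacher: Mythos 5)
Your proof is correct and follows essentially the same route as the paper: both reduce \eqref{systemforxi0} to \eqref{betaeq}, recast \eqref{betaeq} as the critical-point equation of the same strictly convex functional on the convex set~$\BB$ (quadratic part plus pairwise interaction), obtain existence by minimization---boundary blow-up for $s \in [1/2,1)$, continuous extension to $\overline{\BB}$ plus a boundary perturbation gaining a term of order $-\varepsilon^{1-2s}$ for $s \in (0,1/2)$---and deduce \eqref{betaodd} from uniqueness via the reflection $\beta_i \mapsto -\beta_{N-i+1}$. The only cosmetic differences are that the paper verifies strict convexity through an explicit Hessian bound rather than your concave-composed-with-linear observation, and it implements your parenthetical ``several colliding pairs are separated simultaneously'' cleanly by perturbing with the single vector $\varepsilon(1,2,\ldots,N)$, which increases all pairwise distances at once.
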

\begin{proof}
Let~$\xi^0$ be given by~\eqref{xi0def}, for some~$\beta$ to be chosen. A straightforward computation reveals that~$\xi^0$ solves~\eqref{systemforxi0} if and only if~$\beta$ is a solution of~\eqref{betaeq}, that is, if and only if~$\beta$ is a stationary point of the functional~$\V \in C^2(\BB)$ defined by
$$
\V(b) := \frac{|b|^2}{2} + \V_0(b) \quad \mbox{for } b \in \BB,
$$
with
$$
\V_0(b) := \begin{dcases}
\frac{(1 + 2 s) \gamma}{2 s (2 s - 1)} \sum_{1 \le i < j \le N} |b_i - b_j|^{1 - 2 s} & \quad \mbox{if } s \ne \frac{1}{2},\\
- 2 \gamma \sum_{1 \le i < j \le N} \log |b_i - b_j| & \quad \mbox{if } s = \frac{1}{2}.
\end{dcases}
$$
The proof will be complete if we show that~$\V$ has a unique stationary point in~$\BB$.

In order to check this, we first observe that~$\BB$ is a convex open set. Secondly, it holds
$$
\langle D^2 \V(b) \eta, \eta \rangle = |\eta|^2 + (1 + 2 s) \gamma \sum_{1 \le i < j \le N} \frac{|\eta_i - \eta_j|^2}{|b_i - b_j|^{1 + 2 s}}  \ge |\eta|^2 \quad \mbox{for all } b \in \BB \mbox{ and } \eta \in \R^N.
$$
Accordingly,~$\V$ is strictly convex in~$\BB$, and its only possible stationary point is its unique global minimum in~$\BB$, provided it exists. To see that such a minimum indeed exists, we begin by noticing that
\begin{equation} \label{Vcoercive}
\V(b) \ge \frac{|b|^2}{4} \quad \mbox{for all } b \in \BB \mbox{ such that } |b| \ge R,
\end{equation}
for some constant~$R > 0$.

Now, when~$s \in [1/2, 1)$, it holds
$$
\lim_{\BB \ni b \rightarrow \overline{b}} \V(b) = +\infty \quad \mbox{for all } \overline{b} \in \partial \BB.
$$
From this and~\eqref{Vcoercive} we then immediately infer that~$\V$ has a global minimum in~$\BB$.

The case~$s \in (0, 1/2)$ is a bit more delicate. Under this assumption, we have that~$\V \in C^0(\overline{\BB})$. By this and~\eqref{Vcoercive}, it follows that~$\V$ has a global minimum in~$\overline{\BB}$. We claim that, given any~$\overline{b} \in \partial \BB$, we can find~$b \in \BB$ such that
\begin{equation} \label{Vb<Vbbar}
\V(b) < \V(\overline{b}).
\end{equation}
Of course, this would lead us to conclude that the global minimum of~$\V$ lies in~$\BB$. To verify this claim, let
$$
b := \overline{b} + \varepsilon (1, 2, \ldots, N),
$$
for some small~$\varepsilon > 0$ to be decided later. Clearly,~$b \in \BB$. Using that~$\overline{b}_1 \le \overline{b}_2 \le \ldots \le \overline{b}_N$, it is easy to see that
\begin{equation} \label{bij>barbij}
|b_i - b_j| > |\overline{b}_i - \overline{b}_j| \quad \mbox{for every } i \ne j.
\end{equation}
Moreover, as~$\overline{b} \in \partial \BB$, there exists~$I \in \{ 1, \ldots, N - 1 \}$ such that~$\overline{b}_I = \overline{b}_{I + 1}$. By this and~\eqref{bij>barbij}, we get
\begin{align*}
\V(b) - \V(\overline{b}) & = \varepsilon \sum_{j = 1}^N j \overline{b}_j + \frac{\varepsilon^2}{2} \sum_{i = 1}^N j^2 - \frac{(1 + 2 s) \gamma}{2 s (1 - 2 s)} \sum_{1 \le i < j \le N} \left( |b_i - b_j|^{1 - 2 s} - |\overline{b}_i - \overline{b}_j|^{1 - 2 s} \right) \\
& \le \varepsilon N^2 \overline{b}_N + \varepsilon^2 N^3 - \frac{\gamma}{1 - 2 s} \left( |b_I - b_{I + 1}|^{1 - 2 s} - |\overline{b}_I - \overline{b}_{I + 1}|^{1 - 2 s} \right) \\
& = - \varepsilon^{1 - 2 s} \left( \frac{\gamma}{1 - 2 s} - \varepsilon^{2 s} N^2 \overline{b}_N - \varepsilon^{1 + 2 s} N^3 \right).
\end{align*}
From this, inequality~\eqref{Vb<Vbbar} immediately follows, provided we take~$\varepsilon$ small enough.

This concludes the proof of the existence and uniqueness of~$\beta$. The fact that~$\beta$ is the unique solution of~\eqref{betaeq} within~$\BB$ also ensures that~\eqref{betaodd} holds true, as, otherwise, the vector~$\tilde{\beta} \in \BB$ defined by~$\tilde{\beta}_i := - \beta_{N - i + 1}$ will provide a different solution of~\eqref{betaodd}. The proof of Proposition~\ref{explicitprop} is thus complete.
\end{proof}

For a general~$N$ the solution of~\eqref{betaeq} is not explicit. However, a straightforward computation gives that, for~$N = 2$, it holds
$$
\beta_2 = - \beta_1 = \frac{1}{2} \left( \frac{(1 + 2 s) \gamma}{s} \right)^{\frac{1}{1 + 2 s}},
$$
while, for~$N =3$,
$$
\beta_3 = - \beta_1 = \left( \frac{(1 + 2 s) (1 + 4^{-s}) \gamma}{2 s} \right)^{\frac{1}{1 + 2 s}} \quad \mbox{and} \quad \beta_2 = 0.
$$

The remainder of the section is devoted to show that~$\xi^0$ characterizes the long-time asymptotics of all solutions of~\eqref{systemforxi0}. To this aim, we first establish the next lemma, which provides a rough estimate on the diverging rate of two consecutive components of any solution of~\eqref{systemforxi0}.

\begin{lemma} \label{divratelem}
Let~$\eta \in C^1([1, +\infty); \R^N)$ be a solution of
\begin{equation} \label{systemforeta}
\dot{\eta}_i(t) = \frac{\gamma}{2 s} \sum_{j \ne i} \frac{\eta_i(t) - \eta_j(t)}{|\eta_i(t) - \eta_j(t)|^{1 + 2 s}} \quad \mbox{for all } t > 1 \mbox{ and } i = 1, \ldots, N,
\end{equation}
satisfying
\begin{equation} \label{eta0welldistanced}
\eta_1(1) < \eta_2(1) < \ldots < \eta_N(1).
\end{equation}
Then, there exists a constant~$C \ge 1$, depending only on~$s$,~$N$,~$\gamma$, and the initial datum~$\eta(1)$, such that
\begin{equation} \label{diest}
\frac{1}{C} \, t^{\frac{1}{1 + 2 s}} \le \eta_{i + 1}(t) - \eta_i(t) \le C \, t^{\frac{1}{1 + 2 s}} \quad \mbox{for all } t \ge 1 \mbox{ and } i = 1, \ldots, N - 1.
\end{equation}
\end{lemma}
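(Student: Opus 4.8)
Write $\rho_i(t):=\eta_{i+1}(t)-\eta_i(t)$ for $i=1,\dots,N-1$, so that \eqref{diest} is exactly the assertion that each $\rho_i(t)$ is comparable to $t^{1/(1+2s)}$. The plan is to read off from the purely repulsive structure of \eqref{systemforeta} two pointwise differential inequalities for the $\rho_i$'s: a simple one that yields the upper bound at once, and a subtler one that, once combined with the upper bound, yields the lower bound.

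First I would record that, since $\eta\in C^1([1,+\infty);\R^N)$ genuinely solves \eqref{systemforeta}, the right-hand side must be finite at each time, hence the $\eta_i(t)$ are pairwise distinct for all $t\ge1$; together with \eqref{eta0welldistanced} and continuity this forces $\eta_1(t)<\dots<\eta_N(t)$, i.e.\ $\rho_i(t)>0$, for every $t\ge1$. Differentiating and splitting the sum in $\dot\eta_{i+1}-\dot\eta_i$ according to whether the running index equals $i$, equals $i+1$, is $<i$, or is $>i+1$, I obtain
\[
\dot\rho_i=\frac{\gamma}{2s}\Big(2\rho_i^{-2s}-L_i-R_i\Big),\qquad L_i:=\sum_{j<i}\big((\eta_i-\eta_j)^{-2s}-(\eta_{i+1}-\eta_j)^{-2s}\big),\quad R_i:=\sum_{j>i+1}\big((\eta_j-\eta_{i+1})^{-2s}-(\eta_j-\eta_i)^{-2s}\big),
\]
and, because $x\mapsto x^{-2s}$ is decreasing on $(0,+\infty)$, every summand of $L_i$ and of $R_i$ is nonnegative (heuristically: the particles outside the pair $\{\eta_i,\eta_{i+1}\}$ only compress it). Dropping $L_i,R_i\ge0$ gives $\dot\rho_i\le\frac{\gamma}{s}\rho_i^{-2s}$, i.e.\ $\frac{d}{dt}\rho_i^{1+2s}\le\frac{(1+2s)\gamma}{s}$; integrating from $1$ and using $\rho_i(1)>0$ yields $\rho_i(t)\le C\,t^{1/(1+2s)}$ for all $t\ge1$, with $C$ depending only on $s,\gamma$ and $\eta(1)$. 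Summing over $i$, the spread $D(t):=\eta_N(t)-\eta_1(t)=\sum_i\rho_i(t)$ also satisfies $D(t)\le C\,t^{1/(1+2s)}$.

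For the lower bound I would track $m(t):=\min_{1\le i\le N-1}\rho_i(t)$, a locally Lipschitz function whose lower right derivative at any $t$ equals $\min\{\dot\rho_i(t):\rho_i(t)=m(t)\}$. Fix such a minimizing index $i$, so $\rho_k\ge\rho_i$ for all $k$. The crucial point is a telescoping estimate for $L_i$ (and symmetrically for $R_i$): setting $b_k:=\eta_i-\eta_{i-k}$ for $k=1,\dots,i-1$ one has $b_{k+1}-b_k=\eta_{i-k}-\eta_{i-k-1}\ge m(t)=\rho_i$, hence $b_k+\rho_i\le b_{k+1}$, and since $x\mapsto x^{-2s}$ is decreasing,
\[
L_i=\sum_{k=1}^{i-1}\big(b_k^{-2s}-(b_k+\rho_i)^{-2s}\big)\le\sum_{k=1}^{i-2}\big(b_k^{-2s}-b_{k+1}^{-2s}\big)+b_{i-1}^{-2s}-(b_{i-1}+\rho_i)^{-2s}=(\eta_i-\eta_{i-1})^{-2s}-(\eta_{i+1}-\eta_1)^{-2s}\le\rho_i^{-2s}-(\eta_{i+1}-\eta_1)^{-2s},
\]
the last inequality because $\eta_i-\eta_{i-1}=\rho_{i-1}\ge\rho_i$ (empty sums, i.e.\ the cases $i=1$ and $i=N-1$, being trivial); likewise $R_i\le\rho_i^{-2s}-(\eta_N-\eta_i)^{-2s}$. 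Substituting into the formula for $\dot\rho_i$ and using $\eta_{i+1}-\eta_1\le D(t)$, $\eta_N-\eta_i\le D(t)$, I obtain
\[
\dot\rho_i\ge\frac{\gamma}{2s}\Big((\eta_{i+1}-\eta_1)^{-2s}+(\eta_N-\eta_i)^{-2s}\Big)\ge\frac{\gamma}{s}\,D(t)^{-2s}\ge c\,t^{-\frac{2s}{1+2s}},
\]
where the last step uses the upper bound on $D$ from the previous step and $c>0$ depends only on $s,\gamma,N,\eta(1)$. Thus $m$ has lower right derivative at least $c\,t^{-2s/(1+2s)}$ everywhere; integrating gives $m(t)\ge m(1)+c(1+2s)(t^{1/(1+2s)}-1)\ge c'\,t^{1/(1+2s)}$ for $t\ge1$, and since $\rho_i(t)\ge m(t)$ this is precisely the lower bound in \eqref{diest}.

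I expect the telescoping estimate to be the only genuinely delicate step. A naive bound on $L_i$ and $R_i$—replacing each interparticle distance by $m(t)$—only gives $\dot\rho_i\ge\frac{\gamma}{s}\rho_i^{-2s}(1-c_N)$ with $c_N\ge1$ already for $N\ge3$, which is useless, because the compression exerted on a short gap by the outer particles can a priori be of the same order $m(t)^{-2s}$ as the internal repulsion $\rho_i^{-2s}$. The telescoping identity is what reveals that the total compression from all particles on one side of the short gap collapses onto the single distance from $\eta_{i+1}$ (respectively $\eta_i$) to the extreme particle, a quantity bounded below by a multiple of $D(t)\asymp t^{1/(1+2s)}$ and hence contributing only the harmless rate $t^{-2s/(1+2s)}$. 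Everything else is elementary ODE bookkeeping, and none of it requires normalizing the (conserved) barycenter of $\eta$.
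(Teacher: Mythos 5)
Your proof is correct and follows essentially the same route as the paper: the upper bound comes from discarding the (compressive, signed) contributions of the outer particles and integrating $\rho_i^{2s}\dot\rho_i\le C$, and the lower bound tracks the minimal gap, uses its minimality to telescope the compression down to the two boundary terms $(\eta_{i+1}-\eta_1)^{-2s}+(\eta_N-\eta_i)^{-2s}$, controls these by the total spread $D(t)\lesssim t^{1/(1+2s)}$, and integrates. The only differences are cosmetic: you telescope within $L_i$ and $R_i$ and use the right Dini derivative of the minimum, whereas the paper pairs terms across index-shifted sums and invokes a.e.\ differentiability of the minimum.
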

\begin{proof}
First of all, we point out that condition~\eqref{eta0welldistanced} is preserved along the dynamics, meaning that
\begin{equation} \label{etatwelldistanced}
\eta_1(t) < \eta_2(t) < \ldots < \eta_N(t) \quad \mbox{for all } t \ge 1.
\end{equation}
Indeed, this is an immediate consequence of the fact that~$\eta$ is~$C^1$ and solves~\eqref{systemforxi0}.

We begin by establishing the right-hand inequality in~\eqref{diest}. Set~$d_i := \eta_{i + 1} - \eta_i$. By~\eqref{systemforeta}, we have
\begin{equation} \label{divratetech1}
\begin{aligned}
\dot{d_i} & = \frac{\gamma}{2 s} \left\{ \sum_{j \ne i + 1} \frac{\eta_{i + 1} - \eta_j}{|\eta_{i + 1} - \eta_j|^{1 + 2 s}} - \sum_{j \ne i} \frac{\eta_i - \eta_j}{|\eta_i - \eta_j|^{1 + 2 s}} \right\} \\
& = \frac{\gamma}{2 s} \left\{ \sum_{j = 1}^{i - 1} \left( \frac{1}{(\eta_{i + 1} - \eta_j)^{2 s}} - \frac{1}{(\eta_i - \eta_j)^{2 s}} \right) - \sum_{j = i + 2}^{N} \left( \frac{1}{(\eta_j - \eta_{i + 1})^{2 s}} - \frac{1}{(\eta_j - \eta_i)^{2 s}} \right) \right\} \\
& \quad + \frac{\gamma}{s} \frac{1}{(\eta_{i + 1} - \eta_i)^{2 s}}.
\end{aligned}
\end{equation}
Condition~\eqref{etatwelldistanced} yields that
\begin{align*}
\frac{1}{(\eta_{i + 1} - \eta_j)^{2 s}} - \frac{1}{(\eta_i - \eta_j)^{2 s}} & \le 0 \quad \mbox{for every } j \le i - 1, \\
\frac{1}{(\eta_j - \eta_{i + 1})^{2 s}} - \frac{1}{(\eta_j - \eta_i)^{2 s}} & \ge 0 \quad \mbox{for every } j \ge i + 2.
\end{align*}
Using this, we deduce from~\eqref{divratetech1} that
$$
\dot{d}_i \le \frac{\gamma}{s} \frac{1}{(\eta_{i + 1} - \eta_i)^{2 s}} = \frac{\gamma}{ s} \frac{1}{d_i^{2 s}},
$$
that is,~$d_i^{2 s} \dot{d}_i \le \gamma / s$. By integrating this inequality between~$1$ and~$t$, we easily get that
\begin{equation} \label{diestabove}
d_i(t) \le C \, t^{\frac{1}{1 + 2 s}} \quad \mbox{for all } t \ge 1 \mbox{ and } i = 1, \ldots, N,
\end{equation}
for some constant~$C \ge 1$ depending only on~$s$,~$\gamma$, and on the initial distance~$\eta_{i + 1}(1) - \eta_i(1)$.

The deduction of the lower bound in~\eqref{diest} is slightly more involved. The computation is inspired to the one performed in~\cite[Lemma~8.2]{FIM09}. Set~$d := \min \left\{ d_i : i = 1, \ldots, N - 1 \right\}$. For all but a countable number of~$t > 1$, the function~$d$ is differentiable at~$t$ and it holds~$\dot{d}(t) = \dot{d}_i(t)$ for some~$i = 1, \ldots, N$. Using~\eqref{systemforxi0}, we compute
$$
\dot{d} = \dot{d}_i = \frac{\gamma}{2 s} \left\{ \sum_{j = 1}^{i} \frac{1}{(\eta_{i + 1} - \eta_j)^{2 s}} - \sum_{j = i + 2}^N \frac{1}{(\eta_j - \eta_{i + 1})^{2 s}} - \sum_{j = 1}^{i - 1} \frac{1}{(\eta_i - \eta_j)^{2 s}} + \sum_{j = i + 1}^N \frac{1}{(\eta_j - \eta_i)^{2 s}} \right\}.
$$
Shifting indices in the second and third sum and rearranging, this becomes
\begin{equation} \label{dotd=}
\begin{aligned}
\dot{d} & = \frac{\gamma}{2 s} \left\{ \sum_{j = 2}^{i} \left( \frac{1}{(\eta_{i + 1} - \eta_j)^{2 s}} - \frac{1}{(\eta_i - \eta_{j - 1})^{2 s}} \right) + \sum_{j = i + 1}^{N - 1} \left( \frac{1}{(\eta_j - \eta_i)^{2 s}} - \frac{1}{(\eta_{j + 1} - \eta_{i + 1})^{2 s}} \right) \right\} \\
& \quad + \frac{\gamma}{2 s} \left\{ \frac{1}{(\eta_{i + 1} - \eta_1)^{2 s}} + \frac{1}{(\eta_N - \eta_i)^{2 s}} \right\}.
\end{aligned}
\end{equation}
By the minimality of~$d_i$, we have that~$d_i \le d_{\ell}$, i.e.,~$\eta_{i + 1} - \eta_{\ell + 1} \le \eta_i - \eta_{\ell}$ for all~$\ell = 1, \ldots, N - 1$. This gives
\begin{align*}
\frac{1}{(\eta_{i + 1} - \eta_j)^{2 s}} - \frac{1}{(\eta_i - \eta_{j - 1})^{2 s}} & \ge 0 \quad \mbox{for every } j \le i, \\
\frac{1}{(\eta_j - \eta_i)^{2 s}} - \frac{1}{(\eta_{j + 1} - \eta_{i + 1})^{2 s}} & \ge 0 \quad \mbox{for every } j \ge i + 1.
\end{align*}
Thus, both sums on the first line of~\eqref{dotd=} are non-negative and we deduce that
$$
\dot{d} \ge \frac{\gamma}{2 s} \left\{ \frac{1}{(\eta_{i + 1} - \eta_1)^{2 s}} + \frac{1}{(\eta_N - \eta_i)^{2 s}} \right\} \ge \frac{\gamma}{(\eta_N - \eta_1)^{2 s}}.
$$
By integrating this over the interval~$(1, t)$ and recalling~\eqref{diestabove}, we get
$$
d(t) \ge d(1) + \frac{1}{C} \int_1^t \tau^{- \frac{2s}{1 + 2 s}} \, d\tau \ge d(1) + \frac{1}{C} \, t^{- \frac{2 s}{1 + 2 s}} (t - 1),
$$
which easily leads us to the left-hand inequality in~\eqref{diest}. Thus, the lemma is proved.
\end{proof}

With the aid of Lemma~\ref{divratelem}, we can now prove the following result.

\begin{proposition} \label{xi0charprop}
Let~$\eta \in C^1([1, +\infty); \R^N)$ be a solution of~\eqref{systemforeta} satisfying~\eqref{eta0welldistanced} and
\begin{equation} \label{baryofeta=0}
\sum_{i = 1}^N \eta_i(1) = 0.
\end{equation}
Let~$\xi^0$ be the solution given by Proposition~\ref{explicitprop}. Then, it holds
\begin{equation} \label{xitilde0toxi0atinfty}
\lim_{t \rightarrow +\infty} \big| \eta(t) - \xi^0(t) \big| = 0.
\end{equation}
\end{proposition}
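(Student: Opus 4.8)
The plan is to pass to self-similar variables, identify the rescaled system with the gradient flow of the strictly convex functional~$\V$ from the proof of Proposition~\ref{explicitprop}, and then exploit an \emph{improved} convexity of~$\V$ on the zero-barycenter hyperplane to overcome the self-similar scaling. As a preliminary observation, summing~\eqref{systemforeta} over~$i$ and using the antisymmetry of the summands under~$i \leftrightarrow j$ gives~$\frac{d}{dt}\sum_i \eta_i \equiv 0$; hence~\eqref{baryofeta=0} propagates to~$\sum_i \eta_i(t) = 0$ for all~$t \ge 1$, while~\eqref{betaodd} yields~$\sum_i \xi^0_i(t) = 0$. Thus both~$\eta(t)$ and~$\xi^0(t)$ lie, for every~$t$, in the hyperplane~$H_0 := \{ y \in \R^N : \sum_i y_i = 0 \}$.

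Next I would set~$\tau := \log t$ and~$\zeta(\tau) := e^{-\tau/(1 + 2s)} \eta(e^\tau)$, so that~$\eta_i(t) = t^{1/(1 + 2s)} \zeta_i(\log t)$. The exponent~$1/(1+2s)$ in~\eqref{xi0def} is precisely the one dictated by the scale invariance of~\eqref{systemforeta}, and a direct computation shows that all powers of~$t$ cancel, leaving the autonomous system
$$
\dot\zeta_i = - \frac{1}{1 + 2s}\, \zeta_i + \frac{\gamma}{2s} \sum_{j \ne i} \frac{\zeta_i - \zeta_j}{|\zeta_i - \zeta_j|^{1 + 2s}}, \qquad \tau > 0, \quad i = 1, \ldots, N,
$$
still subject to~$\sum_i \zeta_i \equiv 0$. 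Comparing with the computations in the proof of Proposition~\ref{explicitprop}, this is exactly the gradient flow~$\dot\zeta = - \frac{1}{1 + 2s} \nabla \V(\zeta)$, and its unique equilibrium in~$\BB$ is the critical point~$\beta$ of~$\V$ from Proposition~\ref{explicitprop} --- indeed~\eqref{betaeq} reads exactly~$\nabla\V(\beta) = 0$. Moreover, by the same antisymmetry as above,~$\nabla\V(\zeta) \in H_0$ whenever~$\sum_i\zeta_i = 0$, so the flow preserves~$H_0$.

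I would then invoke Lemma~\ref{divratelem}, which gives~$C^{-1} \le \zeta_{i + 1}(\tau) - \zeta_i(\tau) \le C$ for all~$\tau \ge 0$: together with~$\sum_i \zeta_i \equiv 0$ this confines~$\zeta(\tau)$ to a fixed compact set~$K \subset \BB \cap H_0$, so the flow is global and bounded away from collisions. On~$K$ one has~$\max_{i \ne j} |\zeta_i - \zeta_j| \le D$ for some~$D > 0$; combining this with the elementary identity~$\sum_{i < j} |y_i - y_j|^2 = N |y|^2$ valid for~$y \in H_0$, the Hessian formula in the proof of Proposition~\ref{explicitprop} upgrades to~$\langle D^2\V(b) y, y \rangle \ge (1 + c_0)|y|^2$ for all~$b$ with~$\max_{i \ne j}|b_i - b_j| \le D$ and all~$y \in H_0$, where~$c_0 := (1 + 2s)\gamma N / D^{1 + 2s} > 0$. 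Hence~$\V$ is~$(1 + c_0)$-strongly convex on the convex set~$\{ b \in \BB \cap H_0 : \max_{i \ne j}|b_i - b_j| \le D \}$, which contains both~$\zeta(\tau)$ and~$\beta$. By the Polyak--\L{}ojasiewicz inequality and the gradient-flow structure,
$$
\frac{d}{d\tau}\big( \V(\zeta) - \V(\beta) \big) = - \frac{1}{1 + 2s}\, |\nabla\V(\zeta)|^2 \le - \frac{2(1 + c_0)}{1 + 2s}\big( \V(\zeta) - \V(\beta) \big),
$$
so~$\V(\zeta(\tau)) - \V(\beta) \le C e^{-2(1 + c_0)\tau/(1 + 2s)}$, and strong convexity once more yields~$|\zeta(\tau) - \beta| \le C e^{-(1 + c_0)\tau/(1 + 2s)}$. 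Undoing the change of variables,
$$
|\eta(t) - \xi^0(t)| = t^{\frac{1}{1 + 2s}}\, |\zeta(\log t) - \beta| \le C\, t^{\frac{1}{1 + 2s}}\, e^{-\frac{(1 + c_0)\log t}{1 + 2s}} = C\, t^{-\frac{c_0}{1 + 2s}} \longrightarrow 0,
$$
which is~\eqref{xitilde0toxi0atinfty}, in fact with an explicit algebraic rate.

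The main obstacle is precisely the last step. The \emph{naive} convexity bound~$\langle D^2\V(b)y, y \rangle \ge |y|^2$ would only give decay~$|\zeta - \beta| \lesssim e^{-\tau/(1 + 2s)}$, which exactly matches the self-similar factor~$t^{1/(1 + 2s)}$ and therefore would bound~$\eta - \xi^0$ without forcing it to vanish. What rescues the argument is the sharpened inequality~$D^2\V \ge (1 + c_0)I$ \emph{on}~$H_0$, which crucially uses that~$\zeta$ stays in a compact subset of~$\BB$, i.e., the two-sided estimate on consecutive gaps supplied by Lemma~\ref{divratelem}. The remaining ingredients --- the homogeneity computation and the Gronwall/strong-convexity estimates --- are routine.
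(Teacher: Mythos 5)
Your argument is correct, and it follows a genuinely different route from the one in the paper. The paper works directly with the difference $d = \eta - \xi^0$: by the mean value theorem it satisfies a linear nonautonomous system whose coefficients are bounded below by $\varepsilon/t$ thanks to the \emph{upper} bound of Lemma~\ref{divratelem}, and a maximum-principle argument on the spread $d_M - d_m$ combined with the zero-barycenter condition yields $|d| \lesssim t^{-\varepsilon}$. You instead pass to self-similar variables, recognize the rescaled system as the gradient flow of the functional~$\V$ from Proposition~\ref{explicitprop} (your computation of the rescaled equation and of $\nabla\V$ is correct, including the case $s = 1/2$), use both bounds of Lemma~\ref{divratelem} to confine the rescaled orbit, and then exploit the identity $\sum_{i<j}|y_i - y_j|^2 = N|y|^2$ on the zero-barycenter hyperplane $H_0$ to upgrade the Hessian bound to $(1 + c_0)|y|^2$ for $H_0$-directions; the PL/Gronwall step then beats the scaling factor $t^{1/(1+2s)}$ exactly as you explain, giving the algebraic rate $t^{-c_0/(1+2s)}$. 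The paper's proof is more elementary (no Lyapunov functional, only the upper gap bound is used), while yours is structurally more illuminating — it identifies the self-similar profile $\beta$ as the minimizer of a strongly convex energy and makes the mechanism (extra convexity on $H_0$ versus the neutral self-similar direction) explicit, with a computable rate. Two small points you should make explicit for completeness: choose $D \ge \max_{i \ne j}|\beta_i - \beta_j|$ as well, so that $\beta$ and the whole segment from $\zeta(\tau)$ to $\beta$ lie in the region where the improved Hessian bound holds (smaller gaps only help, so the bound survives along the segment); and note that both $\nabla\V(\zeta)$ and $\zeta - \beta$ lie in $H_0$, so the strong-convexity and PL inequalities are only ever invoked along $H_0$-directions, which is precisely where the improved bound is available.
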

\begin{proof}
Let~$b_\eta(t)$ be the barycenter of~$\eta$ at time~$t$, namely
$$
b_\eta(t) := \frac{1}{N} \sum_{i = 1}^N \eta_i(t).
$$
Using that~$\eta$ solves~\eqref{systemforxi0} and anti-symmetrizing we deduce that
$$
\dot{b}_\eta = \frac{\gamma}{2N s} \sum_{i = 1}^N \sum_{j \ne i} \frac{\eta_i - \eta_j}{|\eta_i - \eta_j|^{1 + 2 s}} = \frac{\gamma}{2N s} \sum_{1 \le i < j \le N} \left( \frac{\eta_i - \eta_j}{|\eta_i - \eta_j|^{1 + 2 s}} + \frac{\eta_j - \eta_i}{|\eta_j - \eta_i|^{1 + 2 s}} \right) = 0.
$$
That is, the barycenter is preserved by the dynamics. By~\eqref{baryofeta=0}, this gives that
\begin{equation} \label{baryconst}
b_\eta(t) = 0 \quad \mbox{for all } t \ge 1.
\end{equation}

We now proceed to establish~\eqref{xitilde0toxi0atinfty}.
For~$i = 1, \ldots, N$, set~$d_i(t) := \eta_i(t) - \xi^0_i(t)$. Since both~$\eta$ and~$\xi^0$ are solutions of~\eqref{systemforeta}, using the mean value theorem we write, for all~$t > 1$ and~$i = 1, \ldots, N$,
\begin{equation} \label{dieq2}
\dot{d}_i(t) = \frac{\gamma}{2 s} \sum_{j \ne i} \left\{ \frac{\eta_i(t) - \eta_j(t)}{|\eta_i(t) - \eta_j(t)|^{1 + 2 s}} - \frac{\xi^0_i(t) - \xi^0_j(t)}{|\xi^0_i(t) - \xi^0_j(t)|^{1 + 2 s}} \right\} = - \gamma \sum_{j \ne i} \frac{d_i(t) - d_j(t)}{\left| P_{ij}(t) \right|^{1 + 2 s}},
\end{equation}
where
$$
P_{i j}(t) := \theta_{ij}(t) (\eta_i(t) - \eta_j(t)) + \left( 1 - \theta_{i j}(t) \right) (\xi^0_i(t) - \xi^0_j(t)),
$$
for some functions~$\theta_{i j}: (1, +\infty) \to [0, 1]$. Observe that, by Lemma~\ref{divratelem}, there exists a constant~$C > 0$ for which
\begin{equation} \label{Pijgrowth}
|P_{i j}(t)| \le C \, t^{\frac{1}{1 + 2 s}} \quad \mbox{for all } t > 1.
\end{equation}

Given~$t > 1$, let now~$m, M \in \{ 1, \ldots, N \}$ be such that~$d_m(t) \le d_j(t) \le d_M(t)$ for all~$j = 1, \ldots, N$.
By~\eqref{dieq2} and~\eqref{Pijgrowth}, we have
$$
\dot{d}_M(t) - \dot{d}_m(t) = - \gamma \sum_{j = 1}^N \left( \frac{d_M(t) - d_j(t)}{\left| P_{M j}(t) \right|^{1 + 2 s}} + \frac{d_j(t) - d_m(t)}{\left| P_{m j}(t) \right|^{1 + 2 s}} \right) \le - \frac{\varepsilon}{t} \left( d_M(t) - d_m(t) \right),
$$
for some constant~$\varepsilon > 0$. Set~$d(t) := d_M(t) - d_m(t)$ and observe that~$\dot{d}(t) = \dot{d}_M(t) - \dot{d}_m(t)$ for a.e.~$t > 1$. Therefore, we may rewrite the above inequality as
$$
\dot{d}(t) \le - \frac{\varepsilon}{t} \, d(t) \quad \mbox{for a.e.~} t > 1,
$$
which leads to
$$
d(t) \le d(1) \, t^{- \varepsilon} \quad \mbox{for all } t > 1.
$$
Furthermore, taking advantage of~\eqref{baryconst} and of the fact that the same holds for~$\xi^0$, thanks to~\eqref{betaodd}, we deduce that~$\sum_{i = 1}^N d_i(t) = 0$ and thus that~$d_m(t) \le 0 \le d_M(t)$ for a.e.~$t > 1$. Consequently, we get that
$$
|\eta_i(t) - \xi^0_i(t)| = |d_i(t)| \le d(t) \le d(1) \, t^{- \varepsilon} \quad \mbox{for all } t > 1 \mbox{ and } i = 1, \ldots, N,
$$
which yields~\eqref{xitilde0toxi0atinfty}.
\end{proof}

\section{On the asymptotic behavior of odd layer solutions} \label{layersec}

\noindent
The purpose of this section is to present some improvements on the known asymptotics of the layer solution~$w$, in the case of an even potential function~$W$.

Let~$W \in C^3(\R)$ be a non-degenerate double-well potential with zeroes at~$0$ and~$1$, i.e., a function satisfying
\begin{equation} \label{Wdoublewell}
\begin{aligned}
& W(r) > 0 \quad \mbox{for all } r \in (0, 1), \\
& W(0) = W(1) = W'(0) = W'(1) = 0, \\
& W''(0) = W''(1) > 0.
\end{aligned}
\end{equation}
Notice that a potential fulfilling the set of assumptions~\eqref{Wprop} satisfies in particular~\eqref{Wdoublewell}. However, the periodicity of~$W$ implied by~\eqref{Wprop} does not play any role in the results presented in this section and it is therefore not assumed here.

Under assumption~\eqref{Wdoublewell}, in~\cite{CS05,PSV13,CS15} it is showed that there exists a unique non-decreasing solution~$w$ of~\eqref{ellPNeq} satisfying~\eqref{wcond}. In the same papers, the authors proved that~$w \in C^2(\R; (0, 1))$ and that it satisfies
\begin{alignat}{3}
\label{wasympt}
\left| w(x) - \chi_{(0, +\infty)}(x) \right| & \le \frac{C}{(1 + |x|)^{2 s}} && \qquad \mbox{for all } x \in \R, \\
\label{w'asympt}
0 < w'(x) & \le \frac{C}{(1 + |x|)^{1 + 2 s}} && \qquad \mbox{for all } x \in \R.
\end{alignat}
The works~\cite{GM12,DPV15,DFV14} furthered the knowledge on the behavior of~$w$ at infinity by establishing the asymptotic expansion~\eqref{wasymptDPFV}, with~$\vartheta$ given by
\begin{equation} \label{thetadef}
\vartheta := \begin{cases}
4 s & \quad \mbox{if } s \in \left( 0, \frac{1}{6} \right), \\
\frac{1}{2} + s & \quad \mbox{if } s \in \left[ \frac{1}{6}, \frac{1}{2} \right), \\
1 + 2 s & \quad \mbox{if } s \in \left[ \frac{1}{2}, 1 \right).
\end{cases}
\end{equation}

The primary aim of this section is to provide a refinement of estimate~\eqref{wasymptDPFV} under the hypothesis that the potential~$W$ is even w.r.t.~$1/2$, that is,
\begin{equation} \label{Weven}
W(r) = W(1 - r) \quad \mbox{for all } r \in (0, 1).
\end{equation}
Note that, from~\eqref{Weven} and the fact that~$w$ is the only non-decreasing solution of~\eqref{ellPNeq} satisfying~\eqref{wcond}, we get that
\begin{equation} \label{wodd}
w(x) = 1 - w(- x) \quad \mbox{for all } x \in \R.
\end{equation}
Our main result is the following.

\begin{proposition} \label{uimprovasymptprop}
Assume that~$W \in C^3(\R)$ satisfies~\eqref{Wdoublewell} and~\eqref{Weven}. Then,
\begin{equation} \label{uimprovasympt}
\left| w(x) - \chi_{(0, +\infty)}(x) + \frac{1}{2 s W''(0)} \frac{x}{|x|^{1 + 2 s}} \right| \le \frac{C}{|x|^{4 s}} \quad \mbox{for all } x \in \R,
\end{equation}
for some constant~$C > 0$ depending only on~$s$ and~$W$.
\end{proposition}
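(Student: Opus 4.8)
The plan is to reduce to the half-line $x \to +\infty$ and there to read off the sharp rate directly from the equation, from the fractional Laplacian of the Heaviside function, and from the parity of $w$. First, for $0 < |x| \le 1$ the inequality~\eqref{uimprovasympt} is trivial: its left-hand side is bounded by $C |x|^{-2s}$ (all terms being bounded except $\frac{x}{|x|^{1 + 2s}} = \operatorname{sgn}(x) |x|^{-2s}$), while the right-hand side equals $C |x|^{-4s} \ge C |x|^{-2s}$ there. It thus suffices to treat $|x| \ge 1$, and by the symmetry relation~\eqref{wodd} it is enough to prove
\begin{equation*}
\Big| \big( 1 - w(x) \big) - \frac{1}{2 s W''(0)} \, x^{-2s} \Big| \le \frac{C}{x^{4s}} \qquad \mbox{for all } x \ge 1,
\end{equation*}
the range $x \le - 1$ then following by applying this to $- x$ and using $w(x) = 1 - w(-x)$. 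Set $H := \chi_{(0, +\infty)}$, $c := \frac{1}{2 s W''(0)}$, and $p := w - H$. By~\eqref{wodd} the function $p$ is odd; by~\eqref{wasympt} it is bounded with $|p(x)| \le C (1 + |x|)^{-2s}$; and, from the expansion~\eqref{wasymptDPFV} (valid with the exponent $\vartheta > 2 s$ of~\eqref{thetadef}) together with the corresponding derivative estimates established in~\cite{GM12,DPV15,DFV14} (or obtained from~\eqref{wasymptDPFV} by differentiation and rescaling), one has $|p''(x)| \le C (1 + |x|)^{-2 - 2s}$ for $x \ne 0$. Finally, a direct computation from definition~\eqref{fracLap} gives
\begin{equation*}
(-\Delta)^s H(x) = \frac{1}{2s} \, \frac{x}{|x|^{1 + 2s}} \qquad \mbox{for every } x \ne 0.
\end{equation*}

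The core of the argument is the estimate
\begin{equation} \label{eq:planDeltasp}
\big| (-\Delta)^s p(x) \big| \le \frac{C}{x^{4s}} \qquad \mbox{for all } x \ge 1.
\end{equation}
To obtain it, I would write $(-\Delta)^s p(x) = \int_0^{+\infty} z^{-1-2s} \big( 2 p(x) - p(x+z) - p(x-z) \big) \, dz$ and split the integral into $\{ z \le x/2 \}$ and $\{ z > x/2 \}$. On $\{ z \le x/2 \}$ all three arguments lie in $[x/2, 3 x/2]$, so bounding the second difference by $C z^2 (1+x)^{-2-2s}$ gives a contribution of order $x^{-4s}$. On $\{ z > x/2 \}$, the pieces containing $p(x)$ and $p(x+z)$ are $O(x^{-4s})$ by the pointwise decay of $p$; the change of variables $w = x - z$ turns the piece containing $p(x-z)$ into $- \int_{-x/2}^{x/2} (x - w)^{-1-2s} p(w)\, dw$ plus a tail over $\{ w < -x/2 \}$ which is $O(x^{-4s})$ by elementary estimates on the kernel (there $x - w \ge \max\{ |w|, x \}$). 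Finally, expanding $(x - w)^{-1-2s} = x^{-1-2s}\big( 1 + O(|w|/x) \big)$ on $\{ |w| \le x/2 \}$ produces the leading term $x^{-1-2s} \int_{-x/2}^{x/2} p(w)\, dw$, which \emph{vanishes} because $p$ is odd, while the lower-order terms are controlled through $\int_{-x/2}^{x/2} |w| \, |p(w)| \, dw \le C x^{2-2s}$ and $\int_{-x/2}^{x/2} w^2 \, |p(w)| \, dw \le C x^{3-2s}$, which multiplied by the prefactors $x^{-2-2s}$ and $x^{-3-2s}$ are again $O(x^{-4s})$. This proves~\eqref{eq:planDeltasp}, and the vanishing of $x^{-1-2s} \int_{-x/2}^{x/2} p$ is exactly the step where the parity hypothesis~\eqref{Weven} is used in an essential way.

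The conclusion is then purely algebraic. By~\eqref{Wdoublewell}, for $r$ near $1$ one has $W'(r) = W''(0)(r - 1) + Q(1 - r)$ with $|Q(\sigma)| \le C \sigma^2$ for small $|\sigma|$; since $0 < 1 - w(x) \le C (1 + x)^{-2s}$, equation~\eqref{ellPNeq} gives $(-\Delta)^s w(x) = W''(0) \big( 1 - w(x) \big) + O(x^{-4s})$ as $x \to + \infty$. On the other hand, by linearity, by the formula for $(-\Delta)^s H$, and by~\eqref{eq:planDeltasp},
\begin{equation*}
(-\Delta)^s w(x) = (-\Delta)^s H(x) + (-\Delta)^s p(x) = \frac{1}{2s} \, x^{-2s} + O(x^{-4s}) \qquad \mbox{for } x \to + \infty.
\end{equation*}
Equating the two expressions and dividing by $W''(0) > 0$ yields $1 - w(x) = c \, x^{-2s} + O(x^{-4s})$, which is the desired bound.

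The hard part is~\eqref{eq:planDeltasp}. It requires a careful, somewhat case-sensitive treatment of the nonlocal integral—the bookkeeping changes according to whether $s < 1/2$ or $s \ge 1/2$, since the tails of $p$ are only conditionally integrable in the latter range—and it rests decisively on the cancellation $\int_{-R}^{R} p = 0$ coming from the oddness of $p$: without parity this term is only $O(x^{-1-2s})$, which for $s > 1/2$ is \emph{larger} than $O(x^{-4s})$, so the improvement over the known exponent~\eqref{thetadef} would break down (this is also consistent with the fact that, for $s < 1/6$, where $\vartheta = 4 s$ already, nothing new is needed). A secondary, purely technical point is the refined derivative decay $|p''(x)| \le C (1 + |x|)^{-2-2s}$, for which I would invoke the analysis of~\cite{GM12,DPV15,DFV14} or differentiate the expansion~\eqref{wasymptDPFV} via a standard rescaling and compactness argument.
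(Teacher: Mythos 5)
Your overall strategy is genuinely different from the paper's (which builds an explicit auxiliary profile solving a Peierls--Nabarro equation with an even potential and then compares the odd difference with the barrier $\omega_3$ through an antisymmetric maximum principle), and most of it checks out: the far-field part of your estimate for $(-\Delta)^s p$ --- splitting the integral, using $|p(x)|\le C(1+|x|)^{-2s}$ from~\eqref{wasympt}, and killing the leading term $x^{-1-2s}\int_{-x/2}^{x/2}p$ by oddness --- is correct, as is the final algebraic step combining $(-\Delta)^s\chi_{(0,+\infty)}(x)=\frac{1}{2s}\frac{x}{|x|^{1+2s}}$ with the Taylor expansion of $W'$ at $1$. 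The genuine gap is the input $|p''(x)|=|w''(x)|\le C(1+|x|)^{-2-2s}$. This bound is not contained in~\cite{GM12,DPV15,DFV14}: those works do not assume the parity hypothesis~\eqref{Weven}, and without parity the available linear theory for $(-\Delta)^s+W''(w)$ only yields $|w''(x)|\le C(1+|x|)^{-1-2s}$, because a barrier which is not odd cannot have fractional Laplacian decaying faster than $|x|^{-1-2s}$. Nor does it follow by ``differentiating~\eqref{wasymptDPFV} and rescaling'': the nonlocal tails obstruct the naive interior-estimate argument, and this is exactly why the paper devotes a separate result to this bound, namely estimate~\eqref{w''decay} of Proposition~\ref{w''decayprop}, which is itself proved with the odd barrier $\omega_2$ and an antisymmetric maximum principle, uses the parity of~$W$ through the oddness of $w''$, and assumes $W\in C^{3,1}(\R)$.

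Without that input your near-field estimate fails precisely in the range where the improvement matters: for $s\ge 1/2$, bounding the second difference on $\{z\le x/2\}$ with only the generic decay $|w''|\lesssim (1+|x|)^{-1-2s}$ gives a contribution of order $x^{1-4s}$, i.e.\ a decay $x^{-(4s-1)}$ which is even weaker than the known exponent $\vartheta=1+2s$ in~\eqref{wasymptDPFV}--\eqref{thetadef}, so the claimed $O(x^{-4s})$ bound for $(-\Delta)^s p$ is not reached (for $s>1/2$ the first-difference alternative is not available either, since $\int_0^{x/2}z^{-2s}\,dz$ diverges). The argument can be repaired: for $s<1/2$ replace the second-order Taylor bound by first differences and use only~\eqref{w'asympt}; for general $s$, invoke Proposition~\ref{w''decayprop} explicitly --- there is no circularity, since its proof in the paper does not rely on Proposition~\ref{uimprovasymptprop} --- at the price of assuming $W\in C^{3,1}(\R)$ rather than the $C^3$ regularity in the statement, or else supply an independent proof of the $w''$ decay, which is a nontrivial task of essentially the same nature as the proposition itself.
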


Observe that estimate~\eqref{uimprovasympt} improves~\eqref{wasymptDPFV} for all~$s \in (0, 1)$, as can be seen by recalling definition~\eqref{thetadef} of~$\vartheta$---for~$s \in (0, 1/6]$ the two results are equivalent. Of course,~\eqref{uimprovasympt} holds under the parity assumption~\eqref{Weven} on~$W$, which is not needed for~\eqref{wasymptDPFV}.

The exponent~$4 s$ appearing on the right-hand side of~\eqref{uimprovasympt} is in general not optimal. For~$s = 1/2$, this can be seen by looking at the explicit solution~$w(x) = \frac{1}{2} + \frac{1}{\pi} \arctan(4 \pi x)$ corresponding to the potential~$W(r) = 1 - \cos(2 \pi r)$. When~$s = 1/2$, our techniques can actually be modified in order to obtain the sharp exponent~$3$ in place of~$4 s = 2$ in~\eqref{uimprovasympt}, for all~$C^4$ potentials~$W$ satisfying~\eqref{Wdoublewell} and~\eqref{Weven}. However, we do not know whether this is the case for a general~$s \ne 1/2$. See the forthcoming Remark~\ref{impros=12rmk} for more information on this.

In addition to~\eqref{uimprovasympt}, we have the following estimates for the decay of the second and third derivatives of~$w$.

\begin{proposition} \label{w''decayprop}
Assume that~$W \in C^{3, 1}(\R)$ satisfies~\eqref{Wdoublewell} and~\eqref{Weven}. Then,
\begin{equation} \label{w''decay}
|w''(x)| \le \frac{C}{(1 + |x|)^{2 + 2 s}} \quad \mbox{for all } x \in \R,
\end{equation}
for some constant~$C > 0$ depending only on~$s$ and~$W$. Moreover, if~$W \in C^{4, 1}(\R)$, then,
\begin{equation} \label{w'''decay}
|w'''(x)| \le \frac{C}{(1 + |x|)^{1 + 2 s}} \quad \mbox{for all } x \in \R.
\end{equation}
\end{proposition}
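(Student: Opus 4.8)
The plan is to derive \eqref{w''decay} and \eqref{w'''decay} by differentiating the stationary equation \eqref{ellPNeq} and analyzing the resulting linear nonlocal equations via convolution with the Green's function $G_m$ of $(-\Delta)^s + m$ on $\R$, where $m := W''(0) > 0$. By standard elliptic regularity for the fractional Laplacian applied to \eqref{ellPNeq}, the assumption $W \in C^{3, 1}(\R)$ (resp.~$C^{4, 1}(\R)$) yields $w \in C^{3, \alpha}(\R)$ (resp.~$C^{4, \alpha}(\R)$) for some $\alpha \in (0, 1)$, so that $w', w'', w'''$ are bounded on $\R$ and we may differentiate \eqref{ellPNeq} twice (resp.~three times). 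Rearranging, $w''$ solves
\begin{equation*}
(-\Delta)^s w'' + m \, w'' = g \quad \mbox{in } \R, \qquad g := \big( m - W''(w) \big) w'' - W'''(w) (w')^2 ,
\end{equation*}
and, when $W \in C^{4, 1}$, $w'''$ solves the analogous equation with right-hand side $\widetilde{g} := \big( m - W''(w) \big) w''' - 3 W'''(w) w' w'' - W''''(w) (w')^3$. I would then record the (classical) properties of $G_m$: it is positive, belongs to $L^1(\R) \cap C^\infty(\R \setminus \{ 0 \})$, satisfies $G_m(x) \le C \, |x|^{2 s - 1}$ (with a logarithmic modification if $s = 1/2$) for $0 < |x| \le 1$ and $|G_m^{(k)}(x)| \le C_k \, (1 + |x|)^{-(1 + 2 s + k)}$ for $|x| \ge 1$, $k \ge 0$; and, crucially, since $(-\Delta)^s + m$ has no nontrivial bounded element in its kernel, every bounded solution of $(-\Delta)^s u + m \, u = \phi$ with $\phi \in L^\infty(\R)$ equals $G_m * \phi$. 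Finally, from \eqref{wasympt}, \eqref{w'asympt}, the Lipschitz continuity of $W''$, and $W''(0) = W''(1)$, one has $|m - W''(w(x))| \le C (1 + |x|)^{-2 s}$ and $|w'(x)|^2 \le C (1 + |x|)^{-(2 + 4 s)}$, whence $|g(x)| \le C (1 + |x|)^{-2 s} |w''(x)| + C (1 + |x|)^{-(2 + 4 s)}$, and similarly for $\widetilde{g}$.

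The first step is a crude bootstrap resting on two facts: (i) convolution with $G_m$ preserves power decay up to the threshold rate $1 + 2 s$, i.e.~$|h(x)| \le C (1 + |x|)^{-c}$ implies $|(G_m * h)(x)| \le C (1 + |x|)^{-\min \{ c, \, 1 + 2 s \}}$ (the threshold coming from the tail of $G_m$); and (ii) the factor $(1 + |x|)^{-2 s}$ multiplying $w''$ (resp.~$w'''$) in $g$ (resp.~$\widetilde{g}$) makes that term decay $2 s$ faster than $w''$, while the inhomogeneous pieces decay strictly faster than rate $2 + 2 s$. Combining, $w'' = G_m * g$ has decay rate at least $\min \{ (\text{rate of } w'') + 2 s, \, 1 + 2 s \}$; iterating from the trivial rate $0$, after finitely many steps (about $1/(2s)$ of them) one reaches $|w''(x)| \le C (1 + |x|)^{-(1 + 2 s)}$, and then, in the same way, $|w'''(x)| \le C (1 + |x|)^{-(1 + 2 s)}$. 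The latter is exactly \eqref{w'''decay}; the former also gives $g \in L^1(\R)$, since now $|g(x)| \le C (1 + |x|)^{-(1 + 4 s)}$.

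The heart of the matter is improving the decay of $w''$ from rate $1 + 2 s$ to rate $2 + 2 s$, and it is here that the parity assumption \eqref{Weven} enters---exactly as in Proposition~\ref{uimprovasymptprop}. By \eqref{wodd} the function $w''$ is odd and $w'$ is even, and differentiating \eqref{Weven} gives $W''(r) = W''(1 - r)$, $W'''(r) = - W'''(1 - r)$, so that $x \mapsto m - W''(w(x))$ is even and $x \mapsto W'''(w(x))$ is odd; hence $g$ is an odd function of $x$, and, being in $L^1(\R)$, it satisfies $\int_\R g = 0$. Exploiting this cancellation I would write
\begin{equation*}
w''(x) = \int_\R \big( G_m(x - y) - G_m(x) \big) \, g(y) \, dy
\end{equation*}
and split the integral into $\{ |y| \le |x|/2 \}$ and $\{ |y| > |x|/2 \}$. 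On the first region the mean value theorem gives $|G_m(x - y) - G_m(x)| \le |y| \sup_{[x - y, \, x]} |G_m'| \le C |y| (1 + |x|)^{-(2 + 2 s)}$, so that contribution is controlled by $C (1 + |x|)^{-(2 + 2 s)} \int_{|y| \le |x|/2} |y| \, |g(y)| \, dy$; on the second region one estimates $\int_{|y| > |x|/2} G_m(x - y) |g(y)| \, dy$ and $G_m(x) \int_{|y| > |x|/2} |g(y)| \, dy$ separately, using the tail bounds on $G_m$. A direct computation shows that, if $|w''(x)| \le C (1 + |x|)^{-b}$, these estimates upgrade the bound to rate $\min \{ b + 2 s, \, 2 + 2 s \}$---the effective threshold having been lifted from $1 + 2 s$ to $2 + 2 s$ thanks to the extra factor $|y|$ and the decay of $G_m'$. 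Iterating from $b = 1 + 2 s$, in finitely many steps we obtain $|w''(x)| \le C (1 + |x|)^{-(2 + 2 s)}$, which is \eqref{w''decay}.

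The part I expect to require the most care is the bookkeeping in this last iteration, particularly for small $s$: the weight $(1 + |x|)^{-2 s}$ in $m - W''(w)$ is not integrable, so the weighted moment $\int_\R |y| \, |g(y)| \, dy$ need not be finite at the intermediate stages (it becomes finite once $b + 2 s > 2$), and one must track at each step which region of the split integral dominates---in the regime $b + 2 s \le 2$ the near-diagonal part still yields the improvement through $\int_{|y| \le |x|/2} |y| (1 + |y|)^{-(b + 2 s)} \, dy \le C (1 + |x|)^{2 - (b + 2 s)}$, while the bottleneck is the contribution of $y$ near $x$ in the far part, which produces exactly rate $b + 2 s$. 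Minor additional attention is needed at the borderline exponents, where at most a harmless logarithmic factor can appear (removed by one further iteration, and absorbed using that $g$ decays strictly faster than $(1 + |x|)^{-1}$), and near $x = 0$, where $G_m$ is singular for $s \le 1/2$---this region is innocuous since all functions involved are continuous there. No such subtlety arises for \eqref{w'''decay}, since the target rate $1 + 2 s$ coincides with the threshold rate of $G_m$ and is already produced by the crude bootstrap.
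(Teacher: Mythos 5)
Your argument is correct, but it follows a genuinely different route from the paper. The paper proves \eqref{w''decay} and \eqref{w'''decay} by comparison: it builds the explicit barriers~$\omega_1,\omega_2,\omega_3$ in~\eqref{omegasdef} with~$|(-\Delta)^s\omega_j|\le C|\omega_j|$, proves maximum principles adapted to odd functions by rewriting~$(-\Delta)^s$ on the half-line through the antisymmetric kernel~$K_s$ (Remark~\ref{fraclapforoddrmk}, Propositions~\ref{maxprincprop}, \ref{weakMPprop2}, \ref{weakMPprop3}), and then applies Proposition~\ref{vdecayprop2} (odd comparison with~$\omega_2$, rate~$2+2s$) to~$v=w''$ and Proposition~\ref{vdecayprop3} (general comparison with~$\omega_1$, rate~$1+2s$) to~$v=w'''$; the parity of~$W$ enters only through the oddness of~$w''$, which is what unlocks the odd barrier. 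You instead work with the Green function~$G_m$ of~$(-\Delta)^s+m$, bootstrap the decay of~$w''=G_m\ast g$ and~$w'''$ up to the kernel's generic threshold~$1+2s$ (which already yields \eqref{w'''decay}), and then exploit the parity through the oddness, hence zero mean, of the right-hand side~$g$, using the subtracted kernel~$G_m(x-y)-G_m(x)$ and the~$(1+|x|)^{-(2+2s)}$ decay of~$G_m'$ to lift~$w''$ to rate~$2+2s$; your bookkeeping of the intermediate regimes (non-integrable moment when~$b+2s\le 2$, borderline logarithms, the singularity of~$G_m$ at the origin for~$s\le 1/2$) is accurate, and the iteration does terminate as you describe. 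The two mechanisms are different incarnations of the same structural fact (\eqref{Weven} makes~$w''$ odd, equivalently makes~$g$ odd with vanishing integral), and each has its advantages: the paper's barrier machinery needs no fine asymptotics of~$((-\Delta)^s+m)^{-1}$, only requires the equation for large~$|x|$, and is reused elsewhere (e.g.\ in Proposition~\ref{uimprovasymptprop} via~$\omega_3$); your route concentrates the work in one cancellation step, and the kernel facts you quote as classical are indeed available---note in particular that positivity, $\|G_m\|_{L^1}=1/m$, the tail bound~$|x|^{-(1+2s)}$ and the derivative bound~$|x|^{-(2+2s)}$ all follow from the subordination formula~$G_m=\int_0^{+\infty}e^{-mt}p(\cdot,t)\,dt$ together with \ref{pmass=1}, \ref{pbounds}, and \eqref{pxbound} already used in the paper. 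Two points you assert without proof should be justified in a complete write-up: the representation claim that every \emph{bounded} solution of~$(-\Delta)^s u+mu=\phi$, $\phi\in L^\infty(\R)$, equals~$G_m\ast\phi$ (i.e.\ a Liouville property for the homogeneous equation, provable by the same approximate-maximum argument as in Lemma~\ref{weakOYMPlem}), and the stated local and global bounds on~$G_m$ and~$G_m'$; both are standard, so this is a matter of references or short arguments rather than a gap.
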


We believe~\eqref{w''decay} to be optimal, while~\eqref{w'''decay} is certainly not---again, consider the explicit solution for~$s = 1/2$. The ultimate reason for the different behaviors of~\eqref{w''decay} versus~\eqref{w'''decay} is that~$w''$ is odd, whereas~$w'''$ is even. The function~$w''$ can be thus estimated at infinity via the use of odd barriers, whose fractional Laplacians of order~$2 s$ decay faster than~$|x|^{- 1 - 2 s}$---which is the typical decay rate of~$(-\Delta)^s \eta$, for a general smooth and compactly supported function~$\eta$ in one dimension, that is, the generic decay rate of the~$s$-Laplacian. Conversely, for non-symmetric or even functions, it is unclear how to go beyond this rate and thus we only get estimate~\eqref{w'''decay} for~$w'''$.

The proofs of Propositions~\ref{uimprovasymptprop} and~\ref{w''decayprop} follow the general strategy developed in~\cite{GM12,DPV15}---apart from the modifications needed for our improved estimates, made possible by the parity of~$W$. In order not to break the flow of the paper, we postpone the arguments to Appendix~\ref{layerapp}.

\section{A few general facts about fractional parabolic equations} \label{solsec}

\noindent
Here, we mostly collect some known results about bounded solutions to semilinear parabolic equations driven by the fractional Laplacian. These results will be then frequently used throughout the remainder of the paper. We largely follow~\cite[Section~2]{CR13} although our treatment is essentially self-contained.

Denote with~$\widecheck{\textcolor{white}{a}}$ the inverse Fourier transform in~$\R$ and set
$$
\widetilde{p}(x, t) := \left( e^{- t \, |\,\cdot\,|^{2 s}} \right) \!\! \widecheck{\textcolor{white}{\big)}} (x) = \frac{1}{2 \pi} \int_{\R} e^{i x \xi - t |\xi|^{2 s}} d\xi.
$$
Up to a rescaling in the time variable~$t$, the function~$\widetilde{p}$ is the heat kernel corresponding to the operator~$(-\Delta)^s$ defined in~\eqref{fracLap}. Indeed, it is well-known that there exists a unique~$\lambda_s > 0$, depending only on~$s$, such that the function~$p$ defined by~$p(x, t) := \widetilde{p}(x, \lambda_s t)$ satisfies the following properties:
\begin{enumerate}[label=$(\mbox{P}\arabic*)$,leftmargin=*]
\item \label{preg} $p \in C^\infty(\R\times (0, +\infty))$.
\item \label{pscalor} $\partial_t p + (- \Delta)^s p = 0$ in~$\R \times (0, +\infty)$.
\item \label{pmass=1} $\displaystyle \int_\R p(x, t) \, dx = 1$ for all~$t > 0$.
\item \label{psemi} $p(\cdot, t) \ast p(\cdot, \tau) = p(\cdot, t + \tau)$ for all~$t, \tau > 0$.
\item \label{pscales} $p(x, t) = t^{-1/2 s} p(t^{- 1/2s} x, 1)$ for all~$x \in \R, \, t > 0$.
\item \label{pbounds} There exists a constant~$\Lambda_s \ge 1$, depending only on~$s$, for which
$$
\frac{\Lambda_s^{-1} \, t}{(x^2 + t^{1/s})^{\frac{1 + 2 s}{2}}} \le p(x, t) \le \frac{\Lambda_s \, t}{(x^2 + t^{1/s})^{\frac{1 + 2 s}{2}}} \quad \mbox{for all } x \in \R, \, t > 0.
$$
\end{enumerate}
Properties~\ref{preg}-\ref{pscales} can all be easily deduced from the definition of~$p$, while~\ref{pbounds} follows from~\cite{P23}---see also~\cite[Theorem~2.1]{BG60}.

Given~$t > 0$ and~$u \in L^\infty(\R)$, we define
$$
\T_t[u](x) := \left( p(\cdot, t) \ast u \right)(x) = \int_\R p(x - y, t) u(y) \, dy \quad \mbox{for all } x \in \R.
$$
By property~\ref{pmass=1},~$\T_t$ maps~$L^\infty(\R)$ to~$L^\infty(\R)$. Also,~\ref{psemi} gives that~$\T_t$ is a semigroup in~$L^\infty(\R)$.

Let~$t_0 \in \R$ and~$G = G(x, t, u): \R \times (t_0, +\infty) \times \R \to \R$ be a measurable function satisfying
\begin{equation} \label{gbounded}
\begin{gathered}
\mbox{for every } M > 0 \mbox{ and } T > t_0, \mbox{ there exists } C_{T, M} > 0 \mbox{ such that }\\
|G(x, t, u)| \le C_{T, M} \mbox{ for all } x \in \R, \, t \in (t_0, T], \, u \in [-M, M]
\end{gathered}
\end{equation}
and
\begin{equation} \label{gLip}
\mbox{there is } L > 0 \mbox{ such that } |G(x, t, u) - G(x, t, v)| \le L |u - v| \mbox{ for all } x \in \R, \, t > t_0, \, u, v \in \R.
\end{equation}
Sometimes, we will adopt the notation~$G[u](x, t) := G(x, t, u(x, t))$. In the following, we will be particularly interested in~$G$'s of the form
$$
G(x, t, u) = f(x, t) + b(x, t) u, 
$$
for some~$f, b \in L^\infty(\R \times (t_0, +\infty))$.

Given~$t_1 \in (t_0, +\infty]$ and~$u_0 \in L^\infty(\R)$, we consider the initial value problem
\begin{equation} \label{Gprob}
\begin{cases}
\partial_t u + (-\Delta)^s u = G[u] & \quad \mbox{in } \R \times (t_0, t_1),\\
u = u_0 & \quad \mbox{on } \R \times \{ t_0 \}.
\end{cases}
\end{equation}
The notion of solution of~\eqref{Gprob} that we will mostly consider is presented in the following definition.

\begin{definition} \label{strongsoldef}
Assume~$t_1 < +\infty$. We say that~$u \in L^\infty(\R \times (t_0, t_1))$ is a \emph{mild solution} of~\eqref{Gprob} if
\begin{equation} \label{ustrongdef}
u(x, t) = \T_{t - t_0}[u_0](x) + \int_{t_0}^t \T_{t - \tau} [G[u](\cdot, \tau)] (x) \, d\tau \quad \mbox{for a.e.~} x \in \R, \, t \in (t_0, t_1).
\end{equation}
When~$t_1 = +\infty$, we only require that~$u \in L^\infty_\loc([t_0, +\infty); L^\infty(\R))$ in addition to identity~\eqref{ustrongdef}.
\end{definition}

It is not hard to verify that, when~$G$ and~$u$ are sufficiently smooth,~$u$ is a mild solution of~\eqref{Gprob} if and only if it solves it in the pointwise sense. Without assuming regularity a priori (apart from the boundedness of~$u$), Definition~\ref{strongsoldef} still provides a well-defined notion of solution of~\eqref{Gprob}.

The next proposition provides some basic local- and global-in-time regularity estimates for mild solutions. Here, it does not harm the generality to assume the right-hand side to be a function~$f$ of~$x$ and~$t$ only. We also adopt the notation~$a \wedge b := \min \{ a, b\}$.

\begin{proposition} \label{regforstrongprop}
Let~$f: \R \times (t_0, t_1) \to \R$ be a bounded function and~$u$ be a mild solution of
$$
\begin{cases}
\partial_t u + (-\Delta)^s u = f & \quad \mbox{in } \R \times (t_0, t_1),\\
u = u_0 & \quad \mbox{on } \R \times \{ t_0 \}.
\end{cases}
$$
Assume that~$t_1 < +\infty$ and let~$T \ge t_1 - t_0$. The following estimates are valid:
\begin{enumerate}[label=$(\alph*)$,leftmargin=*]
\item \label{intreg} If~$u_0 \in L^\infty(\R)$, then, for every~$t_\star \in (t_0, t_1)$,~$\sigma \in (0, s)$, and~$\theta \in (0, 1)$, it holds
$$
\sup_{t \in (t_\star, t_1)} \| u(\cdot, t) \|_{C^{2 \sigma}(\R)} + \sup_{x \in \R} \| u(x, \cdot) \|_{C^\theta(t_\star, t_1)}
\le C_{T, \, t_\star} \Big( \| f \|_{L^\infty(\R \times (t_0, t_1))} + \| u_0 \|_{L^\infty(\R)} \Big),
$$
for some constant~$C_{t_\star} > 0$ depending only on~$s$,~$\sigma$,~$\theta$,~$T$, and~$t_\star$. For~$s \ne 1/2$, we can take~$\sigma = s$.
\item \label{globreg} If~$u_0 \in C^\alpha(\R)$ for some~$\alpha \in (0, 2 s \wedge 1)$, then it holds
$$
\sup_{t \in (t_0, t_1)} \| u(\cdot, t) \|_{C^{\alpha}(\R)} + \sup_{x \in \R} \| u(x, \cdot) \|_{C^{\frac{2s \wedge 1}{2s} \alpha}(t_0, t_1)}
%\| u \|_{C_{x, t}^{2s \wedge \alpha, (2s \wedge 1) \frac{\alpha}{2s}}(\R \times [0, T])}
\le C_T \Big( \| f \|_{L^\infty(\R \times (t_0, t_1))} + \| u_0 \|_{C^\alpha(\R)} \Big),
$$
for some constant~$C > 0$ depending only on~$s$,~$\alpha$, and~$T$.
\end{enumerate}
\end{proposition}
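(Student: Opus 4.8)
The plan is to prove Proposition~\ref{regforstrongprop} directly from the Duhamel representation, writing $u=\T_{t-t_0}[u_0]+v$ with $v(x,t):=\int_{t_0}^t\T_{t-\tau}[f(\cdot,\tau)](x)\,d\tau$ (both terms well defined, the second as a Bochner integral of a bounded integrand over a finite interval), and reducing everything to sharp $L^1$ bounds for the spatial increments of the kernel $p$. From the Fourier representation of $p(\cdot,1)$ — whose symbol loses smoothness only at the origin, through $|\xi|^{2s}$ — one obtains the decay $|\partial_x^k p(x,1)|\le C_k(1+|x|)^{-1-2s-k}$ for every $k\ge0$, hence by the scaling \ref{pscales} the bounds $\|\partial_x^k p(\cdot,r)\|_{L^1(\R)}\le C_k r^{-k/(2s)}$ and (using \ref{pscalor}) $\|\partial_r p(\cdot,r)\|_{L^1(\R)}\le Cr^{-1}$. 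Interpolating with $\|p(\cdot,r)\|_{L^1}=1$ and $\|\partial_x^2p(\cdot,r)\|_{L^1}\le Cr^{-1/s}$ gives, for any $\beta\in[0,1]$ and $h\in\R$,
$$\|p(\cdot+h,r)-p(\cdot,r)\|_{L^1}\le C|h|^\beta r^{-\beta/(2s)},\qquad \|p(\cdot+h,r)-2p(\cdot,r)+p(\cdot-h,r)\|_{L^1}\le C|h|^{2\beta}r^{-\beta/s}.$$
Since $\T_r$ is convolution with a probability density, $\|\T_r g\|_{C^{k,\gamma}}\le\|g\|_{C^{k,\gamma}}$ for all $r>0$, and the smoothing bound $\|\partial_x^k\T_r g\|_{L^\infty}\le C_k r^{-k/(2s)}\|g\|_{L^\infty}$ holds for $g\in L^\infty$.

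For the spatial regularity of $v(\cdot,t)$ I would substitute $r=t-\tau$ and estimate the second difference of $v(\cdot,t)$ by $\|f\|_{L^\infty}\int_0^{t-t_0}\min\{1,|h|^{2\beta}r^{-\beta/s}\}\,dr$; taking $\beta=\sigma$ when $\sigma<s$ (the integral converging as $\sigma/s<1$) and $\beta\in(s,1]$ with the integral split at $r=|h|^{2s}$ when $\sigma=s$ (possible precisely because $s<1$, and forcing $s\ne1/2$), one gets a Zygmund bound $\le C_T\|f\|_{L^\infty}|h|^{2\sigma}$, hence $\|v(\cdot,t)\|_{C^{2\sigma}(\R)}\le C_T\|f\|_{L^\infty}$ whenever $2\sigma\notin\Z$; for $2\sigma=1$ (which forces $s>1/2$) one instead bounds $\|\partial_x v(\cdot,t)\|_{L^\infty}\le\|f\|_{L^\infty}\int_0^{t-t_0}\|\partial_x p(\cdot,r)\|_{L^1}\,dr$, the integral $\int_0^{t-t_0}r^{-1/(2s)}\,dr$ converging exactly when $s>1/2$. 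Together with $[\T_{t-t_0}u_0]_{C^\alpha}\le[u_0]_{C^\alpha}$ this yields the spatial half of \ref{globreg}, while for \ref{intreg} one additionally uses $\|\partial_x^k\T_{t-t_0}u_0\|_{L^\infty}\le C_k(t_\star-t_0)^{-k/(2s)}\|u_0\|_{L^\infty}$ for $t\ge t_\star$, the case $\sigma<s$ being already covered by the first-difference bound with $\beta=\sigma$.

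The temporal estimates follow by coupling the spatial ones with the restart identity $u(x,t)=\T_{t-t'}[u(\cdot,t')](x)+\int_{t'}^t\T_{t-\tau}[f(\cdot,\tau)](x)\,d\tau$ valid for $t_0<t'<t<t_1$ — a consequence of Definition~\ref{strongsoldef}, the semigroup property \ref{psemi}, and Fubini. Setting $\delta=t-t'$ and $g=u(\cdot,t')$, one has $u(x,t)-u(x,t')=(\T_\delta g(x)-g(x))+\int_{t'}^t\T_{t-\tau}[f]\,d\tau$; the integral is $O(\|f\|_{L^\infty}\delta)$, while evenness of $p$ kills the first-order Taylor term and the finiteness of $\int_\R p(z,1)|z|^\alpha\,dz$ (valid since $\alpha<2s$) gives $|\T_\delta g(x)-g(x)|\le C[g]_{C^\alpha}\delta^{\alpha/(2s)}$; inserting the uniform spatial bound for $[u(\cdot,t')]_{C^\alpha}$ proved above produces the exponent $\tfrac{2s\wedge1}{2s}\alpha$ of \ref{globreg} (recall $\alpha<2s\wedge1$). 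For \ref{intreg} the same scheme runs on $[t_\star,t_1)$: the homogeneous part is Lipschitz in $t$ there since $\|\partial_r\T_r u_0\|_{L^\infty}\le Cr^{-1}\|u_0\|_{L^\infty}$ for $r\ge t_\star-t_0$, and for $v$ one uses $|\T_\delta g(x)-g(x)|\le C\|g\|_{C^{2\sigma'}}\delta^{\sigma'/s}$ with $\sigma'\in(\theta s,s)$ and the interior spatial bound for $g=u(\cdot,t')$, so that $\delta^{\sigma'/s}\le C_{T,\theta}\,\delta^\theta$ for $\delta\le T$.

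The genuinely delicate point is the endpoint $\sigma=s$ in \ref{intreg} when $s\ne1/2$: first differences of $p$ only give $C^{2s-\varepsilon}$, and one truly needs the second-difference bound, whose time integral is borderline and converges solely because the weight $r^{-\beta/s}$ may be taken with $\beta>s$ — the same mechanism that requires $s<1$ and rules out $s=1/2$. The remaining steps are routine: deriving the kernel-derivative decay from the Fourier representation of $p$, and verifying the restart identity rigorously within the mild-solution framework. All constants are of the asserted form once one uses $t_1-t_0\le T$ and $t-t_0\ge t_\star-t_0$.
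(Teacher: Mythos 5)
Your proposal is correct and rests on the same Duhamel splitting $u=U_0+U_1$ and the same family of kernel bounds as the paper's Appendix~\ref{estforstrongapp}, but it departs from the paper's proof in two genuine ways. For the spatial regularity of the forcing term, the paper (Lemma~\ref{U1lem}) never uses second differences: for $s<1/2$ the first-difference estimate \eqref{pkxLip} with $j=0$, integrated in time and split at $r=|h|^{2s}$, already yields exactly the modulus $|h|^{2s}$ (so your remark that first differences ``only give $C^{2s-\varepsilon}$'' is inaccurate in that range --- the logarithmic loss occurs only at $s=1/2$, which is precisely why the endpoint is excluded there), while for $s>1/2$ the paper differentiates under the integral and applies the same first-difference estimate to $\partial_x p$, obtaining $\partial_x U_1\in L^\infty$ with $[\partial_x U_1]_{C^{2s-1}}$ controlled. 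Your alternative --- an $L^1$ second-difference (Zygmund) bound for $p$, interpolated between $\|p(\cdot,r)\|_{L^1}=1$ and $\|\partial_x^2p(\cdot,r)\|_{L^1}\le Cr^{-1/s}$, followed by the classical equivalence of the second-difference class with $C^{\gamma}$ for $\gamma\in(0,2)\setminus\{1\}$ --- is equally valid and handles $s<1/2$ and $s>1/2$ uniformly, at the cost of invoking that equivalence, which is standard (Zygmund, Stein) but should be cited rather than left implicit; likewise the derivative decay $|\partial_x^kp(x,t)|\le C\,t(|x|+t^{1/2s})^{-1-2s-k}$ is best quoted from the literature, as the paper does with \eqref{pxbound} and \cite{CZ16}, since the one-line Fourier heuristic is not a proof. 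For the time regularity your route also differs: the paper bounds the time increments of $U_1$ directly through $|\partial_t p|\le Ct^{-1}p$ (giving $C^\theta$ for every $\theta<1$) and extracts the exponent $\frac{2s\wedge1}{2s}\alpha$ for $U_0$ from the scaling identity \ref{pscales} via $|t^{1/2s}-\tau^{1/2s}|^\alpha$ (Lemma~\ref{U0lem}), whereas you restart the mild solution at an intermediate time $t'$ (an identity that indeed follows from Definition~\ref{strongsoldef}, \ref{psemi}, and Fubini, and is used elsewhere in the paper) and convert the already-established uniform spatial H\"older bound of $u(\cdot,t')$ into time regularity through $\int_\R p(z,\delta)|z|^{\alpha}\,dz=C\delta^{\alpha/2s}$. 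Both mechanisms give the stated exponents and constants; your restart argument makes the origin of $\frac{2s\wedge1}{2s}\alpha$ and of the interior exponent $\theta$ more transparent, while the paper's computation stays entirely at the kernel level. Apart from the two misstatements flagged above (the $C^{2s-\varepsilon}$ claim, and the stray reference to a ``first-difference bound with $\beta=\sigma$'' in part~\ref{intreg}, where the exponent obtained from first differences is $|h|^\beta$, not $|h|^{2\beta}$ --- your second-difference computation already covers that case anyway), the argument is complete.
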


Proposition~\ref{regforstrongprop} is surely well-known to the experts and can be proved via rather straightforward, albeit lengthy, computations. We postpone the argument to Appendix~\ref{estforstrongapp}.

We now address the solvability of problem~\eqref{Gprob}. To this end, it is convenient to consider the map~$\NNN_{G, u_0}: L^\infty(\R\times (t_0, t_1)) \to L^\infty(\R \times (t_0, t_1))$ defined, for~$u_0 \in L^\infty(\R)$ and~$u \in L^\infty(\R \times (t_0, t_1))$, by
\begin{equation} \label{Ngphidef}
\NNN_{G, u_0}[u](x, t) := \T_{t - t_0}[u_0](x) + \int_{t_0}^t \T_{t - \tau} [G[u](\cdot, \tau)] (x) \, d\tau \quad \mbox{for a.e.~} x \in \R, \, t \in (t_0, t_1).
\end{equation}
We remark that the boundedness of~$\NNN_{G, u_0}[u]$ is a consequence, in particular, of hypothesis~\eqref{gbounded} on~$G$. The following result holds true.

\begin{proposition} \label{existinLinftyprop}
Let~$G: \R \times (t_0, +\infty) \times \R \to \R$ be a function satisfying~\eqref{gbounded} and~\eqref{gLip}. Then, given any~$u_0 \in L^\infty(\R)$, there exists a unique mild solution~$u$ to problem~\eqref{Gprob}, with~$t_1 = +\infty$.
\end{proposition}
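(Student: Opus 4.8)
The plan is to run a standard Banach fixed point argument on a short time interval and then iterate, with the iteration reaching $+\infty$ precisely because the Lipschitz bound~\eqref{gLip} is \emph{global}.

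First I would record that, by~\ref{pmass=1} together with the positivity of $p$ (visible from~\ref{pbounds}), the operator $\T_t$ satisfies $\| \T_t[v] \|_{L^\infty(\R)} \le \| v \|_{L^\infty(\R)}$ for every $t > 0$; combined with~\eqref{gbounded}--\eqref{gLip} (which yield $|G(x, t, u)| \le C_{T, 0} + L |u|$ for $t \le T$) this shows that, for any $\tau > 0$, the map $\NNN_{G, u_0}$ of~\eqref{Ngphidef} sends $L^\infty(\R \times (t_0, t_0 + \tau))$ into itself. Then, for $u, v$ in this space and a.e.~$(x, t)$,
\[
\big| \NNN_{G, u_0}[u](x, t) - \NNN_{G, u_0}[v](x, t) \big| \le \int_{t_0}^t \big\| G[u](\cdot, \sigma) - G[v](\cdot, \sigma) \big\|_{L^\infty(\R)} \, d\sigma \le L \tau \, \| u - v \|_{L^\infty(\R \times (t_0, t_0 + \tau))}.
\]
Choosing, say, $\tau := 1/(2L)$ makes $\NNN_{G, u_0}$ a contraction, so Banach's theorem produces a unique bounded $u$ satisfying~\eqref{ustrongdef} on $(t_0, t_0 + \tau)$, i.e.~a unique mild solution there. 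Since the Duhamel integral in~\eqref{ustrongdef} also converges at $t = t_0 + \tau$, this $u$ is in fact bounded on $\R \times (t_0, t_0 + \tau]$, so that $u(\cdot, t_0 + \tau) \in L^\infty(\R)$ is well defined.

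The decisive observation is that the length $\tau = 1/(2L)$ does \emph{not} depend on $u_0$ nor on the size of the solution. Hence I would repeat the construction on $(t_0 + \tau, t_0 + 2\tau)$ with initial datum $u(\cdot, t_0 + \tau)$, and so on; using the semigroup property~\ref{psemi} and the shift $\sigma \mapsto \sigma - k\tau$ in the time integral, one checks that the concatenation of the pieces still satisfies~\eqref{ustrongdef} on $(t_0, t_0 + n\tau)$ for each $n$. Since $t_0 + n\tau \to +\infty$, this yields a mild solution on all of $\R \times (t_0, +\infty)$, bounded on every finite strip, hence lying in $L^\infty_\loc([t_0, +\infty); L^\infty(\R))$ as Definition~\ref{strongsoldef} requires. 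For uniqueness, two global mild solutions restrict on $(t_0, t_0 + \tau)$ to fixed points of the same contraction $\NNN_{G, u_0}$ and so agree there; splitting~\eqref{ustrongdef} at $t_0 + \tau$ and using~\ref{psemi}, on the next interval they are both fixed points of $\NNN_{G, u(\cdot, t_0 + \tau)}$, hence agree again, and an induction completes the argument.

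I do not expect any genuine obstacle: because~\eqref{gLip} is global there is no blow-up and the local existence time is uniform, so the iteration never stalls. The only slightly fiddly point is the bookkeeping showing that the concatenated local solutions satisfy the \emph{global} Duhamel identity~\eqref{ustrongdef} (and conversely that a global mild solution restricts to local ones on each subinterval), which is just a splitting of the time integral combined with the semigroup identity~\ref{psemi}.
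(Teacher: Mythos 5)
Your argument is correct and is essentially the paper's proof: a contraction estimate for $\NNN_{G, u_0}$ on a time interval of length comparable to $1/L$ (using~\ref{pmass=1} and the global Lipschitz bound~\eqref{gLip}), followed by iteration over consecutive intervals via the semigroup property, with uniqueness obtained interval by interval. The only cosmetic difference is how the slice $u(\cdot, t_0 + \tau)$ is given meaning — you evaluate the Duhamel formula at the endpoint, while the paper invokes the continuity of mild solutions from Proposition~\ref{regforstrongprop}\ref{intreg} — and either route settles this bookkeeping point.
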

\begin{proof}
Of course, the claim is equivalent to the existence and uniqueness of a mild solution~$u$ to problem~\eqref{Gprob} for every~$t_1 \in (t_0, +\infty)$.

Given such a~$t_1$, for~$u \in L^\infty(\R \times (t_0, t_1))$ being a mild solution of~\eqref{Gprob} is equivalent to being a fixed point for the map~$\NNN := \NNN_{G, u_0}$. The existence and uniqueness of such a fixed point follows, when~$T := t_1 - t_0$ is small, from the fact that~$\NNN$ is a contraction. Indeed, by~\eqref{gLip} and property~\ref{pmass=1},
\begin{align*}
\big| \NNN[v](x, t) - \NNN[w](x, t) \big| & \le \int_{t_0}^t \left( \int_\R p(x - y, t - \tau) \left| G(y, \tau, v(y, \tau)) - G(y, \tau, w(y, \tau)) \right| dy \right) d\tau \\
& \le L T \| v - w \|_{L^\infty(\R \times (t_0, t_1))},
\end{align*}
for a.e.~$x \in \R$,~$t \in (t_0, t_1)$, and all~$v, w \in L^\infty(\R \times (t_0, t_1))$. Hence,
$$
\| \NNN[v] - \NNN[w] \|_{L^\infty(\R \times (t_0, t_1))} \le \frac{3}{4} \, \| v - w \|_{L^\infty(\R \times (t_0, t_1))} \quad \mbox{for all } v, w \in L^\infty(\R\times(t_0, t_1)),
$$
provided~$T \le 3/(4L)$.

The case of a general~$T$ follows by applying the previous argument iteratively in the time intervals~$(t_0, t_0 + (2L)^{-1}), (t_0 + (2L)^{-1}, t_0 + L^{-1}), \ldots, (t_0 + (2 L)^{-1} (\lceil 2 L T \rceil - 1), t_1)$, by restricting the mild solution found in an interval to the right endpoint of said interval and using this function as the initial datum for the next interval. Note that this can be done thanks to the semigroup properties of~$\T_t$. In addition, we took advantage of the fact that the temporal slices of mild solutions are well-defined bounded continuous functions of~$x$, since the solutions are bounded continuous functions of~$(x, t)$, thanks to Proposition~\ref{regforstrongprop}\ref{intreg}.
\end{proof}

In the sequel, we will need to consider products of mild solutions with positive factors~$A(t)$ depending solely on the time variable. The next result establishes that these new functions are themselves mild solutions of a different initial value problem.
%Note that this fact has already been observed in~\cite{CR13} for the case~$A(t) = e^{a t}$.

\begin{lemma} \label{Arescalelem}
Let~$G: \R \times (t_0, +\infty) \times \R \to \R$ be a function satisfying~\eqref{gbounded} and~$u$ be a mild solution of~\eqref{Gprob}. Then, given a positive~$A \in C^1([t_0, t_1])$, the function~$\widetilde{u}(x, t) := A(t) u(x, t)$ is a mild solution of
\begin{equation} \label{Gtildeprob}
\begin{cases}
\partial_t \widetilde{u} + (-\Delta)^s \widetilde{u} = \widetilde{G}[\widetilde{u}] & \quad \mbox{in } \R \times (t_0, t_1),\\
\widetilde{u} = \widetilde{u}_0 & \quad \mbox{on } \R \times \{ t_0 \},
\end{cases}
\end{equation}
with~$\widetilde{u}_0 := A(t_0) u_0$ and
\begin{equation} \label{gtildedef}
\widetilde{G}(x, t, \widetilde{u}) := A(t) G(x, t, A(t)^{-1} \widetilde{u}) + A'(t) A(t)^{-1} \widetilde{u}.
\end{equation}
\end{lemma}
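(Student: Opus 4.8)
The plan is to verify the defining integral identity \eqref{ustrongdef} for $\widetilde{u}$ directly, using the known identity for $u$ together with an integration by parts in time to absorb the extra drift term $A'(t)A(t)^{-1}\widetilde{u}$. First I would recall that, since $u$ is a mild solution of \eqref{Gprob}, we have
\begin{equation*}
u(x,t) = \T_{t-t_0}[u_0](x) + \int_{t_0}^t \T_{t-\tau}[G[u](\cdot,\tau)](x)\,d\tau
\end{equation*}
for a.e.\ $x$ and $t$. Multiplying by $A(t)$ and writing $A(t) = A(t_0) + \int_{t_0}^t A'(\tau)\,d\tau$ in the first term and $A(t) = A(\tau) + \int_\tau^t A'(\sigma)\,d\sigma$ inside the integral term, one rearranges so that $A(t_0)\T_{t-t_0}[u_0](x) = \T_{t-t_0}[\widetilde u_0](x)$ appears, plus $\int_{t_0}^t \T_{t-\tau}[A(\tau)G[u](\cdot,\tau)](x)\,d\tau$, plus a collection of double-integral remainders coming from the $A'$ pieces.

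The key computation is to show that the $A'$-remainders assemble exactly into $\int_{t_0}^t \T_{t-\tau}[A'(\tau)A(\tau)^{-1}\widetilde u(\cdot,\tau)](x)\,d\tau$. To see this, I would interchange the order of integration (Fubini, justified by boundedness of $u$, continuity of $A,A'$ on the compact interval $[t_0,t_1]$, and property~\ref{pmass=1}) and use the semigroup property~\ref{psemi} of $\T$: the term $\int_{t_0}^t A'(\sigma)\Big(\int_{t_0}^\sigma \T_{t-\tau}[G[u](\cdot,\tau)]\,d\tau\Big)d\sigma$, after swapping $\sigma$ and $\tau$, produces $\int_{t_0}^t \T_{t-\sigma}\Big[A'(\sigma)\int_{t_0}^\sigma \T_{\sigma-\tau}[G[u](\cdot,\tau)]\,d\tau\Big]d\sigma$ once one uses $\T_{t-\tau} = \T_{t-\sigma}\circ\T_{\sigma-\tau}$; combined with the analogous term from the initial-datum piece, $\int_{t_0}^t A'(\sigma)\,\T_{t-\sigma}[\T_{\sigma-t_0}[u_0]]\,d\sigma$, and using linearity of $\T_{t-\sigma}$, the bracket becomes exactly $u(\cdot,\sigma)$ by the mild-solution identity for $u$ again. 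Thus the remainder is $\int_{t_0}^t \T_{t-\sigma}[A'(\sigma)u(\cdot,\sigma)]\,d\sigma = \int_{t_0}^t \T_{t-\sigma}[A'(\sigma)A(\sigma)^{-1}\widetilde u(\cdot,\sigma)]\,d\sigma$. Putting the pieces together and recalling the definition \eqref{gtildedef} of $\widetilde G$, namely $\widetilde G[\widetilde u](x,\tau) = A(\tau)G[u](x,\tau) + A'(\tau)A(\tau)^{-1}\widetilde u(x,\tau)$, yields precisely \eqref{ustrongdef} for $\widetilde u$ with initial datum $\widetilde u_0$, which is what we want.

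Finally I would record the routine technical points: $\widetilde u \in L^\infty(\R\times(t_0,t_1))$ (or in $L^\infty_\loc$ if $t_1 = +\infty$, applying the argument on each finite subinterval) because $A$ is bounded on compacts and $u$ is bounded; $\widetilde G$ inherits a bound of the form \eqref{gbounded} from $G$ since $A$, $A^{-1}$, $A'$ are bounded on $[t_0,t_1]$; and all the applications of Fubini's theorem are legitimate because the integrands are products of the heat kernel (integrable in the spatial variable with total mass $1$), the bounded functions $G[u]$ or $u$, and the continuous functions $A, A'$ over a bounded time domain. I expect the main obstacle to be purely bookkeeping: keeping track of which variable is which after the repeated use of Fubini and the semigroup property, and making sure the telescoping of the $A'$-terms against the mild-solution identity for $u$ is carried out in the correct order. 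There is no analytic difficulty beyond that.
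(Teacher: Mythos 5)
Your proposal is correct and is essentially the paper's argument run in the opposite direction: the paper computes $\int_{t_0}^t \T_{t-\tau}[A'(\tau)u(\cdot,\tau)]\,d\tau$ by applying $\T_{t-\tau}$ to the mild identity at intermediate times and integrating by parts in $\tau$, whereas you start from $A(t)u(\cdot,t)$, split $A(t)$ by the fundamental theorem of calculus, and recover the same term via Fubini and the semigroup property — the same manipulation in disguise. Both rest on exactly the ingredients you list (semigroup property \ref{psemi}, the mild identity for $u$ at intermediate times, and Fubini justified by \ref{pmass=1} and boundedness), so there is no gap.
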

\begin{proof}
We follow the argument of~\cite[Subsection~2.3]{CR13}, where the result is established for the case of~$A(t)$ being an exponential factor.

As~$u$ is a mild solution of~\eqref{Gprob}, we have that~$u(\cdot, \tau) = \T_{\tau - t_0}[u_0] + \int_{t_0}^\tau \T_{\tau - \sigma}[G[u](\cdot, \sigma)] \, d\sigma$ in~$\R$ for a.e.~$\tau \in (t_0, t_1)$. By applying the operator~$\T_{t - \tau}$ to both sides of the previous identity and taking advantage of its semigroup properties, we find that~$\T_{t - \tau}[u(\cdot, \tau)] = \T_{t - t_0}[u_0] + \int_{t_0}^\tau \T_{t - \sigma}[G[u](\cdot, \sigma)] \, d\sigma$. Hence, after an integration by parts and another application of~\eqref{ustrongdef}, we get
\begin{align*}
\int_{t_0}^t \T_{t - \tau}[A'(\tau) u(\cdot, \tau)] \, d\tau & = \T_{t - t_0}[u_0] \int_{t_0}^t A'(\tau) \, d\tau + \int_{t_0}^t A'(\tau) \left( \int_{t_0}^\tau \T_{t - \sigma}[G[u](\cdot, \sigma)] \, d\sigma \right) d\tau \\
& = \T_{t - t_0}[u_0] \left( A(t) - A(t_0) \right) \\
& \quad + A(t) \int_{t_0}^t \T_{t - \sigma}[G[u](\cdot, \sigma)] \, d\sigma - \int_{t_0}^t A(\tau) \T_{t - \tau}[G[u](\cdot, \tau)] \, d\tau \\
& = A(t) u(\cdot, t) - \T_{t - t_0}[A(t_0) u_0] - \int_{t_0}^t \T_{t - \tau}[A(\tau) G[u](\cdot, \tau)] \, d\tau,
\end{align*}
in~$\R$, for a.e.~$t \in (t_0, t_1)$. That is,~$\widetilde{u}$ is a mild solution of~\eqref{Gtildeprob}.
\end{proof}

%\begin{remark} \label{exprescalermk}
%If~$u$ is a mild solution of~\eqref{Gprob} and~$a \in \R$, then the function~$\widetilde{u}(x, t) := e^{a t} u(x, t)$ is a mild solution of~\eqref{Gprob} with the same~$u_0$ as initial datum and with~$g$ replaced by
%\begin{equation} \label{gtildedef}
%\widetilde{g}(x, t, \widetilde{u}) := e^{a t} g(x, t, e^{- a t} \widetilde{u}) + a \widetilde{u}.
%\end{equation}
%Notice that, if~$g$ satisfies properties~\eqref{gbounded} and~\eqref{gLip}, then~$\widetilde{g}$ satisfies them too, possibly with different constants~$C_{T, M}$ and~$L$.
%\end{remark}

As a first application of the previous result, we have the following comparison principle.

\begin{proposition} \label{compprincforstrongprop}
Let~$G_1, G_2: \R \times (t_0, +\infty) \times \R \to \R$ be two functions satisfying~\eqref{gbounded},~\eqref{gLip}, and
\begin{equation} \label{g1leg2}
G_1(x, t, u) \le G_2(x, t, u) \quad \mbox{for all } x \in \R, \, t > t_0, \, u \in \R.
\end{equation}
For~$i = 1, 2$, let~$u_{i, 0} \in L^\infty(\R)$ be such that~$u_{1, 0} \le u_{2, 0}$ in~$\R$. Let~$u_i$ be the mild solution of
$$
\begin{cases}
\partial_t u_i + (-\Delta)^s u_i = G_i(\cdot, \cdot, u_i) & \quad \mbox{in } \R \times (t_0, t_1),\\
u_i = u_{i, 0} & \quad \mbox{on } \R \times \{ t_0 \},
\end{cases}
$$
for any~$t_1 > t_0$. Then,~$u_1 \le u_2$ in~$\R \times (t_0, t_1)$.
\end{proposition}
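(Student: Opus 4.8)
The plan is to reduce, via the rescaling of Lemma~\ref{Arescalelem}, to the situation in which both~$G_1$ and~$G_2$ are nondecreasing in their last variable, and then to close the argument by a Gronwall estimate on the negative part of~$w := u_2 - u_1$. As a preliminary reduction, I would dispose of the case~$t_1 = +\infty$: since the restriction to~$\R \times (t_0, T)$ of a mild solution on~$\R \times (t_0, +\infty)$ is again a mild solution for every finite~$T > t_0$, it suffices to treat~$t_1 < +\infty$, which I assume henceforth. Fix a number~$L > 0$ which is a Lipschitz constant for both~$G_1$ and~$G_2$ in the sense of~\eqref{gLip}.

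Next I would apply Lemma~\ref{Arescalelem} with the factor~$A(t) := e^{L(t - t_0)}$, which is positive and of class~$C^1([t_0, t_1])$: the functions~$\widetilde{u}_i := A(t) u_i$ are mild solutions of~\eqref{Gtildeprob}--\eqref{gtildedef} with initial data~$\widetilde{u}_{i, 0} = A(t_0) u_{i, 0} = u_{i, 0}$ and nonlinearities
\[
\widetilde{G}_i(x, t, v) = A(t)\, G_i\big(x, t, A(t)^{-1} v\big) + L\, v .
\]
A direct check shows that each~$\widetilde{G}_i$ still satisfies~\eqref{gbounded} and~\eqref{gLip} (on~$(t_0, t_1)$, with Lipschitz constant~$2L$), that~$\widetilde{G}_1 \le \widetilde{G}_2$ pointwise, and---this being the point of the rescaling---that~$v \mapsto \widetilde{G}_i(x, t, v)$ is nondecreasing: for~$v < v'$ the Lipschitz bound on~$G_i$ gives~$A(t)\big(G_i(x, t, A(t)^{-1} v') - G_i(x, t, A(t)^{-1} v)\big) \ge - L (v' - v)$, which is exactly compensated by the added term~$L(v' - v)$. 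Since~$A > 0$, the inequality~$\widetilde{u}_1 \le \widetilde{u}_2$ is equivalent to~$u_1 \le u_2$; so, after relabelling, I may assume that~$G_1, G_2$ are nondecreasing in their last variable, with a common Lipschitz constant~$\Lambda$, and that~$u_{1, 0} \le u_{2, 0}$.

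With this reduction in place, I would set~$w := u_2 - u_1$ and subtract the mild formulations~\eqref{ustrongdef} of the two equations, obtaining
\[
w(x, t) = \T_{t - t_0}[u_{2, 0} - u_{1, 0}](x) + \int_{t_0}^t \T_{t - \tau}\big[ G_2[u_2](\cdot, \tau) - G_1[u_1](\cdot, \tau) \big](x) \, d\tau .
\]
The \emph{key step} is the pointwise lower bound on the integrand: writing~$w_-(\cdot, \tau) := \max\{- w(\cdot, \tau), 0\}$ and splitting~$G_2[u_2] - G_1[u_1] = \big(G_2[u_2] - G_2[u_1]\big) + \big(G_2[u_1] - G_1[u_1]\big)$, the second summand is~$\ge 0$ by~\eqref{g1leg2}, while, distinguishing the set where~$u_2 \ge u_1$ (there monotonicity of~$G_2$ gives a nonnegative difference) from the set where~$u_2 < u_1$ (there the Lipschitz bound applies), one finds~$G_2[u_2](\cdot, \tau) - G_2[u_1](\cdot, \tau) \ge - \Lambda\, w_-(\cdot, \tau)$. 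Since the heat kernel~$p$ is nonnegative by~\ref{pbounds} and has unit mass by~\ref{pmass=1}, and since~$u_{2, 0} - u_{1, 0} \ge 0$, this yields
\[
w(x, t) \ge - \Lambda \int_{t_0}^t \| w_-(\cdot, \tau) \|_{L^\infty(\R)} \, d\tau \quad \mbox{for a.e.~} (x, t) \in \R \times (t_0, t_1),
\]
hence~$\| w_-(\cdot, t) \|_{L^\infty(\R)} \le \Lambda \int_{t_0}^t \| w_-(\cdot, \tau) \|_{L^\infty(\R)} \, d\tau$. As~$t \mapsto \| w_-(\cdot, t) \|_{L^\infty(\R)}$ is bounded---indeed continuous, by Proposition~\ref{regforstrongprop}\ref{intreg}---on~$(t_0, t_1)$ and vanishes at~$t_0$, Gronwall's inequality forces~$w_- \equiv 0$, that is,~$u_1 \le u_2$; undoing the rescaling gives the claim.

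The only genuine obstacle is that~$G_1$ and~$G_2$ are a priori merely Lipschitz, not monotone, so that a direct Gronwall estimate would also pick up the positive part~$w_+$ and fail to close. The exponential rescaling of Lemma~\ref{Arescalelem} is precisely what repairs this, upgrading the two-sided Lipschitz control to monotonicity; everything else---the verification that~$\widetilde{G}_i$ inherits~\eqref{gbounded}, \eqref{gLip}, the ordering, and gains monotonicity, together with the sign bookkeeping in the splitting and the final Gronwall step---is routine.
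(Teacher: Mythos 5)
Your proof is correct, and it shares with the paper the first (and most important) reduction: the exponential rescaling of Lemma~\ref{Arescalelem}, with factor~$e^{L(t-t_0)}$ for a common Lipschitz constant~$L$, which turns the merely Lipschitz nonlinearities into nondecreasing ones while preserving~\eqref{gbounded},~\eqref{gLip}, the ordering~\eqref{g1leg2}, and the ordering of the data. Where you diverge is in how the comparison is closed. The paper stays at the level of the solution \emph{map}: on a time interval short enough that the Duhamel operator~$\NNN_{\widetilde G_i, u_{i,0}}$ is a contraction, the mild solution is the limit of Picard iterates, the iteration map is order-preserving (positivity of~$p$ plus monotonicity of~$\widetilde G_1$ and~$\widetilde G_1 \le \widetilde G_2$), so the inequality~$\widetilde u_1 \le \widetilde u_2$ is inherited in the limit; the general case is then obtained by subdividing~$(t_0,t_1)$ and iterating. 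You instead subtract the two Duhamel identities and run a Gronwall estimate on~$\|w_-(\cdot,t)\|_{L^\infty(\R)}$, using the splitting~$G_2[u_2]-G_1[u_1] = (G_2[u_2]-G_2[u_1]) + (G_2[u_1]-G_1[u_1]) \ge -\Lambda\, w_-$ together with the unit mass and nonnegativity of the kernel. Your route buys a proof on an arbitrary finite interval in one stroke, with no smallness-of-$T$ reduction and no appeal to how the solution was constructed (only to the mild identity itself, plus boundedness and measurability of the slices, which Proposition~\ref{regforstrongprop}\ref{intreg} supplies); the paper's route avoids any Gronwall bookkeeping and exhibits the comparison principle as a direct consequence of the order preservation of the fixed-point scheme it has already set up in Proposition~\ref{existinLinftyprop}. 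Both are complete; the monotonicity upgrade via the exponential factor is the shared essential idea.
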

\begin{proof}
First of all, we remark that it suffices to prove the result when~$T:= t_1 - t_0$ is a small number. Indeed, if this is not the case, we divide~$(t_0, t_1)$ into a finite number of adjacent subintervals of sufficiently small length and apply the result iteratively in each subinterval.

Let now~$L_i$ be the Lipschitz constant of~$G_i$ (as given by property~\eqref{gLip}) and set~$a := \max \{ L_1, L_2 \}$. Consider the functions~$\widetilde{G}_i$ defined as in~\eqref{gtildedef}, corresponding to~$G_i$ and to the function~$A(t) = e^{a (t - t_0)}$. Observe that~$\widetilde{G}_i$ satisfies properties~\eqref{gbounded} and~\eqref{gLip}, possibly for different constants~$C_{T, M, i}$ and~$L_i$. Moreover, it is immediate to see that~$\widetilde{G}_i$ is monotone non-decreasing in the last component.

Consider the functions~$\widetilde{u}_i := e^{a (t - t_0)} u_i$. As per Lemma~\ref{Arescalelem},~$\widetilde{u}_i$ is the mild solution of problem~\eqref{Gprob} with initial datum~$u_{i, 0}$ and right-hand side~$\widetilde{G}_i$. Recalling how the existence of mild solution has been established in Proposition~\ref{existinLinftyprop} and the proof of the contraction lemma, assuming~$T \le 3/(8a)$ we have that~$\widetilde{u}_i$ may be obtained as the limit
\begin{equation} \label{uiasalimit}
\widetilde{u}_i = \lim_{k \rightarrow +\infty} (\NNN_i)^k[u_{i, 0}] \in L^\infty(\R \times (t_0, t_1)),
\end{equation}
where we wrote~$\NNN_i := \NNN_{\widetilde{G}_i, u_{i, 0}}$ and identified~$u_{i, 0}$ with its constant extension~$u_{i, 0}(\cdot, t) := u_{i, 0}$ for~$t > 0$. From definition~\eqref{Ngphidef}, the positivity of~$p$, the monotonicity of, say,~$\widetilde{G}_1$, and hypothesis~\eqref{g1leg2}, it follows that~$\NNN_1[v] \le \NNN_2[w]$ in~$\R \times (t_0, t_1)$ if~$v \le w$ in~$\R \times (t_0, t_1)$. By applying this relation iteratively and recalling~\eqref{uiasalimit}, we conclude that~$\widetilde{u}_1 \le \widetilde{u}_2$ and, consequently,~$u_1 \le u_2$ in~$\R \times (t_0, t_1)$.
\end{proof}

The next two lemmas provide us with a couple of barriers that will be used later on in conjunction with the comparison principle of Proposition~\ref{compprincforstrongprop}. Recall definition~\eqref{Phidef} of the weight function~$\Phi$.

\begin{lemma} \label{etabound0lem}
For~$m > 0$ and~$T \ge 1$, let~$\etaini_T = \etaini$ be the mild solution of the problem
\begin{equation} \label{etaprob0}
\begin{cases}
\partial_t \etaini + (-\Delta)^s \etaini + m \etaini = 0 & \quad \mbox{in } \R \times (T, +\infty) \\
\etaini = \Phi & \quad \mbox{on } \R \times \{ T \}.
\end{cases}
\end{equation}
Then,
$$
0 < \etaini \le C \Phi \quad \mbox{in } \R \times [T, +\infty),
$$
for some constant~$C > 0$ depending only on~$s$ and~$m$.
\end{lemma}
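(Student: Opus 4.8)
The plan is to reduce the statement to an estimate for the fractional heat semigroup $\T$ and then to prove the two bounds $\etaini \le C\,t^{-2s/(1+2s)}$ and $\etaini \le C\,|x|^{-2s}$ separately, whose combination gives $\etaini \le C\Phi$.

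\emph{Step 1: semigroup representation.} The right-hand side $G(x,t,u) = -mu$ satisfies~\eqref{gbounded}--\eqref{gLip}, so Proposition~\ref{existinLinftyprop} guarantees that $\etaini$ is well-defined; applying Lemma~\ref{Arescalelem} on each finite time interval with the positive factor $A(t) := e^{m(t-T)}$ shows that $A\,\etaini$ is the mild solution of $\partial_t v + (-\Delta)^s v = 0$ with datum $\Phi(\cdot,T) \in L^\infty(\R)$, that is, $A(t)\,\etaini(x,t) = \T_{t-T}[\Phi(\cdot,T)](x)$. Hence
$$
\etaini(x,t) = e^{-m(t-T)}\,\T_{t-T}[\Phi(\cdot,T)](x), \qquad x \in \R,\ t \ge T,
$$
and the strict positivity $\etaini > 0$ is immediate from the positivity of $p$ (the lower bound in~\ref{pbounds}) and of $\Phi(\cdot,T)$.

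\emph{Step 2: the two bounds.} Since $\|\Phi(\cdot,T)\|_{L^\infty(\R)} = T^{-2s/(1+2s)}$ and $T \ge 1$, property~\ref{pmass=1} gives $\etaini(x,t) \le e^{-m(t-T)}\,T^{-2s/(1+2s)}$; together with the elementary inequality $\sup_{\tau \ge 0} e^{-m\tau}(1+\tau)^{2s/(1+2s)} < \infty$ (using $T \ge 1$ to replace $1+\tau/T$ by $1+\tau$) this yields $\etaini(x,t) \le C\,t^{-2s/(1+2s)}$ with $C = C(s,m)$. For the bound by $|x|^{-2s}$, write $\tau := t-T$ and $X := |x|$, and split $\T_\tau[\Phi(\cdot,T)](x) = \int_\R p(x-y,\tau)\Phi(y,T)\,dy$ according to $|x-y| < X/2$ or $|x-y| \ge X/2$. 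If $X \le 2\,T^{1/(1+2s)}$ the estimate is trivial, since $|x|^{-2s}$ then controls a multiple of $\|\Phi(\cdot,T)\|_{L^\infty(\R)}$; if $X > 2\,T^{1/(1+2s)}$, the near-diagonal piece is handled using $\Phi(y,T) = |y|^{-2s} \le (X/2)^{-2s}$ there and~\ref{pmass=1}, while for the far piece one uses $p(z,\tau) \le \Lambda_s\,\tau\,|z|^{-1-2s}$ from~\ref{pbounds} and splits the $y$-integral into $|y| \le T^{1/(1+2s)}$, $T^{1/(1+2s)} < |y| \le X/2$, and $|y| > X/2$; the last region produces $C\,|x|^{-2s}$ outright (there $|y| = |x+(y-x)| > X/2$), and in the first two the factor $e^{-m\tau}$ absorbs the extra powers of $\tau$ that appear, via $e^{-m\tau}\tau \le (em)^{-1}$ and $X > 2\,T^{1/(1+2s)} \ge 2$. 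Taking the minimum of the two bounds gives $\etaini \le C\Phi$.

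I expect the delicate point to be exactly the far-field term in Step~2 in the intermediate regime $T^{1/(1+2s)} \ll X \ll \tau^{1/(2s)}$: there the diffusion has already spread beyond the truncation scale of $\Phi(\cdot,T)$ near the origin but not yet past $x$, so the pure-diffusion estimate only gives $|x|^{-2s}$ times a positive power of $\tau$, and it is the exponential decay factor coming from the zeroth-order term $m\,\etaini$ that must be exploited to close the estimate; in particular the result genuinely uses $m > 0$. As an alternative, for $T$ large one may instead invoke the comparison principle of Proposition~\ref{compprincforstrongprop} against the stationary supersolution $2^s (R_T^2 + x^2)^{-s}$, with $R_T := T^{1/(1+2s)}$: this function dominates $\Phi(\cdot,T)$ and satisfies $(-\Delta)^s u + m u \ge 0$ once $R_T$ is large enough, since by scaling $(-\Delta)^s (R_T^2 + x^2)^{-s}$ decays strictly faster at infinity than $(R_T^2 + x^2)^{-s}$ and is bounded by a multiple of $R_T^{-2s}$ times it.
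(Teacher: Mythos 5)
Your proof is correct and takes essentially the same route as the paper: the representation $\etaini(x,t)=e^{-m(t-T)}\,\T_{t-T}[\Phi(\cdot,T)](x)$ via Lemma~\ref{Arescalelem}, the trivial $L^\infty$ bound combined with the exponential factor for the $t^{-2s/(1+2s)}$ decay, and a near/far splitting of the convolution at scale $|x|/2$ using~\ref{pmass=1} and the kernel bound~\ref{pbounds}, with $e^{-m(t-T)}$ absorbing the linear-in-time factor, for the $|x|^{-2s}$ decay. Your finer subdivision of the far region (and the optional stationary-supersolution comparison) is a slightly more elaborate bookkeeping than the paper's two-piece split, but the ingredients and structure coincide.
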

\begin{proof}
By Lemma~\ref{Arescalelem}---applied here with~$A(t) = e^{m (t - t_0)}$---and Definition~\ref{strongsoldef}, we have that
$$
\etaini(x, t) = e^{- m (t - T)} \int_\R p(y, t - T) \Phi(x - y, T) \, dy.
$$
From this, property~\ref{pmass=1}, and~\eqref{Phidef}, it immediately follows that
$$
0 < \etaini(x, t) \le T^{- \frac{2 s}{1 + 2 s}} e^{- m (t - T)} \le C \, t^{- \frac{2 s}{1 + 2 s}} \quad \mbox{for all } x \in \R, \, t \ge T.
$$
Note that the last inequality can be obtained for instance by considering separately the two cases of~$t \in [T, 2 T]$ and~$t \ge 2 T$.

To finish the proof we only need to show that~$\eta^{(1)}$ is controlled by~$|x|^{- 2 s}$. By~\ref{pbounds} and again~\ref{pmass=1},
\begin{align*}
\etaini(x, t) & \le e^{- m (t - T)} \left\{ \int_{|y - x| \ge \frac{|x|}{2}} \frac{p(y, t - T)}{|x - y|^{2 s}} \, dy + C \, \frac{t - T}{T^{\frac{2 s}{1 + 2 s}}} \int_{|y - x| < \frac{|x|}{2}} \frac{dy}{y^{1 + 2 s}} \right\} \\
& \le C e^{- m (t - T)} \left\{ |x|^{-2 s} + \frac{t - T}{T^{\frac{2 s}{1 + 2 s}}} |x|^{- 2 s} \right\} \le \frac{C e^{- m (t - T)}}{|x|^{2 s}} \Big\{ 1 + (t - T) \Big\} \le \frac{C}{|x|^{2 s}},
\end{align*}
where the last inequality is a consequence of the boundedness of the function~$\sigma \mapsto e^{-m \sigma} \{ 1 + \sigma \}$ in~$[0, +\infty)$. The proof is thus concluded.
\end{proof}

\begin{lemma} \label{etaboundlem}
For~$m > 0$, let~$\etadis$ be the mild solution of the problem
\begin{equation} \label{etaprob}
\begin{cases}
\partial_t \etadis + (-\Delta)^s \etadis + m \etadis = \Phi & \quad \mbox{in } \R \times (1, +\infty) \\
\etadis = 0 & \quad \mbox{on } \R \times \{ 1 \}.
\end{cases}
\end{equation}
Then,
\begin{equation} \label{etabound}
0 < \etadis \le C \Phi \quad \mbox{in } \R \times [1, +\infty),
\end{equation}
for some constant~$C > 0$ depending only on~$s$ and~$m$.
\end{lemma}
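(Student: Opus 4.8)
The plan is to write $\etadis$ explicitly through Duhamel's formula and then estimate the resulting space-time convolution directly, in close analogy with the computations in the proof of Lemma~\ref{etabound0lem}.

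First I would recast the problem in the form~\eqref{Gprob} with $G(x, t, u) = \Phi(x, t) - m u$, which plainly satisfies~\eqref{gbounded} and~\eqref{gLip}, so that $\etadis$ is the unique mild solution furnished by Proposition~\ref{existinLinftyprop}. Applying Lemma~\ref{Arescalelem} with the factor $A(t) := e^{m (t - 1)}$, the zeroth-order term $- m \etadis$ gets absorbed, and one sees that $e^{m (t - 1)} \etadis$ is the mild solution of $\partial_t \widetilde{u} + (-\Delta)^s \widetilde{u} = e^{m (t - 1)} \Phi$ with null initial datum at $t = 1$. Unwinding Definition~\ref{strongsoldef}, this produces the representation
$$
\etadis(x, t) = \int_1^t e^{- m (t - \tau)} \int_\R p(x - y, t - \tau) \, \Phi(y, \tau) \, dy \, d\tau \quad \mbox{for } x \in \R, \, t \ge 1,
$$
from which the positivity of $\etadis$ in~\eqref{etabound} is immediate, since $p > 0$ and $\Phi > 0$.

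The core of the argument is then to bound the inner integral $J(x, \sigma, \tau) := \int_\R p(x - y, \sigma) \Phi(y, \tau) \, dy$, for $\sigma > 0$ and $\tau \ge 1$, in two complementary ways. On the one hand, property~\ref{pmass=1} together with $\Phi(\cdot, \tau) \le \tau^{- \frac{2 s}{1 + 2 s}}$ gives $J(x, \sigma, \tau) \le \tau^{- \frac{2 s}{1 + 2 s}}$. On the other hand, performing the change of variables $y \mapsto x - y$ and repeating the dyadic splitting used in the proof of Lemma~\ref{etabound0lem}---namely, bounding $\Phi$ by $|\cdot|^{- 2 s}$ on $\{ |y - x| \ge |x| / 2 \}$, where that weight is comparable to $|x|^{- 2 s}$, and by its sup-bound $\tau^{- \frac{2 s}{1 + 2 s}} \le 1$ on $\{ |y - x| < |x| / 2 \}$, where instead property~\ref{pbounds} yields $p(\cdot, \sigma) \le \Lambda_s \sigma |\cdot|^{- 1 - 2 s}$, integrable at infinity---one obtains $J(x, \sigma, \tau) \le C (1 + \sigma) |x|^{- 2 s}$ for every $x \ne 0$, with $C$ depending only on $s$. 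Inserting these two estimates into the $\tau$-integral: the first one yields $\etadis(x, t) \le \int_1^t e^{- m (t - \tau)} \tau^{- \frac{2 s}{1 + 2 s}} \, d\tau \le C \, t^{- \frac{2 s}{1 + 2 s}}$ (splitting at $\tau = t / 2$ when $t \ge 2$ and estimating each piece separately, the contribution of $[1, t/2]$ being killed by the exponential factor against any polynomial growth, while the regime $t \in [1, 2]$ is trivial since there $\etadis \le 1 \le C t^{- \frac{2 s}{1 + 2 s}}$); the second one gives $\etadis(x, t) \le C |x|^{- 2 s} \int_0^{+\infty} e^{- m \sigma} (1 + \sigma) \, d\sigma \le C |x|^{- 2 s}$. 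Taking the minimum of the two bounds produces exactly~\eqref{etabound}, with $C$ depending only on $s$ and $m$.

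There is no serious difficulty here; the argument is a routine, if slightly lengthy, exercise in heat-kernel estimates, essentially parallel to Lemma~\ref{etabound0lem}. The one point deserving care is the second bound on $J$: one must arrange the splitting of the $y$-integral so that the singular power $|x - y|^{- 2 s}$ is only used where $|x - y| \gtrsim |x|$, and the bound $p(y,\sigma) \lesssim \sigma |y|^{- 1 - 2 s}$ is only integrated over $\{ |y| \gtrsim |x| \}$, using the sup-bound on $\Phi$ near $y = x$. This is precisely what avoids invoking the quantity $\int_\R \Phi(\cdot, \tau)$, which is finite only for $s > 1/2$, and it is the reason the estimate holds uniformly across all $s \in (0, 1)$.
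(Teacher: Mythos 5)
Your proposal is correct and follows essentially the same route as the paper: the Duhamel representation obtained through Lemma~\ref{Arescalelem}, followed by the two complementary bounds $C\,t^{-\frac{2s}{1+2s}}$ (via property~\ref{pmass=1}) and $C\,|x|^{-2s}$, whose minimum gives~\eqref{etabound}. The only deviation is in the spatial bound, where you replace the paper's finer inner/outer splitting of the heat kernel at $|y|=\tau^{1/2s}$ by the cruder estimate $p(y,\sigma)\le \Lambda_s\,\sigma\,|y|^{-1-2s}$ near $x$ (as in Lemma~\ref{etabound0lem}), letting the resulting factor $1+\sigma$ be absorbed by $e^{-m\sigma}$ in the time integral; this is a harmless streamlining of the same computation.
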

\begin{proof}
Taking advantage of Lemma~\ref{Arescalelem}, with~$A(t) = e^{m t}$, we represent the solution~$\etadis$ as
\begin{equation} \label{etarepres}
\etadis(x, t) = \int_0^{t - 1} \int_{\R} e^{- m \tau} p(y, \tau) \Phi(x - y, t - \tau) \, dy d\tau.
\end{equation}
Note that from this identity we immediately infer the positivity of~$\etadis$.

We claim that
\begin{alignat}{3}
\label{etaboundclaim1}
\etadis(x, t) & \le C \, t^{- \frac{2 s}{1 + 2 s}} && \qquad \mbox{for all } x \in \R, \, t \ge 1, \\
\label{etaboundclaim2}
\etadis(x, t) & \le C |x|^{- 2 s} && \qquad \mbox{for all } |x| \ge t^{\frac{1}{1 + 2 s}}, \, t \ge 1,
\end{alignat}
for some costant~$C \ge 1$ depending only on~$s$ and~$m$. Observe that, in view of~\eqref{Phidef}, estimate~\eqref{etabound} would then be proved.

We begin by establishing the time decay estimate~\eqref{etaboundclaim1}. In view of~\eqref{Phidef},~\eqref{etarepres}, and~\ref{pmass=1}, we have
$$
\etadis(x, t) \le \int_0^{t - 1} \frac{e^{- m \tau}}{(t - \tau)^{\frac{2 s}{1 + 2 s}}} \, d\tau.
$$
We split the last integral between the two domains~$(0, (t - 1)/2)$ and~$((t - 1)/2, t - 1)$. We get
$$
\int_0^{t - 1} \frac{e^{- m \tau}}{(t - \tau)^{\frac{2 s}{1 + 2 s}}} \, d\tau \le \left( \frac{t + 1}{2} \right)^{- \frac{2 s}{1 + 2 s}} \int_0^{\frac{t - 1}{2}} e^{- m \tau} \, d\tau + \int_{\frac{t - 1}{2}}^{t - 1} e^{- m \tau}  \, d\tau \le C \left( t^{- \frac{2 s}{1 + 2 s}} + e^{- \frac{m}{2} t} \right),
$$
and thus~\eqref{etaboundclaim1} follows.

Now we deal with~\eqref{etaboundclaim2}. Recalling~\eqref{Phidef}, it is easy to see that
$$
\Phi(x, t) \le \frac{4^s}{(1 + |x|)^{2 s}} \quad \mbox{for all } x \in \R, \, t > 1.
$$
By using this in combination with~\eqref{etarepres} and~\ref{pbounds}, we obtain
\begin{equation} \label{etaboundtech1}
\etadis(x, t) \le C \int_0^{t - 1} \left\{ \tau^{- \frac{1}{2 s}} \int_{|y| \le \tau^{\frac{1}{2 s}}} \frac{dy}{(1 + |x - y|)^{2 s}} + \tau \int_{|y| > \tau^{\frac{1}{2 s}}} \frac{ dy}{(1 + |x - y|)^{2 s} |y|^{1 + 2 s}} \right\} \frac{d\tau}{e^{m \tau}}.
\end{equation}
To estimate these two integrals, we distinguish between the two cases~$|y - x| \le |x|/2$ and~$|y - x| > |x| / 2$. In the first situation, we simply use that~$1 + |x - y| \ge 1$ and~$|y| \ge |x| / 2$, while in the second we take advantage of the inequality~$1 + |x - y| \ge |x|/2$. We get
$$
\int_{|y| \le \tau^{\frac{1}{2 s}}} \frac{dy}{(1 + |x - y|)^{2 s}} \le \int_{\substack{|y| \le \tau^{\frac{1}{2 s}} \\ |y - x| \le \frac{|x|}{2}}} dy + \frac{4^{s}}{|x|^{2 s}} \int_{\substack{|y| \le \tau^{\frac{1}{2 s}} \\ |y - x| > \frac{|x|}{2}}} dy \le 2 \left( \tau^{\frac{1}{2 s}} - \frac{|x|}{2} \right)_+ + \frac{C \, \tau^{\frac{1}{2 s}}}{|x|^{2 s}}
$$
and
\begin{align*}
\int_{|y| > \tau^{\frac{1}{2 s}}} \frac{dy}{(1 + |x - y|)^{2 s} |y|^{1 + 2 s}} & \le \int_{\substack{|y| > \tau^{\frac{1}{2 s}} \\ |y - x| \le \frac{|x|}{2}}} \frac{dy}{|y|^{1 + 2 s}} + \frac{4^{s}}{|x|^{2 s}} \int_{\substack{|y| > \tau^{\frac{1}{2 s}} \\ |y - x| > \frac{|x|}{2}}} \frac{dy}{|y|^{1 + 2 s}} \\
& \le C \left( \min \left\{ \frac{4^s}{|x|^{2 s}}, \frac{1}{\tau} \right\} + \frac{1}{|x|^{2 s} \tau} \right).
\end{align*}
By plugging these last two estimates into~\eqref{etaboundtech1}, we obtain
\begin{align*}
\etadis(x, t) & \le C \left( \frac{1}{|x|^{2 s}} \int_0^{\frac{|x|^{2 s}}{4^s}} e^{-m \tau} \tau \, d\tau + \int_{\frac{|x|^{2 s}}{4^s}}^{+\infty} e^{- m \tau} \, d\tau + \frac{1}{|x|^{2 s}} \int_{0}^{t - 1} e^{- m \tau} d\tau \right) \\
& \le C \left( |x|^{- 2 s} + e^{- \frac{m}{4^s} |x|^{2 s}} + |x|^{-2 s} \right).
\end{align*}
Claim~\eqref{etaboundclaim2} is then also true.
\end{proof}

We conclude the section with a result that deals with the linearization of equation~\eqref{maineq} at the layer solution~$w$. It provides a decay estimate (in an~$L^2$ sense) for all mild solutions whose time slices are~$L^2$-orthogonal to the derivative of~$w$. Its proof heavily relies on the non-degeneracy of the stationary Peierls-Nabarro equation~\eqref{ellPNeq}, established (in a quantitative form) in~\cite[Section~5]{DPV15}.

\begin{lemma} \label{L2lem}
Let~$u$ be a mild solution of
$$
\begin{cases}
\partial_t u + (-\Delta)^s u + W''(w) u = 0 & \quad \mbox{in } \R \times (t_0, t_1),\\
u=u_0 & \quad \mbox{on } \R \times \{ t_0 \},
\end{cases}
$$
such that
\begin{equation} \label{uinL2}
u \in C^0((t_0, t_1); L^2(\R))
\end{equation}
and
\begin{equation} \label{L2uortw'}
\int_{\R} u(x, t) w'(x) \, dx = 0 \quad \mbox{for all } t \in (t_0, t_1).
\end{equation}
Then, there exists a constant~$\lambda > 0$, depending only on~$s$ and~$W$, such that the function
$$
t \longmapsto e^{\lambda t} \int_\R u(x, t)^2 \, dx
$$
is non-increasing in~$(t_0, t_1)$.
\end{lemma}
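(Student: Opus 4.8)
The plan is to use the energy $E(t) := \int_\R u(x,t)^2\,dx$ as a Lyapunov functional, the decisive ingredient being the coercivity of the linearized operator $L_w := (-\Delta)^s + W''(w)$ away from its kernel. Write $Q_w(v) := \int_\R v\,(-\Delta)^s v\,dx + \int_\R W''(w)\,v^2\,dx$ for the quadratic form associated with $L_w$; since differentiating~\eqref{ellPNeq} gives $L_w w' = 0$ and $w' > 0$, the quantitative non-degeneracy of the layer solution proved in~\cite[Section~5]{DPV15} provides a constant $\lambda_0 > 0$, depending only on $s$ and $W$, such that
\begin{equation} \label{L2lemcoerc}
Q_w(v) \ge \lambda_0 \int_\R v^2\,dx \qquad \mbox{for every } v \in H^s(\R) \mbox{ with } \int_\R v(x)\,w'(x)\,dx = 0
\end{equation}
This is the only substantial analytic input; everything else is a regularity-and-bookkeeping matter.

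First I would upgrade the regularity of the mild solution $u$. As $W''(w)$ is bounded, $f := - W''(w)\,u$ is bounded, so Proposition~\ref{regforstrongprop}\ref{intreg} makes $u$ H\"older continuous in $(x,t)$ on $\R\times(t_0,t_1)$; then $f$ is locally H\"older as well, and a standard Schauder bootstrap for the fractional heat equation shows that $u$ is a classical solution of $\partial_t u + (-\Delta)^s u + W''(w)\,u = 0$ on $\R\times(t_0,t_1)$ with $\partial_t u$ continuous. Moreover, fixing $t\in(t_0,t_1)$ and $t_\star\in(t_0,t)$ and writing $u(\cdot,t)=\T_{t-t_\star}[u(\cdot,t_\star)]+\int_{t_\star}^t\T_{t-\tau}[f(\cdot,\tau)]\,d\tau$, the hypothesis $u(\cdot,t_\star)\in L^2(\R)$ (see~\eqref{uinL2}), the $L^2$-smoothing of the semigroup, and the integrable bound $\|\T_{t-\tau}[g]\|_{H^s(\R)}\le C\,(t-\tau)^{-1/2}\|g\|_{L^2(\R)}$ give $u(\cdot,t)\in H^s(\R)$ for every $t\in(t_0,t_1)$. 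Now I would multiply the equation by $u$ and integrate over $\R$: using $\int_\R u\,(-\Delta)^s u\,dx=\frac{1}{2}\iint_{\R^2}\frac{(u(x)-u(y))^2}{|x-y|^{1+2s}}\,dx\,dy\ge0$ (the integration by parts over the whole line being justified by a cutoff argument, since $u(\cdot,t)\in L^2\cap L^\infty$) and differentiating $E$ under the integral sign (legitimate because $u$ and $\partial_t u$ are continuous and locally bounded in $L^2(\R)$ in time) one obtains
\begin{equation} \label{L2lemenergy}
\frac{1}{2}\,\dot E(t) = \int_\R u(x,t)\,\partial_t u(x,t)\,dx = - Q_w\big(u(\cdot,t)\big) \qquad \mbox{for all } t\in(t_0,t_1)
\end{equation}
By the orthogonality~\eqref{L2uortw'}, inequality~\eqref{L2lemcoerc} applies to $v=u(\cdot,t)$, so $\dot E(t)\le -2\lambda_0\,E(t)$; taking $\lambda:=2\lambda_0$ gives $\frac{d}{dt}\big(e^{\lambda t}E(t)\big)=e^{\lambda t}\big(\lambda E(t)+\dot E(t)\big)\le0$, which is the assertion.

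I expect the only real obstacle to be this technical extraction of regularity and decay from the mild-solution framework of Definition~\ref{strongsoldef}: verifying that $u(\cdot,t)\in H^s(\R)$, that $E$ is differentiable with derivative $2\int_\R u\,\partial_t u\,dx$, and that the nonlocal integration by parts over $\R$ leaves no boundary term at infinity. The coercivity~\eqref{L2lemcoerc} itself is taken directly from~\cite[Section~5]{DPV15} and requires no new argument.
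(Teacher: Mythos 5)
Your proposal rests on the same key ingredient as the paper's proof---the quantitative non-degeneracy of the layer solution from~\cite[Section~5]{DPV15}, applied to~$u(\cdot, t)$ thanks to the orthogonality~\eqref{L2uortw'}---but you implement it in differential form ($\dot E \le -\lambda E$), whereas the paper implements it in integrated form, and the difference matters precisely at the step you flag. To write~$\dot E(t) = 2\int_\R u\, \partial_t u\, dx$ and to treat~$\int_\R u\,(-\Delta)^s u\, dx = \frac{1}{2}[u(\cdot,t)]_{H^s(\R)}^2$ as an absolutely convergent integral, you need~$\partial_t u(\cdot, t) = -(-\Delta)^s u(\cdot,t) - W''(w) u(\cdot,t)$ to lie in~$L^2(\R)$, i.e.~$u(\cdot, t) \in H^{2s}(\R)$; your smoothing argument only yields~$u(\cdot, t) \in H^s(\R)$, since the bound~$\| \T_{t-\tau}[g] \|_{\dot H^{2s}(\R)} \le C (t-\tau)^{-1} \| g \|_{L^2(\R)}$ is not integrable in the Duhamel term, and the assertion that~$\partial_t u$ is ``locally bounded in~$L^2$ in time'' is therefore not established as written. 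The gap is fixable within your own framework: since~$W''(w)$ is bounded and Lipschitz it is a multiplier on~$H^s(\R)$, so a second pass through the Duhamel formula, now with the~$H^s \to H^{2s}$ smoothing estimate, upgrades the slices to~$H^{2s}(\R)$, after which the energy identity, the nonlocal integration by parts (via Plancherel, no cutoff needed at that point), and your coercivity inequality close the argument.

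For comparison, the paper sidesteps all of this: after the same upgrade to a pointwise solution (via~\cite{VDQR17}), it sets~$\widetilde u := e^{\lambda(t-t_0)/2} u$ (Lemma~\ref{Arescalelem}), multiplies the equation by~$\eta_R^2 \widetilde u$ with a spatial cutoff~$\eta_R$, integrates over a time interval~$(\tau, t)$ instead of differentiating the energy, proves a pointwise bilinear inequality to control the cutoff error in the fractional Dirichlet form, and lets~$R \to +\infty$ using only hypothesis~\eqref{uinL2}. This gives~$\int_\R \widetilde u(x,t)^2\,dx \le \int_\R \widetilde u(x,\tau)^2\,dx$ directly, so neither~$H^s$ regularity of the time slices nor differentiability of~$E$ is ever needed; the real work is in the cutoff estimate, which is exactly the part you left as ``a cutoff argument.'' Your route buys a cleaner differential inequality at the cost of the extra bootstrap; the paper's route uses weaker regularity but a more delicate localization.
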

\begin{proof}
First of all, in view of~\cite[Lemma~5.3]{DPV15}, there exists a constant~$\lambda > 0$, depending only on~$s$ and~$W$, for which
\begin{equation} \label{DPVnondeg}
[v]_{H^s(\R)}^2 + 2 \int_\R W''(w(x)) v(x)^2 \, dx \ge \lambda \, \| v \|_{L^2(\R)}^2 \mbox{ for all } v \in L^2(\R) \mbox{ s.t.~} \!\! \int_\R v(x) w'(x) \, dx = 0.
\end{equation}
Given an open interval~$I \subseteq \R$, we denote here with~$[\, \cdot \,]_{H^s(I)}$ the Gagliardo seminorm of the fractional Sobolev space~$H^s(I)$, that is, we write
$$
[v]_{H^s(I)} := \left( \iint_{I^2} \frac{|v(x) - v(y)|^2}{|x - y|^{1 + 2 s}} \, dx dy \right)^{\! \frac{1}{2}} \quad \mbox{for all } v \in L^2(I).
$$
To be rigorous,~\eqref{DPVnondeg} is proved in~\cite{DPV15} only for the case~$s \in (1/2, 1)$. However, the result is actually true for any~$s \in (0, 1)$, as noted, for instance, at the beginning of the proof of~\cite[Theorem~9.1]{DFV14}.\footnote{We also point out a misprint occurring in~\cite{DPV15} (and~\cite{PSV13}). There, it is stated that the layer solution~$w$ of~\eqref{ellPNeq} is (formally) a local minimizer of the functional
$$
v \longmapsto \frac{1}{2} \, [v]_{H^s(\R)}^2 + \int_\R W(v(x)) \, dx,
$$
with respect to compactly supported perturbations. A direct inspection shows that the factor in front of~$[v]_{H^s(\R)}^2$ should be~$1/4$ instead of~$1/2$. As a result, formulas~(5.5)-(5.7) in~\cite{DPV15} should be replaced by~\eqref{DPVnondeg} here.}

With~$\lambda$ given by~\eqref{DPVnondeg}, we consider the function~$\widetilde{u}(x, t) := e^{\frac{\lambda}{2} (t - t_0)} u(x, t)$. By Lemma~\ref{Arescalelem}, we know that~$\widetilde{u}$ is a mild solution of
\begin{equation} \label{g=-W''ueq}
\begin{cases}
\partial_t \widetilde{u} + (-\Delta)^s \widetilde{u} + \left( W''(w) - \frac{\lambda}{2} \right) \widetilde{u} = 0 & \quad \mbox{in } \R \times (t_0, t_1),\\
\widetilde{u}=u_0 & \quad \mbox{on } \R \times \{ t_0 \},
\end{cases}
\end{equation}
In addition,~$\widetilde{u}$,~$\partial_t \widetilde{u}$, and~$(-\Delta)^s \widetilde{u}$ are all continuous functions on~$\R \times (t_0, t_1)$ and therefore~$\widetilde{u}$ is a pointwise solution of~\eqref{g=-W''ueq}---see, e.g.,~\cite[Corollary~3.1]{VDQR17}.

Let~$\eta \in C^\infty_c(\R)$ be a cutoff function satisfying~$0 \le \eta \le 1$ in~$\R$,~$\mbox{supp}(\eta) \subset (-2, 2)$,~$\eta = 1$ in~$(-1, 1)$, and~$|\eta'| \le 4$. For~$R \ge 1$, set~$\eta_R(x) := \eta(x/R)$. For~$t_0 < \tau < t < t_1$ fixed, we multiply the equation in~\eqref{g=-W''ueq} against~$\eta_R^2 \widetilde{u}$ and integrate the resulting identity in~$(\tau, t) \times \R$. We get
\begin{equation} \label{equationagainstcutoffed}
\int_{\tau}^{t} \int_\R \eta_R(x)^2 \widetilde{u}(x, \sigma) \left\{ \partial_t \widetilde{u}(x, \sigma) + (-\Delta)^s \widetilde{u}(x, \sigma) + \left( W''(w(x)) - \frac{\lambda}{2} \right) \widetilde{u}(x, \sigma) \right\} dx d\sigma = 0.
\end{equation}

On the one hand,
\begin{equation} \label{timeder}
\begin{aligned}
2 \int_{\tau}^{t} \int_\R \eta_R(x)^2 \widetilde{u}(x, \sigma) \partial_t \widetilde{u}(x, \sigma) \, dx d\sigma & = \int_{\tau}^{t} \frac{d}{d\sigma} \left( \int_\R \eta_R(x)^2 \widetilde{u}(x, \sigma)^2 \, dx \right) d\sigma \\
& = \int_\R \eta_R(x)^2 \widetilde{u}(x, t)^2 \, dx - \int_\R \eta_R(x)^2 \widetilde{u}(x, \tau)^2 \, dx.
\end{aligned}
\end{equation}

Secondly, after a symmetrization, for any fixed~$\sigma \in (\tau, t)$ we write
\begin{equation} \label{lapl=iota}
2 \int_\R \eta_R(x)^2 \widetilde{u}(x, \sigma) (-\Delta)^s \widetilde{u}(x, \sigma) \, dx = \iint_{\R^2} \frac{\iota(x, y)}{|x - y|^{1 + 2 s}} \, dx dy,
\end{equation}
where
$$
\iota(x, y) := \left( \widetilde{u}(x, \sigma) - \widetilde{u}(y, \sigma) \right) \left( \eta_R(x)^2 \widetilde{u}(x, \sigma) - \eta_R(y)^2 \widetilde{u}(y, \sigma) \right).
$$
We claim that, for every~$x, y \in \R$, it holds
\begin{equation} \label{iotage}
\begin{aligned}
\iota(x, y) & \ge \left( 1 - R^{-s} \right)\max \{ \eta_R(x), \eta_R(y) \}^2 |\widetilde{u}(x, \sigma) - \widetilde{u}(y, \sigma)|^2 \\
& \quad - R^s \max \left\{ \widetilde{u}(x, \sigma)^2, \widetilde{u}(y, \sigma)^2 \right\} |\eta_R(x) - \eta_R(y)|^2.
\end{aligned}
\end{equation}
When both~$x$ and~$y$ lie outside of the support of~$\eta_R$, inequality~\eqref{iotage} is clearly valid, since both sides of it vanish. Suppose then that~$\max \{ \eta_R(x), \eta_R(y) \} > 0$. As~\eqref{iotage} is symmetric in~$x$ and~$y$, we can also assume without loss of generality that~$\eta_R(x) \ge \eta_R(y)$. Using the weighted Young's inequality, we compute
\begin{align*}
\iota (x, y) & = \eta_R(x)^2 |\widetilde{u}(x, \sigma) - \widetilde{u}(y, \sigma)|^2 + (\widetilde{u}(x, \sigma) - \widetilde{u}(y, \sigma)) \widetilde{u}(y, \sigma) (\eta_R(x) + \eta_R(y)) (\eta_R(x) - \eta_R(y)) \\
& \ge \left( \eta_R(x)^2 - \frac{\varepsilon^2 (\eta_R(x) + \eta_R(y))^2}{2} \right) |\widetilde{u}(x, \sigma) - \widetilde{u}(y, \sigma)|^2 - \frac{\widetilde{u}(y, \sigma)^2 |\eta_R(x) - \eta_R(y)|^2}{2 \varepsilon^2},
\end{align*}
for any~$\varepsilon > 0$. The choice~$\varepsilon := \sqrt{2} R^{- s / 2}\eta_R(x) / (\eta_R(x) + \eta_R(y)) \ge R^{-s/2}/\sqrt{2}$ leads to~\eqref{iotage}.

As~$\eta_R = 1$ in~$(-R, R)$, inequality~\eqref{iotage} gives in particular that
$$
\iint_{\R^2} \frac{\iota(x, y)}{|x - y|^{1 + 2 s}} \, dx dy \ge \left( 1 - R^{-s} \right) [\widetilde{u}(\cdot, \sigma)]_{H^s(-R, R)}^2 - 2 R^s \int_{\R} \widetilde{u}(x, \sigma)^2 \left( \int_\R \frac{|\eta_R(x) - \eta_R(y)|^2}{|x - y|^{1 + 2 s}} \, dy \right) dx.
$$
By the properties of~$\eta_R$, we have
$$
\int_\R \frac{|\eta_R(x) - \eta_R(y)|^2}{|x - y|^{1 + 2 s}} \, dy \le C \left( R^{-2} \int_0^R r^{1 - 2 s} \, dr + \int_R^{+\infty} \frac{dr}{r^{1 + 2 s}} \right) \le C R^{-2 s},
$$
for all~$x \in \R$ and for some constant~$C$ depending only on~$s$. Consequently, by combining the above two estimates with~\eqref{lapl=iota}, we conclude that
$$
\int_\tau^t \int_\R \eta_R(x)^2 \widetilde{u}(x, \sigma) (-\Delta)^s \widetilde{u}(x, \sigma) \, dx d\sigma \ge \frac{1 - R^{-s}}{2} \int_\tau^t [\widetilde{u}(\cdot, \sigma)]_{H^s(-R, R)}^2 \, d\sigma - C R^{- s} \| \widetilde{u} \|_{L^2(\R \times (\tau, \sigma))}^2.
$$

Putting together the last inequality,~\eqref{equationagainstcutoffed},~\eqref{timeder}, and letting~$R \rightarrow +\infty$, we get
$$
\int_\R \widetilde{u}(x, t)^2 \, dx - \int_\R \widetilde{u}(x, \tau)^2 \, dx + \int_\tau^t \left\{ [\widetilde{u}(\cdot, \sigma)]_{H^s(\R)}^2 + \int_\R \left( 2 \, W''(w(x)) - \lambda \right) \widetilde{u}(x, \sigma)^2 \, dx \right\} d\sigma \le 0.
$$
Observe that the limit can be taken in a rigorous way thanks to hypothesis~\eqref{uinL2} and the boundedness of~$W''$. Note now that, in view of assumption~\eqref{L2uortw'}, we have that~$\widetilde{u}(\cdot, \sigma)$ is orthogonal in~$L^2(\R)$ to~$w'$ for all~$\sigma \in (\tau, t)$. Hence, the non-degeneracy inequality~\eqref{DPVnondeg} gives that the quantity within curly brackets in the last formula is non-negative for all~$\sigma \in (\tau, t)$. Consequently,
$$
\int_\R \widetilde{u}(x, t)^2 \, dx \le \int_\R \widetilde{u}(x, \tau)^2 \, dx \quad \mbox{for all } t_0 < \tau < t < t_1.
$$
Recalling the definition of~$\widetilde{u}$, we are led to the conclusion of the lemma.
\end{proof}

\section{Solving for~$\psi$. Proofs of Proposition~\ref{mainlinprop} and Theorem~\ref{nonlinearthm}.} \label{psisec}

\noindent
In the present section and the next one, we complete the proof of Theorem~\ref{mainthm2} initiated in Section~\ref{outsec}. Here, we address the solvability in~$\Adot_T$ of problems~\eqref{psiprobthm} and~\eqref{psiDirprob}, showing in particular the validity of Proposition~\ref{mainlinprop} and Theorem~\ref{nonlinearthm}. Prior to this, we present a couple of lemmas containing estimates for the functions~$\E$ and~$\NN$.

Throughout the section, we assume~$h$ to be in~$\bar{B}_1(\HH_{T, \mu})$, for some~$\mu$ satisfying~\eqref{mulimit0}.

\subsection{Some preliminary estimates} \label{ENsubsec}

We include in this subsection a couple of technical results that will be often used both in this section and the next.

We begin with the following lemma, which contains some estimates for the error terms~$\E_1$,~$\E_2$, and~$\E_{0, j}$. See in particular~\eqref{ElePhinew}, which motivates our choice for the weight function~$\Phi$ and, as a result, for the norm~$\| \cdot \|_{\A_T}$.

\begin{lemma} \label{ElePhilem}
There exist two generic constants~$T_0, C \ge 1$ such that
\begin{alignat}{3}
\label{EklePhi}
|\E_k(x, t)| & \le C \Phi(x, t) && \qquad \mbox{for } k = 1, 2, \\
\label{E0elllePhi}
\int_\R |\E_{0, j}(x, t)| Z_j(x, t) \, dx & \le C \, t^{- \frac{2 s}{1 + 2s}} && \qquad \mbox{for } j = 1, \ldots, N, \\
\label{E0t1-t2}
\left| \partial_t \E_1(x, t) \right| & \le C \, t^{- \frac{2 s}{1 + 2 s}} \Phi(x, t), && \\
\label{E0ellt1-t2}
\left| \partial_t \E_{0, j}(x, t) \right| & \le C \, t^{- \frac{2 s}{1 + 2 s}} && \qquad \mbox{for } j = 1, \ldots, N,
\end{alignat}
for all~$x \in \R$ and~$t \ge T_0$. In particular,
\begin{equation} \label{ElePhinew}
|\E(x, t)| \le C \Phi(x, t),
\end{equation}
for all~$x \in \R$ and~$t \ge T_0$.
\end{lemma}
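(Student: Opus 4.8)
The plan rests on the wide separation of the dislocation centres. Since $\xi^0_i(t)=\beta_i t^{1/(1+2s)}$ with $\beta_1<\dots<\beta_N$, while any $h\in\bar B_1(\HH_{T,\mu})$ satisfies $|h(t)|\le t^{1-\mu}$ and $|\dot h(t)|\le t^{-\mu}$ with $1-\mu<1/(1+2s)$ and $\mu>2s/(1+2s)$ by~\eqref{mulimit0}, for $T_0$ large enough one has $\xi_{i+1}(t)-\xi_i(t)\ge c\,t^{1/(1+2s)}$ and $|\dot\xi_i(t)|\le C\,t^{-2s/(1+2s)}$ for all $t\ge T_0$. Together with the $1$-periodicity of $W$ and the decay estimates~\eqref{wasympt}--\eqref{w'asympt} of the layer solution, this reduces every quantity in the statement to deviations of single layer transitions from their integer limits.

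I would first prove~\eqref{EklePhi} for $\E_1$, which I expect to be the crux. Partition $\R$ into the sets $I_i$ of points $x$ closest to $\xi_i(t)$. For $x\in I_i$ and $j\ne i$ one has $|x-\xi_j(t)|\ge c\,t^{1/(1+2s)}$, so by~\eqref{wasympt} the quantity
\[
\delta_i(x,t):=\sum_{j>i}w(x-\xi_j(t))-\sum_{j<i}\bigl(1-w(x-\xi_j(t))\bigr)
\]
satisfies $|\delta_i|\le C\,t^{-2s/(1+2s)}$ and $z=(i-1)+w(x-\xi_i(t))+\delta_i$. By periodicity $W'(z)=W'\bigl(w(x-\xi_i(t))+\delta_i\bigr)$, so the mean value theorem gives $\bigl|W'(z)-W'(w(x-\xi_i(t)))\bigr|\le\|W''\|_{L^\infty}|\delta_i|$; and for $j\ne i$, writing $W'(w(x-\xi_j(t)))$ as $W'(w(x-\xi_j(t)))-W'(1)$ when $j<i$ and as $W'(w(x-\xi_j(t)))-W'(0)$ when $j>i$, one bounds $|W'(w(x-\xi_j(t)))|$ by $\|W''\|_{L^\infty}\,\mathrm{dist}(w(x-\xi_j(t)),\Z)\le C\,t^{-2s/(1+2s)}$. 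Summing, $|\E_1(x,t)|\le C\,t^{-2s/(1+2s)}$. For the bound by $|x|^{-2s}$: when $|x|\ge C\,t^{1/(1+2s)}$ every centre lies at distance $\ge|x|/2$ from $x$, hence by~\eqref{wasympt} all $w(x-\xi_j(t))$ lie within $C|x|^{-2s}$ of $0$ (if $x<0$) or of $1$ (if $x>0$), so $z$ lies within $C|x|^{-2s}$ of an integer and the same periodicity argument yields $|\E_1(x,t)|\le C|x|^{-2s}$; on the overlap $t^{1/(1+2s)}\le|x|\le C\,t^{1/(1+2s)}$ the two weights are comparable. Altogether $|\E_1|\le C\Phi$.

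For $\E_2=-\sum_j Z_j\dot\xi_j$ each summand is $\le C\,t^{-2s/(1+2s)}(1+|x-\xi_j(t)|)^{-1-2s}$ by~\eqref{w'asympt} and $|\dot\xi_j|\le C\,t^{-2s/(1+2s)}$; combining the global bound $C\,t^{-2s/(1+2s)}$ with $C|x|^{-1-2s}t^{-2s/(1+2s)}\le C|x|^{-2s}$ (valid when $|x-\xi_j(t)|\ge|x|/2$) gives $|\E_2|\le C\Phi$, after which~\eqref{ElePhinew} follows from~\eqref{Edecomp}. For~\eqref{E0elllePhi} the same localization gives $|\E_{0,j}(x,t)|\le C\,t^{-2s/(1+2s)}$ on $I_j$, now using $1$-periodicity of $W''$ and the mean value theorem for $W''$ (here is where $W\in C^3$ enters), while off $I_j$ one only needs $|\E_{0,j}|\le 2\|W''\|_{L^\infty}$ together with $\int_{|x-\xi_j(t)|\ge c\,t^{1/(1+2s)}}w'(x-\xi_j(t))\,dx\le C\,t^{-2s/(1+2s)}$, which follows again from~\eqref{w'asympt}.

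For the time derivatives I would differentiate the definitions~\eqref{E1def} and~\eqref{E0idef}: since $\partial_t z=-\sum_k Z_k\dot\xi_k$ and $\partial_t w(x-\xi_j(t))=-Z_j\dot\xi_j$, one obtains the identities $\partial_t\E_1=-\sum_j\E_{0,j}Z_j\dot\xi_j$ and $\partial_t\E_{0,j}=-W'''(z)\sum_k Z_k\dot\xi_k+W'''(w(x-\xi_j(t)))Z_j\dot\xi_j$. Estimate~\eqref{E0ellt1-t2} is then immediate from $|\dot\xi_k|\le C\,t^{-2s/(1+2s)}$, $|Z_k|\le\|w'\|_{L^\infty}$ and $|W'''|\le\|W'''\|_{L^\infty}$; and~\eqref{E0t1-t2} follows from $|\partial_t\E_1|\le C\,t^{-2s/(1+2s)}\sum_j|\E_{0,j}||Z_j|$ once one checks $\sum_j|\E_{0,j}||Z_j|\le C\Phi$, which comes from the usual split: on $I_i$ the term $j=i$ is controlled by $|\E_{0,i}|\le C\,t^{-2s/(1+2s)}$ and the terms $j\ne i$ by $|Z_j|\le C\,t^{-1}\le C\,t^{-2s/(1+2s)}$, while for $|x|\ge C\,t^{1/(1+2s)}$ one uses $|Z_j|\le C|x|^{-1-2s}\le C|x|^{-2s}$. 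The main obstacle is the bound~\eqref{EklePhi} for $\E_1$: one must organize the localization near each $\xi_i$ carefully, verify that the separation $\xi_{i+1}-\xi_i\ge c\,t^{1/(1+2s)}$ holds uniformly for $t\ge T_0$ (which is exactly where hypothesis~\eqref{mulimit0} is invoked), and confirm that via $1$-periodicity of $W$ every error term reduces to a controllable deviation of a single layer from its integer limit; the remaining estimates are then variants of this theme or direct consequences of the differentiation identities.
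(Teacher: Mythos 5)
Your proposal is correct and follows essentially the same route as the paper: localization based on the separation $\xi_{i+1}(t)-\xi_i(t)\ge c\,t^{1/(1+2s)}$, the layer asymptotics \eqref{wasympt}--\eqref{w'asympt}, the $1$-periodicity of $W$ together with the mean value theorem, and the differentiation identity $\partial_t\E_1=-\sum_j\E_{0,j}Z_j\dot\xi_j$, which is exactly the formula used in the paper. The only differences are organizational (Voronoi cells and the explicit deviation $\delta_i$ versus the paper's midpoint intervals inside $I(t)$ and a separate exterior case), and your handling of the core/far split and the overlap region is sound.
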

\begin{proof}
We first prove that~$\E_1$ satisfies~\eqref{EklePhi}. To do this, we set
\begin{equation} \label{Itdef}
I(t) := \left( - 2 \beta_N t^{\frac{1}{1 + 2 s}}, 2 \beta_N t^{\frac{1}{1 + 2 s}} \right),
\end{equation}
and distinguish between the cases~$x \in \R \setminus I(t)$ and~$x \in I(t)$.

First, we let~$t \ge T_0$ and suppose that~$x \in \R \setminus I(t)$. Without loss of generality, we also assume that~$x > 0$, so that~$x \ge 2 \beta_n \, t^{\frac{1}{1 + 2 s}}$. As~$h \in \bar{B}_1(\HH_{T, \mu})$ with~$\mu > 2 s / (1 + 2 s)$, taking~$T_0$ sufficiently large we have
$$
|h_i(t)| \le t^{1 - \mu} \le \frac{\beta_N}{2} \, t^{\frac{1}{1 + 2 s}}
$$
and, therefore, recalling~\eqref{Itdef},
$$
x - \xi_i(t) \ge x - |\xi_i^0(t)| - |h_i(t)| \ge x - |\beta_i| t^{\frac{1}{1 + 2 s}} - \frac{\beta_N}{2} \, t^{\frac{1}{1 + 2 s}} \ge x - \frac{3 \beta_N}{2} \, t^{\frac{1}{1 + 2 s}} \ge \frac{x}{4},
$$
for every index~$i = 1, \ldots, N$. By this, the Lipschitz continuity and periodicity of~$W'$, the fact that~$W'(0) = 0$, and estimate~\eqref{wasympt} for the asymptotic behavior of~$w$, recalling definitions~\eqref{E1def} and~\eqref{zdef} we get
\begin{align*}
|\E_1(x, t)| & \le \left| W' \! \left( \sum_{i = 1}^N \left( w(x - \xi_i(t)) - 1 \right) \right) - W'(0) \right| + \sum_{j = 1}^N \left| W'(w(x - \xi_j(t)) - 1) - W'(0) \right| \\
& \le 2 \| W'' \|_{L^\infty(\R)} \sum_{i = 1}^N \left( 1 - w(x - \xi_i(t)) \right) \le C (x - \xi_i(t))^{- 2 s} \le C |x|^{- 2 s},
\end{align*}
for some generic constant~$C \ge 1$. This gives estimate~\eqref{EklePhi} for~$\E_1$ and~$x \in \R \setminus I(t)$---recall~\eqref{Phidef}.

To check that such a bound also holds when~$x \in I(t)$, let~$j \in \{ 1, \ldots, N \}$ be the unique integer for which
\begin{equation} \label{uniquej}
\frac{\xi_{j - 1}^0(t)  + \xi_j^0(t)}{2} = \frac{\beta_{j - 1} + \beta_j}{2} \, t^{\frac{1}{1 + 2 s}} \le x < \frac{\beta_j + \beta_{j + 1}}{2} \, t^{\frac{1}{1 + 2 s}} = \frac{\xi_{j}^0(t)  + \xi_{j + 1}^0(t)}{2},
\end{equation}
where we adopt the convention that~$\beta_0 := 3 \beta_1 = - 3 \beta_N$ and~$\beta_{N + 1} := 3 \beta_N$. Assuming~\eqref{uniquej}, we have
\begin{equation} \label{x-xiget}
|x - \xi_i(t)| \ge \frac{1}{2} \min \left\{ \beta_j - \beta_{j - 1}, \beta_{j + 1} - \beta_j \right\} t^{\frac{1}{1 + 2 s}} - |h_i(t)| \ge \frac{1}{C} \, t^{\frac{1}{1 + 2 s}} \quad \mbox{for all } i \ne j,
\end{equation}
provided~$T_0$ is large enough. Using this,~\eqref{wasympt}, and the periodicity and regularity of~$W'$, we compute
\begin{align*}
|\E_1(x, t)| & \le \left| W' \! \left( w(x - \xi_j(t)) + \sum_{i \ne j} w(x - \xi_i(t)) \right) - W'(w(x - \xi_j(t))) \right| + \sum_{i \ne j} |W'(w(x - \xi_i(t)))| \\
& \le 2 \| W'' \|_{L^\infty(\R)} \left( \sum_{i < j} \left( 1 - w(x - \xi_i(t)) \right) + \sum_{i > j} w(x - \xi_i(t)) \right) \le C \, t^{- \frac{2 s}{1 + 2 s}},
\end{align*}
and~\eqref{EklePhi} is true for~$\E_1$ also when~$x \in I(t)$.

We now verify that~\eqref{EklePhi} is fulfilled by~$\E_2$ as well---recall~\eqref{E2def} and~\eqref{Zidef} for the definition of~$\E_2$. This is an immediate consequence of the stronger estimate
\begin{align*}
|\E_2(x, t)| & \le \sum_{i = 1}^N |\dot{\xi}_i(t) |w'(x - \xi_i(t)) \le C \, t^{ - \frac{2 s}{1 + 2 s}} \left( \chi_{I(t)}(x) + \frac{\chi_{\R \setminus I(t)}(x)}{|x|^{1 + 2 s}} \right),
\end{align*}
which holds for all~$x \in \R$ and~$t \ge T_0$, provided~$T_0$ is sufficiently large.

Observe that~\eqref{ElePhinew} is an immediate consequence of~\eqref{EklePhi}, thanks to the decomposition~\eqref{Edecomp}.

We proceed to check~\eqref{E0elllePhi}. For~$x$ satisfying~\eqref{uniquej}, estimate~\eqref{x-xiget} holds true and therefore, recalling definition~\eqref{E0idef}, we have
$$
|\E_{0, j}(x, t)| \le \| W''' \|_{L^\infty(\R)} \left\{ \sum_{i < j} \left( 1 - w(x - \xi_i(t)) \right) + \sum_{i > j} w(x - \xi_i(t)) \right\} \le C \, t^{- \frac{2 s}{1 + 2 s}}.
$$
Hence, applying the change of variables~$y := x - \xi_j(t)$, we estimate
$$
\int_{\frac{\xi_{j - 1}^0(t)  + \xi_j^0(t)}{2}}^{\frac{\xi_{j}^0(t)  + \xi_{j + 1}^0(t)}{2}} |\E_{0, j}(x, t)| Z_j(x, t) \, dx \le C \, t^{- \frac{2 s}{1 + 2 s}} \int_\R w'(y) \, dy = C \, t^{- \frac{2 s}{1 + 2 s}}.
$$
Conversely, by the same change of variables,
\begin{align*}
& \int_{\R \setminus \left( \frac{\xi_{j - 1}^0(t) + \xi_j^0(t)}{2}, \, \frac{\xi_{j}^0(t) + \xi_{j + 1}^0(t)}{2} \right)} |\E_{0, j}(x, t)| Z_j(x, t) \, dx \\
& \hspace{12pt} \le 2 \| W'' \|_{L^\infty(\R)} \int_{\R \setminus \left( - \frac{\xi_j^0(t) - \xi_{j - 1}^0(t)}{2} - h_j(t), \, \frac{\xi_{j + 1}^0(t) - \xi_{j}^0(t)}{2} - h_j(t) \right)} \! w'(y) \, dy \le C \int_{\frac{1}{C} t^{\frac{1}{1 + 2 s}}}^{+\infty} \frac{dy}{y^{1 + 2 s}} \le C \, t^{- \frac{2 s}{1 + 2 s}},
\end{align*}
provided~$T_0$ is large enough. The combination of the last two inequalities gives~\eqref{E0elllePhi}.

We now address the validity of~\eqref{E0t1-t2}. First, we compute the derivative of~$\E_1$ with respect to~$t$:
$$
\partial_t \E_1(x, t) = - \sum_{i = 1}^N \Big\{ W''(z(x, t)) - W''(w(x - \xi_i(t))) \Big\} w'(x - \xi_i(t)) \dot{\xi}_i(t)
$$
When~$x \in \R \setminus I(t)$ (and assuming also, without loss of generality, that~$x < 0$), we simply estimate
$$
|\partial_t \E_1(x, t)| \le 2 \| W'' \|_{L^\infty(\R)} \sum_{i = 1}^N w'(x - \xi_i(t)) |\dot{\xi}_i(t)| \le C x^{ - 1 - 2 s} t^{- \frac{2 s}{1 + 2 s}} \le C \, t^{-1} \Phi(x, t).
$$
On the other hand, for~$x \in I(t)$ we let~$j$ be defined by~\eqref{uniquej} and compute
\begin{align*}
|\partial_t \E_1(x, t)| & \le \left| W'' \! \left( w(x - \xi_j(t)) + \sum_{i \ne j} w(x - \xi_i(t)) \right) - W''(w(x - \xi_j(t))) \right| w'(x - \xi_j(t)) |\dot{\xi}_j(t)| \\
& \quad + \sum_{i \ne j} \Big| W''(z(x, t)) - W''(w(x - \xi_i(t))) \Big| w'(x - \xi_i(t)) |\dot{\xi}_i(t)| \\
& \le \frac{C}{t^{\frac{2 s}{1 + 2 s}}} \left\{ \| W''' \|_{L^\infty(\R)} \left( \sum_{i < j} \left( w(x - \xi_i(t)) - 1 \right) + \sum_{i > j} w(x - \xi_i(t)) \right) + \frac{\| W'' \|_{L^\infty(\R)}}{t} \right\} \\
& \le C \, t^{- \frac{2 s}{1 + 2 s}} \Phi(x, t).
\end{align*}
The last two inequalities lead to~\eqref{E0t1-t2}.

Finally, for~$j = 1, \ldots, N$ we simply have
$$
\left| \partial_t \E_{0, j}(x, t) \right| = \left| W'''(z(x, t)) \sum_{i = 1}^N w'(x - \xi_i(t)) \dot{\xi}_i(t) - W'''(w(x - \xi_j(t))) w'(x - \xi_j(t)) \dot{\xi}_j(t) \right| \le C \, t^{- \frac{2 s}{1 + 2 s}},
$$
which is~\eqref{E0ellt1-t2}. This concludes the proof of Lemma~\ref{ElePhilem}.
\end{proof}

We proceed with a second lemma, containing some computations for the nonlinear term~$\NN$. Recall the definition~\eqref{Atnualphadef} of the space of H\"older continuous functions~$\A_T^\alpha$.

\begin{lemma} \label{Ndecaylem}
There exists a generic constant~$C \ge 1$ such that
\begin{equation} \label{Npsi1psi2est}
\left\| \NN[\psi_1] - \NN[\psi_2] \right\|_{\A_{T}} \le C \, T^{- \frac{2 s}{1 + 2 s}} \max \left\{ \| \psi_1 \|_{\A_{T}}, \| \psi_2 \|_{\A_{T}} \right\} \| \psi_1 - \psi_2 \|_{\A_{T}},
\end{equation}
for every~$\psi_1, \psi_1 \in \A_{T}$ and~$T \ge 1$. In particular,
\begin{equation} \label{Npsiest}
\| \NN[\psi] \|_{\A_{T}} \le C \, T^{- \frac{2s}{1 + 2 s}} \| \psi \|_{\A_{T}}^2,
\end{equation}
for every~$\psi \in \A_{T}$ and~$T \ge 1$. In addition, given any~$\alpha \in (0, 1)$,
\begin{equation} \label{Nt1-Nt2}
|\NN[\psi](x, t_1) - \NN[\psi](x, t_2)| \le C \| \psi \|_{\A_{T}} \| \psi \|_{\A_{T}^\alpha} \, t^{- \frac{4 s}{1 + 2 s}} |t_1 - t_2|^\alpha,
\end{equation}
for every~$\psi \in \A_T^\alpha$,~$x \in \R$,~$t \ge T \ge 1$, and~$t_1, t_2 \in [t, t + 1]$.
\end{lemma}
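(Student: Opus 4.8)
The plan is to exploit the fact that $\NN$ is, by its definition~\eqref{Ndef}, a second-order Taylor remainder. Writing
$$
\Psi(a, b) := W'(a + b) - W'(a) - W''(a) b \qquad \mbox{for } a, b \in \R,
$$
we have $\NN[\psi](x, t) = \Psi(z(x, t), \psi(x, t))$, and
$$
\partial_b \Psi(a, b) = W''(a + b) - W''(a), \qquad \partial_a \Psi(a, b) = W''(a + b) - W''(a) - W'''(a) b,
$$
so that, by the integral form of Taylor's theorem,
$$
|\partial_b \Psi(a, b)| \le \| W''' \|_{L^\infty(\R)} \, |b| \qquad \mbox{and} \qquad |\partial_a \Psi(a, b)| \le \tfrac{1}{2} \| W^{(4)} \|_{L^\infty(\R)} \, b^2 \qquad \mbox{for all } a, b \in \R.
$$
Both $L^\infty$ norms are finite generic constants, since $W \in C^{4, 1}(\R)$ is $1$-periodic by~\eqref{Wprop}. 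The linear bound on $\partial_b \Psi$ will drive~\eqref{Npsi1psi2est} and~\eqref{Npsiest}, whereas the \emph{quadratic} bound on $\partial_a \Psi$ is what upgrades the natural decay in~\eqref{Nt1-Nt2} to the rate $t^{- 4 s / (1 + 2 s)}$.

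For~\eqref{Npsi1psi2est}, I would fix $(x, t)$ and write, $z = z(x, t)$ being common to both terms,
$$
\NN[\psi_1] - \NN[\psi_2] = \Psi(z, \psi_1) - \Psi(z, \psi_2) = (\psi_1 - \psi_2) \int_0^1 \partial_b \Psi\bigl(z, \psi_2 + \tau (\psi_1 - \psi_2)\bigr) \, d\tau,
$$
whence $|\NN[\psi_1] - \NN[\psi_2]|(x, t) \le \| W''' \|_{L^\infty(\R)} \, |\psi_1 - \psi_2|(x, t) \max\{ |\psi_1|, |\psi_2| \}(x, t)$, the max coming from the fact that the argument of $\partial_b\Psi$ is a convex combination of $\psi_1(x,t)$ and $\psi_2(x,t)$. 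Substituting $|\psi_i(x, t)| \le \| \psi_i \|_{\A_T} \Phi(x, t)$ and $|(\psi_1 - \psi_2)(x, t)| \le \| \psi_1 - \psi_2 \|_{\A_T} \Phi(x, t)$, and using $\Phi(x, t) \le t^{- 2 s / (1 + 2 s)} \le T^{- 2 s / (1 + 2 s)}$ for $t > T$, one factor of $\Phi$ is absorbed into the left-hand side and the other is estimated by $T^{- 2 s / (1 + 2 s)}$; dividing by $\Phi$ and taking the supremum over $\R \times (T, +\infty)$ gives~\eqref{Npsi1psi2est}. Estimate~\eqref{Npsiest} is the special case $\psi_2 \equiv 0$, since $\NN[0] = \Psi(z, 0) = 0$ and $\max\{\|\psi\|_{\A_T}, 0\} = \|\psi\|_{\A_T}$.

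For~\eqref{Nt1-Nt2}, fix $x \in \R$, $t \ge T$, and $t_1, t_2 \in [t, t + 1]$ (say $t_2 < t_1$), abbreviate $z_i := z(x, t_i)$ and $\psi_i := \psi(x, t_i)$, and split
$$
\NN[\psi](x, t_1) - \NN[\psi](x, t_2) = \Bigl[ \Psi(z_1, \psi_1) - \Psi(z_2, \psi_1) \Bigr] + \Bigl[ \Psi(z_2, \psi_1) - \Psi(z_2, \psi_2) \Bigr].
$$
For the second bracket, the fundamental theorem of calculus in the second slot and the linear bound on $\partial_b \Psi$ give a bound by $C \max\{ |\psi_1|, |\psi_2| \} \, |\psi_1 - \psi_2|$; here $\max\{ |\psi_1|, |\psi_2| \} \le \| \psi \|_{\A_T} t^{- 2 s / (1 + 2 s)}$ (using $t_i \ge t$ and $\Phi(x,t_i)\le t_i^{-2s/(1+2s)}$) and $|\psi_1 - \psi_2| \le [\psi]_{C^\alpha(\R \times (t, t + 1))} |t_1 - t_2|^\alpha \le \| \psi \|_{\A_T^\alpha} t^{- 2 s / (1 + 2 s)} |t_1 - t_2|^\alpha$ by~\eqref{CTnualphadef}, so this term is $\le C \| \psi \|_{\A_T} \| \psi \|_{\A_T^\alpha} t^{- 4 s / (1 + 2 s)} |t_1 - t_2|^\alpha$. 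For the first bracket, the fundamental theorem of calculus in the first slot and the \emph{quadratic} bound on $\partial_a \Psi$ give a bound by $C \psi_1^2 \, |z_1 - z_2| \le C \| \psi \|_{\A_T}^2 t^{- 4 s / (1 + 2 s)} |z_1 - z_2|$; it then remains to prove
$$
|z(x, t_1) - z(x, t_2)| \le C \, t^{- \frac{2 s}{1 + 2 s}} |t_1 - t_2|,
$$
which follows by integrating $\partial_t z(x, \sigma) = - \sum_{i = 1}^N w'(x - \xi_i(\sigma)) \dot{\xi}_i(\sigma)$ over $[t_2, t_1] \subseteq [t, t + 1]$ and using $\| w' \|_{L^\infty(\R)} < + \infty$ together with $|\dot{\xi}_i(\sigma)| \le |\dot{\xi}_i^0(\sigma)| + |\dot{h}_i(\sigma)| \le C \sigma^{- 2 s / (1 + 2 s)} + \sigma^{- \mu} \le C \sigma^{- 2 s / (1 + 2 s)}$ for $\sigma \ge 1$, the last step being a consequence of~\eqref{mulimit0} and of $h \in \bar{B}_1(\HH_{T, \mu})$. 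Collecting, the first bracket is $\le C \| \psi \|_{\A_T}^2 t^{- 6 s / (1 + 2 s)} |t_1 - t_2|$, and since $t \ge 1$, $|t_1 - t_2| \le 1$, and $\| \psi \|_{\A_T} \le \| \psi \|_{\A_T^\alpha}$, this is in turn $\le C \| \psi \|_{\A_T} \| \psi \|_{\A_T^\alpha} t^{- 4 s / (1 + 2 s)} |t_1 - t_2|^\alpha$. Adding the two contributions yields~\eqref{Nt1-Nt2}.

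The computations are routine; the only genuine subtlety is the bookkeeping in the last estimate, where one must notice that the time-variation of $z$ already carries a factor $t^{- 2 s / (1 + 2 s)}$ and pair it with the \emph{quadratic}—rather than merely linear—smallness of $\partial_a \Psi$ in $\psi$. This is the single place where the regularity hypothesis $W \in C^{4, 1}(\R)$ (rather than just $C^3$) is used, and without it one would only obtain the decay $t^{- 2 s / (1 + 2 s)}$ for the term (I), which would be insufficient.
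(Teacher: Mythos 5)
Your argument is correct and essentially the same as the paper's: the second-order Taylor structure of $\NN$ yields \eqref{Npsi1psi2est}--\eqref{Npsiest} exactly as you do, and for \eqref{Nt1-Nt2} the paper performs the same splitting into a $z$-variation term bounded by $|\psi(x,t_1)|^2\,|z(x,t_1)-z(x,t_2)|$ (via the Lipschitz continuity of $W'''$) plus a $\psi$-variation term bounded by $\max\{|\psi(x,t_1)|,|\psi(x,t_2)|\}\,|\psi(x,t_1)-\psi(x,t_2)|$, together with the bound $|z(x,t_1)-z(x,t_2)|\le C\,t^{-2s/(1+2s)}|t_1-t_2|$, which you verify in more detail. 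Only your closing remark is slightly off: with a merely linear bound on $\partial_a\Psi$ the $z$-variation term would still decay like $t^{-4s/(1+2s)}$, since $|\psi(x,t_1)|$ and $|z(x,t_1)-z(x,t_2)|$ each already carry a factor $t^{-2s/(1+2s)}$; what the quadratic bound buys is only the stated norm dependence $\|\psi\|_{\A_T}\|\psi\|_{\A_T^\alpha}$, not the time decay.
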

\begin{proof}
We begin to deal with~\eqref{Npsi1psi2est} and~\eqref{Npsiest}. Notice that it suffices to establish~\eqref{Npsi1psi2est}, as~\eqref{Npsiest} follows by taking~$\psi_1 = \psi$ and~$\psi_2 = 0$ in~\eqref{Npsi1psi2est}, since~$\NN[0] = 0$. Recalling the definition~\eqref{Ndef} of~$N$, we have
$$
\NN[\psi_1] - \NN[\psi_2] = W'(z + \psi_1) - W'(z + \psi_2) - W''(z) \left( \psi_1 - \psi_2 \right).
$$
Thanks to the Lipschitz continuity of~$W''$, we may then estimate
\begin{align*}
\left| \NN[\psi_1] - \NN[\psi_2] \right| & \le \left| \int_{\psi_2}^{\psi_1} \left| W''(z + \tau) - W''(z) \right| d\tau \right| \le \| W''' \|_{L^\infty(\R)} \left| \int_{\psi_2}^{\psi_1} |\tau| \, d\tau \right| \\
& \le C \max \left\{ |\psi_1|, |\psi_2| \right\} |\psi_1 - \psi_2|.
\end{align*}
Estimate~\eqref{Npsi1psi2est} plainly follows from this, recalling definition~\eqref{Phidef} of~$\Phi$.

To establish~\eqref{Nt1-Nt2}, we let~$t_1, t_2 \in [t, t+ 1]$ and compute, using that~$W''' \in C^1(\R)$,
\begin{align*}
& |\NN[\psi](x, t_1) - \NN[\psi](x, t_2)| \\
& \hspace{40pt} = \left| \int_0^{\psi(x, t_1)} \left( \int_0^\tau W'''(z(x, t_1) + \sigma) \, d\sigma \right) d\tau - \int_0^{\psi(x, t_2)} \left( \int_0^\tau W'''(z(x, t_2) + \sigma) \right) d\tau \right| \\
& \hspace{40pt} \le \left| \int_0^{\psi(x, t_1)} \left( \int_0^\tau |W'''(z(x, t_1) + \sigma) - W'''(z(x, t_2) + \sigma)| \, d\sigma \right) d\tau \right| \\
& \hspace{40pt} \quad + \left| \int_{\psi(x, t_1)}^{\psi(x, t_2)} \left( \int_0^\tau W'''(z(x, t_2) + \sigma) \right) d\tau \right| \\
& \hspace{40pt} \le C \Big\{ |\psi(x, t_1)|^2 \, |z(x, t_1) - z(x, t_2)| + \max \{ |\psi(x, t_1)|, |\psi(x, t_2)| \} \, |\psi(x, t_1) - \psi(x, t_2)| \Big\}.
\end{align*}
Since~$w' \in L^\infty(\R)$ and~$h \in \bar{B}_1(\HH_{T, \mu})$, one easily checks that~$|z(x, t_1) - z(x, t_2)| \le C \, t^{- \frac{2 s}{1 + 2 s}} |t_1 - t_2|$. Recalling definition~\eqref{CTnualphadef}, we are immediately led to~\eqref{Nt1-Nt2}. The proof is now complete.
\end{proof}

\subsection{Linear theory for $\psi$}

In order to address the nonlinear initial value problem~\eqref{psiDirprob}, we develop in this subsection a solvability theory for the corresponding linear problem~\eqref{psiprobthm}---namely, we establish Proposition~\ref{mainlinprop}.

As a first step towards its proof, we have the following existence and uniqueness result in a bounded time interval~$(T, T + \tau)$. Notice that the right-hand side~$g$ is a general function in~$\A_{(T, T + \tau)}$. Thus, the found solution~$\psi$ belongs only to~$\A_{(T, T+ \tau)}$ and not necessarily to~$\Adot_{(T, T + \tau)}$. As a result, the corresponding estimate is governed by a constant that depends on (an upper bound~$\bar{\tau}$ on)~$\tau$.

\begin{lemma} \label{locweightedLinftytoLinftylem}
Let~$T \ge 1$,~$\bar{\tau} \ge \tau > 0$,~$h \in \bar{B}_1(\HH_{T, \mu})$,~$g \in \A_{(T, T + \tau)}$, and~$\psi_0 \in \A_{\{ T \}}$. Then, there exists a unique solution~$\psi \in \A_{(T, T + \tau)}$ of
\begin{equation} \label{psiprob}
\begin{cases}
\partial_t \psi + (-\Delta)^s \psi + W''(z) \psi = g & \quad \mbox{in } \R \times (T, T + \tau) \\
\psi = \psi_0 & \quad \mbox{on } \R \times \{ T \}.
\end{cases}
\end{equation}
In addition,~$\psi$ satisfies
\begin{equation} \label{psilocinAnuest}
\| \psi \|_{\A_{(T, T + \tau)}} \le C_{\bar{\tau}} \left( \| g \|_{\A_{(T, T + \tau)}} + \| \psi_0 \|_{\A_{\{ T \}}} \right),
\end{equation}
for some constant~$C_{\bar{\tau}}$ depending only on structural quantities and~$\bar{\tau}$.
\end{lemma}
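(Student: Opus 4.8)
The plan is to obtain existence and uniqueness of a bounded mild solution of~\eqref{psiprob} from the abstract theory of this section, and then to upgrade boundedness to the weighted estimate~\eqref{psilocinAnuest} by comparison with the free fractional heat flow. For the first part, I would cast~\eqref{psiprob} in the form~\eqref{Gprob} with $G(x, t, u) := g(x, t) - W''(z(x, t))\, u$. Since $W \in C^{4, 1}(\R)$, the derivative $W''$ is globally bounded; writing $m_0 := \| W'' \|_{L^\infty(\R)}$, the map $G$ is Lipschitz in $u$ with constant $m_0$, so~\eqref{gLip} holds, and since $|g(x, t)| \le \| g \|_{\A_{(T, T + \tau)}}\, \Phi(x, t) \le \| g \|_{\A_{(T, T + \tau)}}$ because $\Phi \le 1$ for $t \ge 1$, condition~\eqref{gbounded} holds as well. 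Proposition~\ref{existinLinftyprop} (applied on $(T, +\infty)$ and then restricted to $(T, T + \tau)$) therefore yields a unique mild solution $\psi \in L^\infty(\R \times (T, T + \tau))$ of~\eqref{psiprob}. As every element of $\A_{(T, T + \tau)}$ is in particular bounded, this also gives uniqueness within $\A_{(T, T + \tau)}$, once we know $\psi$ lies there, which is exactly the content of~\eqref{psilocinAnuest}.

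For the weighted bound, the coefficient $W''(z)$ is sign-indefinite, so in order to run a maximum-principle argument it is convenient to first pass to a nonnegative zeroth-order term. By Lemma~\ref{Arescalelem} with $A(t) = e^{m_0 (t - T)}$, the function $\widetilde\psi(x, t) := e^{m_0 (t - T)} \psi(x, t)$ is the mild solution of $\partial_t \widetilde\psi + (-\Delta)^s \widetilde\psi + \widetilde c\, \widetilde\psi = \widetilde g$ in $\R \times (T, T + \tau)$ with $\widetilde\psi(\cdot, T) = \psi_0$, where $\widetilde c := W''(z) + m_0 \in [0, 2 m_0]$ and $\widetilde g := e^{m_0 (t - T)} g$, so that $\| \widetilde g \|_{\A_{(T, T + \tau)}} \le e^{m_0 \bar\tau} \| g \|_{\A_{(T, T + \tau)}}$. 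Let $V$ be the mild solution (again provided by Proposition~\ref{existinLinftyprop}) of the same equation with right-hand side $|\widetilde g|$ and datum $|\psi_0|$. Since $\widetilde c \ge 0$, three applications of the comparison principle of Proposition~\ref{compprincforstrongprop}---with the pairs of right-hand sides $(\widetilde g - \widetilde c\, u,\ |\widetilde g| - \widetilde c\, u)$, $(-|\widetilde g| - \widetilde c\, u,\ \widetilde g - \widetilde c\, u)$ and $(-\widetilde c\, u,\ |\widetilde g| - \widetilde c\, u)$ and the evident orderings of the corresponding initial data---give $0 \le V$ and $|\widetilde\psi| \le V$ in $\R \times (T, T + \tau)$. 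Next, let $U$ be the mild solution of $\partial_t U + (-\Delta)^s U = |\widetilde g|$ with $U(\cdot, T) = |\psi_0|$; then, subtracting the Duhamel formulas, $U - V$ equals $\int_T^t \T_{t - \tau}[(\widetilde c\, V)(\cdot, \tau)]\, d\tau$ and is thus the mild solution of $\partial_t w + (-\Delta)^s w = \widetilde c\, V$ with null datum, so that a further comparison with the zero solution (using $\widetilde c\, V \ge 0$) yields $V \le U$.

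It then remains to bound $U$ by a multiple of $\Phi$. Writing $U(x, t) = \T_{t - T}[|\psi_0|](x) + \int_T^t \T_{t - \tau}\bigl[\, |\widetilde g|(\cdot, \tau)\, \bigr](x)\, d\tau$ and using $|\psi_0| \le \| \psi_0 \|_{\A_{\{ T \}}}\, \Phi(\cdot, T)$ together with $|\widetilde g(\cdot, \tau)| \le \| \widetilde g \|_{\A_{(T, T + \tau)}}\, \Phi(\cdot, \tau) \le \| \widetilde g \|_{\A_{(T, T + \tau)}}\, \Phi(\cdot, T)$ (as $\Phi(x, \cdot)$ is non-increasing), I would invoke Lemma~\ref{etabound0lem} in the form $\T_\sigma[\Phi(\cdot, T)] = e^{m \sigma}\, \etaini_T(\cdot, T + \sigma) \le C\, e^{m \sigma}\, \Phi(\cdot, T + \sigma)$ for $\sigma > 0$, and then reduce everything to the elementary inequality
\[
\int_0^{t - T} \Phi(x, T + \sigma)\, d\sigma \le C_{\bar\tau}\, \Phi(x, t) \qquad \text{for } x \in \R,\ T < t \le T + \bar\tau,
\]
which is checked by distinguishing the case $|x| \ge t^{1/(1 + 2 s)}$ (where $\Phi(x, T + \sigma) = \Phi(x, t) = |x|^{-2 s}$ for all $\sigma \in (0, t - T)$) from the case $|x| < t^{1/(1 + 2 s)}$ (where one uses that $T$ and $t$ are comparable up to a factor depending only on $\bar\tau$). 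Combining these bounds gives $|\psi| = e^{-m_0 (t - T)} |\widetilde\psi| \le V \le U \le C_{\bar\tau} \bigl( \| g \|_{\A_{(T, T + \tau)}} + \| \psi_0 \|_{\A_{\{ T \}}} \bigr)\, \Phi$ on $\R \times (T, T + \tau)$, which is~\eqref{psilocinAnuest}; as noted above, this also closes the uniqueness part.

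The only real difficulty I anticipate is bookkeeping rather than conceptual: one has to make sure that every constant that shows up depends solely on the structural parameters and on the upper bound $\bar\tau$, and \emph{not} on $T$ itself. This is exactly where the restriction to a bounded time window is used, and where the innocuous-looking comparison $\int_0^{t - T} \Phi(x, T + \sigma)\, d\sigma \lesssim \Phi(x, t)$ has to be carried out with some care. All the smoothing estimates for the free flow needed along the way are already contained in (the proofs of) Lemmas~\ref{etabound0lem} and~\ref{etaboundlem}.
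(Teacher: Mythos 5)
Your argument is correct, but it follows a different route from the paper's. The paper proves \eqref{psilocinAnuest} with a \emph{single} application of the comparison principle against the explicit supersolution $\overline{\psi}(x,t) := e^{M(t-T)}\big(\|\psi_0\|_{\A_{\{T\}}}\,\etaini_T(x,t) + \|g\|_{\A_{(T,T+\tau)}}\,\etadis(x,t)\big)$, where both barriers come from Lemmas~\ref{etabound0lem} and~\ref{etaboundlem}: the second barrier $\etadis$ already encodes the effect of a source bounded by $\Phi$, globally in time, so no Duhamel integral ever needs to be estimated by hand. You instead reduce, via three comparisons, to the free fractional heat flow $U$ and then control $U$ through the Duhamel formula, replacing Lemma~\ref{etaboundlem} by the elementary observation that $\Phi(x,\cdot)$ is slowly varying on the bounded time window $[T, T+\bar\tau]$ once $T \ge 1$ (your inequality $\int_0^{t-T}\Phi(x,T+\sigma)\,d\sigma \le C_{\bar\tau}\Phi(x,t)$). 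Both approaches get the exponential factor $e^{m\bar\tau}$ as the source of the $\bar\tau$-dependence. Your route is a bit longer but more self-contained (it only needs Lemma~\ref{etabound0lem}, not Lemma~\ref{etaboundlem}), and it makes the dependence on $T \ge 1$ explicit; the paper's barrier argument is shorter because $\etadis$ does the time-integral bookkeeping for free. A minor remark: when you split the case $|x| < t^{1/(1+2s)}$ in checking the integral inequality, one really has to observe that for any $\sigma \in (0, t-T)$ the ratio $\Phi(x, T+\sigma)/\Phi(x,t)$ is bounded by $(t/T)^{2s/(1+2s)} \le (1+\bar\tau)^{2s/(1+2s)}$ regardless of whether $|x|$ falls above or below $(T+\sigma)^{1/(1+2s)}$; your phrase ``$T$ and $t$ are comparable up to a factor depending on $\bar\tau$'' is the right idea but worth spelling out, since this is precisely where $T \ge 1$ (rather than $T > 0$) is used.
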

\begin{proof}
The existence and uniqueness of a solution~$\psi \in L^\infty(\R \times (T, T + \tau))$ to~\eqref{psiprob} is a consequence of Proposition~\ref{existinLinftyprop}. We thus only need to establish that~$\psi \in \A_{(T, T + \tau)}$ and~\eqref{psilocinAnuest} is true.

Consider the positive functions~$\eta^{(1)}_{T}$ and~$\eta^{(2)}$ introduced, respectively, in Lemmas~\ref{etabound0lem} and~\ref{etaboundlem}, for~$m = 1$. Define
$$
\overline{\psi}(x, t) := e^{M (t - T)} \left( C_1 \, \eta^{(1)}_{T}(x, t) + C_2 \, \eta^{(2)}(x, t) \right) \quad \mbox{for } x \in \R, \, t \ge T,
$$
with~$M := \| W'' \|_{L^\infty(\R)} + 1$,~$C_1 := \| \psi_0 \|_{\A_{\{ T \}}}$, and~$C_2 := \| g \|_{\A_{(T, T + \tau)}} $. Thanks to Lemma~\ref{Arescalelem}, we have that~$\overline{\psi}$ is a mild solution of
$$
\begin{cases}
\partial_t \overline{\psi} + (-\Delta)^s \overline{\psi} = \overline{G}[\overline{\psi}] & \quad \mbox{in } \R \times (T, +\infty) \\
\overline{\psi} = \overline{\psi}_0 & \quad \mbox{on } \R \times \{ T \},
\end{cases}
$$
for some~$\overline{\psi}_0 \ge C_1 \psi(\cdot, T)$ in~$\R$ and with
$$
\overline{G}(x, t, u) := C_2 e^{M(t - T)} \Phi(x, t) + \| W'' \|_{L^\infty(\R)} \, u_+ + W''(z(x, t)) \, u_-.
$$
Notice that here we took advantage of the positivity of~$\overline{\psi}$ in~$\R \times (T, T + \tau)$. As~$\psi_0(x) \le \overline{\psi}_0(x)$ for all~$x \in \R$ and~$g(x, t) - W''(z(x, t)) \, u \le \overline{G}(x, t, u)$ for all~$x \in \R$,~$t \in (T, T + \tau)$, and~$u \in \R$, we conclude, using the comparison principle of Proposition~\ref{compprincforstrongprop}, that~$\psi \le \overline{\psi}$ in~$\R \times (T, T + \tau)$. Since an analogous bound from below can be obtained by comparing~$\psi$ to~$- \overline{\psi}$, applying Lemmas~\ref{etabound0lem}-\ref{etaboundlem} we infer that~$\psi \in \A_{(T, T + \tau)}$ and that~\eqref{psilocinAnuest} holds true.
\end{proof}

The key tool that we need to prove Proposition~\ref{mainlinprop} is an a priori estimate like~\eqref{psilocinAnuest} but with a constant independent of~$\bar{\tau}$. We do this in the next proposition, at the price of assuming that~$\psi$ belongs to~$\Adot_{(T, T + \tau)}$, i.e., that it satisfies the orthogonality conditions~\eqref{psiort}.

\begin{proposition} \label{weightedLinftytoLinftyprop}
Let~$T, \tau \ge 1$,~$h \in \bar{B}_1(\HH_{T, \mu})$,~$g \in \A_{(T, T + \tau)}$, and~$\psi_0 \in \A_{\{ T \}}$. Let~$\psi$ be the mild solution of problem~\eqref{psiprob} and suppose that~$\psi \in \Adot_{(T, T + \tau)}$. Then, there exist two generic constants~$T_0, C_\sharp \ge 1$ such that
$$
\left\| \psi \right\|_{\A_{(T, T + \tau)}} \le C_\sharp \left( \left\| g \right\|_{\A_{(T, T + \tau)}} + \| \psi_0 \|_{\A_{\{ T \}}} \right),
$$
provided~$T \ge T_0$.
\end{proposition}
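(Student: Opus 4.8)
The argument I would use is one by contradiction and compactness. If the estimate failed, then for every $n \in \N$ there would exist $T_n \ge n$, $\tau_n \ge 1$, $h_n \in \bar{B}_1(\HH_{T_n, \mu})$, $g_n \in \A_{(T_n, T_n + \tau_n)}$, $\psi_{0, n} \in \A_{\{ T_n \}}$ and a mild solution $\psi_n \in \Adot_{(T_n, T_n + \tau_n)}$ of~\eqref{psiprob} — with $z$, the $Z_i$, $W''(z)$ formed out of $\xi = \xi^0 + h_n$ — such that $\| \psi_n \|_{\A_{(T_n, T_n + \tau_n)}} = 1$ while $a_n := \| g_n \|_{\A_{(T_n, T_n + \tau_n)}} + \| \psi_{0, n} \|_{\A_{\{ T_n \}}} \to 0$; the normalization is legitimate by the linearity of~\eqref{psiprob} in $(\psi, g, \psi_0)$. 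Necessarily $T_n \to +\infty$, so the dislocation centers $\xi_1(t) < \dots < \xi_N(t)$ are, for $t$ large, far apart.

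The first — and technically most delicate — step is an \emph{outer barrier estimate} confining the possible concentration of $\psi_n$ to bounded neighborhoods of the dislocation points. I would fix $R \ge 1$, depending only on $s$, $W$, $N$, so large that $W''(z(x, t)) \ge \tfrac{1}{2} W''(0) =: 2 m_0 > 0$ whenever $\operatorname{dist}(x, \{ \xi_i(t) \}_i) \ge R$ and $t$ is large — this holds by~\eqref{wasympt}, the periodicity of $W''$ and $W''(0) > 0$ — and set
$$
\mathcal{K}_n := \sup_{T_n < t < T_n + \tau_n} t^{\frac{2 s}{1 + 2 s}} \max_{1 \le i \le N} \, \sup_{|x - \xi_i(t)| \le R} |\psi_n(x, t)| \, \le \, \| \psi_n \|_{\A_{(T_n, T_n + \tau_n)}} = 1 .
$$
I would then prove $\| \psi_n \|_{\A_{(T_n, T_n + \tau_n)}} \le C (a_n + \mathcal{K}_n)$ by comparing $\psi_n$, on the exterior region $\{\operatorname{dist}(x, \{ \xi_i(t) \}_i) \ge R\}$ where $W''(z) \ge 2 m_0 > 0$, with a supersolution built from $\etaini_{T_n}$ and $\etadis$ of Lemmas~\ref{etabound0lem} and~\ref{etaboundlem} (with $m = m_0$) and from an auxiliary function $\zeta_n$ — the mild solution on $\R \times (T_n, +\infty)$ of $\partial_t \zeta_n + (-\Delta)^s \zeta_n + m_0 \zeta_n = \Phi\, \chi_{\{ \operatorname{dist}(x, \{ \xi_i(t) \}_i) \le 2R \}}$ with null datum — namely $\bar\psi_n := C ( a_n \etaini_{T_n} + a_n \etadis + \mathcal{K}_n \etaini_{T_n} + \mathcal{K}_n \zeta_n )$. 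Using Lemma~\ref{Arescalelem} and the heat kernel bounds~\ref{pbounds} one checks that $\zeta_n \le C \Phi$ globally while $\zeta_n \ge c\, t^{- 2s/(1+2s)}$ on the $R$-neighborhoods of the centers (and $\etaini_{T_n} \gtrsim t^{-2s/(1+2s)}$ there for $t$ near $T_n$), so $\bar\psi_n$ dominates $\pm \psi_n$ at $t = T_n$ and on the lateral boundary $\{ \operatorname{dist}(x, \{ \xi_i(t) \}_i) = R \}$ — the latter exactly by the definition of $\mathcal{K}_n$ — and is a supersolution of~\eqref{psiprob} in the exterior region. A maximum principle argument there (whose spatial hole is bounded and whose zeroth order coefficient is $\ge 2 m_0 > 0$), in the vein of Proposition~\ref{compprincforstrongprop} and using Proposition~\ref{regforstrongprop} to take interior maxima in limits, then gives $|\psi_n| \le \bar\psi_n \le C (a_n + \mathcal{K}_n) \Phi$ there; combined with the trivial bound $|\psi_n| \le \mathcal{K}_n t^{- 2s/(1+2s)} \le C \mathcal{K}_n \Phi$ on the $R$-neighborhoods this proves the claim, whence $\mathcal{K}_n \ge c_0 > 0$ for $n$ large.

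Consequently, after passing to a subsequence, there are an index $i$ and points $(x_n, t_n)$ with $|x_n - \xi_i(t_n)| \le R$, $x_n - \xi_i(t_n) \to \bar x$ and $t_n^{2s/(1+2s)} |\psi_n(x_n, t_n)| \ge c_0 / 2$. I would blow up at the $i$-th dislocation by setting $\phi_n(y, \sigma) := t_n^{2s/(1+2s)} \psi_n(y + \xi_i(t_n), t_n + \sigma)$ — translating by the \emph{constant} $\xi_i(t_n)$, so that no transport term appears. From $|\psi_n| \le \Phi$ one gets $|\phi_n(y, \sigma)| \le (1 + \sigma/t_n)^{- 2s/(1+2s)}$, so $\{ \phi_n \}$ is locally uniformly bounded on $\R \times D$, $D$ an open interval containing $0$ with endpoints $\lim(T_n - t_n) \in [-\infty, 0)$ and $\lim(T_n + \tau_n - t_n) \in (0, +\infty]$, and — by Proposition~\ref{regforstrongprop} applied on bounded time windows — equi-Hölder; a subsequence converges locally uniformly to some $\phi$. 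Since $t_n^{2s/(1+2s)} g_n(\cdot + \xi_i(t_n), t_n + \sigma) \to 0$ (as $a_n \to 0$) and $W''(z(\cdot + \xi_i(t_n), \cdot)) \to W''(w)$ locally uniformly (using~\eqref{wasympt}, the periodicity of $W''$, $|\xi_i(t_n) - \xi_i(t_n + \sigma)| \to 0$, and the bounds underlying Lemma~\ref{ElePhilem}), $\phi$ is a bounded mild — hence, by Proposition~\ref{regforstrongprop}, classical — solution of $\partial_\sigma \phi + (-\Delta)^s \phi + W''(w) \phi = 0$ on $\R \times D$, with $|\phi(\bar x, 0)| \ge c_0/2 > 0$; passing $\int_\R \psi_n(\cdot, t) Z_i(\cdot, t)\, dx = 0$ to the limit (the terms from $Z_j$, $j \ne i$, vanish since $|\xi_i - \xi_j| \to +\infty$) gives $\int_\R \phi(\cdot, \sigma) w'\, dx = 0$ for all $\sigma \in D$; and if $\inf D > -\infty$ then $\phi$ has zero initial datum, because $t_n^{2s/(1+2s)} \| \psi_{0, n} \|_{\A_{\{ T_n \}}} \Phi(\cdot, T_n) \to 0$ uniformly.

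It remains to show $\phi \equiv 0$, contradicting $|\phi(\bar x, 0)| > 0$. If $\inf D > -\infty$ this is immediate from the uniqueness in Proposition~\ref{existinLinftyprop}. If $\inf D = -\infty$, so that $\phi$ is a bounded \emph{ancient} solution of the linearized equation orthogonal to $w'$, this is the crux of the matter. The natural tool is Lemma~\ref{L2lem}: were $\phi \in C^0(D; L^2(\R))$ with $\sup_{\sigma} \| \phi(\cdot, \sigma) \|_{L^2(\R)} < +\infty$, it would follow that $\sigma \mapsto e^{\lambda \sigma} \| \phi(\cdot, \sigma) \|_{L^2(\R)}^2$ is non-increasing on $D$ (with $\lambda$ as in that lemma), and letting $\sigma \to -\infty$ would force $\phi \equiv 0$. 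The obstruction is that $\phi$ a priori decays in $x$ only like $|x|^{-2 s}$, which fails to be square-integrable when $s \le 1/4$. I would overcome this by a decay bootstrap: since $W''(w(x)) \ge 2 m_0 > 0$ for $|x| \ge R$ and $W''(w(x)) = W''(0) + O(|x|^{-2 s})$ as $|x| \to +\infty$, comparison of $\phi$ with stationary barriers $C(1 + |x|)^{-\gamma}$ on the exterior $\{ |x| \ge R \} \times D$ — again via a maximum principle, using only the uniform bound $|\phi| \le C$ on $\{ |x| \le R \}$ — improves the decay exponent from $\gamma$ to $\gamma + 2s$, and finitely many iterations give $|\phi(x, \sigma)| \le C(1 + |x|)^{-1}$ uniformly in $\sigma$, hence $\phi \in C^0(D; L^2(\R))$ with a uniform $L^2$ bound; then Lemma~\ref{L2lem} applies and the argument closes. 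The two steps I expect to demand the most care are thus the construction of the outer supersolution $\bar\psi_n$ over the \emph{moving} bad region $\{ \operatorname{dist}(x, \{ \xi_i(t) \}_i) \le 2R \}$, and this decay bootstrap, which is what lets the ancient blow-up limit enter the $L^2$ framework of Lemma~\ref{L2lem}.
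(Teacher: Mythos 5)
Your proposal follows essentially the same route as the paper's proof: a contradiction/normalization argument, reduction of the weighted estimate to the core regions via the barriers of Lemmas~\ref{etabound0lem}--\ref{etaboundlem}, a blow-up at a dislocation core producing a bounded (possibly ancient) solution of the linearization at~$w$ orthogonal to~$w'$, and its annihilation through spatial-decay barriers combined with the~$L^2$ monotonicity of Lemma~\ref{L2lem}. Your minor variants (recentering by the constant~$\xi_i(t_n)$ instead of the moving center, and an iterated exterior barrier in place of the single barrier~$\overline{C} w' + e^{-m(t - \sigma)}$) are sound, though note that the blow-up limit is a priori only bounded---the spatial decay encoded in~$\Phi$ is centered at the origin and is lost after recentering at~$\xi_i(t_n)$---so the decay upgrade must start from boundedness and handle the initial time of the comparison on ancient intervals by the usual~$e^{-m(t-\sigma)}$ trick, exactly as the paper does.
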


\begin{proof}

We argue by contradiction and suppose that, for every~$j \in \N$, there exist two positive real numbers
\begin{equation} \label{b>a+1}
b_j - 1 > a_j \ge j,
\end{equation}
an array of trajectories~$\xi^{(j)} = \xi^0 + h^{(j)}$, with~$h^{(j)} \in \bar{B}_1(\HH_{a_j, \mu})$, and three functions~$\psi^{(j)}, g^{(j)} \in \A_{(a_j, b_j)}$, and~$\psi_0^{(j)} \in L^\infty(\R)$, with~$\| \psi^{(j)}_0 / \Phi(\cdot, a_j) \|_{L^\infty(\R)} < +\infty$, satisfying
\begin{equation} \label{eqforpsij}
\begin{cases}
\partial_t \psi^{(j)} + (-\Delta)^s \psi^{(j)} + b^{(j)} \psi^{(j)} = g^{(j)} & \quad \mbox{in } \R \times (a_j, b_j), \\
\psi^{(j)} = \psi_0^{(j)} & \quad \mbox{on } \R \times \{ a_j \},
\end{cases}
\end{equation}
with~$b^{(j)}(x, t) := W''(z^{(j)}(x, t))$ and~$z^{(j)}(x, t) := \sum_{i = 1}^N w(x - \xi_i^{(j)}(t))$, as well as the orthogonality condition
\begin{equation} \label{psijortcond}
\int_{\R} \psi^{(j)}(x, t) w'(x - \xi^{(j)}_i(t)) \, dx = 0 \quad \mbox{for a.e.~} t \in (a_j, b_j) \mbox{ and every } i = 1, \ldots, N,
\end{equation}
but for which
$$
\left\| \frac{\psi^{(j)}}{\Phi} \right\|_{L^\infty(\R \times (a_j, b_j))} > j \left\{ \left\| \frac{g^{(j)}}{\Phi} \right\|_{L^\infty(\R \times (a_j, b_j))} + \left\| \frac{\psi_0^{(j)}}{\Phi(\cdot, a_j)} \right\|_{L^\infty(\R)} \right\}.
$$
The equation being linear, after a renormalization we may also suppose that
\begin{equation} \label{psij=1}
\left\| \frac{\psi^{(j)}}{\Phi} \right\|_{L^\infty(\R \times (a_j, b_j))} = 1
\end{equation}
and consequently that
\begin{equation} \label{gjdecay}
\left\| \frac{g^{(j)}}{\Phi} \right\|_{L^\infty(\R \times (a_j, b_j))} + \left\| \frac{\psi_0^{(j)}}{\Phi(\cdot, a_j)} \right\|_{L^\infty(\R)} < \frac{1}{j}.
\end{equation}

For~$R \ge 1$, consider the sets
$$
\mathscr{B}_j(R) := \left\{ (x, t) \in \R \times (a_j, b_j) : |x - \xi_i^{(j)}(t)| < R \mbox{ for some } i = 1, \ldots, N \right\}
$$
We claim that
\begin{equation} \label{weightedlemclaim}
\lim_{j \rightarrow +\infty} \left\| \frac{\psi^{(j)}}{\Phi} \right\|_{L^\infty(\mathscr{B}_j(R))} = 0.
\end{equation}
For the moment, we assume~\eqref{weightedlemclaim} to hold true and show that, under its validity, we reach a contradiction.

Let
\begin{equation} \label{mandadef}
m := \frac{W''(0)}{2} > 0 \quad \mbox{and} \quad d^{(j)}(x, t) := b^{(j)}(x, t) \chi_{\left( \R \times (a_j, b_j) \right) \setminus \mathscr{B}_j(R)} + m \chi_{\mathscr{B}_j(R)}.
\end{equation}
If~$R$ is chosen sufficiently large, but independently of~$j$, then
$$
b^{(j)} \ge m \quad \mbox{in } \big( \R \times (a_j, b_j) \big) \setminus \mathscr{B}_j(R).
$$
Accordingly,
\begin{equation} \label{a>m}
d^{(j)} \ge m \quad \mbox{in } \R \times (a_j, b_j).
\end{equation}
We rewrite~\eqref{eqforpsij} as
$$
\begin{cases}
\partial_t \psi^{(j)} + (- \Delta)^s \psi^{(j)} = G^{(j)}[\psi^{(j)}] & \quad \mbox{in } \R \times (a_j, b_j) \\
\psi^{(j)} = \psi_0^{(j)} & \quad \mbox{on } \R \times \{ a_j \},
\end{cases}
$$
with~$G^{(j)}(x, t, u) := f^{(j)}(x, t) - d^{(j)}(x, t) \, u \,$ and~$f^{(j)} := g^{(j)} + (m - b^{(j)}) \chi_{\mathscr{B}_j(R)} \psi^{(j)}$.

Consider now the positive mild solutions~$\etaini_{a_j}$ and~$\etadis$ of problems~\eqref{etaprob0} (with~$T = a_j$) and~\eqref{etaprob}, respectively, with~$m$ as in~\eqref{mandadef}. Let
\begin{equation} \label{Cj12def}
\begin{aligned}
C_j^{(1)} & := \left\| \frac{\psi_0^{(j)}}{\Phi(\cdot, a_j)} \right\|_{L^\infty(\R)}, \\
C_j^{(2)} & := \left\| \frac{g^{(j)}}{\Phi} \right\|_{L^\infty(\R \times (a_j, b_j))} + 2 \| W'' \|_{L^\infty(\R)} \left\| \frac{\psi^{(j)}}{\Phi} \right\|_{L^\infty(\mathscr{B}_j(R))},
\end{aligned}
\end{equation}
and define~$\overline{\psi}^{(j)} := C_j^{(1)} \etaini_{a_j} + C_j^{(2)} \etadis$. Thanks to its positivity, this function solves
$$
\begin{cases}
\partial_t \overline{\psi}^{(j)} + (-\Delta)^s \overline{\psi}^{(j)} = \overline{G}^{(j)}[\, \overline{\psi}^{(j)}] & \quad \mbox{in } \R \times (a_j, +\infty) \\
\overline{\psi}^{(j)} = \overline{\psi}_0^{(j)} & \quad \mbox{on } \R \times \{ a_j \},
\end{cases}
$$
for some~$\overline{\psi}_0^{(j)} \in L^\infty(\R)$ satisfying~$\overline{\psi}_0^{(j)} \ge C_j^{(1)} \Phi(\cdot, a_j)$ in~$\R$ and with
$$
\overline{G}^{(j)}(x, t, u) := C_j^{(2)} \Phi(x, t) - m u_+ + d^{(j)}(x, t) u_-.
$$
Observe that
$$
|\psi_0^{(j)}(x)| \le C_j^{(1)} \Phi(x, a_j) \quad \mbox{and} \quad |f^{(j)}(x, t)| \le C_j^{(2)} \Phi(x, t) \quad \mbox{for a.e.~} x \in \R, \, t \in (a_j, b_j).
$$
From these inequalities and~\eqref{a>m} we infer that~$\psi_0^{(j)} \le \overline{\psi}_0^{(j)}$ in~$\R$ and~$G^{(j)} \le \overline{G}^{(j)}$ in~$\R \times (a_j, b_j) \times \R$. Hence, we can apply the comparison principle of Proposition~\ref{compprincforstrongprop} and get that
$$
\psi^{(j)} \le \overline{\psi}^{(j)} \quad \mbox{in } \R \times (a_j, b_j),
$$
By considering~$- \overline{\psi}^{(j)}$ instead of~$\overline{\psi}^{(j)}$, we get the analogous lower bound. By this and Lemmas~\ref{etabound0lem}-\ref{etaboundlem}, we conclude that
$$
|\psi^{(j)}| \le C \left\{ C_j^{(1)} + C_j^{(2)} \right\} \Phi \quad \mbox{in } \R \times (a_j, b_j),
$$
for some constant~$C > 0$ independent of~$j$. Recalling~\eqref{Cj12def},~\eqref{gjdecay}, and~\eqref{weightedlemclaim}, we then find that
$$
\left\| \frac{\psi^{(j)}}{\Phi} \right\|_{L^\infty(\R \times (a_j, b_j))} \le C \left\{ \left\| \frac{\psi_0^{(j)}}{\Phi(\cdot, a_j)} \right\|_{L^\infty(\R)} + \left\| \frac{g^{(j)}}{\Phi} \right\|_{L^\infty(\R \times (a_j, b_j))} + \left\| \frac{\psi^{(j)}}{\Phi} \right\|_{L^\infty(\mathscr{B}_j(R))} \right\} \longrightarrow 0,
$$
as~$j \rightarrow +\infty$. But this is in contradiction with~\eqref{psij=1} and, accordingly, the conclusion of Proposition~\ref{weightedLinftytoLinftyprop} follows, under the validity of claim~\eqref{weightedlemclaim}.

To finish the proof of Proposition~\ref{weightedLinftytoLinftyprop}, we therefore only need to prove that~\eqref{weightedlemclaim} holds true. Again, we argue by contradiction and suppose that there exist a constant~$\delta_\star > 0$ and a sequence of points~$(x_j, t_j) \in \mathscr{B}_j(R)$, such that
\begin{equation} \label{ratiogedelta}
\frac{\left| \psi^{(j)}(x_j, t_j) \right|}{\Phi(x_j, t_j)} \ge \delta_\star \quad \mbox{for all } j.
\end{equation}
Note that, by Lemma~\ref{ElePhilem} and~\eqref{gjdecay}, we necessarily have that~$t_j \ge a_j + 1/2$. Up to a subsequence, we may assume that there exists a unique~$\ell \in \{ 1, \ldots, N \}$ for which
\begin{equation} \label{xjtjleR}
|x_j - \xi_\ell^{(j)}(t_j)| < R \quad \mbox{for all } j.
\end{equation}
In particular, there exists a point~$\bar{x} \in B_R$ such that
\begin{equation} \label{barxdef}
\lim_{j \rightarrow +\infty} \left( x_j - \xi_\ell^{(j)}(t_j) \right) = \bar{x},
\end{equation}
up to extracting a further subsequence.

Define
$$
\widetilde{\psi}^{(j)}(x, t) := (t + t_j)^{\frac{2 s}{1 + 2 s}} \psi^{(j)}(x, t + t_j).
$$
Recalling~\eqref{eqforpsij}, Lemma~\ref{Arescalelem} guarantees that~$\widetilde{\psi}^{(j)}$ is a mild solution of
\begin{equation} \label{psitildeprob}
\begin{cases}
\partial_t \widetilde{\psi}^{(j)} + (-\Delta)^s \widetilde{\psi}^{(j)} + \widetilde{b}^{(j)} \widetilde{\psi}^{(j)} = \widetilde{g}^{(j)} & \quad \mbox{in } \R \times (a_j - t_j, b_j - t_j), \\
\widetilde{\psi}^{(j)} = \widetilde{\psi}_0^{(j)} & \quad \mbox{on } \R \times \{ a_j - t_j \},
\end{cases}
\end{equation}
with
\begin{align*}
\widetilde{b}^{(j)}(x, t) & = b^{(j)}(x, t + t_j) - \frac{2 s}{1 + 2 s} (t + t_j)^{- 1}, \\
\widetilde{g}^{(j)}(x, t) & = (t + t_j)^{\frac{2 s}{1 + 2 s}} g^{(j)}(x, t + t_j), \\
\widetilde{\psi}_0^{(j)}(x) & = a_j^{\frac{2 s}{1 + 2 s}} \psi_0^{(j)}(x).
\end{align*}
Furthermore, by~\eqref{psij=1},~\eqref{gjdecay}, the definition~\eqref{Phidef} of~$\Phi$, and~\eqref{b>a+1}, we have that
\begin{equation} \label{psigbtildele}
\begin{gathered}
\| \widetilde{\psi}^{(j)} \|_{L^\infty(\R \times (a_j - t_j, b_j - t_j))} \le 1, \quad \| \widetilde{\psi}_0^{(j)} \|_{L^\infty(\R)} + \| \widetilde{g}^{(j)} \|_{L^\infty(\R \times (a_j - t_j, b_j - t_j))} \le \frac{1}{j}, \\
\mbox{and} \quad \sup_{x \in \R, \, t \in (a_j - t_j, b_j - t_j)} \left| \, \widetilde{b}^{(j)}(x, t) - b^{(j)}(x, t + t_j) \right| \le \frac{1}{j}.
\end{gathered}
\end{equation}
From this, the fact that
\begin{equation} \label{bjleW''}
\| b^{(j)} \|_{L^\infty(\R \times (a_j, b_j))} \le \| W'' \|_{L^\infty(\R)},
\end{equation}
and Proposition~\ref{regforstrongprop}, we infer that, for every~$\varepsilon \in (0, 1/2]$,
\begin{equation} \label{Holderunifinj}
\| \widetilde{\psi}^{(j)} \|_{C^\alpha(\R \times (a_j - t_j + \varepsilon, b_j - t_j))} \le C_\varepsilon,
\end{equation}
for some exponent~$\alpha \in (0 , 1)$, depending only on~$s$, and some constant~$C_\varepsilon > 0$, depending only on~$s$,~$W$, and~$\varepsilon$. In addition, condition~\eqref{psijortcond} translates into
\begin{equation} \label{jtildeortcond}
\int_{\R} \widetilde{\psi}^{(j)}(x, t) w'(x - \xi_i^{(j)}(t + t_j)) \, dx = 0 \quad \mbox{for a.e.~} t \in (a_j - t_j, b_j - t_j) \mbox{ and every } i = 1, \ldots, N,
\end{equation}
while~\eqref{ratiogedelta},~\eqref{xjtjleR},~\eqref{Phidef},~\eqref{xi0def}, and the fact that~$h^{(j)} \in \bar{B}_1(\HH_{a_j, \mu})$ imply that
\begin{equation} \label{psitildegedelta}
|\widetilde{\psi}^{(j)}(x_j, 0)| \ge \delta_\star,
\end{equation}
up to possibly taking a smaller~$\delta_\star > 0$, still uniform in~$j$. Using that~$\widetilde{\psi}^{(j)}$ is a mild solution of~\eqref{psitildeprob} as well as estimates~\eqref{psigbtildele},~\eqref{bjleW''}, and property~\ref{pmass=1} in Section~\ref{solsec}, we immediately deduce that
\begin{equation} \label{tildepsijgoesto0}
\| \widetilde{\psi}^{(j)}(\cdot, t) \|_{L^\infty(\R)} \le \frac{1}{j} + \left( \frac{2}{j} + \| W'' \|_{L^\infty(\R)} \right) (t - a_j + t_j) \quad \mbox{for all } t \in (a_j - t_j, b_j - t_j).
\end{equation}

Set now
$$
\widehat{\psi}^{(j)}(x, t) := \widetilde{\psi}^{(j)}(x + x_j - \bar{x} + \xi_\ell^{(j)}(t + t_j) - \xi_\ell^{(j)}(t_j), t).
$$
By~\eqref{psigbtildele},~\eqref{Holderunifinj},~\eqref{xi0def}, and the fact that~$h^{(j)} \in \bar{B}_1(\HH_{a_j, \mu})$, we infer that
\begin{equation} \label{Holderhatunifinj}
\| \widehat{\psi}^{(j)}\|_{L^\infty(\R \times (a_j - t_j, b_j - t_j))} \le 1 \quad \mbox{and} \quad \| \widehat{\psi}^{(j)} \|_{C^\alpha(\R \times (a_j - t_j + \varepsilon, b_j - t_j))} \le C_\varepsilon.
\end{equation}
Furthermore, as~$\widetilde{\psi}^{(j)}$ is a mild solution of~\eqref{psitildeprob}, applying Definition~\ref{strongsoldef} it is not hard to check that~$\widehat{\psi}^{(j)}$ satisfies\footnote{Formally,~$\widehat{\psi}^{(j)}$ solves the equation
\begin{equation} \tag*{$(\dagger)_j$} \label{driftedeq}
\partial_t \widehat{\psi}^{(j)} + (-\Delta)^s \widehat{\psi}^{(j)} + \widehat{a}^{(j)} \partial_x \widehat{\psi}^{(j)} + \widehat{b}^{(j)} \widehat{\psi}^{(j)} = \widehat{g}^{(j)} \quad \mbox{in } \R \times (a_j - t_j, b_j - t_j),
\end{equation}
with drift coefficient~$\widehat{a}^{(j)}(t) := - \dot{\xi}_\ell(t + t_j)$. Identity~\eqref{psihatjstrong} essentially corresponds to a mild formulation of~\ref{driftedeq}. Below, we will let~$j \rightarrow +\infty$ and find that~$\widehat{\psi}^{(j)}$ converges to a solution~$\widehat{\psi}$ of
\begin{equation} \tag*{$(\dagger)_\infty$} \label{eqforpsihat}
\partial_t \widehat{\psi} + (-\Delta)^s \widehat{\psi} + W''(w) \widehat{\psi} = 0.
\end{equation}
It does not seem possible to get compactness and take such a limit directly through~\ref{driftedeq}, as we do not control the spatial derivative~$\partial_x \widehat{\psi}^{(j)}$---at least when~$s \le 1/2$. To circumvent this issue, we recover the needed compactness by transferring the uniform~$C^\alpha$ bound~\eqref{Holderunifinj} on~$\widetilde{\psi}^{(j)}$ to~$\widehat{\psi}^{(j)}$ (which is done in~\eqref{Holderhatunifinj}, thanks to the boundedness of~$\dot{\xi}_\ell$) and pass~\eqref{psihatjstrong} to the limit to obtain the mild formulation~\eqref{psihatstrong} of equation~\ref{eqforpsihat} (an operation that only requires the pointwise convergence of~$\widehat{\psi}^{(j)}$ to~$\widehat{\psi}$).}
\begin{equation} \label{psihatjstrong}
\begin{aligned}
& \widehat{\psi}^{(j)}(x, t) = \int_\R p(x + \xi_\ell^{(j)}(t + t_j) - \xi_\ell^{(j)}(\sigma + t_j) - y, t - \sigma) \widehat{\psi}^{(j)}(y, \sigma) \, dy \\
& \hspace{10pt} + \int_{\sigma}^t \int_\R p(x + \xi_\ell^{(j)}(t + t_j) - \xi_\ell^{(j)}(\tau + t_j) - y, t - \tau) \left\{ \widehat{g}^{(j)}(y, \tau) - \widehat{b}^{(j)}(y, \tau) \widehat{\psi}^{(j)}(y, \tau) \right\} dy d\tau,
\end{aligned}
\end{equation}
for all~$x \in \R$,~$a_j - t_j \le \sigma < t < b_j - t_j$, and with
\begin{align*}
\widehat{b}^{(j)}(x, t) & := \widetilde{b}^{(j)}(x + x_j - \bar{x} + \xi_\ell^{(j)}(t + t_j) - \xi_\ell^{(j)}(t_j), t), \\
\widehat{g}^{(j)}(x, t) & := \widetilde{g}^{(j)}(x + x_j - \bar{x} + \xi_\ell^{(j)}(t + t_j) - \xi_\ell^{(j)}(t_j), t).
\end{align*}
Also,~\eqref{jtildeortcond} (with~$i = \ell$),~\eqref{psitildegedelta}, and~\eqref{tildepsijgoesto0} respectively yield
\begin{align}
\label{jhatortcond}
\int_{\R} \widehat{\psi}^{(j)}(x, t) w'(x + x_j - \bar{x} - \xi_\ell^{(j)}(t_j)) \, dx  & = 0 \quad \mbox{for every } t \in (a_j - t_j, b_j - t_j), \\
\label{psijhatgedelta}
|\widehat{\psi}^{(j)}(\bar{x}, 0)| & \ge \delta_\star,
\end{align}
and
\begin{equation} \label{hatpsijgoesto0}
\| \widehat{\psi}^{(j)}(\cdot, t) \|_{L^\infty(\R)} \le \frac{1}{j} + \left( \frac{2}{j} + \| W'' \|_{L^\infty(\R)} \right) (t - a_j + t_j) \quad \mbox{for all } t \in (a_j - t_j, b_j - t_j).
\end{equation}

Let now~$\bar{a} \in [-\infty, 0)$ and~$\bar{b} \in [0, +\infty]$ be such that
$$
\lim_{j \rightarrow +\infty} \left( a_{j} - t_{j} \right) = \bar{a} \quad \mbox{and} \quad \lim_{j \rightarrow +\infty} \left( b_{j} - t_{j} \right) = \bar{b},
$$
up to a subsequence. Note that~\eqref{b>a+1} gives that~$\bar{b} - \bar{a} \ge 1$. By~\eqref{Holderhatunifinj} and Ascoli-Arzel\`a theorem, up to extracting a further subsequence,~$\{ \widehat{\psi}^{(j)} \}$ converges locally uniformly in~$\R \times (\bar{a}, \bar{b})$ to a continuous function~$\widehat{\psi}$. From~\eqref{Holderhatunifinj} we get that
\begin{equation} \label{hatpsile1}
\| \widehat{\psi} \|_{L^\infty(\R \times (\bar{a}, \bar{b}))} \le 1.
\end{equation}
By~\eqref{xi0def}, the fact that~$h^{(j)} \in \bar{B}_1(\HH_{a_j, \mu})$,~\eqref{psigbtildele}, and~\eqref{barxdef}, we also see that
\begin{align*}
\lim_{j \rightarrow +\infty} \left( \xi^{(j)}_\ell(t + t_j) - \xi^{(j)}_\ell(\tau + t_j)\right) = 0, \quad \lim_{j \rightarrow +\infty} \widehat{b}^{(j)}(x, t) = W''(w(x)), \quad \mbox{and} \quad \lim_{j \rightarrow +\infty} |\widehat{g}^{(j)}(x, t)| = 0,
\end{align*}
for a.e.~$x \in \R$ and~$\bar{a} < \sigma \le \tau < t < \bar{b}$. Accordingly, using Lebesgue's dominated convergence theorem we may let~$j \rightarrow +\infty$ in~\eqref{psihatjstrong} and obtain that~$\widehat{\psi}$ satisfies
\begin{equation} \label{psihatstrong}
\widehat{\psi}(x, t) = \int_\R p(x - y, t - \sigma) \widehat{\psi}(y, \sigma) \, dy - \int_{\sigma}^t \int_\R p(x - y, t - \tau) W''(w(y)) \widehat{\psi}(y, \tau) \, dy d\tau,
\end{equation}
for all~$x \in \R$ and~$\bar{a} < \sigma < t < \bar{b}$. Moreover, by taking the limit in~\eqref{jhatortcond},~\eqref{psijhatgedelta}, and~\eqref{hatpsijgoesto0}, we obtain
\begin{align}
\label{hatortcond}
\int_{\R} \widehat{\psi}(x, t) w'(x) \, dx & = 0 \quad \mbox{for every } t \in (\bar{a}, \bar{b}), \\
\label{psihatgedelta}
|\widehat{\psi}(\bar{x}, 0)| & \ge \delta_\star,
\end{align}
and
\begin{equation} \label{hatpsigoesto0}
\| \widehat{\psi}(\cdot, t) \|_{L^\infty(\R)} \le \| W'' \|_{L^\infty(\R)} (t - \bar{a}) \quad \mbox{for all } t \in (\bar{a}, \bar{b}).
\end{equation}
Notice that the last expression yields meaningful information only when~$\bar{a} > -\infty$.

We now claim that
\begin{equation} \label{psihatiszero}
\widehat{\psi}(x, t) = 0 \quad \mbox{for all } x \in \R, \, t \in (\bar{a}, \bar{b}).
\end{equation}
As~$0 \in (\bar{a}, \bar{b}]$, it is clear that~\eqref{psihatiszero} would contradict~\eqref{psihatgedelta}. Thus, to finish the proof we only need to prove~\eqref{psihatiszero}. To do this, we distinguish between the two cases~$\bar{a} > -\infty$ and~$\bar{a} = -\infty$.

If~$\bar{a} > -\infty$, we deduce from~\eqref{psihatstrong} and~\eqref{hatpsigoesto0} that~$\widehat{\psi}$ is a mild solution of
$$
\begin{cases}
\partial_t \widehat{\psi} + (-\Delta)^s \widehat{\psi} + W''(w) \widehat{\psi} = 0 & \quad \mbox{in } \R \times (\bar{a}, \bar{b}), \\
\widehat{\psi} = 0 & \quad \mbox{on } \R \times \{ \bar{a} \}.
\end{cases}
$$
Claim~\eqref{psihatiszero} then follows from the uniqueness of mild solutions, established in Proposition~\ref{existinLinftyprop}.

Assume now that~$\bar{a} = -\infty$. Then,~\eqref{psihatstrong} says that~$\widehat{\psi}$ is a mild solution of equation
\begin{equation} \label{psihateqentire}
\partial_t \widehat{\psi} + (-\Delta)^s \widehat{\psi} + W''(w) \widehat{\psi} = 0 \quad \mbox{in } \R \times (\sigma, \bar{b}),
\end{equation}
for every~$\sigma < 0$ (with initial datum at~$t = \sigma$ given by~$\widehat{\psi}(\cdot, \sigma)$ itself). Let
$$
m := \frac{W''(0)}{2} \quad \mbox{and} \quad d(x) := \max \left\{ m, W''(w(x)) \right\}.
$$
We rewrite~\eqref{psihateqentire} as
$$
\partial_t \widehat{\psi} + (-\Delta)^s \widehat{\psi} = G[\widehat{\psi}] \quad \mbox{in } \R \times (\sigma, \bar{b}),
$$
with~$G(x, t, u) := \left( d(x) - W''(w(x)) \right) \widehat{\psi}(x, t) - d(x) u$. Let then~$M > 0$ be sufficiently large to have~$W''(w(x)) > m$ for all~$x \in \R \setminus [-M, M]$. Also let~$\overline{C} > 0$ be such that~$w'(x) \ge \overline{C}^{-1}$ for all~$x \in [-M, M]$. Consider the function
$$
\overline{\psi}(x, t) := \overline{C} w'(x) + e^{-m(t - \sigma)}.
$$
It is easy to see that~$\overline{\psi}$ solves
$$
\begin{cases}
\partial_t \overline{\psi} + (-\Delta)^s \overline{\psi} = \overline{G}[\overline{\psi}] & \quad \mbox{in } \R \times (\sigma, +\infty), \\
\overline{\psi} = \overline{C} w' + 1 & \quad \mbox{on } \R \times \{ \sigma \},
\end{cases}
$$
with~$\overline{G}(x, t, u) := \overline{f}(x, t) - d(x) u$ and~$\overline{f}(x, t) = \overline{C} \left( d(x) - W''(w(x)) \right) w'(x) + (d(x) - m) e^{- m(t- \sigma)}$. Observe that, due to~\eqref{hatpsile1} and our choices of~$d$,~$M$, and~$\overline{C}$, it holds
$$
\overline{f}(x, t) \ge \left( d(x) - W''(w(x) \right) \widehat{\psi}(x, t) \quad \mbox{for all } x \in \R, \, t \in (\sigma, \bar{b}).
$$
Hence,~$G \le \overline{G}$ in~$\R \times (\sigma, \bar{b}) \times \R$. Since we also have that~$\widehat{\psi}(\cdot, \sigma) \le 1 \le \overline{\psi}(\cdot, \sigma)$ in~$\R$, we may apply the comparison principle of Proposition~\ref{compprincforstrongprop} and deduce, recalling the decay estimate~\eqref{w'asympt} for~$w'$, that
$$
\widehat{\psi}(x, t) \le \overline{\psi}(x, t) \le \frac{C}{(1 + |x|)^{1 + 2 s}} + e^{-m(t - \sigma)} \quad \mbox{for all } x \in \R, \, t \in [\sigma, \bar{b}),
$$
for some constant~$C > 0$ depending only on~$s$ and~$W$. By the arbitrariness of~$\sigma < 0$ and since we can also obtain an analogous lower bound for~$\widehat{\psi}$, we conclude that
\begin{equation} \label{psihatspacedecayuniv}
|\widehat{\psi}(x, t)| \le \frac{C}{(1 + |x|)^{1 + 2 s}} \quad \mbox{for all } x \in \R, \, t \in (-\infty, \bar{b}).
\end{equation}
This gives in particular that~$\widehat{\psi} \in C^0((-\infty, \bar{b}); L^2(\R))$, which, along with equation~\eqref{psihateqentire} and the orthogonality condition~\eqref{hatortcond}, enables us to apply Lemma~\ref{L2lem} and deduce that
$$
e^{\lambda t} \int_\R \widehat{\psi}(x, t)^2 \, dx \le e^{\lambda \sigma} \int_{\R} \widehat{\psi}(x, \sigma)^2 \, dx \quad \mbox{for all } -\infty < \sigma < t < \bar{b},
$$
for some constant~$\lambda > 0$ depending only on~$s$ and~$W$. Thanks to~\eqref{psihatspacedecayuniv}, the integral on the right-hand side of the above inequality is bounded uniformly in~$\sigma$. Hence, letting~$\sigma \rightarrow -\infty$ 
we are led to~\eqref{psihatiszero}. The proof of Proposition~\ref{weightedLinftytoLinftyprop} is thus complete.
\end{proof}

Proposition~\ref{weightedLinftytoLinftyprop} provides an important a priori estimate for solutions~$\psi$ of~\eqref{psiprobthm} fulfilling the orthogonality conditions~\eqref{psiort}. The next lemma, when applied with an initial datum~$\psi_0 \in \Adot_{\{ T \}}$, shows that such orthogonality conditions are satisfied if and only if the coefficients~$(c_j)$ on the right-hand side of the equation in~\eqref{psiprobthm} are determined by the system~\eqref{Ac=bt>Tige1}, with~$(b_i)$ as in~\eqref{bidef}.

\begin{lemma} \label{ortequivsystem}
Let~$T \ge 1$,~$\tau \in (0, +\infty]$,~$h \in \bar{B}_1(\HH_{T, \mu})$,~$f \in L^\infty(\R \times (T, T + \tau))$, and~$\psi$ be the mild solution of
\begin{equation} \label{psiprobt0t1}
\begin{dcases}
\partial_t \psi + (-\Delta)^s \psi + W''(z) \psi = f + \sum_{j = 1}^N c_j Z_j & \quad \mbox{in } \R \times (T, T + \tau) \\
\psi = \psi_0 \vphantom{\sum_{j = 1}^N c_j Z_j} & \quad \mbox{in } \R \times \{ T \},
\end{dcases}
\end{equation}
for some initial datum~$\psi_0 \in L^\infty(\R)$ and some coefficients~$c \in L^\infty((T, T + \tau); \R^N)$. Let~$i \in \{ 1, \ldots, N\}$. Then, the map
$$
(T, T + \tau) \ni t \longmapsto \int_{\R} \psi(x, t) Z_i(x, t) \, dx \in \R \quad \mbox{is constant}
$$
if and only if~$c$ satisfies
$$
\sum_{j = 1}^N A_{i j}(t) \, c_j(t) = b_i(t) \quad \mbox{for all } t \in (T, T + \tau),
$$
with~$A_{i j}$ and~$b_i$ respectively given by~\eqref{Aijdef} and~\eqref{bidef}.
\end{lemma}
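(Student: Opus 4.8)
The plan is to show that the scalar function $\phi_i(t) := \int_\R \psi(x, t)\, Z_i(x, t) \, dx$ is locally Lipschitz on $(T, T + \tau)$ and satisfies, for a.e.~$t$,
\begin{equation} \label{phiidotident}
\dot{\phi}_i(t) = \sum_{j = 1}^N A_{i j}(t)\, c_j(t) - b_i(t) .
\end{equation}
Once this is established, $\phi_i$ is absolutely continuous, so it is constant on $(T, T + \tau)$ precisely when $\dot{\phi}_i = 0$ a.e., which by~\eqref{phiidotident} is the condition $\sum_j A_{i j}(t)\, c_j(t) = b_i(t)$ for a.e.~$t$; in particular this holds when the equality is satisfied for every $t$, and the lemma follows. (The almost-everywhere qualification is immaterial in the applications, where $b$ is continuous in $t$.) Heuristically,~\eqref{phiidotident} is just the result of differentiating $\phi_i$ under the integral sign, substituting the equation in~\eqref{psiprobt0t1} for $\partial_t \psi$, moving $(-\Delta)^s$ onto $Z_i$ by self-adjointness, using the identity $(-\Delta)^s Z_i(\cdot, t) = - W''(w(\cdot - \xi_i(t)))\, Z_i(\cdot, t)$ (obtained by differentiating~\eqref{ellPNeq} and translating) together with $\partial_t Z_i = - \dot{\xi}_i\, w''(\cdot - \xi_i)$, and recognising the definitions~\eqref{E0idef},~\eqref{Aijdef},~\eqref{bidef}. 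The difficulty --- and the reason this lemma requires a proof --- is that $f$ is merely bounded, so $\psi$ need not be a pointwise solution of~\eqref{psiprobt0t1}, and this computation must instead be run through the Duhamel formula.

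First I would note that $\phi_i$ is well defined and continuous: the right-hand side $f + \sum_j c_j Z_j$ of~\eqref{psiprobt0t1} is bounded (since $c \in L^\infty$ and $Z_j = w'(\cdot - \xi_j) \in L^\infty(\R)$), hence by Proposition~\ref{regforstrongprop}\ref{intreg} the mild solution $\psi$ is bounded and continuous on $\R \times (T, T + \tau)$, while $Z_i(\cdot, t) \in L^1(\R)$ with $L^1$-norm independent of $t$ by~\eqref{w'asympt}. Next, for fixed $T \le s < t < T + \tau$, the semigroup property lets me rewrite the mild identity~\eqref{ustrongdef} with initial time $s$,
\begin{equation} \label{mildfroms}
\psi(\cdot, t) = \T_{t - s}[\psi(\cdot, s)] + \int_s^t \T_{t - \sigma}\big[ G(\cdot, \sigma) \big] \, d\sigma , \qquad G := - W''(z)\, \psi + f + \sum_{j = 1}^N c_j Z_j ,
\end{equation}
and then pair~\eqref{mildfroms} with $Z_i(\cdot, t)$, using $\int_\R \T_r[g]\, h \, dx = \int_\R g\, \T_r[h] \, dx$ for $g \in L^\infty(\R)$, $h \in L^1(\R)$ (Fubini's theorem and the evenness of the kernel $p(\cdot, r)$), to get
\begin{equation} \label{phiidualized}
\phi_i(t) = \int_\R \psi(x, s)\, \T_{t - s}[Z_i(\cdot, t)](x) \, dx + \int_s^t \int_\R G(x, \sigma)\, \T_{t - \sigma}[Z_i(\cdot, t)](x) \, dx \, d\sigma .
\end{equation}

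Subtracting $\phi_i(s) = \int_\R \psi(x, s)\, Z_i(x, s) \, dx$ from~\eqref{phiidualized} and splitting $\T_{t - s}[Z_i(\cdot, t)] - Z_i(\cdot, s) = \big( \T_{t - s}[Z_i(\cdot, t)] - Z_i(\cdot, t) \big) + \big( Z_i(\cdot, t) - Z_i(\cdot, s) \big)$, I would rewrite both brackets as integrals over $(s, t)$: the second equals $- \int_s^t w''(\cdot - \xi_i(\sigma))\, \dot{\xi}_i(\sigma) \, d\sigma$ since $\xi_i \in C^1$; for the first, since $w'$ is a bounded $C^2$ function with $(-\Delta)^s w' = - W''(w)\, w' \in L^1(\R) \cap L^\infty(\R)$ (differentiate~\eqref{ellPNeq}, use~\eqref{w'asympt}), one has $\tfrac{d}{dr} \T_r[Z_i(\cdot, t)] = - \T_r[(-\Delta)^s Z_i(\cdot, t)] = \T_r[W''(w(\cdot - \xi_i(t)))\, Z_i(\cdot, t)]$ by~\ref{pscalor}, whence $\T_{t - s}[Z_i(\cdot, t)] - Z_i(\cdot, t) = \int_s^t \T_{t - \sigma}[W''(w(\cdot - \xi_i(t)))\, Z_i(\cdot, t)] \, d\sigma$. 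At this stage $\phi_i(t) - \phi_i(s)$ is a sum of integrals over $(s, t)$ of quantities bounded uniformly in $\sigma$ on compact time intervals (in terms of $\| \psi \|_{L^\infty}$, $\| W'' \|_{L^\infty}$, $\| Z_i(\cdot, t) \|_{L^1}$, $\| \dot{\xi}_i \|_{L^\infty}$, $\| w'' \|_{L^1}$, $\| c \|_{L^\infty}$, $\| w' \|_{L^\infty}$), which yields the local Lipschitz bound on $\phi_i$.

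The final step is to divide $\phi_i(t) - \phi_i(s)$ by $t - s$ and let $t \to s^+$; I expect this passage to the limit to be the main technical point. The contributions from the two brackets converge, for every $s$, by the continuity of $\psi$ and $z$, the $C^1$-regularity of $\xi$, and the strong $L^1$-continuity of $\T_r$ at $r = 0^+$ on the fixed functions $Z_i(\cdot, s)$, $W''(w(\cdot - \xi_i(s)))\, Z_i(\cdot, s)$, $w''(\cdot - \xi_i(s))$ (each dominated in modulus by a fixed $L^1(\R)$ function by~\eqref{w'asympt} and~\eqref{w''decay}), giving $\int_\R \psi\, W''(w(\cdot - \xi_i(s)))\, Z_i \, dx - \dot{\xi}_i(s) \int_\R \psi\, w''(\cdot - \xi_i(s)) \, dx$. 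For the remaining term $\frac{1}{t - s} \int_s^t \int_\R G(x, \sigma)\, \T_{t - \sigma}[Z_i(\cdot, t)](x) \, dx \, d\sigma$ I would first replace $\T_{t - \sigma}[Z_i(\cdot, t)]$ by $Z_i(\cdot, \sigma)$ at the cost of an error that is $O(|t - s|)$ uniformly in $\sigma$, and then apply the Lebesgue differentiation theorem to the $L^\infty$-in-$\sigma$ function $\sigma \mapsto \int_\R G(x, \sigma)\, Z_i(x, \sigma) \, dx$ --- this is the only point where $f$ and $c$ being merely bounded forces an almost-everywhere statement. The resulting limit is, for a.e.~$s \in (T, T + \tau)$,
\begin{equation}
\dot{\phi}_i(s) = \int_\R \psi\, W''(w(\cdot - \xi_i(s)))\, Z_i \, dx - \dot{\xi}_i(s) \int_\R \psi\, w''(\cdot - \xi_i(s)) \, dx + \int_\R \Big( - W''(z)\, \psi + f + \sum_{j = 1}^N c_j(s) Z_j \Big) Z_i \, dx ,
\end{equation}
which equals $\sum_j A_{i j}(s)\, c_j(s) - b_i(s)$ once one recalls $W''(z(\cdot, s)) - W''(w(\cdot - \xi_i(s))) = \E_{0, i}(\cdot, s)$ from~\eqref{E0idef} and the definitions~\eqref{Aijdef},~\eqref{bidef}; this is~\eqref{phiidotident}. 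In summary, the only real obstacle is that the mild solution may fail to be classical, so~\eqref{phiidotident} cannot be obtained by plugging into the PDE and must instead be extracted from the Duhamel identity, which is what brings the self-adjointness of $\T_r$, the eigenfunction identity $(-\Delta)^s w' = - W''(w) w'$, and the Lebesgue-point argument into the proof.
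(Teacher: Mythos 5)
Your proof is correct, and it rests on the same three pillars as the paper's argument: the Duhamel (mild) formulation restarted at an arbitrary intermediate time, the self-adjointness of $\T_r$ coming from the evenness of $p(\cdot, r)$, and the fact that~$Z_i$ solves~$\partial_t Z_i - (-\Delta)^s Z_i = H_i$ with~$H_i = -\dot{\xi}_i \, w''(\cdot - \xi_i) + W''(w(\cdot - \xi_i)) Z_i$. Where you diverge is in the final extraction: you differentiate the paired Duhamel identity, which requires the local Lipschitz bound and a Lebesgue-point argument and yields~$\dot{\phi}_i = \sum_j A_{ij} c_j - b_i$ only almost everywhere --- a caveat you correctly flag and which is harmless, since~$c$ and~$f$ are only~$L^\infty$ classes to begin with. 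The paper instead represents~$Z_i$ backwards in time through the same kernel, $Z_i(\cdot, m) = \T_{t - m}[Z_i(\cdot, t)] - \int_m^t \T_{\ell - m}[H_i(\cdot, \ell)] \, d\ell$, substitutes this twice and observes that two double integrals cancel after swapping the time variables of integration; this gives the exact increment identity~$I_i(t) - I_i(\rho) = \int_\rho^t \int_\R \{ \psi H_i + Z_i G \} \, dx \, d\sigma$ for every pair~$\rho < t$, so no differentiation, Lipschitz estimate, or Lebesgue-point argument is needed. The two routes deliver the same content: the paper's is algebraically heavier but stays entirely at the level of integral identities, while yours is analytically heavier (difference quotients, $O(|t-s|)$ replacement of~$\T_{t-\sigma}[Z_i(\cdot,t)]$ by~$Z_i(\cdot,\sigma)$, Lebesgue points) but arguably more transparent about where the formula for~$b_i$ comes from.
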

\begin{proof}
Set
$$
I_i(\sigma) := \int_{\R} \psi(x, \sigma) Z_i(x, \sigma) \, dx \quad \mbox{for } \sigma \in (T, T + \tau).
$$
From the fact that~$\psi$ is a mild solution of~\eqref{psiprobt0t1} it follows that
\begin{equation} \label{psistrongbetweentauandt}
\psi(x, m) = \T_{m - \rho}[\psi(\cdot, \rho)](x) + \int_{\rho}^m \T_{m - \sigma}[G(\cdot, \sigma)](x) \, d\sigma \quad \mbox{for all } x \in \R \mbox{ and } m \in (\rho, T + \tau),
\end{equation}
with
$$
G(x, \sigma) := f(x, \sigma) + \sum_{j = 1}^{N} c_j(\sigma) Z_j(x, \sigma) - W''(z(x, \sigma)) \psi(x, \sigma).
$$
Let now~$\rho, t \in (T, T + \tau)$ be fixed, with~$\rho < t$. We multiply identity~\eqref{psistrongbetweentauandt} (with~$m = t$) by~$Z_i(x, t)$ and integrate as~$x$ ranges in~$\R$. We get
\begin{equation} \label{psiintegratedagainstwi}
I_i(t) = \int_\R \T_{t - \rho}[\psi(\cdot, \rho)](x) Z_i(x, t) \, dx + \int_{\rho}^t \int_\R \T_{t - \sigma}[G(\cdot, \sigma)](x) Z_i(x, t) \, dx d\sigma.
\end{equation}
From the parity of~$p(\cdot, m)$ we deduce that, for any two functions~$U \in L^\infty(\R)$,~$V \in L^1(\R)$ and any~$m > 0$,
\begin{equation} \label{TUV}
\int_\R \T_{m}[U](x) V(x) \, dx = \int_\R \int_\R p(x - y, m) U(y) V(x) \, dx dy = \int_\R \T_{m}[V](y) U(y) \, dy.
\end{equation}
Using~\eqref{TUV} twice (with~$U = \psi(\cdot, \rho)$,~$V = Z_i(\cdot, t)$,~$m = t - \rho$ and~$U = G(\cdot, \sigma)$,~$V = Z_i(\cdot, t)$,~$m = t - \sigma$), identity~\eqref{psiintegratedagainstwi} becomes
\begin{equation} \label{psiagainstwiafterfubini}
I_i(t) = \int_\R \T_{t - \rho}[Z_i(\cdot, t)](y) \psi(y, \rho) \, dy + \int_{\rho}^t \int_\R \T_{t - \sigma}[Z_i(\cdot, t)](y) G(y, \sigma) \, dy d\sigma.
\end{equation}

Next, observe that
$$
\partial_t Z_i(x, \sigma) - (-\Delta)^s Z_i(x, \sigma) = H_i(x, \sigma) \quad \mbox{for all } x \in \R \mbox{ and } \sigma > T,
$$
with
$$
H_i(x, \sigma) := - w''(x - \xi_i(\sigma)) \dot{\xi}_i(\sigma) + W''(w(x - \xi_i(\sigma))) Z_i(x, \sigma).
$$
From this it is not hard to see that~$Z_i$ can be represented as
$$
Z_i(y, m) = \T_{t - m}[Z_i(\cdot, t)](y) - \int_m^t \T_{\ell - m}[H_i(\cdot, \ell)](y) \, d\ell \quad \mbox{for all } y \in \R \mbox{ and } m \in (T, t).
$$
We use this identity twice (with~$m = \rho$ and~$m = \sigma$) to rewrite~\eqref{psiagainstwiafterfubini} as
\begin{align*}
I_i(t) & = I_i(\rho) + \int_\rho^t \int_\R \T_{\ell - \rho}[H_i(\cdot, \ell)](y) \psi(y, \rho) \, dy d\ell \\
& \quad + \int_{\rho}^t \int_\R \left\{ Z_i(y, \sigma) + \int_\sigma^t \T_{\ell - \sigma}[H_i(\cdot, \ell)](y) \, d\ell \right\} G(y, \sigma) \, dy d\sigma.
\end{align*}
Applying again~\eqref{TUV} (this time with~$U = \psi(\cdot, \rho)$,~$V = H_i(\cdot, \ell)$,~$m = \ell - \rho$) and~\eqref{psistrongbetweentauandt} (with~$m = \ell$), after a computation we obtain
\begin{align*}
& I_i(t) - I_i(\rho) = \int_\rho^t \int_\R \Big\{ \psi(x, \sigma) H_i(x, \sigma) + Z_i(x, \sigma) G(x, \sigma) \Big\} \, dx d\sigma \\
& \hspace{10pt} - \int_\rho^t \int_\R \left\{ \int_{\rho}^\ell \T_{\ell - \sigma}[G(\cdot, \sigma)](x) \, d\sigma \right\} H_i(x, \ell) \, dx d\ell + \int_\rho^t \int_\R \left\{ \int_\sigma^t \T_{\ell - \sigma}[H_i(\cdot, \ell)](y) \, d\ell \right\} G(y, \sigma) \, dy d\sigma.
\end{align*}
After a swap in the time variables of integration~($\sigma$ and~$\ell$) and another application of~\eqref{TUV}, we see that the last two integrals are equal and therefore cancel each other out. Hence, we conclude that~$I_i$ is constant in~$(T, T + \tau)$ if and only if
$$
\int_\R \Big\{ \psi(x, t) H_i(x, t) + Z_i(x, t) G(x, t) \Big\} \, dx = 0 \quad \mbox{for all } t \in (T, T + \tau).
$$
The claim of Lemma~\ref{ortequivsystem} then follows after an inspection of the quantity on the left-hand side.
\end{proof}

We are now almost in position to prove Proposition~\ref{mainlinprop}. We will do it via a fixed point argument based on the a priori estimate of Proposition~\ref{weightedLinftytoLinftyprop}, applied with~$g$ given by the right-hand side of the equation in~\eqref{psiprobthm}. To this aim, we need to estimate the~$\| \cdot \|_{\A_T}$ norm of the sum~$\sum_{j = 1}^N c_j Z_j$. The next lemma is a first step in this direction.

\begin{lemma} \label{clem}
Let~$T \ge 1$,~$\tau \in (0, +\infty]$,~$\psi, f \in L^\infty(\R \times (T, T + \tau))$. Let~$A = (A_{i j})$ and~$b = (b_i)$ be given by~\eqref{Aijdef} and~\eqref{bidef}, respectively. Then, there exists a generic constant~$T_0 \ge 1$ such that, if~$T \ge T_0$, there exists a unique solution~$c = (c_j)$ of
\begin{equation} \label{Ac=b}
A(t) \, c(t) = b(t) \quad \mbox{for all } t \in (T, T + \tau).
\end{equation}
If, moreover,~$\psi, f \in \A_{(T, T + \tau)}$, then it holds
\begin{equation} \label{c=fintw+epsj}
c_j(t) = - \gamma \int_{\R} f(x, t) w'(x - \xi_j(t)) \, dx + \varepsilon_j(t) \quad \mbox{for all } t \in (T, T + \tau),
\end{equation}
where~$\gamma$ is as in~\eqref{gammadef} and~$\varepsilon_j: (T, T + \tau) \to \R$ satisfies
\begin{equation} \label{epsjdecay}
|\varepsilon_j(t)| \le C \left( \left\| \psi \right\|_{\A_{(T, T + \tau)}} + \left\| f \right\|_{\A_{(T, T + \tau)}} \right) t^{- \frac{4 s}{1 + 2 s}} \quad \mbox{for all } t \in (T, T + \tau), \, j = 1, \ldots, N,
\end{equation}
for some generic constant~$C$.
\end{lemma}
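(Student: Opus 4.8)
The plan is to treat the Gram matrix $A(t)$ as a small perturbation of $\gamma^{-1}$ times the identity, invert it by a Neumann series, and then expand $c(t)=A(t)^{-1}b(t)$ so as to read off the main term $-\gamma\int_\R f(x,t)\,w'(x-\xi_j(t))\,dx$ and estimate the rest. First I would record the structure of $A(t)$. By translation invariance of the integral in~\eqref{Aijdef} and definition~\eqref{gammadef}, the diagonal entries are exactly $A_{ii}(t)=\int_\R w'(y)^2\,dy=\gamma^{-1}$. For $i\ne j$, splitting $\int_\R w'(x-\xi_i(t))\,w'(x-\xi_j(t))\,dx$ at the midpoint between $\xi_i(t)$ and $\xi_j(t)$ and using~\eqref{w'asympt} together with $\int_\R w'=1$ gives $|A_{ij}(t)|\le C\,|\xi_i(t)-\xi_j(t)|^{-1-2s}$. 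Since $\xi=\xi^0+h$ with $\xi^0_j(t)=\beta_j t^{1/(1+2s)}$, $\beta_1<\dots<\beta_N$, and $|h_i(t)|\le t^{1-\mu}=o(t^{1/(1+2s)})$ (because $\mu>2s/(1+2s)$), one has $|\xi_i(t)-\xi_j(t)|\ge\frac{1}{C}\,t^{1/(1+2s)}$ for $i\ne j$ and $t$ large, just as in~\eqref{x-xiget}; hence $|A_{ij}(t)|\le C\,t^{-1}$ there. Writing $A(t)=\gamma^{-1}(I+\gamma R(t))$ with $R(t)$ the off-diagonal remainder, whose entries are $O(t^{-1})$, I would fix a generic $T_0\ge 1$ so large that $\gamma R(t)$ has norm at most $1/2$ on $(T_0,+\infty)$; then $A(t)$ is invertible for $t\ge T_0$ and~\eqref{Ac=b} is uniquely solved by $c(t)=A(t)^{-1}b(t)$.

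The Neumann series then gives $A(t)^{-1}=\gamma I+S(t)$, where the entries of $S(t)$ are $O(t^{-1})$ for $t\ge T_0$. Componentwise this reads $c_j(t)=\gamma\,b_j(t)+(S(t)b(t))_j$, and comparing with the explicit form~\eqref{bidef} of $b$ shows that the contribution $-\gamma\int_\R f(x,t)\,w'(x-\xi_j(t))\,dx$ of $\gamma b_j(t)$ is precisely the main term in~\eqref{c=fintw+epsj}. Thus, defining $\varepsilon_j$ to be the remaining terms,
\[
\varepsilon_j(t)=\gamma\int_\R\E_{0,j}(x,t)\,\psi(x,t)\,Z_j(x,t)\,dx+\gamma\,\dot{\xi}_j(t)\int_\R\psi(x,t)\,w''(x-\xi_j(t))\,dx+(S(t)b(t))_j .
\]

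To conclude I would estimate the three pieces of $\varepsilon_j$ when $\psi,f\in\A_{(T,T+\tau)}$, using $|\psi|\le\|\psi\|_{\A_{(T,T+\tau)}}\Phi\le\|\psi\|_{\A_{(T,T+\tau)}}\,t^{-2s/(1+2s)}$. By~\eqref{E0elllePhi} the first integral is $\le C\|\psi\|_{\A_{(T,T+\tau)}}\,t^{-4s/(1+2s)}$. For the second, $|\dot{\xi}_j(t)|\le C\,t^{-2s/(1+2s)}$ (from $\xi^0_j(t)=\beta_j t^{1/(1+2s)}$ in~\eqref{xi0def} and $|\dot{h}_j(t)|\le t^{-\mu}\le t^{-2s/(1+2s)}$), while $\int_\R\Phi(\cdot,t)\,|w''(\cdot-\xi_j(t))|\,dx\le t^{-2s/(1+2s)}\,\|w''\|_{L^1(\R)}$ and $w''\in L^1(\R)$ by Proposition~\ref{w''decayprop}; so this piece is again $\le C\|\psi\|_{\A_{(T,T+\tau)}}\,t^{-4s/(1+2s)}$. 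For the last piece, the same bounds together with $\int_\R\Phi(\cdot,t)\,w'(\cdot-\xi_k(t))\,dx\le t^{-2s/(1+2s)}\|w'\|_{L^1(\R)}=t^{-2s/(1+2s)}$ give $|b_k(t)|\le C(\|\psi\|_{\A_{(T,T+\tau)}}+\|f\|_{\A_{(T,T+\tau)}})\,t^{-2s/(1+2s)}$ for every $k$, and since the entries of $S(t)$ are $O(t^{-1})$ this yields $|(S(t)b(t))_j|\le C(\|\psi\|_{\A_{(T,T+\tau)}}+\|f\|_{\A_{(T,T+\tau)}})\,t^{-1-2s/(1+2s)}$. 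Because $1+\frac{2s}{1+2s}=\frac{1+4s}{1+2s}\ge\frac{4s}{1+2s}$, all three contributions are $O((\|\psi\|_{\A_{(T,T+\tau)}}+\|f\|_{\A_{(T,T+\tau)}})\,t^{-4s/(1+2s)})$, which is~\eqref{epsjdecay}.

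I expect the main obstacle to be the careful bookkeeping of decay rates: the dominant term of $b(t)$, namely $-\int_\R f\,Z_k\,dx$, decays only like $t^{-2s/(1+2s)}$, so reaching the exponent $4s/(1+2s)$ in $(S(t)b(t))_j$ genuinely uses the extra factor $t^{-1}$ coming from the faster decay of the off-diagonal entries of $A(t)$ (hence of $S(t)$), combined with the inequality $1+\frac{2s}{1+2s}\ge\frac{4s}{1+2s}$. Everything else is routine, relying on Lemma~\ref{ElePhilem}, the integrability of $w'$ and $w''$, and the growth bounds built into $\bar{B}_1(\HH_{T,\mu})$ and~\eqref{xi0def}.
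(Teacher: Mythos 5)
Your proposal is correct and follows essentially the same route as the paper: exact computation of the diagonal entries $A_{ii}=\gamma^{-1}$, smallness of the off-diagonal entries from the separation $|\xi_i-\xi_j|\gtrsim t^{1/(1+2s)}$, inversion of $A$ as a perturbation of $\gamma^{-1}I$, and the same estimates on the three pieces of $b_i$ via~\eqref{E0elllePhi}, the integrability of $w''$, and $|\dot\xi_i|\lesssim t^{-2s/(1+2s)}$. The only (harmless) difference is that your convolution-type splitting yields the sharper off-diagonal bound $|A_{ij}|\lesssim t^{-1}$, whereas the paper settles for $t^{-2s/(1+2s)}$, which already suffices for~\eqref{epsjdecay}.
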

\begin{proof}
We claim that, if~$T_0$ is large enough, the matrix~$(A_{i j}(t))$ is invertible for every~$t > T \ge T_0$ and therefore~\eqref{Ac=b} is uniquely solvable. To verify this, first notice that, changing variables appropriately,
\begin{equation} \label{Aii=beta}
A_{i i}(t) = \int_{\R} w'(x - \xi_i(t))^2 \, dx = \int_{\R} w'(y)^2 \, dy = \gamma^{-1},
\end{equation}
for every~$i = 1, \ldots, N$. On the other hand, when~$i \ne j$, we change coordinates as before and write
$$
A_{i j}(t) = \int_{\R} w'(x - \xi_i(t)) w'(x - \xi_j(t)) \, dx = \int_{\R} w'(y) w'(y + \xi_i(t) - \xi_j(t)) \, dy.
$$
Let then~$\beta_\star := \inf_{i \ne j} |\beta_i - \beta_j|/4$. Recalling~\eqref{xi0def} and the fact that~$h \in \bar{B}_1(\HH_{T, \mu})$, for~$|y| \le \beta_\star \, t^{\frac{1}{1 + 2 s}}$ and~$t > T \ge T_0$ we have
$$
|y + \xi_i(t) - \xi_i(y)| \ge |\xi_i^0(t) - \xi_j^0(t)| - |h_i(t) - h_j(t)| - |y| \ge 2 \beta_\star \, t^{\frac{1}{1 + 2 s}} - |y| \ge \beta_\star \, t^{\frac{1}{1 + 2 s}},
$$
provided~$T_0$ is sufficiently large and generic. Using this and estimate~\eqref{w'asympt} on~$w'$, we obtain
\begin{equation} \label{Aijdecay}
\begin{aligned}
|A_{i j}(t)| & \le C \int_{\R} \frac{dy}{(1 + |y|)^{1 + 2 s} (1 + |y + \xi_i(t) - \xi_j(t)|)^{1 + 2 s}} \\
& \le C \left\{ \int_{- \beta_\star \, t^{\frac{1}{1 + 2 s}}}^{\beta_\star \, t^{\frac{1}{1 + 2 s}}} \frac{dy}{|y + \xi_i(t) - \xi_j(t)|^{1 + 2 s}} + 2 \int_{\beta_\star \, t^{\frac{1}{1 + 2 s}}}^{+\infty} \frac{dy}{y^{1 + 2 s}} \right\} \le C \, t^{- \frac{2s}{1+2s}},
\end{aligned}
\end{equation}
for all~$t > T \ge T_0$. In particular, by choosing~$T_0$ large enough, the matrix~$(A_{i j}(t))$ has positive determinant and is therefore invertible, for every~$t > T \ge T_0$. Consequently, there exists a unique solution~$c(t)$ to~\eqref{Ac=b}.

We now show that, if~$\psi, f \in \A_{(T, T + \tau)}$, then~$c_j$ satisfies~\eqref{c=fintw+epsj}-\eqref{epsjdecay}. First, we estimate the decay of the~$b_i$'s. By definition~\eqref{Phidef} of~$\Phi$ and a change of variables, we have
\begin{equation} \label{fw'decay}
\begin{aligned}
\int_{\R} |f(x, t)| w'(x - \xi_i(t)) \, dx & \le \left\| f \right\|_{\A_{(T, T + \tau)}} \int_{\R} \Phi(x, t) w'(x - \xi_i(t)) \, dx \\
& \le \left\| f \right\|_{\A_{(T, T + \tau)}} t^{- \frac{2s}{1+2s}} \int_{\R} w'(y) \, dy = \left\| f \right\|_{\A_{(T, T + \tau)}} t^{- \frac{2s}{1+2s}},
\end{aligned}
\end{equation}
as~$w'$ is positive and has integral~$1$. On the other hand, by estimate~\eqref{E0elllePhi} in Lemma~\ref{ElePhilem}, we have
\begin{equation} \label{W''decays}
\int_{\R} \left| \E_{0, i}(x, t) \right| |\psi(x, t)| Z_i(x, t) \, dx \le C \left\| \psi \right\|_{\A_{(T, T + \tau)}} t^{- \frac{4 s}{1 + 2 s}}.
\end{equation}
Finally, using definitions~\eqref{xi0def} and~\eqref{Phidef}, that~$h \in \bar{B}_1(\HH_{T, \mu})$, and that~$w'' \in L^1(\R)$, by estimate~\eqref{w''decay} of Proposition~\ref{w''decayprop}, we easily obtain
$$
|\dot{\xi}_i(t)| \int_{\R} |\psi(x, t)| |w''(x - \xi_i(t))| \, dx \le C \left\| \psi \right\|_{\A_{(T, T + \tau)}} t^{- \frac{4 s}{1+2s}}.
$$
Putting together this,~\eqref{W''decays}, and recalling the definition~\eqref{bidef} of~$b_i$, we conclude that
\begin{equation} \label{bitech}
b_i(t) = - \int_{\R} f(x, t) Z_i(x, t) \, dx + \left\| \psi \right\|_{\A_{(T, T + \tau)}} O \! \left(t^{- \frac{4 s}{1 + 2 s}}\right),
\end{equation}
for every~$i = 1, \ldots, N$.

To deduce from this the decay of~$c_j$, we recall~\eqref{Aii=beta} and~\eqref{Aijdecay} to obtain the following estimate on the inverse of the matrix~$A$:
$$
A^{-1}(t) = \left( \gamma^{-1} I + O \! \left( t^{-\frac{2s}{1+2s}} \right) \right)^{-1} = \gamma I + O \! \left( t^{-\frac{2s}{1+2s}} \right).
$$
Therefore, by~\eqref{bitech} and~\eqref{fw'decay}, we immediately get
$$
c_j(t) = - \gamma \int_{\R} f(x, t) Z_i(x, t) \, dx + \left\{ \left\| \psi \right\|_{\A_{(T, T + \tau)}} + \left\| f \right\|_{\A_{(T, T + \tau)}} \right\} O \! \left(t^{- \frac{4 s}{1 + 2 s}}\right),
$$
which gives~\eqref{c=fintw+epsj}-\eqref{epsjdecay}.
\end{proof}

Given~$f, \psi \in \A_{(T, T + \tau)}$, let~$c$ be the unique solution to system~\eqref{Ac=b}, as given by Lemma~\ref{clem}. Note that~$c$ depends on both~$\psi$ and~$f$. We consider the function~$\CC_f[\psi]$ defined by
\begin{equation} \label{Cfdef}
\CC_f[\psi](x, t) := \sum_{j = 1}^N c_j(t) Z_j(x, t) \quad \mbox{for } x \in \R \mbox{ and } t \in (T, T + \tau).
\end{equation}
In particular, we denote with~$\CC_0[\psi]$ the function corresponding to the choice~$f = 0$.

\begin{lemma} \label{Cfdecaylem}
Let~$T \ge 1$,~$\tau \in (0, +\infty]$,~$h \in \bar{B}_1(\HH_{T, \mu})$, and~$\psi, f \in \A_{(T, T + \tau)}$. Then, there exist two generic constants~$T_0, C_\star \ge 1$ such that
\begin{equation} \label{Cfdecay}
\left\| \CC_f[\psi] \right\|_{\A_{(T, T + \tau)}} \le C_\star \left( T^{- \frac{2 s}{1 + 2 s}} \| \psi \|_{\A_{(T, T + \tau)}} + \| f \|_{\A_{(T, T + \tau)}} \right),
\end{equation}
provided~$T \ge T_0$. In particular,
$$
\left\| \CC_0[\psi] \right\|_{\A_{(T, T + \tau)}} \le C_\star \, T^{- \frac{2 s}{1 + 2 s}} \| \psi \|_{\A_{(T, T + \tau)}}.
$$
\end{lemma}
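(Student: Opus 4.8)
The plan is to read the estimate off directly from Lemma~\ref{clem} and the definition~\eqref{Cfdef} of $\CC_f[\psi]$. Throughout I assume $T \ge T_0$, with $T_0$ at least as large as the threshold furnished by Lemma~\ref{clem}, so that $c = (c_j)$ is well defined and satisfies the representation~\eqref{c=fintw+epsj}--\eqref{epsjdecay}.

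First I would obtain a pointwise bound for each $c_j$. By~\eqref{c=fintw+epsj}, the definition~\eqref{Phidef} of $\Phi$, and the facts $\Phi(\cdot, t) \le t^{-2s/(1+2s)}$ and $\int_\R w' = 1$, the principal term is controlled by
\[
\left| \gamma \int_\R f(x, t) w'(x - \xi_j(t)) \, dx \right| \le \gamma \| f \|_{\A_{(T, T + \tau)}} \int_\R \Phi(x, t) \, w'(x - \xi_j(t)) \, dx \le \gamma \, t^{- \frac{2 s}{1 + 2 s}} \| f \|_{\A_{(T, T + \tau)}} ,
\]
while the remainder is estimated by~\eqref{epsjdecay}. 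Since $t \ge T \ge 1$ gives $t^{-4s/(1+2s)} \le T^{-2s/(1+2s)} t^{-2s/(1+2s)}$ and $\| f \|_{\A_{(T, T+\tau)}} T^{-2s/(1+2s)} \le \| f \|_{\A_{(T, T+\tau)}}$, the two contributions combine to
\[
|c_j(t)| \le C \left( \| f \|_{\A_{(T, T + \tau)}} + T^{- \frac{2 s}{1 + 2 s}} \| \psi \|_{\A_{(T, T + \tau)}} \right) t^{- \frac{2 s}{1 + 2 s}} \qquad \mbox{for all } t \in (T, T + \tau) , \, j = 1, \ldots, N .
\]

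The second ingredient is the elementary inequality
\[
t^{- \frac{2 s}{1 + 2 s}} \sum_{j = 1}^N w'(x - \xi_j(t)) \le C \, \Phi(x, t) \qquad \mbox{for all } x \in \R , \, t \ge T \ge T_0 ,
\]
which I would prove by distinguishing the region $\{ |x| \le 4 (\beta_N + 1) \, t^{1/(1+2s)} \}$ from its complement. On the first region $\Phi(x, t) \ge C^{-1} t^{-2s/(1+2s)}$ by~\eqref{Phidef}, so the bound is immediate from $w' \le \| w' \|_{L^\infty(\R)}$. On the complement, since $h \in \bar{B}_1(\HH_{T, \mu})$ and $\mu > 2s/(1+2s)$ one has $|\xi_j(t)| \le (\beta_N + 1) t^{1/(1+2s)}$ for $t \ge 1$, hence $|x - \xi_j(t)| \ge |x|/2$, and then~\eqref{w'asympt} yields $w'(x - \xi_j(t)) \le C |x|^{-1-2s}$; the elementary bounds $|x| \ge 1$ and $t^{-2s/(1+2s)} |x|^{-1} \le C t^{-1} \le C$ then give $t^{-2s/(1+2s)} \sum_j w'(x - \xi_j(t)) \le C \min\{ |x|^{-2s}, t^{-2s/(1+2s)} \} = C \Phi(x, t)$. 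This is in essence the computation already carried out for $\E_2$ in the proof of Lemma~\ref{ElePhilem}, and could alternatively be cited from there.

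Combining the two displays above with $\CC_f[\psi](x, t) = \sum_j c_j(t) \, w'(x - \xi_j(t))$ and $w' \ge 0$ gives
\[
|\CC_f[\psi](x, t)| \le C \left( \| f \|_{\A_{(T, T + \tau)}} + T^{- \frac{2 s}{1 + 2 s}} \| \psi \|_{\A_{(T, T + \tau)}} \right) \Phi(x, t) ,
\]
and dividing by $\Phi(x, t)$ and taking the supremum over $\R \times (T, T + \tau)$ proves~\eqref{Cfdecay}; the displayed special case is then just $f \equiv 0$. I expect the argument to be routine: the only genuinely delicate input is~\eqref{epsjdecay} itself, which was established in Lemma~\ref{clem} using Lemma~\ref{ElePhilem} and the improved layer asymptotics of Proposition~\ref{w''decayprop}, and that work has already been done.
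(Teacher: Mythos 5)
Your proof is correct and follows essentially the same route as the paper: the pointwise bound $|Z_j(x,t)| \le C\, t^{\frac{2s}{1+2s}} \Phi(x,t)$ (your weighted sum inequality is just this, rearranged), combined with the representation~\eqref{c=fintw+epsj} and the bounds~\eqref{epsjdecay} and~\eqref{fw'decay} from Lemma~\ref{clem}. The only difference is that you spell out the combination of the coefficient and kernel bounds, which the paper leaves as ``readily established.''
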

\begin{proof}
We first claim that
\begin{equation} \label{w'overPsidecay}
|Z_j(x, t)| \le C \, t^{\frac{2s}{1+2s}} \Phi(x, t) \quad \mbox{for all } x \in \R, \, t > T, \mbox{ and } j = 1, \ldots, N.
\end{equation}
To check this, take~$|x| \ge 4 \xi_N^0(t) = 4 \beta_N \, t^{\frac{1}{1 + 2 s}}$. For such a choice, using that~$h \in \bar{B}_1(\HH_{T, \mu})$ we have
$$
|x - \xi_j(t)| \ge |x| - |\xi^0_j(t)| - |h_j(t)| \ge |x| - 2 \beta_N \, t^{\frac{1}{1 + 2 s}} \ge \frac{|x|}{2},
$$
provided~$T_0$ is sufficiently large, and hence
$$
|Z_j(x, t)| = |w'(x - \xi_j(t))| \le \frac{C}{(1 + |x - \xi_j(t)|)^{1 + 2 s}} \le \frac{C}{|x|^{1 + 2 s}} \le C \Phi(x, t),
$$
recalling~\eqref{Phidef}. On the other hand, for~$|x| < 4 \beta_N \, t^{\frac{1}{1 + 2 s}}$, we simply estimate
$$
|Z_j(x, t)| \le C \le C \, t^{\frac{2 s}{1 + 2 s}} \Phi(x, t).
$$
In both cases,~\eqref{w'overPsidecay} follows.

By putting together~\eqref{w'overPsidecay},~\eqref{epsjdecay}, and~\eqref{fw'decay}, inequality~\eqref{Cfdecay} is readily established.
\end{proof}

Thanks to all these results, we are now ready to establish the main result of the subsection.

\begin{proof}[Proof of Proposition~\ref{mainlinprop}]
Let~$T'$ be larger than the positive constants~$T_0$ found in Proposition~\ref{weightedLinftytoLinftyprop} and Lemmas~\ref{clem}-\ref{Cfdecaylem}. Given any~$\psi_0 \in \A_{\{ T' \}}$ and~$g \in \A_{(T', T' + 2)}$, denote with~$\mathbf{R}_{\psi_0}(g) = \mathbf{R}_{T', \psi_0}(g)$ the mild solution of problem~\eqref{psiprob} (with~$T = T'$ and~$\tau = 2$). In view of Lemma~\ref{locweightedLinftytoLinftylem}, we have that~$\mathbf{R}_{\psi_0}$ maps~$\A_{(T', T' + 2)}$ into itself.

For~$f, \psi \in \A_{(T', T' + 2)}$, set~$\mathbf{B}_{f, \psi_0}(\psi) := \mathbf{R}_{\psi_0}(f + \CC_f[\psi])$, with~$\CC_f$ defined by~\eqref{Cfdef}. By Lemma~\ref{Cfdecaylem}, we know that~$\B_{f, \psi_0}$ also maps~$\A_{(T', T' + 2)}$ into itself. We claim that~$\B_{f, \psi_0}$ has a unique fixed point in~$\A_{(T', T' + 2)}$, provided~$T_0$ is large enough. To see this, it suffices to notice that, for~$T_0$ large,~$\B_{f, \psi_0}$ is a contraction. Indeed, given~$\psi^{(1)}, \psi^{(2)} \in \A_{(T', T' + 2)}$, from Lemmas~\ref{locweightedLinftytoLinftylem} and~\ref{Cfdecaylem} we get 
\begin{align*}
& \| \B_{f, \psi_0}(\psi^{(1)}) - \B_{f, \psi_0}(\psi^{(2)}) \|_{\A_{(T', T' + 2)}} \\
& \hspace{15pt} = \| \mathbf{R}_{\psi_0}(f + \CC_f[\psi^{(1)}]) - \mathbf{R}_{\psi_0}(f + \CC_f[\psi^{(2)}]) \|_{\A_{(T', T' + 2)}} = \| \mathbf{R}_0(\CC_f[\psi^{(1)}] - \CC_f[\psi^{(2)}]) \|_{\A_{(T', T' + 2)}} \\
& \hspace{15pt} = \| \mathbf{R}_0(\CC_0[\psi^{(1)} - \psi^{(2)}]) \|_{\A_{(T', T' + 2)}} \le C_2 \| \CC_0[\psi^{(1)} - \psi^{(2)}] \|_{\A_{(T', T' + 2)}} \\
& \hspace{15pt} \le C_2 C_\star T_0^{- \frac{2 s}{1 + 2 s}} \| \psi^{(1)} - \psi^{(2)} \|_{\A_{(T', T' + 2)}} \le \frac{1}{2} \, \| \psi^{(1)} - \psi^{(2)} \|_{\A_{(T', T' + 2)}},
\end{align*}
provided~$T_0$ is sufficiently large. Hence,~$\B_{f, \psi_0}$ is a contraction and, by the Banach fixed point theorem, it admits a unique fixed point~$\psi = \psi[f, \psi_0, T']$ in~$\A_{(T', T' + 2)}$, for all~$T' \ge T_0$.

Let now~$T \ge T_0$,~$\psi_0 \in \Adot_{\{ T \}}$, and~$f \in \A_{T}$. The function~$\psi: \R \times (T, +\infty) \to \R$ defined iteratively by
$$
\psi :=
\begin{cases}
\psi[f, \psi_0, T] & \quad \mbox{in } \R \times (T, T + 1], \\
\psi[f, \psi(\cdot, T + k), T + k] & \quad \mbox{in } \R \times (T + k, T + k + 1], \mbox{ for } k \in \N,
\end{cases}
$$
is a mild solution of problem~\eqref{psiprobthm}, with~$(c_j)$ given by Lemma~\ref{clem}. From Lemma~\ref{ortequivsystem} and the fact that~$\psi_0$ fulfills the orthogonality conditions~\eqref{psi0ort}, we get that the function~$\psi$ satisfies~\eqref{psiort} with~$I = (T, +\infty)$. Accordingly,~$\psi \in \Adot_{(T, T + \tau)}$ for all~$\tau > 0$, and thus Proposition~\ref{weightedLinftytoLinftyprop} and Lemma~\ref{Cfdecaylem} yield that
\begin{align*}
\left\| \psi \right\|_{\A_{T}} & \le C_\sharp \left( \left\| f \right\|_{\A_{T}} + \| \CC_f[\psi] \|_{\A_T} + \| \psi_0 \|_{\A_{\{ T \}}} \right) \\
& \le C_\sharp \left( (C_\star + 1) \left\| f \right\|_{\A_{T}} + C_\star T_0^{- \frac{2 s}{1 + 2 s}} \| \psi \|_{\A_{T}}  + \| \psi_0 \|_{\A_{\{ T \}}} \right).
\end{align*}
By taking~$T_0$ sufficiently large, we can reabsorb the~$\A_T$ norm of~$\psi$ to the left-hand side and conclude that the bound~\eqref{psioverPhiestthm} holds true. In particular,~$\psi \in \Adot_T$.

Let now~$\widetilde{\psi} \in \Adot_T$ be another mild solution of~\eqref{psiprobthm}, for some~$\widetilde{c}: (T, +\infty) \to \R^N$. Lemma~\ref{ortequivsystem} yields that~$\widetilde{c}$ solves system~\eqref{Ac=bt>Tige1}, with~$A$ given by~\eqref{Aijdef} and with~$b$ replaced by the vector-valued function~$\widetilde{b}$ defined by the right-hand side of~\eqref{bidef}, with~$\widetilde{\psi}$ instead of~$\psi$. In view of this,~$\psi - \widetilde{\psi}$ is the mild solution of~\eqref{psiprobthm} with~$f = 0$,~$\psi_0 = 0$, and with~$c_j - \widetilde{c}_j$ in place of~$c_j$. From, say, estimate~\eqref{psioverPhiestthm} it follows that~$\widetilde{\psi} = \psi$. Consequently, the proof of Proposition~\ref{mainlinprop} is complete.
\end{proof}

\subsection{Nonlinear theory for $\psi$}

We can now take advantage of the linear theory that we just developed and of the estimates derived in Subsection~\ref{ENsubsec} to establish Theorem~\ref{nonlinearthm}.

\begin{proof}[Proof of Theorem~\ref{nonlinearthm}]
Let~$T$ be larger than the constant~$T_0 \ge 1$ found in Proposition~\ref{mainlinprop}. For~$f \in \A_T$ and~$\psi_0 \in \Adot_{\{ T \}}$, denote with~$\mathbf{S}_{\psi_0}[f]$ the unique solution to problem~\eqref{psiprobthm} lying in~$\Adot_T$, as given by Proposition~\ref{mainlinprop}. Write
$$
\C_{\psi_0}[\psi] := \mathbf{S}_{\psi_0}[- \NN[\psi] - \E] \quad \mbox{for } \psi \in \A_T,
$$
Thanks to estimates~\eqref{ElePhinew} of Lemma~\ref{ElePhilem} and~\eqref{Npsiest} of Lemma~\ref{Ndecaylem}, we know that~$- \NN[\psi] - \E$ belongs to~$\A_T$ if~$\psi$ does. Consequently,~$\C_{\psi_0}$ maps~$\Adot_T$ into itself. Moreover, it is clear that a function~$\psi \in \Adot_T$ is a mild solution of~\eqref{psiDirprob} if and only if it is a fixed point of~$\C_{\psi_0}$.

To prove that~$\C_{\psi_0}$ has indeed a fixed point, we consider the closed ball
$$
\X := \Big\{ \psi \in \Adot_T : \| \psi \|_{\A_T} \le 2 C_{\E} C_\flat \Big\},
$$
of the Banach space~$\Adot_T$, for a fixed~$T \ge T_0$, and where we denote with~$C_{\E}$ and~$C_\flat$, respectively, the constants~$C$ appearing in estimates~\eqref{ElePhinew} of Lemma~\ref{ElePhilem} and~\eqref{psioverPhiestthm} of Proposition~\ref{mainlinprop}. We claim that
\begin{equation} \label{Kclaim1}
\C_{\psi_0} \mbox{ maps } \X \mbox{ into } \X
\end{equation}
and
\begin{equation} \label{Kclaim2}
\C_{\psi_0} \mbox{ is a contraction in } \X,
\end{equation}
provided~$T_0$ is sufficiently large.

First, we check~\eqref{Kclaim1}. In view of Proposition~\ref{mainlinprop}, we know that, given~$\psi \in \X$,
$$
\| \C_{\psi_0}[\psi] \|_{\A_{T}} = \| \mathbf{S}_{\psi_0}[- \NN[\psi] - \E] \|_{\A_{T}} \le C_\flat \left( \| \NN[\psi] \|_{\A_{T}} + \| \E \|_{\A_{T}} + \| \psi_0 \|_{\A_{\{ T \}}} \right).
$$
Taking advantage of inequalities~\eqref{Npsiest} in Lemma~\ref{Ndecaylem} and~\eqref{ElePhinew} in Lemma~\ref{ElePhilem}, we further estimate
$$
\| \C_{\psi_0}[\psi] \|_{\A_T} \le C_0 \left( T_0^{- \frac{2 s}{1 + 2 s}} \| \psi \|_{\A_{T}}^2 + \| \psi_0 \|_{\A_{\{ T \}}} \right) + C_\flat C_{\E},
$$
for some generic constant~$C_0 > 0$. Using that~$\psi \in \X$, we immediately see from the last inequality that~$\| \C_{\psi_0}[\psi] \|_{\A_T} \le 2 C_{\E} C_\flat$, provided~$T_0$ is sufficiently large and~$\| \psi_0 \|_{\A_{\{ T \}}}$ sufficiently small. Claim~\eqref{Kclaim1} then follows.

We now establish~\eqref{Kclaim2}. Let~$\psi^{(1)}, \psi^{(2)} \in \X$. Using the linearity properties of~$\mathbf{S}_{\hspace{1pt}\cdot}[\, \cdot \,]$, Proposition~\ref{mainlinprop}, inequality~\eqref{Npsi1psi2est} of Lemma~\ref{Ndecaylem}, and the definition of~$\X$, we compute
\begin{align*}
\| \C_{\psi_0}[\psi^{(1)}] - \C_{\psi_0}[\psi^{(2)}] \|_{\A_{T}} & = \| \mathbf{S}_{\psi_0}[- \NN[\psi^{(1)}] - \E]  - \mathbf{S}_{\psi_0}[- \NN[\psi^{(2)}] - \E] \|_{\A_{T}} \\
& = \| \mathbf{S}_0[\NN[\psi^{(2)}] - \NN[\psi^{(1)}]] \|_{\A_{T}} \le C_\flat \| \NN[\psi^{(2)}] - \NN[\psi^{(1)}] \|_{\A_{T}} \\
& \le C \, T_0^{- \frac{2 s}{1 + 2 s}} \| \psi^{(1)} - \psi^{(2)} \|_{\A_{T}},
\end{align*}
for some generic constant~$C > 0$. By taking~$T_0 \ge (2 C)^{(1 + 2 s) / (2 s)}$, we infer that
$$
\| \C_{\psi_0}[\psi^{(1)}] - \C_{\psi_0}[\psi^{(2)}] \|_{\A_{T}} \le \frac{1}{2} \, \| \psi^{(1)} - \psi^{(2)} \|_{\A_{T}},
$$
and~\eqref{Kclaim2} follows.

In view of~\eqref{Kclaim1} and~\eqref{Kclaim2}, by the Banach fixed point theorem, we conclude that there exists a unique fixed point~$\psi$ for~$\C_{\psi_0}$ in~$\X$. Accordingly,~$\psi$ is the unique mild solution of~\eqref{psiDirprob} which lies in~$\Adot_T$ and satisfies~\eqref{psidecay} with~$C = 2 C_{\E} C_\flat$.

To conclude the proof of Theorem~\ref{nonlinearthm}, we are left with showing that, if~$\psi_0$ is odd, then~$\psi$ satisfies property~\eqref{psiodd}. To verify this, let~$\widehat{\psi}(x, t) := - \psi(- x, t)$. We claim that
\begin{equation} \label{hatpsisol}
\widehat{\psi} \mbox{ is a mild solution of problem } \eqref{psiDirprob},
\end{equation}
for some coefficients~$\widehat{c} = (\widehat{c}_j): (T, +\infty) \to \R^N$. Observe that, in view of the unique solvability of~\eqref{psiDirprob}, this would immediately give that~$\widehat{\psi} = \psi$ in~$\R \times [T, +\infty)$, which is~\eqref{psiodd}. To check~\eqref{hatpsisol}, we first notice that, by~\eqref{hodd} and~\eqref{betaodd}, we have that
\begin{equation} \label{xiodd}
\xi_i(t) = - \xi_{N - i + 1}(t) \quad \mbox{for all } t > T \mbox{ and } i = 1, \ldots, N.
\end{equation}
Using this,~\eqref{wodd}, and the periodicity and symmetry of~$W$, it is not hard to verify that the identities
\begin{equation} \label{oddeveniden}
\begin{alignedat}{3}
z(- x, t) & = N - z(x, t), && \\
W''(z(- x, t)) & = W''(z(x, t)), && \\
Z_j(- x, t) & = Z_{N - j + 1}(x, t) && \qquad \mbox{for } j = 1, \ldots, N, \\
\E_k(- x, t) & = - \E_k(x, t) && \qquad \mbox{for } k = 1, 2, \\
\E_{0, j}(- x, t) & = \E_{0, N - j + 1}(x, t) && \qquad \mbox{for } j = 1, \ldots, N, \\
\NN[\psi](- x, t) & = - \NN[\widehat{\psi}](x, t), &&
\end{alignedat}
\end{equation}
hold true for every~$x \in \R$ and~$t > T$. Claim~\eqref{hatpsisol} then easily follows from them and the oddness of~$\psi_0$. The proof of the theorem is thus complete.
\end{proof}

Applying the regularity theory developed in Section~\ref{solsec}, we easily obtain a global~$C^\alpha$ estimate for~$\psi$, when the initial datum~$\psi_0$ is regular. Recall definitions~\eqref{CTnualphadef} and~\eqref{ATalphainit}.

\begin{lemma} \label{Holderpsilem}
Let~$\psi \in \Adot_T$ be the solution of problem~\eqref{psiDirprob}, as per Theorem~\ref{nonlinearthm}. Suppose that the initial datum~$\psi_0$ belongs to~$\Adot^1_{\{ T \}}$. Then,
\begin{equation} \label{Holderestforpsi}
\| \psi \|_{\A_T^\alpha} \le C \left( 1 + \| \psi_0 \|_{\A_{\{ T \}}^1} \right),
\end{equation}
for two generic constants~$\alpha \in (0, 1)$ and~$C > 0$.
\end{lemma}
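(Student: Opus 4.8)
The plan is to reduce the estimate to the linear regularity theory of Section~\ref{solsec}, applied on unit-length time slabs, treating separately the region near the initial time $t=T$ and the region far from it. First I would rewrite the equation solved by $\psi$ as $\partial_t\psi+(-\Delta)^s\psi=f$ with $f:=-W''(z)\psi-\NN[\psi]-\E+\sum_{j=1}^N c_jZ_j$. Since $\|\psi\|_{\A_T}\le C$ by Theorem~\ref{nonlinearthm}, the boundedness of $W''$, estimate~\eqref{ElePhinew} of Lemma~\ref{ElePhilem}, estimate~\eqref{Npsiest} of Lemma~\ref{Ndecaylem}, and Lemma~\ref{Cfdecaylem} (noting that, comparing~\eqref{bdef} with~\eqref{bidef}, one has $\sum_j c_jZ_j=\CC_{-\NN[\psi]-\E}[\psi]$) together yield $\|f\|_{\A_T}\le C$ for a generic constant. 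In particular $|f(x,t)|\le C\Phi(x,t)\le C\,t^{-2s/(1+2s)}$, so on any slab $\R\times(t_0,t_0+3)$ one has $\|f\|_{L^\infty}\le C\,t_0^{-2s/(1+2s)}$; this decay near the initial time, inherited ultimately from the quadratic character of $\NN$ and the sharp decay of the $c_j$'s only through $\|f\|_{\A_T}\le C$, is what makes the weighted bound close.

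For $t\ge T+2$ I would view $\psi$ as a mild solution of $\partial_t\psi+(-\Delta)^s\psi=f$ on $\R\times(t-1,t+1)$ with initial datum $\psi(\cdot,t-1)$, and apply Proposition~\ref{regforstrongprop}\ref{intreg} with $t_\star=t-1/2$. Both $\|\psi(\cdot,t-1)\|_{L^\infty(\R)}\le\|\psi\|_{\A_T}\Phi(\cdot,t-1)\le C\,t^{-2s/(1+2s)}$ and $\|f\|_{L^\infty(\R\times(t-1,t+1))}\le C\,t^{-2s/(1+2s)}$, so the proposition gives $\sup_{\tau\in(t,t+1)}\|\psi(\cdot,\tau)\|_{C^{2\sigma}(\R)}+\sup_{x}\|\psi(x,\cdot)\|_{C^\theta(t,t+1)}\le C\,t^{-2s/(1+2s)}$ (with $\sigma=s$ if $s\ne1/2$). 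Combining a uniform spatial $C^{2\sigma}$ bound with a uniform temporal $C^\theta$ bound gives joint Hölder continuity, so $[\psi]_{C^\alpha(\R\times(t,t+1))}\le C\,t^{-2s/(1+2s)}$ for $\alpha:=\min\{2\sigma,\theta\}$ (any sufficiently small $\alpha$), hence $t^{2s/(1+2s)}[\psi]_{C^\alpha(\R\times(t,t+1))}\le C$ for all $t\ge T+2$.

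For $T<t\le T+2$ I would instead apply Proposition~\ref{regforstrongprop}\ref{globreg} on the fixed slab $\R\times(T,T+3)$ with initial datum $\psi_0$, which is admissible since $\psi_0\in\Adot^1_{\{T\}}\subset C^1(\R)\subset C^\alpha(\R)$ for $\alpha$ small. Here $\|f\|_{L^\infty(\R\times(T,T+3))}\le C\,T^{-2s/(1+2s)}$ as above, while $\|\psi_0\|_{C^\alpha(\R)}\le\|\psi_0\|_{C^1(\R)}=\|\psi_0\|_{L^\infty(\R)}+\|\psi_0'\|_{L^\infty(\R)}\le T^{-2s/(1+2s)}\|\psi_0\|_{\A^1_{\{T\}}}$ directly from the definitions of $\|\cdot\|_{\A_{\{T\}}}$ and of $\|\cdot\|_{\A^1_{\{T\}}}$ in~\eqref{ATalphainit}. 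Thus $[\psi]_{C^\alpha(\R\times(T,T+3))}\le C\,T^{-2s/(1+2s)}(1+\|\psi_0\|_{\A^1_{\{T\}}})$, and since $t\asymp T$ on $(T,T+2]$ this gives $t^{2s/(1+2s)}[\psi]_{C^\alpha(\R\times(t,t+1))}\le C(1+\|\psi_0\|_{\A^1_{\{T\}}})$ there as well. Putting the two ranges together and adding $\|\psi\|_{\A_T}\le C$ yields~\eqref{Holderestforpsi}.

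No genuine difficulty is expected: the points needing care are (i) verifying that $f$ inherits the weight $\Phi$, which requires assembling the estimates on $W''(z)\psi$, $\NN[\psi]$, $\E$, and $\sum_j c_jZ_j$; (ii) confirming the extra $T^{-2s/(1+2s)}$ gain of $\|f\|_{L^\infty}$ and of $\|\psi_0\|_{C^1}$ near $t=T$, which is exactly what absorbs the weight $t^{2s/(1+2s)}$ for $t\asymp T$; and (iii) upgrading the separate space/time Hölder estimates of Proposition~\ref{regforstrongprop} to a joint parabolic Hölder bound on $\R\times(t,t+1)$. All of these are routine.
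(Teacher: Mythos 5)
Your proposal is correct and follows essentially the same route as the paper: write the equation as $\partial_t\psi+(-\Delta)^s\psi=f$ with $f=-W''(z)\psi-\NN[\psi]-\E+\sum_j c_jZ_j$, bound $\|f\|_{\A_T}$ via Lemmas~\ref{ElePhilem},~\ref{Ndecaylem}, and~\ref{Cfdecaylem} together with~\eqref{psidecay}, then apply Proposition~\ref{regforstrongprop}\ref{globreg} on a fixed slab near $t=T$ (using $\psi_0\in C^1$ and the $T^{-2s/(1+2s)}$ weights) and Proposition~\ref{regforstrongprop}\ref{intreg} on unit slabs $(t-1,t+1)$ for $t\ge T+2$ (using $\psi(\cdot,t-1)$ as initial datum). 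The only cosmetic differences are your explicit choice of $t_\star$ and the fixed slab $(T,T+3)$ versus the paper's $(T,t+1)$, neither of which changes the argument.
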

\begin{proof}
Let~$\alpha \in (0, \min \{ 2 s, 1, 1/(2 s)) \})$ and~$t \ge T$. If~$t \in [T, T + 2]$, then we apply Proposition~\ref{regforstrongprop}\ref{globreg}, along with~\eqref{psidecay}, Lemmas~\ref{ElePhilem},~\ref{Ndecaylem},~\ref{Cfdecaylem}, and Theorem~\ref{nonlinearthm}, obtaining
\begin{align*}
\| \psi \|_{C^\alpha(\R \times [t, t + 1])} & \le C \left\{ \left\| - W''(z) \psi - \NN[\psi] - \E + \sum_{j = 1}^N c_j Z_j \right\|_{L^\infty(\R \times (T, t + 1))} + \| \psi_0 \|_{C^1(\R)} \right\} \\
& \le C \, T^{- \frac{2 s}{1 + 2 s}} \left( \| \psi \|_{\A_T} + T^{ - \frac{2 s}{1 + 2 s}} \| \psi \|_{\A_T}^2 + 1 + \| \psi_0 \|_{\A_{\{ T \}}^1} \right) \le C \left( 1 + \| \psi_0 \|_{\A_{\{ T \}}^1} \right) t^{- \frac{2 s}{1 + 2 s}},
\end{align*}
for some generic constant~$C > 0$. If, on the other hand,~$t > T + 2$, then we use instead Proposition~\ref{regforstrongprop}\ref{intreg} and deduce that
\begin{align*}
\| \psi \|_{C^\alpha(\R \times [t, t + 1])} & \le C \left\{ \left\| - W''(z) \psi - \NN[\psi] - \E + \sum_{j = 1}^N c_j Z_j \right\|_{L^\infty(\R \times (t - 1, t + 1))} + \| \psi(\cdot, t - 1) \|_{L^\infty(\R)} \right\} \\
& \le C \, t^{- \frac{2 s}{1 + 2 s}} \left( 1 + \| \psi \|_{\A_T} \right) \le C \, t^{- \frac{2 s}{1 + 2 s}}.
\end{align*}
In either case, we are led to~\eqref{Holderestforpsi}.
\end{proof}

In light of Theorem~\ref{nonlinearthm}, corresponding to each~$h \in \bar{B}_1(\HH_{T, \mu})$ there exists a unique~$\psi = \Psi[h] \in \Adot_T$ that solves the nonlinear initial value problem~\eqref{psiDirprob} with initial datum~$\psi_0$ and satisfies estimate~\eqref{psidecay}. The next result addresses the continuity properties of such map~$\Psi$, when~$\psi_0 \in \Adot_{\{ T \}}^1$.

\begin{lemma} \label{Psicontlem}
Let~$\psi_0 \in \Adot^1_{\{ T \}}$ and~$\{ h^{(k)} \}_{k \in \N} \subset \bar{B}_1(\HH_{T, \mu})$ be a sequence converging to~$h$ in~$\bar{B}_1(\HH_{T, \mu})$. Then,~$\Psi[h^{(k)}] \rightarrow \Psi[h]$ locally uniformly in~$\R \times [T, +\infty)$.
\end{lemma}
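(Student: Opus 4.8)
The plan is to combine the \emph{uniform} a priori estimates for $\Psi$ with a compactness argument and the uniqueness part of Theorem~\ref{nonlinearthm}. Write $\psi^{(k)} := \Psi[h^{(k)}]$ and $\xi^{(k)} := \xi^0 + h^{(k)}$, and let $z^{(k)}$, $Z_i^{(k)}$, $\E^{(k)}$, $\E_{0,i}^{(k)}$, $\NN^{(k)}$ denote the quantities from Section~\ref{notsec} built with $\xi^{(k)}$ in place of $\xi$. Since $\|h^{(k)} - h\|_{\HH_{T, \mu}} \to 0$, both $\xi^{(k)} \to \xi := \xi^0 + h$ and $\dot{\xi}^{(k)} \to \dot{\xi}$ uniformly on every bounded time interval $[T, T']$ (the weights $t^{\mu - 1}$, $t^\mu$ being bounded below there); by uniform continuity of $w$, $w'$, $W'$, $W''$ this gives $z^{(k)} \to z$, $Z_i^{(k)} \to Z_i$, $W''(z^{(k)}) \to W''(z)$, $\E^{(k)} \to \E$ locally uniformly in $\R \times [T, +\infty)$, and likewise $\NN^{(k)}[\psi^{(k)}] \to \NN[\psi_\infty]$ whenever $\psi^{(k)} \to \psi_\infty$ locally uniformly. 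The crucial point is that the bound $\|\psi^{(k)}\|_{\A_T} \le C$ from \eqref{psidecay} and the weighted Hölder bound $\|\psi^{(k)}\|_{\A_T^\alpha} \le C(1 + \|\psi_0\|_{\A_{\{T\}}^1})$ from Lemma~\ref{Holderpsilem} hold with \emph{generic} constants, hence uniformly in $k$. In particular $\{\psi^{(k)}\}$ is equibounded and equicontinuous on every slab $\R \times [T, T']$, so by Arzel\`a--Ascoli and a diagonal extraction, from any subsequence one can extract a further subsequence (not relabeled) converging locally uniformly in $\R \times [T, +\infty)$ — including up to the initial slice $t = T$, by the global-in-time part of Lemma~\ref{Holderpsilem} — to a continuous function $\psi_\infty$ still satisfying $|\psi_\infty| \le C\,\Phi$, so $\psi_\infty \in \A_T$ with $\|\psi_\infty\|_{\A_T} \le C$, and $\psi_\infty(\cdot, T) = \psi_0$.

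Next I would show that $\psi_\infty$ is the solution of \eqref{psiDirprob} associated with $h$. First, passing to the limit in $\int_\R \psi^{(k)}(x, t) Z_i^{(k)}(x, t)\, dx = 0$ by dominated convergence — $|\psi^{(k)}(x,t)| \le C\,\Phi(x,t)$ and $\|Z_i^{(k)}(\cdot, t)\|_{L^1(\R)} = 1$ by \eqref{w'asympt} — yields the orthogonality conditions \eqref{psiort} for $\psi_\infty$, so $\psi_\infty \in \Adot_T$. Secondly, the coefficients $c^{(k)}(t) = A^{(k)}(t)^{-1} b^{(k)}(t)$ converge: the entries $A_{ij}^{(k)}(t)$ and $b_i^{(k)}(t)$ are integrals of the locally uniformly convergent integrands appearing in \eqref{Aijdef} and \eqref{bdef}, all dominated by integrable multiples of $\Phi(\cdot, t)$ by Lemmas~\ref{ElePhilem}, \ref{Ndecaylem} and estimates \eqref{w'asympt}, \eqref{w''decay}, hence they converge pointwise in $t > T$; since $A^{(k)}(t) \to A(t)$, which is invertible for $t$ large by Lemma~\ref{clem}, we get $c^{(k)}(t) \to c_\infty(t) := A(t)^{-1}b_\infty(t)$ there, with $b_\infty$ built from $\psi_\infty$ and $\xi$. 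Moreover, by Lemma~\ref{Cfdecaylem} applied with $f = -\NN^{(k)}[\psi^{(k)}] - \E^{(k)}$ (whose $\A_T$-norm is uniformly bounded), $\big|\sum_j c_j^{(k)}(t) Z_j^{(k)}(x, t)\big| \le C\,\Phi(x, t)$ uniformly in $k$. Finally, in the Duhamel identity
\[
\psi^{(k)}(x, t) = \T_{t - T}[\psi_0](x) + \int_T^t \T_{t - \tau}\Big[\big( - W''(z^{(k)})\psi^{(k)} - \NN^{(k)}[\psi^{(k)}] - \E^{(k)} + \textstyle\sum_{j} c_j^{(k)} Z_j^{(k)}\big)(\cdot, \tau)\Big](x)\, d\tau,
\]
the integrand is, uniformly in $k$ and for $\tau \in (T, t)$ with $t$ in a bounded set, dominated by $C\,\Phi(\cdot, \tau)$, while $\int_\R p(x - y, t - \tau)\,\Phi(y, \tau)\, dy$ is bounded on finite time intervals; dominated convergence then lets me pass to the limit and conclude that $\psi_\infty$ is a mild solution of \eqref{psiDirprob} for $h$, with coefficients $c_\infty$.

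By the uniqueness in Theorem~\ref{nonlinearthm}, together with $\|\psi_\infty\|_{\A_T} \le C$ (the constant of \eqref{psidecay}) and $\psi_\infty \in \Adot_T$, we obtain $\psi_\infty = \Psi[h]$. Since this identifies the limit of \emph{every} locally uniformly convergent subsequence of $\{\psi^{(k)}\}$, and the whole sequence is precompact in $C_{\rm loc}(\R \times [T, +\infty))$, the full sequence $\psi^{(k)}$ converges locally uniformly to $\Psi[h]$, which is the assertion. The main obstacle is the bookkeeping in the two passages to the limit — in the coefficient system and in the Duhamel formula — where one must exhibit a single integrable dominating function valid uniformly in $k$; this is precisely what the $\Phi$-weighted estimates of Lemmas~\ref{ElePhilem}, \ref{Ndecaylem}, \ref{Cfdecaylem} and the uniform bound $|\psi^{(k)}| \le C\,\Phi$ furnish, after which everything else is soft.
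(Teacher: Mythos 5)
Your proposal is correct and follows essentially the same route as the paper's proof: the uniform weighted H\"older bound of Lemma~\ref{Holderpsilem}, Arzel\`a--Ascoli with a diagonal extraction, passage to the limit in the mild formulation (coefficients included) by dominated convergence, identification of the limit via the uniqueness statement of Theorem~\ref{nonlinearthm}, and the subsequence principle to upgrade to convergence of the full sequence. You merely spell out the domination arguments for the coefficient system and the Duhamel identity, which the paper leaves implicit.
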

\begin{proof}
Write~$\psi^{(k)} := \Psi[h^{(k)}]$. In view of Lemma~\ref{Holderpsilem}, there exist two constants~$\alpha \in (0, 1)$, depending only on structural quantities, and~$C' > 0$, which also depends on the~$\A_{\{ T \}}^1$ norm of~$\psi_0$, such that~$\| \psi^{(k)} \|_{\A_T^\alpha} \le C'$ for all~$k \in \N$. By Ascoli-Arzel\`a theorem and a standard diagonal procedure, there exists a subsequence~$\{ \psi^{(k_\ell)} \} \subseteq \{ \psi^{(k)} \}$ converging to some~$\psi \in \A_T^\alpha$ locally uniformly~in~$\R \times [T, +\infty)$. Using that~$h^{(k_\ell)} \rightarrow h$ and Lebesgue's dominated convergence theorem, we may let~$\ell \rightarrow +\infty$ in the mild formulations of the initial value problems satisfied by the~$\psi^{(k_\ell)}$'s and obtain that~$\psi$ is a mild solution of~\eqref{psiDirprob} (with coefficients corresponding to~$h$). In addition,~$\psi \in \Adot_T$ and it satisfies~\eqref{psidecay}. Consequently, by the uniqueness statement of Theorem~\ref{nonlinearthm}, we conclude that~$\psi = \Psi[h]$ and thus that the full sequence~$\Psi[h^{(k)}]$ converges to it.
\end{proof}

\begin{remark} \label{LipvsHolrmk}
The previous lemma shows that~$\Psi$ is a continuous map from~$\bar{B}_1(\HH_{T, \mu})$ to~$\Adot_T$, endowed with the~$L^\infty_\loc(\R \times [T, +\infty))$ topology. A more refined argument---similar to the one employed in part~(b) of the proof of~\cite[Proposition~4.1]{dG18}---actually leads to quantitative information on the modulus of continuity of~$\Psi$, with respect to a slightly weaker norm than~$\| \cdot \|_{\A_T}$. It is worth pointing out that~$\Psi$ is Lipschitz when~$s \in (1/2, 1)$, while for~$s \in (0, 1/2]$ it only seems to be H\"older continuous. This loss of regularity is caused by the fact that the norm~$\| \cdot \|_{\HH_{T, \mu}}$ allows the elements of~$\bar{B}_1(\HH_{T, \mu})$ to be unbounded when~$s \in (0, 1/2]$.
\end{remark}

\section{Solving for~$h$. Proof of Theorem~\ref{Msystsolvthm}.} \label{hsec}

\noindent
In this section, we show the existence of a solution~$h \in \bar{B}_1(\HH_{T, \mu})$ of the nonlinear problem~\eqref{hdot+Rh=Fh}, provided its initial datum~$h^{(0)}$ is suitably small. That is, we prove Theorem~\ref{Msystsolvthm}.

Recall that the right-hand side~$F$ of the equation in~\eqref{hdot+Rh=Fh} is a nonlinear vector-valued function of~$h$. In order to solve~\eqref{hdot+Rh=Fh}, we thus proceed to investigate some properties of~$F$.

Throughout the section, we always assume~$\psi$ to be the solution of problem~\eqref{psiDirprob} given by Theorem~\ref{nonlinearthm}, for an initial datum~$\psi_0$ lying in~$\Adot^1_{\{ T \}}$---and, hence, in particular of class~$C^1(\R)$.

\subsection{Properties of the nonlinear term $F$.}

We begin the subsection by studying the decay rate of~$F(t)$ as~$t \rightarrow +\infty$. Notice that the function~$F^{(2)}$ appearing in the definition~\eqref{Fdef} of~$F$ involves the inverse of the matrix~$M$ given by~\eqref{Mdef}. We have the following preliminary result which addresses the invertibility of~$M$ and provides some bounds for its inverse.

\begin{lemma} \label{Mlem}
There exists a generic constant~$T_0 \ge 1$ such that, for every~$T \ge T_0$, the matrix~$M(t)$ defined by~\eqref{Mdef} is invertible for all~$t > T$. In addition, it holds
\begin{equation} \label{Minv}
M^{-1}(t) = \gamma I + O \! \left( t^{- \frac{2 s}{1 + 2 s}} \right) \quad \mbox{for all } t > T.
\end{equation}
\end{lemma}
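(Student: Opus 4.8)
The plan is to show that $M(t)$ is a small perturbation of the scalar matrix $\gamma^{-1} I$ and then invert it via a Neumann series, exactly as was done for $A(t)$ at the end of the proof of Lemma~\ref{clem}.

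First I would collect the ingredients already at our disposal. By \eqref{Aii=beta} the diagonal entries satisfy $A_{ii}(t) = \gamma^{-1}$ for every $i$ and every $t$, while \eqref{Aijdecay} gives $|A_{ij}(t)| \le C\, t^{-\frac{2s}{1+2s}}$ for $i \ne j$ and all $t > T \ge T_0$, provided $T_0$ is a sufficiently large generic constant. It then remains to estimate the diagonal correction in \eqref{Mdef}. Since $\psi$ is the solution of \eqref{psiDirprob} furnished by Theorem~\ref{nonlinearthm}, estimate \eqref{psidecay} yields $\| \psi \|_{\A_T} \le C$, so that $|\psi(x, t)| \le C\, \Phi(x, t) \le C\, t^{-\frac{2s}{1+2s}}$ for all $x \in \R$ and $t \ge T$, by \eqref{Phidef}. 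Moreover, estimate \eqref{w''decay} of Proposition~\ref{w''decayprop} shows that $w'' \in L^1(\R)$. Combining these two facts and changing variables $y := x - \xi_i(t)$,
\[
\left| \int_\R \psi(x, t)\, w''(x - \xi_i(t)) \, dx \right| \le C\, t^{-\frac{2s}{1+2s}} \int_\R |w''(y)| \, dy \le C\, t^{-\frac{2s}{1+2s}}
\]
for all $i = 1, \ldots, N$ and $t > T \ge T_0$.

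Next I would assemble these bounds. Writing $M(t) = \gamma^{-1} I + R(t)$, the estimates above show that every entry of $R(t)$ is $O(t^{-\frac{2s}{1+2s}})$, hence $\| R(t) \| \le C\, t^{-\frac{2s}{1+2s}}$ in any fixed matrix norm. Choosing $T_0$ large enough that $\gamma\, C\, T_0^{-\frac{2s}{1+2s}} \le \tfrac12$, we get $\| \gamma R(t) \| \le \tfrac12$ for every $t > T \ge T_0$, so that $I + \gamma R(t)$ is invertible with $(I + \gamma R(t))^{-1} = \sum_{k \ge 0} (- \gamma R(t))^k = I + O(t^{-\frac{2s}{1+2s}})$. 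Consequently $M(t) = \gamma^{-1}(I + \gamma R(t))$ is invertible for all $t > T$, and
\[
M^{-1}(t) = \gamma\, (I + \gamma R(t))^{-1} = \gamma I + O\!\left( t^{-\frac{2s}{1+2s}} \right),
\]
which is exactly \eqref{Minv}.

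I do not expect any genuine difficulty here: the statement is a direct analogue of the invertibility of $A(t)$ established within the proof of Lemma~\ref{clem}, the only additional input being the bound on the diagonal perturbation coming from $\psi$, which is immediate from $\| \psi \|_{\A_T} \le C$ and $w'' \in L^1(\R)$. The one point requiring a little care is the bookkeeping of constants: the generic constant $T_0$ in the statement must be taken no smaller than those appearing in Theorem~\ref{nonlinearthm} and Lemma~\ref{clem}, so that all the cited estimates hold simultaneously.
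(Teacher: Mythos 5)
Your proof is correct and follows essentially the same route as the paper: compare $A(t)$ to $\gamma^{-1}I$ via \eqref{Aii=beta} and \eqref{Aijdecay}, bound the diagonal correction using $\|\psi\|_{\A_T}\le C$ and $w''\in L^1(\R)$, and invert the resulting near-scalar matrix. The paper leaves the Neumann-series step implicit, but the argument is identical.
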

\begin{proof}
First, recalling~\eqref{Aii=beta} and~\eqref{Aijdecay}, we have that
$$
A(t) = \gamma^{-1} I + O \! \left( t^{- \frac{2 s}{1 + 2 s}} \right).
$$
On the other hand, by definition~\eqref{Phidef}, the fact that~$w'' \in L^1(\R)$ (by Proposition~\ref{w''decayprop}), and estimate~\eqref{psidecay}, we get that
$$
\left| \int_{\R} \psi(x, t) w''(x - \xi_i(t)) \, dx \right| \le t^{- \frac{2 s}{1 + 2 s}} \| \psi \|_{\A_{T}} \int_{\R} |w''(y)| \, dy \le C \, t^{- \frac{2 s}{1 + 2 s}}.
$$
The last two formulas and definition~\eqref{Mdef} yield that
$$
M(t) = \gamma^{-1} I + O \! \left( t^{- \frac{2 s}{1 + 2 s}} \right).
$$
In particular, for~$T_0$ large enough the matrix~$M(t)$ is invertible for every~$t > T \ge T_0$ and~\eqref{Minv} holds true.
\end{proof}

The next proposition deals with the rate of decay of~$F$ and of its H\"older modulus of continuity.

\begin{proposition} \label{Fdecaylem}
Let~$h \in \bar{B}_1(\HH_{T, \mu})$, with~$\mu > 3 s / (1 + 2 s)$. Then, there exist constants~$T_0, C, C_{\psi_0} \ge 1$, and~$\alpha \in (0, 1)$ such that
$$
\begin{alignedat}{3}
|F(t)| & \le C \, t^{- \frac{4 s}{1 + 2 s}} && \quad \mbox{for all } t > T, \\
\sup_{t_1, t_2 \in [t, t + 1]} \frac{|F(t_1) - F(t_2)|}{|t_1 - t_2|^\alpha} & \le C_{\psi_0} \, t^{- \frac{4 s}{1 + 2 s}} && \quad \mbox{for all } t > T,
\end{alignedat}
$$
provided~$T \ge T_0$. The constants~$T_0$, $C$, and~$\alpha$ are generic, while~$C_{\psi_0}$ also depends on~$\| \psi_0 \|_{\A^1_{\{ T \}}}$.
\end{proposition}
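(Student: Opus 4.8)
The plan is to estimate the two pieces $F = F^{(1)} + F^{(2)}$ of \eqref{Fdef} separately.

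First I would dispose of $F^{(1)}$ by homogeneity. The field $\mathscr{R}$ from \eqref{Rmapdef} is positively homogeneous of degree $-2s$, hence $D_\xi^2\mathscr{R}$ is homogeneous of degree $-2-2s$. As $h \in \bar{B}_1(\HH_{T,\mu})$ gives $|h(t)| \le t^{1-\mu}$ and $|\dot h(t)| \le t^{-\mu}$, and $\mu > 2s/(1+2s)$ makes $t^{1-\mu} \ll t^{1/(1+2s)}$ (the order of the separations $|\xi^0_i(t) - \xi^0_j(t)|$), for $T$ large the segment from $\xi^0(t)$ to $\xi(t) = \xi^0(t) + h(t)$ lies in a region where $\mathscr{R}$ is smooth with $|D_\xi^2\mathscr{R}| \lesssim t^{-(2+2s)/(1+2s)}$. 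Taylor's formula with integral remainder then gives
\[
|F^{(1)}(t)| \le \sup |D_\xi^2\mathscr{R}|\,|h(t)|^2 \lesssim t^{-\frac{2+2s}{1+2s}}\, t^{2-2\mu} = t^{\frac{2s}{1+2s}-2\mu} \le C\, t^{-\frac{4s}{1+2s}},
\]
the last inequality using exactly $\mu \ge 3s/(1+2s)$. Since $\xi^0 \in C^\infty$ and $h \in C^1$, $F^{(1)} \in C^1$; differentiating once and using $|\dot\xi^0| \lesssim t^{-2s/(1+2s)}$, $|\dot h| \le t^{-\mu}$ together with the homogeneity of $D_\xi\mathscr{R}$ and $D_\xi^2\mathscr{R}$ yields $|\tfrac{d}{dt}F^{(1)}(t)| \lesssim t^{-1-\mu} \le C\, t^{-4s/(1+2s)}$, which controls $[F^{(1)}]_{C^\alpha}$ on unit time intervals.

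The substantial step is $F^{(2)} = M^{-1}d + \mathscr{R}(\xi)$. By Lemma~\ref{Mlem}, $M^{-1}(t) = \gamma I + O(t^{-2s/(1+2s)})$. In $d_i = \int_\R\{\E_{0,i}\psi + \E_1 + \NN[\psi]\}Z_i$ (see \eqref{didef}) the terms with $\E_{0,i}\psi$ and $\NN[\psi]$ are $O(t^{-4s/(1+2s)})$: by \eqref{E0elllePhi} together with $|\psi| \le \|\psi\|_{\A_T}\Phi \le C t^{-2s/(1+2s)}$ (from \eqref{psidecay}) one gets $\int|\E_{0,i}|\,|\psi|\,Z_i \le Ct^{-4s/(1+2s)}$, and $|\NN[\psi]| \le C|\psi|^2 \le Ct^{-2s/(1+2s)}\Phi$ (as in the proof of Lemma~\ref{Ndecaylem}) gives $\int|\NN[\psi]|Z_i \le Ct^{-4s/(1+2s)}$. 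The heart of the matter is the cancellation
\[
\gamma\int_\R \E_1(x,t)\, Z_i(x,t)\,dx + \mathscr{R}_i(\xi(t)) = O\!\left(t^{-\frac{4s}{1+2s}}\right).
\]
I would prove it by changing variables $y = x - \xi_i(t)$ and invoking the improved expansion \eqref{uimprovasympt} of $w$ (with exponent $4s$, which is where the parity of $W$ enters): in the range where $Z_i$ matters, $z(\,\cdot + \xi_i(t),t) = (i-1) + w(y) + R_i(y,t)$ with $R_i(y,t) = -\tfrac{1}{2sW''(0)}\big[\sum_{k<i}(y + \xi_i - \xi_k)^{-2s} - \sum_{k>i}(\xi_k - \xi_i - y)^{-2s}\big] + O(t^{-4s/(1+2s)})$; Taylor-expanding $W'$ about $w(y)$ and about the integers (using periodicity and $W''(1) = W''(0)$) collapses all leading terms to $\E_1(\,\cdot + \xi_i(t),t) = -\tfrac{1}{2s}\big(\tfrac{W''(w(y))}{W''(0)} - 1\big)\big[\sum_{k<i}(y+\xi_i-\xi_k)^{-2s} - \sum_{k>i}(\xi_k-\xi_i-y)^{-2s}\big]$ plus lower-order contributions. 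Multiplying by $w'(y)$ and integrating: the $y$-independent coefficient of each $(y+\xi_i-\xi_k)^{-2s}$ is paired with the identity $\int_\R w'(y)\big(\tfrac{W''(w(y))}{W''(0)} - 1\big)\,dy = \tfrac{1}{W''(0)}\int_0^1(W''(r) - W''(0))\,dr = -1$; the $y$-linear coefficient is odd in $y$ while $w'(y)\big(\tfrac{W''(w(y))}{W''(0)} - 1\big)$ is even (parity of $W$ again), so it drops; and the leftover — the quadratic Taylor remainder, the $O(t^{-4s/(1+2s)})$ errors of \eqref{uimprovasympt}, the quadratic-in-$R_i$ terms, and the far region $|y| \gtrsim t^{1/(1+2s)}$ where $w'$ decays like $|y|^{-1-2s}$ — is $O(t^{-4s/(1+2s)})$. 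This gives $\int_\R\E_1 Z_i = \tfrac{1}{2s}\sum_{k\ne i}\tfrac{\xi_i - \xi_k}{|\xi_i - \xi_k|^{1+2s}} + O(t^{-4s/(1+2s)}) = -\gamma^{-1}\mathscr{R}_i(\xi) + O(t^{-4s/(1+2s)})$, hence the displayed identity; combined with $|d| \lesssim |\mathscr{R}(\xi)| \lesssim t^{-2s/(1+2s)}$ and Lemma~\ref{Mlem}, it yields $F^{(2)} = -\mathscr{R}(\xi) + O(t^{-4s/(1+2s)}) + \mathscr{R}(\xi) = O(t^{-4s/(1+2s)})$.

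For the H\"older bound on $F^{(2)}$ I would \emph{not} differentiate in $t$, since $\psi$ is only H\"older in time; instead, splitting $F^{(2)}_i = \big(\gamma\!\int\E_1 Z_i + \mathscr{R}_i(\xi)\big) + \gamma\!\int\E_{0,i}\psi Z_i + \gamma\!\int\NN[\psi]Z_i + \big((M^{-1} - \gamma I)d\big)_i$, the first bracket is $C^1$ in $t$ with derivative $O(t^{-4s/(1+2s)})$ by \eqref{E0t1-t2}, $|\dot\xi_i| \lesssim t^{-2s/(1+2s)}$ and $|\tfrac{d}{dt}\mathscr{R}_i(\xi)| \lesssim t^{-(1+4s)/(1+2s)}$, while the $\psi$-terms are handled on unit intervals through the time-H\"older bound $[\psi]_{C^\alpha(\R\times(t,t+1))} \le t^{-2s/(1+2s)}\|\psi\|_{\A_T^\alpha} \le C(1 + \|\psi_0\|_{\A^1_{\{ T \}}})\, t^{-2s/(1+2s)}$ from Lemma~\ref{Holderpsilem}, the time-Lipschitz bound \eqref{E0ellt1-t2} for $\E_{0,i}$, the bound $|\partial_t Z_i| \lesssim |\dot\xi_i|\,|w''(\cdot-\xi_i)|$ (with $w'' \in L^1(\R)$ by \eqref{w''decay}), estimate \eqref{Nt1-Nt2} for $\NN[\psi]$, and $\int Z_i = 1$; finally $[M^{-1} - \gamma I]_{C^\alpha(t,t+1)} \le C(1 + \|\psi_0\|_{\A^1_{\{ T \}}})\, t^{-2s/(1+2s)}$ (the same ingredients applied to \eqref{Mdef}) combines with $\|d\|_\infty \lesssim t^{-2s/(1+2s)}$ and $[d]_{C^\alpha} \lesssim (1 + \|\psi_0\|_{\A^1_{\{ T \}}})\, t^{-2s/(1+2s)}$. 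Summing up, each product of a $C^\alpha$-seminorm of order $(1 + \|\psi_0\|_{\A^1_{\{ T \}}})t^{-2s/(1+2s)}$ with an $L^\infty$-bound of order $t^{-2s/(1+2s)}$ contributes $C_{\psi_0}t^{-4s/(1+2s)}$. The hardest part of the whole argument is the cancellation identity for $\gamma\int\E_1 Z_i + \mathscr{R}_i(\xi)$: it is the unique place where the sharp exponent $4s$ of \eqref{uimprovasympt} and the parity of $W$ are genuinely used, and keeping control of the error terms — in particular the slowly decaying tails of $w'$ and $w''$, for which \eqref{w'asympt} and \eqref{w''decay} are needed — requires care.
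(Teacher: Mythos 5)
Your proposal is correct and follows essentially the same route as the paper: the splitting $F=F^{(1)}+F^{(2)}$ with Taylor expansion and the homogeneity bounds on $D^2_{\xi\xi}\mathscr{R}$, $D^3_{\xi\xi\xi}\mathscr{R}$ for $F^{(1)}$; the expansion $M^{-1}=\gamma I+O(t^{-2s/(1+2s)})$ from Lemma~\ref{Mlem}; the $O(t^{-4s/(1+2s)})$ bounds on the $\E_{0,i}\psi$ and $\NN[\psi]$ contributions to $d$; the cancellation of $\gamma\int\E_1 Z_i$ against $\mathscr{R}_i(\xi)$ via Proposition~\ref{uimprovasymptprop} together with the parity of $W$ (even--odd cancellation of the linear Taylor term, exact value of the paired integral); and the H\"older estimate obtained from Lemma~\ref{Holderpsilem}, \eqref{E0t1-t2}, \eqref{E0ellt1-t2}, \eqref{Nt1-Nt2}, \eqref{w''decay}--\eqref{w'''decay}, and the H\"older continuity of $M^{-1}$. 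The only differences (pairing the constant coefficient with the full-line integral rather than with $\int_{I_i(t)}\{W''(0)-W''(w)\}w'$, and treating $\gamma\int\E_1 Z_i+\mathscr{R}_i(\xi)$ as $C^1$ in the H\"older step) are cosmetic and do not change the argument.
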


Proposition~\ref{Fdecaylem} is an immediate consequence of the next two lemmas, which address, respectively, the decay properties of~$F^{(1)}$ and~$F^{(2)}$---recall that~$F = F^{(1)} + F^{(2)}$.

\begin{lemma} \label{F1lem}
Let~$h \in \bar{B}_1(\HH_{T, \mu})$, with~$\mu > 2 s / (1 + 2 s)$. Then, there exist generic constants~$T_0, C \ge 1$ such that
\begin{equation} \label{F1decay}
|F^{(1)}(t)| + t |\dot{F}^{(1)}(t)| \le C \, t^{- 2 \mu + \frac{2 s}{1 + 2 s}} \quad \mbox{for all } t > T,
\end{equation}
provided~$T \ge T_0$.
\end{lemma}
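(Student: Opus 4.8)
The plan is to estimate $F^{(1)}$ directly from its definition \eqref{F1def}, namely
$$
F^{(1)}(t) = - \Big\{ \mathscr{R}(\xi^0(t) + h(t)) - \mathscr{R}(\xi^0(t)) - D_\xi \mathscr{R}(\xi^0(t)) h(t) \Big\},
$$
which is precisely the second-order Taylor remainder of the smooth map $\mathscr{R}$ at the point $\xi^0(t)$ with increment $h(t)$. By Taylor's theorem with integral remainder, $|F^{(1)}(t)| \le \tfrac{1}{2} \sup_{\theta \in [0,1]} \| D^2_\xi \mathscr{R}(\xi^0(t) + \theta h(t)) \| \, |h(t)|^2$. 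The key point is that $\mathscr{R}$, being a sum of terms of the form $|\xi_i - \xi_j|^{-2s}$, has second derivatives that decay like $|\xi_i - \xi_j|^{-2 - 2s}$. Along the trajectory $\xi^0(t) = \beta t^{1/(1+2s)}$, consecutive components are separated by a distance $\sim t^{1/(1+2s)}$; since $h \in \bar B_1(\HH_{T,\mu})$ with $\mu > 2s/(1+2s)$ we have $|h(t)| \le t^{1-\mu} \le \tfrac12 (\min_{i \ne j}|\beta_i - \beta_j|)\, t^{1/(1+2s)}$ for $T_0$ large, so the separations are preserved up to a constant factor along $\xi^0 + \theta h$. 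Hence $\| D^2_\xi \mathscr{R}(\xi^0(t) + \theta h(t)) \| \le C\, t^{-(2+2s)/(1+2s)}$, and combining with $|h(t)|^2 \le t^{2-2\mu}$ gives
$$
|F^{(1)}(t)| \le C\, t^{2 - 2\mu - \frac{2 + 2s}{1 + 2s}} = C\, t^{- 2\mu + \frac{2s}{1+2s}},
$$
using $2 - \frac{2+2s}{1+2s} = \frac{2(1+2s) - (2+2s)}{1+2s} = \frac{2s}{1+2s}$. That is the first half of \eqref{F1decay}.

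For the derivative bound, I would differentiate $F^{(1)}$ in $t$. Writing $F^{(1)}(t) = -\mathscr{Q}(\xi^0(t), h(t))$ where $\mathscr{Q}(\xi, k) := \mathscr{R}(\xi + k) - \mathscr{R}(\xi) - D_\xi\mathscr{R}(\xi) k$ is the Taylor remainder, the chain rule gives
$$
\dot F^{(1)}(t) = - D_\xi \mathscr{Q}(\xi^0(t), h(t))\, \dot\xi^0(t) - D_k \mathscr{Q}(\xi^0(t), h(t))\, \dot h(t).
$$
Now $D_k \mathscr{Q}(\xi, k) = D_\xi \mathscr{R}(\xi + k) - D_\xi \mathscr{R}(\xi)$ is a first-order remainder, so $|D_k \mathscr{Q}(\xi^0(t), h(t))| \le C \sup_\theta \|D^2_\xi \mathscr{R}\| \,|h(t)| \le C\, t^{-\frac{2+2s}{1+2s}}\, t^{1-\mu}$; paired with $|\dot h(t)| \le t^{-\mu}$ this contributes $\le C\, t^{1 - 2\mu - \frac{2+2s}{1+2s}} = C\, t^{-2\mu + \frac{2s}{1+2s} - 1}$, which after multiplying by $t$ is of the right order. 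Similarly $D_\xi \mathscr{Q}(\xi, k) = D_\xi\mathscr{R}(\xi+k) - D_\xi\mathscr{R}(\xi) - D^2_\xi\mathscr{R}(\xi)k$ is itself a second-order remainder in $k$, hence bounded by $C \sup_\theta \|D^3_\xi\mathscr{R}\|\, |k|^2 \le C\, t^{-\frac{3+2s}{1+2s}}\, t^{2-2\mu}$ (third derivatives of $\mathscr{R}$ decaying like $|\xi_i-\xi_j|^{-3-2s}$ along the separated trajectory); paired with $|\dot\xi^0(t)| \le C\, t^{\frac{1}{1+2s} - 1} = C\, t^{-\frac{2s}{1+2s}}$ this gives $\le C\, t^{2 - 2\mu - \frac{3+2s}{1+2s} - \frac{2s}{1+2s}} = C\, t^{-2\mu + \frac{2s}{1+2s} - 1}$, again of the right order after multiplying by $t$. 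Summing the two contributions yields $t|\dot F^{(1)}(t)| \le C\, t^{-2\mu + \frac{2s}{1+2s}}$, completing \eqref{F1decay}.

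The only genuinely delicate point, and the one I would be most careful about, is justifying that the pairwise distances $|\xi^0_i(t) + \theta h_i(t) - \xi^0_j(t) - \theta h_j(t)|$ stay comparable to $t^{1/(1+2s)}$ uniformly in $\theta \in [0,1]$ and along the whole interval $t > T$ — this is what makes all the derivative bounds on $\mathscr{R}$ usable, and it relies on the smallness hypothesis $\mu > 2s/(1+2s)$ exactly as in the standing assumption that $\xi = \xi^0 + h$ remains ordered (Section~\ref{notsec}), so it follows by choosing $T_0$ large enough. Everything else is routine: bookkeeping of Taylor remainders and exponent arithmetic. I note that the hypothesis $\mu > 3s/(1+2s)$ in Proposition~\ref{Fdecaylem} is not needed here — Lemma~\ref{F1lem} only requires $\mu > 2s/(1+2s)$, as stated — and that under $\mu > 3s/(1+2s)$ one has $-2\mu + \frac{2s}{1+2s} < -\frac{4s}{1+2s}$, so \eqref{F1decay} is indeed stronger than the bound $t^{-4s/(1+2s)}$ needed for $F^{(1)}$ in Proposition~\ref{Fdecaylem}.
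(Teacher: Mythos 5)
Your proof is correct and follows essentially the same route as the paper: both bound $F^{(1)}$ as a second-order Taylor remainder of $\mathscr{R}$ using $|D^2_{\xi\xi}\mathscr{R}| \le C\,t^{-\frac{2+2s}{1+2s}}$ and $|D^3_{\xi\xi\xi}\mathscr{R}| \le C\,t^{-\frac{3+2s}{1+2s}}$ along the trajectory $\xi^0 + \theta h$, whose pairwise separations of order $t^{\frac{1}{1+2s}}$ are preserved thanks to $\mu > \frac{2s}{1+2s}$ and $T_0$ large. The only difference is cosmetic: you differentiate the remainder map $\mathscr{Q}$ by the chain rule, while the paper differentiates its explicit double-integral representation of $F^{(1)}$, and the resulting estimates coincide.
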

\begin{proof}
We begin by showing that the bound~\eqref{F1decay} holds for~$|F^{(1)}(t)|$. For~$i = 1, \ldots, N$, we write
\begin{equation} \label{R2ndorderTay}
\begin{aligned}
F^{(1)}_i & = \int_0^1 \langle D_\xi \mathscr{R}_i(\xi^0) - D_\xi \mathscr{R}_i(\xi^0 + \tau h), h \rangle \, d\tau = - \int_0^1 \left\{ \int_0^1 \langle D^2_{\xi \xi} \mathscr{R}_i(\xi^0 + \sigma \tau h) h, h \rangle \, d\sigma \right\} \tau \, d\tau.
\end{aligned}
\end{equation}
Notice that
\begin{equation} \label{DR=}
D_{\xi_k} \mathscr{R}_i(\xi) = \gamma \left\{ \delta_{i k} \sum_{j \ne i} \frac{1}{|\xi_i - \xi_j|^{1 + 2 s}} - \frac{1 - \delta_{i k}}{|\xi_i - \xi_k|^{1 + 2 s}} \right\}
\end{equation}
and
\begin{align*}
D^2_{\xi_k \xi_\ell} \mathscr{R}_i(\xi) & = - (1 + 2 s) \gamma \Bigg\{ \delta_{i k} \delta_{i \ell} \sum_{j \ne i} \frac{\xi_i - \xi_j}{|\xi_i - \xi_j|^{3 + 2 s}} + (1 - \delta_{i k}) \delta_{k \ell} \frac{\xi_i - \xi_k}{|\xi_i - \xi_k|^{3 + 2 s}} \\
& \quad - \delta_{i k} (1 - \delta_{i \ell}) \frac{\xi_i - \xi_\ell}{|\xi_i - \xi_\ell|^{3 + 2 s}} - \delta_{i \ell} (1 - \delta_{i k}) \frac{\xi_i - \xi_k}{|\xi_i - \xi_k|^{3 + 2 s}}  \Bigg\}.
\end{align*}
The last identity leads to the bound
\begin{equation} \label{D2Rdecay}
\left| D^2_{\xi \xi} \mathscr{R}(\xi^0(t) + \sigma \tau h(t)) \right| \le C \, t^{- \frac{2 + 2 s}{1 + 2 s}} \quad \mbox{for all } t \ge T \mbox{ and } \sigma, \tau \in [0, 1].
\end{equation}
By this and~\eqref{R2ndorderTay}, we get
$$
|F^{(1)}(t)| \le C \, t^{- \frac{2 + 2 s}{1 + 2 s}} |h(t)|^2 \le C \| h \|_{\HH_{T, \mu}}^2  t^{- \frac{2 + 2 s}{1 + 2 s} + 2 (1 - \mu)} \quad \mbox{for all } t > T,
$$
from which estimate~\eqref{F1decay} for~$|F^{(1)}(t)|$ immediately follows---recall that~$h \in \bar{B}_1(\HH_{T, \mu})$.

We now address the bound for~$|\dot{F}^{(1)}(t)|$. Differentiating relation~\eqref{R2ndorderTay} with respect to~$t$, we get
\begin{align*}
\dot{F}_i^{(1)} & = - \sum_{k, \ell, m = 1}^N \int_0^1 \left\{ \int_0^1 \bigg( D^3_{\xi_k \xi_\ell \xi_m} \mathscr{R}_i(\xi^0 + \sigma \tau h) \! \left( \dot{\xi}_m^0 + \sigma \tau \dot{h}_m \right) \! h_k h_\ell \bigg) d\sigma \right\} \tau \, d\tau \\
& \quad - 2 \sum_{k, \ell = 1}^N \int_0^1 \left\{ \int_0^1 \bigg( D^2_{\xi_k \xi_\ell} \mathscr{R}_i(\xi^0 + \sigma \tau h) h_k \dot{h}_\ell \bigg) d\sigma \right\} \tau \, d\tau.
\end{align*}
A computation gives that
$$
\left| D^3_{\xi \xi \xi} \mathscr{R}(\xi^0(t) + \sigma \tau h(t)) \right| \le C \, t^{- \frac{3 + 2 s}{1 + 2 s}} \quad \mbox{for all } t \ge T \mbox{ and } \sigma, \tau \in [0, 1].
$$
From this,~\eqref{D2Rdecay}, and the fact that~$h \in \bar{B}_1(\HH_{T, \mu})$, we infer that
$$
|\dot{F}^{(1)}(t)| \le C \| h \|_{\HH_{T, \mu}}^2 \left( t^{- \frac{3 + 2 s}{1 + 2 s} - \frac{2 s}{1 + 2 s} + 2(1 - \mu)} + t^{- \frac{2 + 2 s}{1 + 2 s} + 1 - 2 \mu} \right) \le C \, t^{- \frac{2 + 2 s}{1 + 2 s} + 1 - 2 \mu},
$$
which is the desired bound for~$|\dot{F}^{(1)}(t)|$.
\end{proof}

\begin{lemma} \label{F2lem}
Let~$h \in \bar{B}_1(\HH_{T, \mu})$, with~$\mu > 2 s / (1 + 2 s)$. Then, there exist constants~$T_0, C, C_{\psi_0} \ge 1$, and~$\alpha \in (0, 1)$ such that
\begin{alignat}{3}
\label{F2decay}
|F^{(2)}(t)| & \le C \, t^{- \frac{4 s}{1 + 2 s}} && \quad \mbox{for all } t > T, \\
\label{F2derdecay}
\sup_{t_1, t_2 \in [t, t + 1]} \frac{|F^{(2)}(t_1) - F^{(2)}(t_2)|}{|t_1 - t_2|^\alpha} & \le C_{\psi_0} \, t^{- \frac{4 s}{1 + 2 s}} && \quad \mbox{for all } t > T,
\end{alignat}
provided~$T \ge T_0$. The constants~$T_0$, $C$, and~$\alpha$ are generic, while~$C_{\psi_0}$ also depends on~$\| \psi_0 \|_{\A^1_{\{ T \}}}$.
\end{lemma}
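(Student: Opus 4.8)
The plan is to prove the two assertions of Lemma~\ref{F2lem} starting from the decomposition $F^{(2)} = M^{-1} d + \mathscr{R}(\xi)$ and from the expansion $M^{-1}(t) = \gamma I + O(t^{-2s/(1+2s)})$ supplied by Lemma~\ref{Mlem}. First I would record the crude bound $|d(t)| \le C\,t^{-2s/(1+2s)}$: the $\E_{0,i}\psi$ and $\NN[\psi]$ contributions to $d_i$ are in fact $O(t^{-4s/(1+2s)})$ by~\eqref{E0elllePhi}, \eqref{Npsiest}, \eqref{psidecay} together with an integration of $\Phi$ against $w'(\cdot-\xi_i(t))$ as in~\eqref{fw'decay}, while the $\E_1$ contribution obeys $|\int_\R \E_1 Z_i\,dx| \le C\int_\R \Phi(x,t)\,w'(x-\xi_i(t))\,dx \le C\,t^{-2s/(1+2s)}$ thanks to~\eqref{EklePhi}. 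Feeding this into $M^{-1} = \gamma I + O(t^{-2s/(1+2s)})$, the claimed estimate~\eqref{F2decay} reduces to $\gamma\,d_i(t) + \mathscr{R}_i(\xi(t)) = O(t^{-4s/(1+2s)})$ for each $i$, and after discarding the $\E_{0,i}\psi$ and $\NN[\psi]$ pieces (already $O(t^{-4s/(1+2s)})$) this is the key cancellation
$$\gamma\int_\R \E_1(x,t)\,Z_i(x,t)\,dx + \mathscr{R}_i(\xi(t)) = O\!\big(t^{-4s/(1+2s)}\big).$$

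To establish it I would split $\R$ into the inner region $\{\,|x-\xi_i(t)| \le c\,t^{1/(1+2s)}\,\}$, with $c$ a small generic constant chosen so that the inner regions of distinct $\xi_\ell$'s are disjoint and $\operatorname{sgn}(x-\xi_j(t)) = \operatorname{sgn}(\xi_i(t)-\xi_j(t))$ there for every $j\neq i$, and its complement. On the complement, $w'(x-\xi_i(t)) \le C|x-\xi_i(t)|^{-1-2s}$ and~\eqref{EklePhi} give $\int_{\{|x-\xi_i|>ct^{1/(1+2s)}\}}|\E_1|\,Z_i\,dx \le C\,t^{-2s/(1+2s)}\!\int_{ct^{1/(1+2s)}}^{+\infty} r^{-1-2s}\,dr = O(t^{-4s/(1+2s)})$. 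On the inner region I would Taylor-expand: writing $z = w(x-\xi_i) + k_i + \sum_{j\neq i}\rho_j$ with $k_i\in\Z$ and $\rho_j := w(x-\xi_j) - \chi_{(0,+\infty)}(\xi_i-\xi_j)$, the periodicity of $W'$, a second-order Taylor expansion of $W'$ and $W'(0)=0$ yield $\E_1 = (W''(w(x-\xi_i))-W''(0))\sum_{j\neq i}\rho_j + O\big((\sum_{j\neq i}\rho_j)^2\big)$; inserting the improved asymptotics~\eqref{uimprovasympt} of Proposition~\ref{uimprovasymptprop}---which is precisely where the parity of $W$ enters, through the exponent $\vartheta = 4s$---one gets, on the inner region, $\rho_j = -\frac{1}{2sW''(0)}\frac{\xi_i-\xi_j}{|\xi_i-\xi_j|^{1+2s}} + L_{ij}(t)(x-\xi_i) + O\big(|x-\xi_i|^2 t^{-(2+2s)/(1+2s)}\big) + O\big(t^{-4s/(1+2s)}\big)$, the linear coefficient arising from the Taylor expansion of $y\mapsto \frac{\xi_i-\xi_j+y}{|\xi_i-\xi_j+y|^{1+2s}}$ at $y=0$.

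The payoff comes from integrating against $Z_i(x,t)=w'(x-\xi_i(t))$ and substituting $y = x-\xi_i(t)$: since $w'$ is even by~\eqref{wodd} and the inner region is symmetric, the linear term $L_{ij}(t)\int y\,w'(y)\,dy$ vanishes, the quadratic, cross, and $O(|x-\xi_j|^{-4s})$ errors integrate to $O(t^{-4s/(1+2s)})$ via $w'(y)\le C|y|^{-1-2s}$ and $\int_1^{ct^{1/(1+2s)}} r^{1-2s}\,dr$ (whose size one checks separately in the regimes $s<1/2$, $s=1/2$, $s>1/2$, all producing, after multiplication by the prefactor, the exponent $-4s/(1+2s)$), and the leading term equals $\sum_{j\neq i}\big(-\tfrac{1}{2sW''(0)}\tfrac{\xi_i-\xi_j}{|\xi_i-\xi_j|^{1+2s}}\big)\int_\R(W''(w(y))-W''(0))w'(y)\,dy$ up to a tail $O(t^{-4s/(1+2s)})$. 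Since $\int_\R(W''(w(y))-W''(0))w'(y)\,dy = W'(1) - W'(0) - W''(0) = -W''(0)$, this is exactly $\tfrac{1}{2s}\sum_{j\neq i}\tfrac{\xi_i-\xi_j}{|\xi_i-\xi_j|^{1+2s}} = -\gamma^{-1}\mathscr{R}_i(\xi(t))$, giving the cancellation and hence~\eqref{F2decay}. I expect this inner-region expansion---keeping careful track of the three ranges of $s$ and of the use of the symmetry of $w'$ to kill the first-order term---to be the main obstacle, with Proposition~\ref{uimprovasymptprop} being the ingredient that makes the scheme close.

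For the Hölder bound~\eqref{F2derdecay} I would argue term by term. The factor $\mathscr{R}(\xi(\cdot))$ is Lipschitz with $\big|\tfrac{d}{dt}\mathscr{R}_i(\xi)\big| \le |D_\xi\mathscr{R}_i(\xi)|\,|\dot\xi| \le C\,t^{-1}\cdot C\,t^{-2s/(1+2s)} \le C\,t^{-4s/(1+2s)}$ by~\eqref{DR=} and $h\in\bar{B}_1(\HH_{T,\mu})$; the dependence of $M^{-1}$ on $t$ is controlled through Lemma~\ref{Mlem}; and the Hölder seminorm of $d$ on $[t,t+1]$ is bounded by combining the time-derivative estimates~\eqref{E0t1-t2}, \eqref{E0ellt1-t2} for $\E_1$ and $\E_{0,j}$, the Hölder estimate~\eqref{Nt1-Nt2} for $\NN[\psi]$, the global Hölder estimate~\eqref{Holderestforpsi} for $\psi$ (which is why the hypothesis $\psi_0\in\Adot^1_{\{T\}}$ is needed and why $C_{\psi_0}$ depends on $\|\psi_0\|_{\A^1_{\{T\}}}$), and the Lipschitz-in-$t$ dependence of $Z_i(x,t)=w'(x-\xi_i(t))$ (via $w''\in L^\infty$ and $|\dot\xi_i|\le C\,t^{-2s/(1+2s)}$), all weighted by $\Phi$ and integrated against $w'$. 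This yields the stated estimate and completes the proof of Lemma~\ref{F2lem}.
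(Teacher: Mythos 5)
Your proof of \eqref{F2decay} is correct and follows essentially the same route as the paper: reduce to the $\E_1$ contribution to $d$ via Lemma~\ref{Mlem} and the bounds \eqref{E0elllePhi}, \eqref{Npsiest}, \eqref{psidecay}; localize near $\xi_i$; use periodicity of $W'$ and a second-order Taylor expansion; insert the improved asymptotics of Proposition~\ref{uimprovasymptprop}; kill the linear term by symmetry and compute the leading constant from $\int_\R (W''(w)-W''(0))\,w' = -W''(0)$, which produces exactly $-\gamma^{-1}\mathscr{R}_i(\xi)$. The only difference is cosmetic: you localize on the symmetric window $\{|x-\xi_i(t)|\le c\,t^{1/(1+2s)}\}$ so the odd linear term integrates to zero outright, whereas the paper works on the halfway interval $I_i(t)$ and then symmetrizes via $\widehat I_i(t)$, estimating the leftover piece by $O(t^{-6s/(1+2s)})$; both close, though note that the cancellation of the linear term also uses that $W''(w(\cdot))$ is even (not only $w'$), a fact the paper states explicitly and which follows from \eqref{Weven}.

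In the Hölder part there is one concrete loose end: you assert that ``the dependence of $M^{-1}$ on $t$ is controlled through Lemma~\ref{Mlem}'', but Lemma~\ref{Mlem} only gives the pointwise expansion \eqref{Minv} and says nothing about the modulus of continuity of $M^{-1}$ in time. In your own decomposition the term $\big(M^{-1}(t_1)-M^{-1}(t_2)\big)d(t_1)$ must be bounded by $C\,t^{-4s/(1+2s)}|t_1-t_2|^\alpha$, and since $|d(t_1)|$ is only $O(t^{-2s/(1+2s)})$ this forces a \emph{weighted} Hölder estimate $|M^{-1}(t_1)-M^{-1}(t_2)|\le C_{\psi_0}\,t^{-2s/(1+2s)}|t_1-t_2|^\alpha$. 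This is not automatic: one needs the corresponding estimate for $M$ itself, i.e.\ for $A_{ij}(t)$ (which follows from $w''\in L^\infty(\R)$, $w'\in L^1(\R)$, and $|\dot\xi|\le C\,t^{-2s/(1+2s)}$) and for $\int_\R \psi(\cdot,t)\,w''(\cdot-\xi_i(t))\,dx$, which requires both the weighted Hölder bound for $\psi$ of Lemma~\ref{Holderpsilem} and control of translations of $w''$ in $L^1$, i.e.\ the third-derivative decay \eqref{w'''decay} of Proposition~\ref{w''decayprop} --- an ingredient you never invoke. This is precisely the content of the paper's claims \eqref{M-1holderclaim}--\eqref{Mholderclaim}, obtained through the resolvent identity $M^{-1}(t_1)-M^{-1}(t_2)=-M^{-1}(t_1)(M(t_1)-M(t_2))M^{-1}(t_2)$. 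Apart from this missing step, your term-by-term treatment of $d$ and of $\mathscr{R}(\xi(\cdot))$ is correct and matches the paper's.
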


\begin{proof}
We first establish~\eqref{F2decay}. Let
\begin{equation} \label{F2tildedef}
\widetilde{F}^{(2)}(t) := M^{-1}(t) \, d(t).
\end{equation}
In view of~\eqref{Minv}, we have that
\begin{equation} \label{F2tildedecay}
\widetilde{F}^{(2)}(t) = \left( \gamma + O \! \left( t^{- \frac{2 s}{1 + 2 s}} \right) \right) d(t) \quad \mbox{for all } t > T.
\end{equation}
We analyze one by one the terms composing~$d = (d_i)_{i = 1}^N$, as defined by~\eqref{didef}. Estimates~\eqref{E0elllePhi} and~\eqref{EklePhi} in Lemma~\ref{ElePhilem} respectively yield
$$
\int_{\R} \left| \E_{0, i}(x, t) \right| |\psi(x, t)| Z_i(x, t) \, dx \le t^{- \frac{2 s}{1 + 2 s}} \| \psi \|_{\A_T} \int_{\R} \left| \E_{0, i}(x, t) \right| Z_i(x, t) \, dx  \le C \| \psi \|_{\A_{T}} t^{- \frac{4 s}{1 + 2 s}}
$$
and, after a change of variables,
$$
\int_{\R} \left| \E_1(x, t) \right| Z_i(x, t) \, dx \le C \, t^{- \frac{2 s}{1 + 2 s}} \int_{\R} w'(y) \, dy = C \, t^{- \frac{2 s}{1 + 2 s}},
$$
for all~$t > T$ and for some generic constant~$C > 0$. On the other hand,~\eqref{Npsiest} of Lemma~\ref{Ndecaylem} gives
$$
\int_{\R} \left| \NN[\psi](x, t) \right| Z_i(x, t) \, dx \le t^{- \frac{2 s}{1 + 2 s}} \| \NN[\psi] \|_{\A_t} \int_{\R} w'(y) \, dy \le C \| \psi \|_{\A_{T}}^2 t^{- \frac{4 s}{1 + 2 s}}.
$$
Hence, recalling definition~\eqref{didef} and estimate~\eqref{psidecay} of Theorem~\ref{nonlinearthm} we infer that
\begin{equation} \label{dest}
|d(t)| \le C \, t^{- \frac{2 s}{1 + 2 s}} \quad \mbox{for all } t > T,
\end{equation}
and, more precisely, that~\eqref{F2tildedecay} yields, for~$i = 1, \ldots, N$,
\begin{equation} \label{ydot=0.5}
\widetilde{F}^{(2)}_i(t) = \gamma \int_{\R} \E_1(x, t) Z_i(x, t) \, dx + O \! \left( t^{- \frac{4 s}{1 + 2 s}} \right) \quad \mbox{for all } t > T.
\end{equation}

To obtain~\eqref{F2decay}, we need to further analyze the term involving~$\E_1$. First of all, changing variables appropriately, we see that
\begin{equation} \label{E1Zafterchange}
\int_{\R} \E_1(x, t) Z_i(x, t) \, dx = \int_{\R} \widehat{\E}_1(y, t) w'(y) \, dy,
\end{equation}
with
\begin{align*}
\widehat{\E}_1(y, t) := & \,\, \E_1(y + \xi_i(t), t) \\
= & \,\, W' \! \left( w(y) + \sum_{j \ne i} w(y + \xi_i(t) - \xi_j(t)) \right) - W'(w(y)) - \sum_{j \ne i} W'(w(y + \xi_i(t) - \xi_j(t))).
\end{align*}
Now, thanks to the decay estimate~\eqref{wasympt}, definition~\eqref{xi0def}, the fact that~$h \in \bar{B}_1(\HH_{T, \mu})$, and since, by~\eqref{EklePhi},~$|\widehat{\E}_1(\cdot, t)| \le C \, t^{- 2 s / (1 + 2 s)}$, we compute
$$
\int_{\frac{\xi_{i + 1}(t) - \xi_i(t)}{2}}^{+\infty} |\widehat{\E}_1(y, t)| w'(y) \, dy \le \frac{C}{t^{\frac{2 s}{1 + 2 s}}} \left\{ 1 - w \! \left( \frac{\xi_{i + 1}(t) - \xi_i(t)}{2} \right) \right\} \le \frac{C \, t^{- \frac{2 s}{1 + 2 s}}}{(\xi_{i + 1}(t) - \xi_i(t))^{2 s}} \le C \, t^{- \frac{4 s}{1 + 2 s}}.
$$
Analogously,
$$
\int_{-\infty}^{- \frac{\xi_i(t) - \xi_{i - 1}(t)}{2}} |\widehat{\E}_1(y, t)| w'(y) \, dy \le C \, t^{- \frac{4 s}{1 + 2 s}},
$$
and thus, recalling~\eqref{E1Zafterchange}, identity~\eqref{ydot=0.5} becomes
\begin{equation} \label{ydot=2}
\widetilde{F}^{(2)}_i(t) = \gamma \int_{I_i(t)} \widehat{\E}_1(y, t) w'(y) \, dy + O \! \left( t^{- \frac{ 4 s}{1 + 2 s}} \right) \quad \mbox{for all } t > T,
\end{equation}
where we set
$$
I_i(t) := \left( - \frac{\xi_i(t) - \xi_{i - 1}(t)}{2}, \frac{\xi_{i + 1}(t) - \xi_i(t)}{2} \right).
$$

For~$j \ne i$, write
$$
\ell_{i j}(y, t) := \begin{cases}
w(y + \xi_i(t) - \xi_j(t)) - 1 & \quad \mbox{if } j < i \\
w(y + \xi_i(t) - \xi_j(t)) & \quad \mbox{if } j > i
\end{cases}
\qquad \mbox{and} \qquad
\ell_i(y, t) := \sum_{j \ne i} \ell_{i j}(y, t).
$$
Taking advantage of the periodicity of~$W'$, we may express~$\widehat{\E}_1$ as
$$
\widehat{\E}_1(y, t) = W'(w(y) + \ell_i(y, t)) - W'(w(y)) - \sum_{j \ne i} W'(\ell_{i j}(y, t)).
$$
By Taylor expansion and the fact that~$W'(0) = 0$, we have that
$$
W'(w(y) + \ell_i(y, t)) - W'(w(y)) = W''(w(y)) \ell_i(y, t) + \frac{W'''(w(y) + \theta_i(y, t) \ell_i(y, t))}{2} \, \ell_i(y, t)^2
$$
and
$$
W'(\ell_{i j}(y, t)) = W''(0) \ell_{i j}(y, t) + \frac{W'''(\theta_{i j}(y, t) \ell_{i j}(y, t))}{2} \, \ell_{i j}(y, t)^2,
$$
for some functions~$\theta_i, \theta_{i j}: \R \times (T, +\infty) \to [0, 1]$. Note that, for every~$y \in I_i(t)$ and~$j < i$,
\begin{equation} \label{y+xi+xilarge}
y + \xi_i(t) - \xi_j(t) \ge \frac{\xi_i(t) - \xi_{i - 1}(t)}{2} \ge C^{-1} \, t^{\frac{1}{1 + 2 s}},
\end{equation}
and thus, using~\eqref{wasympt},
$$
|\ell_{i j}(y, t)| \le \frac{C}{\left( y + \xi_i(t) - \xi_j(t) \right)^{2 s}} \le C \, t^{- \frac{2s}{1 + 2 s}}.
$$
Since the same bound also holds for every~$j > i$, we conclude that
$$
\widehat{\E}_1(y, t) = - \left\{ W''(0) - W''(w(y)) \right\} \ell_i(y, t) + O \! \left( t^{- \frac{4 s}{1 + 2 s}} \right) \quad \mbox{for all } t > T, \, y \in I_i(t).
$$
By this and the fact that~$w' \in L^1(\R)$, we get that~\eqref{ydot=2} can be simplified to
\begin{equation} \label{ydot=3}
\widetilde{F}^{(2)}_i(t) = - \gamma \int_{I_i(t)} \left\{ W''(0) - W''(w(y)) \right\} \ell_i(y, t) w'(y) \, dy + O \! \left( t^{- \frac{ 4 s}{1 + 2 s}} \right) \quad \mbox{for all } t > T.
\end{equation}

Set now
\begin{equation} \label{Lijdef}
L_{i j}(y, t) := - \frac{1}{2 s W''(0)} \frac{y + \xi_i(t) - \xi_j(t)}{|y + \xi_i(t) - \xi_j(t)|^{1 + 2 s}} \quad \mbox{and} \quad L_i(y, t) := \sum_{j \ne i} L_{i j}(y, t).
\end{equation}
In view of the validity of~\eqref{y+xi+xilarge} for all~$y \in I_i(t)$ and~$j < i$---and of a similar inequality for~$j > i$---, by Proposition~\ref{uimprovasymptprop} we have that
$$
\ell_{i}(y, t) = L_{i}(y, t) + O \! \left( t^{-\frac{4 s}{1 + 2 s}} \right) \quad \mbox{for all } t > T \mbox{ and } y \in I_i(t).
$$
As~$\int_{\R} \left| W''(0) - W''(w(y)) \right| w'(y) \, dy \le C$, equation~\eqref{ydot=3} can be rewritten as
\begin{equation} \label{ydot=4}
\widetilde{F}^{(2)}_i(t) = - \gamma \int_{I_i(t)} \left\{ W''(0) - W''(w(y)) \right\} L_i(y, t) w'(y) \, dy + O \! \left( t^{- \frac{ 4s}{1 + 2 s}} \right) \quad \mbox{for all } t > T.
\end{equation}
A Taylor expansion now yields that
\begin{equation} \label{Lijtaylor}
L_{i j}(y, t) = L_{i j}(0, t) + \frac{1}{W''(0)} \left\{ \frac{y}{|\xi_i(t) - \xi_j(t)|^{1 + 2 s}} + \frac{1 + 2 s}{2} \frac{\vartheta_{i j}(y, t) y + \xi_i(t) - \xi_j(t)}{|\vartheta_{i j}(y, t) y + \xi_i(t) - \xi_j(t)|^{3 + 2 s}} \, y^2 \right\},
\end{equation}
for some~$\vartheta_{i j}: \R \times (T, +\infty) \to [0, 1]$. Since~$w'$ and~$W''(w)$ are even, setting
$$
a_i(t) := \min \left\{ \frac{\xi_i(t) - \xi_{i - 1}(t)}{2}, \frac{\xi_{i + 1}(t) - \xi_i(t)}{2} \right\} \quad \mbox{and} \quad \widehat{I}_i(t) := \left( - a_i(t), a_i(t) \right),
$$
and using that~$|W''(0) - W''(w(y))| \le C (1 + |y|)^{- 2 s}$ and~$0 < w'(y) \le C (1 + |y|)^{- 1 - 2 s}$, we have
\begin{align*}
\left| \int_{I_i(t)} \frac{\left\{ W''(0) - W''(w(y)) \right\} y \, w'(y)}{|\xi_i(t) - \xi_j(t)|^{1 + 2 s}} \, dy \right| & = \left| \int_{I_i(t) \setminus \widehat{I}_i(t)} \frac{\left\{ W''(0) - W''(w(y)) \right\} y \, w'(y)}{|\xi_i(t) - \xi_j(t)|^{1 + 2 s}} \, dy \right| \\
& \le \frac{C}{t} \int_{I_i(t) \setminus \widehat{I}_i(t)} \frac{dy}{|y|^{4 s}} \le C \, t^{- \frac{6 s}{1 + 2 s}}.
\end{align*}
Similarly,
\begin{align*}
\left| \int_{I_i(t)} \left\{ W''(0) - W''(w(y)) \right\} \frac{\vartheta_{i j}(y, t) y + \xi_i(t) - \xi_j(t)}{|\vartheta_{i j}(y, t) y + \xi_i(t) - \xi_j(t)|^{3 + 2 s}} \, y^2 \, w'(y) \, dy \right| & \le \frac{C}{t^{\frac{2 + 2 s}{1 + 2 s}}} \int_{I_i(t)} \frac{dy}{(1 + |y|)^{4 s - 1}} \\
& \le C \, t^{- \frac{4 s}{1 + 2 s}}.
\end{align*}
By using the last two estimates in conjunction with~\eqref{Lijtaylor}, identity~\eqref{ydot=4} becomes
$$
\widetilde{F}^{(2)}_i(t) = - \gamma L_i(0, t) \int_{I_i(t)} \left\{ W''(0) - W''(w(y)) \right\} w'(y) \, dy + O \! \left( t^{- \frac{ 4 s}{1 + 2 s}} \right) \quad \mbox{for all } t > T.
$$
Note that~$|L_i(0, t)| \le C \, t^{- 2 s / (1 + 2 s)}$ and
\begin{align*}
\int_{I_i(t)} \left\{ W''(0) - W''(w(y)) \right\} w'(y) \, dy & = W''(0) \left\{ w \! \left( \frac{\xi_{i + 1}(t) - \xi_i(t)}{2} \right) - w \! \left( - \frac{\xi_i(t) - \xi_{i - 1}(t)}{2} \right) \right\} \\
& \quad - W' \! \left( w \! \left( \frac{\xi_{i + 1}(t) - \xi_i(t)}{2} \right) \right) + W' \! \left( w \! \left( - \frac{\xi_i(t) - \xi_{i - 1}(t)}{2} \right) \right) \\
& = W''(0) + O \! \left( t^{- \frac{2 s}{1 + 2 s}} \right).
\end{align*}
Hence, taking into account definition~\eqref{Lijdef} we conclude that
$$
\widetilde{F}^{(2)}_i(t) = \frac{\gamma}{2 s} \sum_{j \ne i} \frac{\xi_i(t) - \xi_j(t)}{|\xi_i(t) - \xi_j(t)|^{1 + 2 s}} + O \! \left( t^{- \frac{ 4 s}{1 + 2 s}} \right) \quad \mbox{for all } t > T, \, i = 1, \ldots, N,
$$
which, recalling definitions~\eqref{Rmapdef},~\eqref{F2def}, and~\eqref{F2tildedef}, yields estimate~\eqref{F2decay}.

We now deal with~\eqref{F2derdecay}. Let~$t > T$,~$t_1, t_2 \in [t, t + 1]$, and~$i \in \{ 1, \ldots, N\}$ be fixed. Recalling definition~\eqref{F2tildedef} of~$\widetilde{F}^{(2)}$, we write
\begin{equation} \label{Ft1-Ft2}
\widetilde{F}_i^{(2)}(t_1) - \widetilde{F}_i^{(2)}(t_2) = \sum_{j = 1}^N \left\{ \left( M_{i j}^{-1}(t_1) - M_{i j}^{- 1}(t_2) \right) d_j(t_1) + M_{i j}^{-1}(t_2) \Big( d_j(t_1) - d_j(t_2) \Big) \right\}.
\end{equation}

We first claim that
\begin{equation} \label{M-1holderclaim}
|M^{-1}(t_1) - M^{- 1}(t_2)| \le C_{\psi_0} \, t^{- \frac{2 s}{1 + 2 s}} |t_1 - t_2|^\alpha,
\end{equation}
for some generic exponent~$\alpha \in (0, 1)$ and some constant~$C_{\psi_0} \ge 1$ depending only on structural quantities and on~$\| \psi_0 \|_{\A^1_{\{ T \}}}$. To verify~\eqref{M-1holderclaim}, we first observe that, for every~$i, j \in \{ 1, \ldots, N\}$,
$$
M^{-1}_{i j}(t_1) - M^{- 1}_{i j}(t_2) = - \sum_{k, \ell = 1}^N M_{i k}^{-1}(t_1) \left( M_{k \ell}(t_1) - M_{k \ell}(t_2) \right) M_{\ell j}^{-1}(t_2).
$$
By this and~\eqref{Minv}, it is clear that~\eqref{M-1holderclaim} would follow if we establish that
\begin{equation} \label{Mholderclaim}
|M_{i j}(t_1) - M_{i j}(t_2)| \le C_{\psi_0} \, t^{- \frac{2 s}{1 + 2 s}} |t_1 - t_2|^\alpha.
\end{equation}
On the one hand, recalling~\eqref{Aijdef}, we simply compute
\begin{equation} \label{Aholder}
\begin{aligned}
|A_{i j}(t_1) - A_{i j}(t_2)| & \le \int_{\R} w'(x - \xi_i(t_1)) \left| w'(x - \xi_j(t_1)) - w'(x - \xi_j(t_2)) \right| \, dx \\
& \quad + \int_{\R} \left| w'(x - \xi_i(t_1)) - w'(x - \xi_i(t_2)) \right| w'(x - \xi_j(t_2)) \, dx \\
& \le C \Big\{ |\xi_j(t_1) - \xi_j(t_2)| + |\xi_i(t_1) - \xi_i(t_2)| \Big\} \int_\R w'(y) \, dy \\
& \le C \left| \int_{t_1}^{t_2} \left| \dot{\xi}^0(\tau) + \dot{h}(\tau) \right| d\tau \right| \le C \, t^{- \frac{2 s}{1 + 2 s}} |t_1 - t_2|^\alpha,
\end{aligned}
\end{equation}
where in the last inequality we used that~$|t_1 - t_2| \le 1$. On the other hand, we have
\begin{align*}
& \left| \int_{\R} \psi(x, t_1) w''(x - \xi_i(t_1)) \, dx - \int_{\R} \psi(x, t_2) w''(x - \xi_i(t_2)) \, dx \right| \\
& \hspace{20pt} \le \int_{\R} |\psi(x, t_1) - \psi(x, t_2)| |w''(x - \xi_i(t_1))| \, dx + \int_{\R} |\psi(x, t_2)| |w''(x - \xi_i(t_1)) - w''(x - \xi_i(t_2))| \, dx.
\end{align*}
Taking advantage of Lemma~\ref{Holderpsilem} and recalling definition~\eqref{CTnualphadef}, we have
\begin{align*}
\int_{\R} |\psi(x, t_1) - \psi(x, t_2)| |w''(x - \xi_i(t_1))| \, dx & \le C \| \psi \|_{\A_T^\alpha} \, t^{- \frac{2 s}{1+ 2 s}} |t_1 - t_2|^\alpha \int_\R |w''(y)| \, dy \\
& \le C_{\psi_0} \, t^{- \frac{2 s}{1+ 2 s}} |t_1 - t_2|^\alpha,
\end{align*}
while~\eqref{psidecay} and estimate~\eqref{w'''decay} in Proposition~\ref{w''decayprop} give that
\begin{align*}
& \int_{\R} |\psi(x, t_2)| |w''(x - \xi_i(t_1)) - w''(x - \xi_i(t_2))| \, dx \\
& \hspace{80pt} \le C \| \psi \|_{\A_T} t^{- \frac{2 s}{1+ 2 s}} \int_\R \left| \int_{t_1}^{t_2} |w'''(x - \xi_i(\tau))| \left| \dot{\xi}^0_i(\tau) + \dot{h}_i(\tau) \right| d\tau \right| dx \\
& \hspace{80pt} \le C \, t^{- \frac{4 s}{1+ 2 s}} |t_1 - t_2| \int_\R |w'''(y)| \, dy \le C \, t^{- \frac{4 s}{1 + 2 s}} |t_1 - t_2|^\alpha.
\end{align*}
Accordingly,
$$
\left| \int_{\R} \psi(x, t_1) w''(x - \xi_i(t_1)) \, dx - \int_{\R} \psi(x, t_2) w''(x - \xi_i(t_2)) \, dx \right| \le C_{\psi_0} \, t^{- \frac{2 s}{1 + 2 s}} |t_1 - t_2|^\alpha
$$
Recalling definition~\eqref{Mdef}, the combination of this and~\eqref{Aholder} leads us to estimate~\eqref{Mholderclaim} and thus to claim~\eqref{M-1holderclaim}.

In light of~\eqref{M-1holderclaim},~\eqref{dest}, and~\eqref{Minv}, we can rewrite~\eqref{Ft1-Ft2} as
\begin{equation} \label{Ft1-Ft2bis}
\widetilde{F}^{(2)}(t_1) - \widetilde{F}^{(2)}(t_2) = \left\{ \gamma I + O \! \left( t^{- \frac{2 s}{1 + 2 s}} \right) \right\} \Big( d(t_1) - d(t_2) \Big) + |t_1 - t_2|^\alpha \, O \! \left( t^{- \frac{4 s}{1 + 2 s}} \right).
\end{equation}
Thanks to Lemmas~\ref{ElePhilem},~\ref{Ndecaylem}, and~\ref{Holderpsilem}, we have
\begin{align*}
|\E_1(x, t_1) - \E_1(x, t_2)| & \le C \, t^{- \frac{4 s}{1 + 2 s}} |t_1 - t_2|^\alpha, \\
|\E_{0, i}(x, t_1) - \E_{0, i}(x, t_2)| & \le C \, t^{- \frac{2 s}{1 + 2 s}} |t_1 - t_2|^\alpha. \\
|\NN[\psi](x, t_1) - \NN[\psi](x, t_2)| & \le C_{\psi_0} \, t^{- \frac{4 s}{1 + 2 s}} |t_1 - t_2|^\alpha, \\
|\E_1(x, t_1)| \le C \, t^{- \frac{2 s}{1 + 2 s}}, \quad & \mbox{and} \quad |\NN[\psi](x, t_1)| \le C \, t^{- \frac{4 s}{1 + 2 s}}.
\end{align*}
Moreover, by~\eqref{w''decay},
$$
\int_{\R} |Z_i(x, t_1) - Z_i(x, t_2)| \, dx \le \int_\R \left| \int_{\xi_i(t_2)}^{\xi_i(t_1)} w''(x - y) \, dy \right| dx \le C |\xi_i(t_1) - \xi_i(t_2)| \le C \, t^{- \frac{2 s}{1 + 2 s}} |t_1 - t_2|^\alpha.
$$
Recalling definition~\eqref{didef} and exploiting the above bounds in combination with~\eqref{E0elllePhi}, the boundedness of~$\E_{0, i}$, and, once again, Lemma~\ref{Holderpsilem}, we obtain
$$
|d(t_1)- d(t_2)| \le C_{\psi_0} \, t^{- \frac{ 4 s}{1 + 2 s}} |t_1 - t_2|^\alpha.
$$
Hence, from~\eqref{Ft1-Ft2bis} we get that
$$
|\widetilde{F}^{(2)}(t_1) - \widetilde{F}^{(2)}(t_2)| \le C_{\psi_0} \, t^{- \frac{4 s}{1 + 2 s}} |t_1 - t_2|^\alpha.
$$
Since,~\eqref{DR=} yields
$$
|\partial_t \mathscr{R}_i(\xi(t))| \le C \sum_{j = 1}^N \left| D_{\xi_j} \mathscr{R}_i(\xi(t)) \right| |\dot{\xi}_j(t)| \le C \, t^{- \frac{1 + 4 s}{1 + 2 s}},
$$
in view of~\eqref{F2def}, we conclude that estimate~\eqref{F2decay} holds true.
\end{proof}

The following lemma addresses an important symmetry property of~$F$, which holds true under the assumption that the initial datum~$\psi_0$ for~\eqref{psiDirprob} is odd.

\begin{lemma} \label{Foddlem}
Let~$h \in \bar{B}_1(\HH_{T, \mu})$ and suppose that~$\psi_0$ is odd. Then,
$$
F_{N - i + 1}(t) = - F_i(t) \quad \mbox{for all } t > T \mbox{ and } i = 1, \ldots, N.
$$
\end{lemma}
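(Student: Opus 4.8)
The plan is to exploit the reflection symmetry $x\mapsto-x$ together with the evenness of $W$. I would introduce the linear involution $\sigma\colon\R^N\to\R^N$, $(\sigma v)_i:=-v_{N-i+1}$, so that $\sigma^2=\mathrm{Id}$ and the assertion to be proved reads simply $\sigma F(t)=F(t)$ for every $t>T$. By~\eqref{betaodd} we have $\sigma\xi^0(t)=\xi^0(t)$, and since $h\in\bar{B}_1(\HH_{T,\mu})$ satisfies~\eqref{hodd}, also $\sigma h(t)=h(t)$; hence $\sigma\xi(t)=\xi(t)$, as recorded in~\eqref{xiodd}. Moreover, by Theorem~\ref{nonlinearthm} the oddness of $\psi_0$ forces $\psi$ to be odd in $x$, i.e.~\eqref{psiodd} holds. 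I will treat $F^{(1)}$ and $F^{(2)}$ separately, recalling $F=F^{(1)}+F^{(2)}$.

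First I would record the equivariance of the interaction field: a direct computation from~\eqref{Rmapdef}, renaming the summation index $j\mapsto N-j+1$, gives $\mathscr{R}_{N-i+1}(\sigma v)=-\mathscr{R}_i(v)$, that is $\mathscr{R}\circ\sigma=\sigma\circ\mathscr{R}$, and therefore, differentiating, $D_\xi\mathscr{R}(\sigma v)\,\sigma=\sigma\,D_\xi\mathscr{R}(v)$. Applying these identities to~\eqref{F1def} and using $\sigma\xi^0=\xi^0$ and $\sigma h=h$ immediately yields $\sigma F^{(1)}(t)=F^{(1)}(t)$, i.e.~$F^{(1)}_{N-i+1}(t)=-F^{(1)}_i(t)$. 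Likewise $\sigma\mathscr{R}(\xi(t))=\mathscr{R}(\sigma\xi(t))=\mathscr{R}(\xi(t))$, so the second summand of $F^{(2)}$ in~\eqref{F2def} is already $\sigma$-invariant.

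It then remains to show that $M^{-1}(t)d(t)$ is $\sigma$-invariant, and for this I would prove that $M$ commutes with $\sigma$ and that $\sigma d=d$. For the matrix, starting from~\eqref{Aijdef} and~\eqref{Mdef} one changes variables $x\mapsto-x$ in the integrals defining $A_{N-i+1,N-j+1}$ and the diagonal correction; using $\xi_{N-i+1}=-\xi_i$, the evenness of $w'$ and the oddness of $w''$ (both consequences of~\eqref{wodd}), and the oddness~\eqref{psiodd} of $\psi$, one finds $M_{N-i+1,N-j+1}(t)=M_{ij}(t)$, i.e.~$\sigma M(t)\sigma=M(t)$; since $M(t)$ is invertible for $t$ large by Lemma~\ref{Mlem}, this gives $\sigma M^{-1}(t)\sigma=M^{-1}(t)$ as well. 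For the vector $d$ in~\eqref{didef}, I would change variables $x\mapsto-x$ in each of its integral terms and invoke the parity relations in~\eqref{oddeveniden} — available here precisely because $\psi$ is odd — namely $Z_{N-i+1}(-x,t)=Z_i(x,t)$, $\E_{0,N-i+1}(-x,t)=\E_{0,i}(x,t)$, $\E_1(-x,t)=-\E_1(x,t)$, $\NN[\psi](-x,t)=-\NN[\psi](x,t)$, together with $\psi(-x,t)=-\psi(x,t)$; this produces $d_{N-i+1}(t)=-d_i(t)$, i.e.~$\sigma d(t)=d(t)$. Combining, $\sigma\big(M^{-1}(t)d(t)\big)=M^{-1}(t)\sigma d(t)=M^{-1}(t)d(t)$, hence $\sigma F^{(2)}(t)=F^{(2)}(t)$, and adding the contribution of $F^{(1)}$ finishes the proof. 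The only step requiring genuine care is the commutation $\sigma M=M\sigma$: it hinges on the oddness of $\psi$ (to handle the $\int_\R\psi\,w''(\cdot-\xi_i)\,dx$ term in~\eqref{Mdef}) combined with the oddness of $w''$, after which passing to $M^{-1}$ is automatic; everything else is bookkeeping with the index reversal $i\mapsto N-i+1$ and the substitution $x\mapsto-x$.
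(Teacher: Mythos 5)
Your proof is correct and follows essentially the same route as the paper: it uses \eqref{xiodd} (from \eqref{betaodd} and \eqref{hodd}), the oddness of $\psi$ supplied by Theorem~\ref{nonlinearthm}, the parity identities \eqref{oddeveniden} together with the evenness of $w'$ and oddness of $w''$, to obtain $F^{(1)}_{N-i+1}=-F^{(1)}_i$, $\mathscr{R}_{N-i+1}(\xi)=-\mathscr{R}_i(\xi)$, $M_{N-i+1\,N-j+1}=M_{ij}$, and $d_{N-i+1}=-d_i$, exactly as in the paper's argument. The involution $\sigma$ is just a convenient repackaging of the same bookkeeping, and all the individual verifications (in particular the diagonal correction term in $M$) check out.
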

\begin{proof}
First, notice that from the symmetry relation~\eqref{xiodd}, formula~\eqref{DR=}, and definitions~\eqref{Rmapdef} and~\eqref{F1def}, we obtain that~$F^{(1)}_{N - i + 1}(t) = - F^{(1)}_i(t)$ and~$\mathscr{R}_i(\xi(t)) = - \mathscr{R}_{N - i + 1}(\xi(t))$ for all~$t > T$ and~$i = 1, \ldots, N$. To conclude the proof, we thus need to show that the same relation holds for~$\widetilde{F}^{(2)}$ as well, namely that
\begin{equation} \label{F2tildeodd}
\widetilde{F}^{(2)}_i(t) = - \widetilde{F}^{(2)}_{N - i + 1}(t) \quad \mbox{for all } t > T \mbox{ and } i = 1, \ldots, N.
\end{equation}
Recall definitions~\eqref{Fdef},~\eqref{F2def}, and~\eqref{F2tildedef}.

In order to establish~\eqref{F2tildeodd}, we observe that, by Theorem~\ref{nonlinearthm} and the fact that~$\psi_0$ is odd, the solution~$\psi$ satisfies~\eqref{psiodd}. Using this, identities~\eqref{oddeveniden}, and definitions~\eqref{Mdef}-\eqref{didef}, a simple computation reveals that~$M_{N - i + 1 \, N - j + 1}(t) = M_{i j}(t)$ and~$d_{N - i + 1}(t) = - d_i(t)$ for all~$t > T$ and~$i, j = 1, \ldots, N$. From this, claim~\eqref{F2tildeodd} immediately follows. The lemma is thus proved.
\end{proof}

\subsection{Linear theory for~$h$}

In this subsection, we develop a solvability theory for the linear counterpart of system~\eqref{hdot=f}, for a right-hand side belonging to~$\FF_{T, \mu_0}^{\alpha_0}$. Recalling definitions~\eqref{HTmudef} and~\eqref{FTmudef}, we have the following statement.

\begin{proposition} \label{linearhprop}
Let~$T \ge 1$,~$\alpha_0 \in (0, 1)$, and~$\mu_0 \in (0, 1 + \delta)$, with~$\delta > 0$ as in~\eqref{deltadef}. Let~$h^{(0)} \in \R^N$ be satisfying~\eqref{h0odd}. Then, for every~$f \in \FF^{\alpha_0}_{T, \mu_0}$, there exists a unique solution~$h = \mathbf{H}_{h^{(0)}}[f] \in \HH_{T, \mu_0}$ of
$$
\begin{cases}
\dot{h} + D_\xi \mathscr{R}(\xi^0) h = f & \quad \mbox{in } (T, +\infty)\\
h(T) = h^{(0)}. &
\end{cases}
$$
In addition, the map~$\mathbf{H}_{h^{(0)}}: \FF^{\alpha_0}_{T, \mu_0} \to \HH_{T, \mu_0}$ is continuous and it holds
\begin{equation} \label{Hmapbounded}
\| \mathbf{H}_{h^{(0)}}[f] \|_{\HH_{T, \mu_0}} \le C \left( \| f \|_{\FF_{T, \mu_0}} + T^{\mu_0 - 1} |h^{(0)}| \right) \quad \mbox{for all } f \in \FF^{\alpha_0}_{T, \mu_0},
\end{equation}
for some generic constant~$C > 0$.
\end{proposition}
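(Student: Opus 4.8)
The plan is to reduce the system to a constant-coefficient one after recording the algebraic structure of $D_\xi\mathscr{R}(\xi^0)$, and then solve it explicitly by an integrating factor. First I would note that, since $\xi^0_i(t)-\xi^0_j(t)=(\beta_i-\beta_j)\,t^{1/(1+2s)}$, formula~\eqref{DR=} yields $D_\xi\mathscr{R}(\xi^0(t))=t^{-1}\mathscr{M}$ for the \emph{constant} matrix $\mathscr{M}$ with entries
\[
\mathscr{M}_{ik}=\gamma\Big\{\delta_{ik}\sum_{j\ne i}|\beta_i-\beta_j|^{-(1+2s)}-(1-\delta_{ik})\,|\beta_i-\beta_k|^{-(1+2s)}\Big\}.
\]
A short computation shows that $\mathscr{M}$ is symmetric and positive semidefinite, with $\langle\mathscr{M}\eta,\eta\rangle=\gamma\sum_{i<j}|\beta_i-\beta_j|^{-(1+2s)}(\eta_i-\eta_j)^2$ and $\ker\mathscr{M}=\R\mathbf{1}$, where $\mathbf{1}:=(1,\dots,1)$ (as it must be, by the invariance $\mathscr{R}(\xi+c\mathbf{1})=\mathscr{R}(\xi)$). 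Using $|\beta_i-\beta_j|\le2\beta_N$ (which follows from~\eqref{betaodd}) together with the identity $\sum_{i<j}(\eta_i-\eta_j)^2=N|\eta|^2$, valid on $\mathbf{1}^\perp$, one obtains the quantitative coercivity
\[
\langle\mathscr{M}\eta,\eta\rangle\ge\frac{N\gamma}{(2\beta_N)^{1+2s}}\,|\eta|^2\ge\delta\,|\eta|^2\qquad\text{for all }\eta\perp\mathbf{1},
\]
with $\delta$ as in~\eqref{deltadef} (here one uses $2^{1+2s}<16$ for $s\in(0,1)$); this is precisely the step that explains the form of $\delta$. Finally, $\mathscr{M}$ commutes with the reflection $\eta\mapsto(-\eta_{N-i+1})_i$ (again by~\eqref{betaodd}), hence it preserves the subspace $V$ of vectors satisfying~\eqref{fodd}; note that $V\subset\mathbf{1}^\perp$, since~\eqref{fodd} forces $\sum_i\eta_i=0$.

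Next I would write down the solution via the integrating factor $t\mapsto t^{\mathscr{M}}:=\exp(\log t\,\mathscr{M})$: from $\frac{d}{dt}\big(t^{\mathscr{M}}h\big)=t^{\mathscr{M}}\big(\dot h+t^{-1}\mathscr{M}h\big)$ one gets
\[
h(t)=\Big(\tfrac{t}{T}\Big)^{-\mathscr{M}}h^{(0)}+\int_T^t\Big(\tfrac{t}{\sigma}\Big)^{-\mathscr{M}}f(\sigma)\,d\sigma ,
\]
which is well defined for $f\in L^\infty((T,+\infty);\R^N)$, solves the equation a.e.\ and attains $h(T)=h^{(0)}$, and is Lipschitz in $t$. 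Since $h^{(0)}$ and $f(\sigma)$ lie in $V$ and $(t/\sigma)^{-\mathscr{M}}=\exp(-\log(t/\sigma)\mathscr{M})$ commutes with the reflection, it follows that $h(t)\in V$, i.e.\ $h$ obeys~\eqref{hodd}. For the $C^1$ regularity demanded by $\HH_{T,\mu_0}$ I would invoke the Hölder hypothesis $f\in\FF^{\alpha_0}_{T,\mu_0}$: $h$ is already continuous, hence so is $t^{-1}\mathscr{M}h(t)$, and $f$ is continuous (even locally $\alpha_0$-Hölder); therefore $\dot h(t)=-t^{-1}\mathscr{M}h(t)+f(t)$ is continuous on $[T,+\infty)$, so $h\in C^1$. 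This is the only place where the Hölder norm, rather than merely the $\FF_{T,\mu_0}$ norm, is needed.

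The quantitative estimate~\eqref{Hmapbounded} then follows from the operator bound: since $\mathscr{M}$ preserves $V$ and has eigenvalues $\ge\delta$ there, $(t/\sigma)^{-\mathscr{M}}$ acts on $V$ with norm $\le(t/\sigma)^{-\delta}$ for $t\ge\sigma$. Thus $t^{\mu_0-1}\big|(t/T)^{-\mathscr{M}}h^{(0)}\big|\le T^{\delta}t^{\mu_0-1-\delta}|h^{(0)}|\le T^{\mu_0-1}|h^{(0)}|$, because $\mu_0-1-\delta<0$; and, using $|f(\sigma)|\le\|f\|_{\FF_{T,\mu_0}}\sigma^{-\mu_0}$ and $1+\delta-\mu_0>0$,
\[
\Big|\int_T^t\Big(\tfrac{t}{\sigma}\Big)^{-\mathscr{M}}f(\sigma)\,d\sigma\Big|\le\|f\|_{\FF_{T,\mu_0}}\,t^{-\delta}\int_T^t\sigma^{\delta-\mu_0}\,d\sigma\le\frac{\|f\|_{\FF_{T,\mu_0}}}{1+\delta-\mu_0}\,t^{1-\mu_0}.
\]
Hence $\sup_{t>T}t^{\mu_0-1}|h(t)|\le C\big(\|f\|_{\FF_{T,\mu_0}}+T^{\mu_0-1}|h^{(0)}|\big)$, and inserting this into $|\dot h(t)|\le t^{-1}\|\mathscr{M}\|\,|h(t)|+|f(t)|$ controls $\sup_{t>T}t^{\mu_0}|\dot h(t)|$ by the same quantity, yielding~\eqref{Hmapbounded}. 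Uniqueness in $\HH_{T,\mu_0}$ (indeed among all $C^1$ solutions) is immediate, e.g.\ from $\frac{d}{dt}|h_1-h_2|^2=-2t^{-1}\langle\mathscr{M}(h_1-h_2),h_1-h_2\rangle\le0$ together with $(h_1-h_2)(T)=0$. Continuity of $\mathbf{H}_{h^{(0)}}$ follows from linearity: $\mathbf{H}_{h^{(0)}}[f_1]-\mathbf{H}_{h^{(0)}}[f_2]=\mathbf{H}_0[f_1-f_2]$, so $\|\mathbf{H}_{h^{(0)}}[f_1]-\mathbf{H}_{h^{(0)}}[f_2]\|_{\HH_{T,\mu_0}}\le C\|f_1-f_2\|_{\FF_{T,\mu_0}}\le C\|f_1-f_2\|_{\FF^{\alpha_0}_{T,\mu_0}}$ by~\eqref{Hmapbounded}; in particular $\mathbf{H}_{h^{(0)}}$ is Lipschitz.

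I expect the only genuinely delicate point to be the algebraic preliminary step — verifying that $D_\xi\mathscr{R}(\xi^0)$ is exactly $t^{-1}\mathscr{M}$ with $\mathscr{M}$ constant, symmetric and positive semidefinite, and extracting the coercivity constant so that it matches the definition~\eqref{deltadef} of $\delta$; everything afterwards is the standard variation-of-constants analysis of a rescaled linear system. A small but essential observation is that the oddness conditions~\eqref{h0odd},~\eqref{fodd},~\eqref{hodd} confine the entire dynamics to $\mathbf{1}^\perp$, where $\mathscr{M}$ is positive definite: along the kernel direction $\mathbf{1}$ the evolution would be undamped and the growth threshold $\mu_0<1+\delta$ would give no control there.
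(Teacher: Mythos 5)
Your proposal is correct, and it reaches the estimate by a somewhat different route than the paper. The paper never writes down the solution explicitly: it takes the maximal $C^1$ solution from standard ODE theory, gets the symmetry \eqref{hodd} from uniqueness, and then runs a weighted energy argument — the coercivity $\langle D_\xi \mathscr{R}(\xi^0(t))\eta,\eta\rangle \ge \tfrac{2\delta}{t}|\eta|^2$ on zero-sum vectors, Cauchy--Schwarz and Young to get $\tfrac{d}{dt}|h|^2 \le -\tfrac{2\delta}{t}|h|^2 + \tfrac{t}{2\delta}|f|^2$, multiplication by $t^{2\delta}$ and integration (this is where $\mu_0 < 1+\delta$ enters), and finally the equation itself to bound $\dot h$; global existence then follows from the a priori bound. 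You instead exploit the exact self-similarity of $\xi^0$ to write $D_\xi\mathscr{R}(\xi^0(t)) = t^{-1}\mathscr{M}$ with $\mathscr{M}$ constant, symmetric, positive semidefinite with kernel $\R\mathbf{1}$, solve by the integrating factor $t^{\mathscr{M}}$, and estimate the two Duhamel terms via the semigroup bound $\le (t/\sigma)^{-\delta}$ on the reflection-odd subspace $V\subset\mathbf{1}^{\perp}$. The structural core is identical — the quadratic-form identity from \eqref{DR=}, the reduction to zero-sum vectors coming from \eqref{h0odd}/\eqref{fodd}, and the constant $\delta$ of \eqref{deltadef} — and both arguments use the H\"older hypothesis on $f$ only to secure continuity of $\dot h$, with continuity of $\mathbf{H}_{h^{(0)}}$ following from linearity in either case. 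What your route buys is an explicit representation formula, a transparent account of why the undamped direction $\mathbf{1}$ must be excluded, and no need for a maximal-interval continuation argument; what the paper's route buys is robustness, since the energy estimate only needs coercivity of the (possibly time-dependent, non-symmetric) matrix $D_\xi\mathscr{R}(\xi^0(t))$ and would survive perturbations for which no constant $\mathscr{M}$ or explicit propagator is available.
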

\begin{proof}
By standard ODE theory, there exists a unique solution~$h \in C^1([T, T_1); \R^N)$ of
\begin{equation} \label{hdot+Rh=fuptoT1}
\begin{cases}
\dot{h} + D_\xi \mathscr{R}(\xi^0(t)) h = f \quad \mbox{in } (T, T_1)\\
h(T) = h^{(0)},
\end{cases}
\end{equation}
up to a certain maximal time~$T_1 \in (T, +\infty]$. In consequence of the uniqueness of~$h$ and of the facts that~$f$ and~$h^{(0)}$ respectively fulfill the symmetry assumptions~\eqref{fodd} and~\eqref{h0odd}, it is easy to see that~$h$ satisfies
\begin{equation} \label{hodduptoT1}
h_i(t) = - h_{N - i + 1}(t) \quad \mbox{for all } t \in (T, T_1) \mbox{ and } i = 1, \ldots, N.
\end{equation}

We now claim that there exists a generic constant~$C > 0$ such that
\begin{equation} \label{hest}
t^{\mu_0 - 1} |h(t)| + t^{\mu_0} |\dot{h}(t)| \le C \left( \| f \|_{\FF_{T, \mu_0}} + T^{\mu_0 - 1} |h^{(0)}| \right) \quad \mbox{for all } t \in (T, T_1).
\end{equation}
Note that this would imply in particular that~$T_1 = +\infty$. As a result,~$h$ would belong to~$\HH_{T, \mu_0}$ and the map~$\mathbf{H}_{h^{(0)}}: \FF_{T, \mu_0}^{\alpha_0} \to \HH_{T, \mu_0}$ would satisfy~\eqref{Hmapbounded}.

To prove~\eqref{hest}, we pick a vector~$\eta \in \R^N$ and observe that, taking advantage of~\eqref{DR=},
\begin{align*}
\langle D_\xi \mathscr{R}(\xi^0) \eta, \eta \rangle & = \gamma \sum_{i, k = 1}^N \left\{ \delta_{i k} \sum_{j \ne i} \frac{1}{|\xi_i^0 - \xi_j^0|^{1 + 2 s}} - \frac{1 - \delta_{i k}}{|\xi_i^0 - \xi_k^0|^{1 + 2 s}} \right\} \! \eta_i \eta_k = \gamma \sum_{\substack{1 \le i, j \le N \\ i \ne j}} \frac{\eta_i^2 - \eta_i \eta_j}{|\xi^0_i - \xi^0_j|^{1 + 2 s}} \\
& = \frac{\gamma}{2} \sum_{\substack{1 \le i, j \le N \\ i \ne j}} \frac{\eta_i^2 + \eta_j^2 - 2 \eta_i \eta_j}{|\xi^0_i - \xi^0_j|^{1 + 2 s}} = \gamma \sum_{1 \le i < j \le N} \frac{|\eta_i - \eta_j|^2}{|\xi^0_i - \xi^0_j|^{1 + 2 s}}.
\end{align*}
Recalling Proposition~\ref{explicitprop}, we get
$$
\langle D_\xi \mathscr{R}(\xi^0(t)) \eta, \eta \rangle = \frac{\gamma}{t} \sum_{1 \le i < j \le N} \frac{|\eta_i - \eta_j|^2}{(\beta_j - \beta_i)^{1 + 2 s}} \ge \frac{\gamma}{(2 \beta_N)^{1 + 2 s} \, t} \sum_{i, j = 1}^N |\eta_i - \eta_j|^2.
$$
Assuming now~$\eta$ to satisfy~$\sum_{i = 1}^N \eta_i = 0$, we see that
$$
\sum_{i, j = 1}^N |\eta_i - \eta_j|^2 = \sum_{i, j = 1}^N \left( \eta_i^2 + \eta_j^2  - 2 \eta_i \eta_j \right) = 2 N \sum_{i = 1}^N \eta_i^2 - 2 \left( \sum_{i = 1}^N \eta_i \right)^{\! 2} = 2 N |\eta|^2,
$$
which leads us to conclude that
\begin{equation} \label{systemiselliptic}
\langle D_\xi \mathscr{R}(\xi^0(t)) \eta, \eta \rangle \ge \frac{2 \delta}{t} \, |\eta|^2 \quad \mbox{for all } t > T \mbox{ and } \eta \in \R^N \mbox{ satisfying } \sum_{i = 1}^N \eta_i = 0,
\end{equation}
with~$\delta > 0$ given by~\eqref{deltadef}.

We will now apply~\eqref{systemiselliptic} with~$\eta = h$. First, observe that condition~\eqref{hodduptoT1} implies in particular that~$\sum_{i = 1}^N h_i(t) = 0$ for all~$t \in (T, T_1)$. Hence, by multiplying the equation in~\eqref{hdot+Rh=fuptoT1} against~$h$ and making use of~\eqref{systemiselliptic}, we obtain
$$
\frac{d}{dt} |h|^2 = 2 \langle \dot{h}, h \rangle = - 2 \langle D_\xi \mathscr{R}(\xi^0) h, h \rangle + 2 \langle f, h \rangle \le -\frac{2 \delta}{t} |h|^2 + \frac{t}{2 \delta} |f|^2 \quad \mbox{for all } t \in (T, T_1),
$$
where the last estimate follows from Cauchy-Schwarz and the weighted Young's inequalities. Thus,
$$
\frac{d}{dt} \left( t^{2 \delta} |h|^2 \right) = t^{2 \delta} \left( \frac{2 \delta}{t} |h|^2 + \frac{d}{dt} |h|^2 \right) \le \frac{t^{2 \delta + 1}}{2 \delta} \, |f|^2 \quad \mbox{for all } t \in (T, T_1).
$$
By integrating this inequality, using that~$h(T) = h^{(0)}$, and recalling that, by our hypothesis on~$\delta$, it holds~$2 \delta + 1 - 2 \mu_0 > -1$, we find that, for every~$t \in (T, T_1)$,
$$
t^{2 \delta} |h(t)|^2 - T^{2 \delta} |h^{(0)}|^2 \le \frac{1}{2 \delta} \int_T^t \tau^{2 \delta + 1 - 2 \mu_0} \left( \tau^{\mu_0} |f(\tau)| \right)^2 \, d\tau \le C \, t^{2 \delta + 2 - 2 \mu_0} \| f \|_{\FF_{T, \mu_0}}^2, 
$$
which gives
\begin{equation} \label{estforhtech}
t^{\mu_0 - 1} |h(t)| \le C \left( \| f \|_{\FF_{T, \mu_0}} + T^{\mu_0 - 1} |h^{(0)}| \right) \quad \mbox{for all } t \in (T, T_1).
\end{equation}

To establish~\eqref{hest}, we only need to control~$|\dot{h}|$. Using once again the equation in~\eqref{hdot+Rh=fuptoT1},~\eqref{DR=}, and~\eqref{estforhtech} we have
$$
|\dot{h}(t)| \le |f(t)| + |D_\xi \mathscr{R}(\xi^0(t))| |h(t)| \le \frac{t^{\mu_0} |f(t)| + C \, t^{\mu_0 - 1} |h(t)|}{t^{\mu_0}} \le \frac{C}{t^{\mu_0}} \left( \| f \|_{\FF_{T, \mu_0}} + T^{\mu_0 - 1} |h^{(0)}| \right),
$$
for all~$t \in (T, T_1)$. This, combined with~\eqref{estforhtech}, gives~\eqref{hest}.

The continuity of~$\mathbf{H}_{h^{(0)}}$ follows from~\eqref{Hmapbounded}, observing that~$\mathbf{H}_{h^{(0)}}[f^{(1)}] - \mathbf{H}_{h^{(0)}}[f^{(2)}] = \mathbf{H}_0[f^{(1)} - f^{(2)}]$ for every~$f^{(1)}, f^{(2)} \in \FF_{T, \mu_0}^{\alpha_0}$. The proof of the proposition is thus concluded.
\end{proof}

\subsection{Nonlinear theory for $h$}

In this conclusive subsection, we show that, when~$h^{(0)}$ is small, the nonlinear system~\eqref{hdot=f} admits a solution~$h \in \bar{B}_1(\HH_{T, \mu})$, thus proving Theorem~\ref{Msystsolvthm}. Our argument is based on the Schauder fixed point theorem. To apply it, we first need a compactness result.

Given~$h \in \bar{B}_1(\HH_{T, \mu})$, let~$F = \mathbf{F}[h]$ be the vector-valued function defined by~\eqref{Fdef}-\eqref{F2def}. Proposition~\ref{Fdecaylem} and Lemma~\ref{Foddlem} ensure that, if~$\mu > 3 s / (1 + 2 s)$, then~$\mathbf{F}$ maps~$\bar{B}_1(\HH_{T, \mu})$ into~$\FF_{T, 4 s / (1 + 2 s)}^{\alpha}$ and the estimates
\begin{equation} \label{Fhest}
\| \mathbf{F}[h] \|_{\FF_{T, \frac{4 s}{1 + 2 s}}} \le C \quad \mbox{for all } h \in \bar{B}_1(\HH_{T, \mu})
\end{equation}
and
\begin{equation} \label{Fhderest}
\| \mathbf{F}[h] \|_{\FF_{T, \frac{4 s}{1 + 2 s}}^{\alpha}} \le C_{\psi_0} \quad \mbox{for all } h \in \bar{B}_1(\HH_{T, \mu})
\end{equation}
hold true for some~$\alpha \in (0, 1)$,~$C, C_{\psi_0} \ge 1$, and provided~$T$ is larger than a generic constant~$T_0 \ge 1$. In addition to this, we have the following lemma.

\begin{lemma} \label{Fcompactlem}
Let~$\mu > 3 s / (1 + 2 s)$ and~$\mu_0 \in (0, 4 s / (1 + 2 s))$. Then, there exists a generic~$\alpha_0 \in (0, 1)$ for which the map~$\mathbf{F}: \bar{B}_1(\HH_{T, \mu}) \to \FF_{T, \mu_0}^{\alpha_0}$ is compact.
\end{lemma}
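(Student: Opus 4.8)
The plan is to combine the uniform bounds \eqref{Fhest}--\eqref{Fhderest} with an Ascoli--Arzel\`a argument on compact time intervals and the continuity of $h \mapsto \psi = \Psi[h]$ from Lemma~\ref{Psicontlem}. Fix $\alpha \in (0,1)$ as in Proposition~\ref{Fdecaylem} and pick any $\alpha_0 \in (0, \alpha)$; I claim this $\alpha_0$ works. Since a map between metric spaces is compact exactly when it is continuous and sends bounded sets to relatively compact sets, and the domain is already the ball $\bar{B}_1(\HH_{T, \mu})$, it suffices to prove these two properties. For the relative compactness, let $\{ h^{(k)} \}_{k \in \N} \subset \bar{B}_1(\HH_{T, \mu})$ be arbitrary and set $F^{(k)} := \mathbf{F}[h^{(k)}]$. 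By \eqref{Fhderest} there is $K \ge 1$ (also depending on $\| \psi_0 \|_{\A^1_{\{ T \}}}$) with $\| F^{(k)} \|_{\FF_{T, \frac{4 s}{1 + 2 s}}^{\alpha}} \le K$ for all $k$. On each interval $[T, T + m]$ the $F^{(k)}$ are uniformly bounded, and since a function whose $C^\alpha$ seminorm on every unit subinterval is $\le A$ and whose sup norm is $\le B$ is globally $\alpha$-H\"older on $[T, T+m]$ with constant $\le A + 2B$, they are also equicontinuous there. By Ascoli--Arzel\`a and a diagonal extraction over $m$, a subsequence (not relabeled) converges locally uniformly on $[T, +\infty)$ to some $F$; passing to the limit in the pointwise bounds and in the symmetry relation \eqref{fodd} gives $\| F \|_{\FF_{T, \frac{4 s}{1 + 2 s}}^{\alpha}} \le K$, so in particular $F \in \FF_{T, \mu_0}^{\alpha_0}$.

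Next I would upgrade this to convergence in $\FF_{T, \mu_0}^{\alpha_0}$. Write $G^{(k)} := F^{(k)} - F$, so $\| G^{(k)} \|_{\FF_{T, \frac{4 s}{1 + 2 s}}^{\alpha}} \le 2 K$ and $G^{(k)} \to 0$ locally uniformly. Given $\varepsilon > 0$, choose $R \ge T$ so large that $2 K t^{\mu_0 - \frac{4 s}{1 + 2 s}} \le \varepsilon$ for $t \ge R$; then, using $\mu_0 < 4 s / (1 + 2 s)$ and that $[\,\cdot\,]_{C^{\alpha_0}((t, t+1))} \le [\,\cdot\,]_{C^{\alpha}((t, t+1))}$ on unit intervals (as $\alpha_0 \le \alpha$), one gets $\sup_{t \ge R} \big( t^{\mu_0} |G^{(k)}(t)| + t^{\mu_0} [G^{(k)}]_{C^{\alpha_0}((t, t+1))} \big) \le 2 \varepsilon$ for every $k$. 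On the remaining range $t \in [T, R]$, local uniform convergence gives $\sup_{[T, R]} t^{\mu_0} |G^{(k)}(t)| \to 0$, while the interpolation inequality $[g]_{C^{\alpha_0}(I)} \le C \| g \|_{C^0(I)}^{1 - \alpha_0 / \alpha} [g]_{C^{\alpha}(I)}^{\alpha_0 / \alpha}$ on a unit interval $I$ (valid since $\alpha_0 < \alpha$) yields $\sup_{t \in [T, R]} t^{\mu_0} [G^{(k)}]_{C^{\alpha_0}((t, t+1))} \le C R^{\mu_0} (2 K)^{\alpha_0/\alpha} \| G^{(k)} \|_{C^0((T, R + 1))}^{1 - \alpha_0/\alpha} \to 0$. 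Hence $\limsup_k \| G^{(k)} \|_{\FF_{T, \mu_0}^{\alpha_0}} \le 2 \varepsilon$, and $\varepsilon \to 0$ gives $F^{(k)} \to F$ in $\FF_{T, \mu_0}^{\alpha_0}$. Thus $\mathbf{F}(\bar{B}_1(\HH_{T, \mu}))$ is relatively compact.

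For continuity, suppose $h^{(k)} \to h$ in $\bar{B}_1(\HH_{T, \mu})$. By Lemma~\ref{Psicontlem} the solutions $\psi^{(k)} = \Psi[h^{(k)}]$ converge locally uniformly to $\psi = \Psi[h]$, and likewise $\xi^{(k)} \to \xi$, $\dot{\xi}^{(k)} \to \dot{\xi}$ locally uniformly. Inspecting the definitions \eqref{Fdef}--\eqref{F2def} and using the uniform bounds of Theorem~\ref{nonlinearthm}, Lemma~\ref{ElePhilem}, Lemma~\ref{Ndecaylem}, and Lemma~\ref{Mlem} as $k$-independent integrable majorants, Lebesgue's dominated convergence theorem shows that the entries of $M(t)$ and $d(t)$, and hence $F^{(k)}(t)$, converge to $\mathbf{F}[h](t)$ for every fixed $t > T$. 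By the relative compactness just proved, every subsequence of $\{ \mathbf{F}[h^{(k)}] \}$ has a further subsequence converging in $\FF_{T, \mu_0}^{\alpha_0}$, necessarily to this pointwise limit $\mathbf{F}[h]$; hence the whole sequence converges to $\mathbf{F}[h]$ in $\FF_{T, \mu_0}^{\alpha_0}$, proving continuity.

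The main obstacle consists of two delicate points. First, the two-scale splitting of the weighted H\"older norm: the tail part is controlled by the strict gap $\mu_0 < 4 s / (1 + 2 s)$ between the available decay rate of $F$ and the one demanded by the target space, while the compact part needs Ascoli--Arzel\`a together with the H\"older interpolation that trades a sliver of the exponent (taking $\alpha_0 < \alpha$) for strong convergence. Second, the verification that $F^{(k)}(t) \to \mathbf{F}[h](t)$ pointwise, which hinges on the error terms $\E$, $\NN[\psi^{(k)}]$ and the integrands defining $M$ and $d$ being dominated, uniformly in $k$, by fixed integrable functions, so that the locally uniform convergence $\psi^{(k)} \to \psi$ may be passed under the spatial integrals.
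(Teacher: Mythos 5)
Your proof is correct and follows essentially the same route as the paper: the uniform bound~\eqref{Fhderest}, a compact-embedding argument for the weighted H\"older scale (which the paper isolates as Lemma~\ref{compemblem}, applied with intermediate exponents~$\mu_1 \in (\mu_0, 4s/(1+2s))$ and~$\alpha_1 \in (\alpha_0, \alpha)$, while you inline the same tail-splitting and H\"older-interpolation estimates directly at the level of~$\mu_0, \alpha_0$), and identification of the limit via Lemma~\ref{Psicontlem} together with dominated convergence. The difference is purely organizational---you prove relative compactness of the image and continuity into the target space directly, instead of factoring through the intermediate space---and both versions deliver exactly what the Schauder argument needs.
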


The proof of Lemma~\ref{Fcompactlem} will be an easy consequence of the following general fact.

\begin{lemma} \label{compemblem}
Let~$0 < \mu_0 < \mu_1$ and~$0 < \alpha_0 < \alpha_1 < 1$. Then, the inclusion~$\FF_{T, \mu_1}^{\alpha_1} \hookrightarrow \FF_{T, \mu_0}^{\alpha_0}$ is compact.
\end{lemma}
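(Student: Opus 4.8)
The plan is to combine a diagonal Arzel\`a--Ascoli extraction on compact time intervals with the decay gained for free from the strict inequality $\mu_0 < \mu_1$ between the weights. Let $\{ f^{(k)} \}_{k \in \N} \subset \FF_{T, \mu_1}^{\alpha_1}$ be a bounded sequence, normalized so that $\| f^{(k)} \|_{\FF_{T, \mu_1}^{\alpha_1}} \le 1$ for every $k$. To prove compactness of the inclusion it is enough to extract a subsequence converging in $\FF_{T, \mu_0}^{\alpha_0}$.

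First I would derive uniform local regularity. The bound $t^{\mu_1} |f^{(k)}(t)| \le 1$ gives $\| f^{(k)} \|_{L^\infty((T, +\infty); \R^N)} \le T^{- \mu_1}$, while the bound $t^{\mu_1} [f^{(k)}]_{C^{\alpha_1}((t, t + 1); \R^N)} \le 1$ for all $t > T$, combined with the previous one (splitting the pair of points of $[T, R]$ according to whether their distance is $\le 1$ or $> 1$, and using that continuous functions realize H\"older seminorms up to the endpoints), patches into a uniform estimate $\| f^{(k)} \|_{C^{\alpha_1}((T, R); \R^N)} \le C$ on every fixed window $[T, R]$. Since the embedding $C^{\alpha_1}((T, R)) \hookrightarrow C^{\alpha_0}((T, R))$ is compact, a diagonal argument over an increasing sequence of $R$'s yields a subsequence, still denoted $\{ f^{(k)} \}$, converging in $C^{\alpha_0}_{\loc}((T, +\infty))$, in particular locally uniformly, to a function $f : (T, +\infty) \to \R^N$ which inherits the symmetry~\eqref{fodd}. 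Passing the pointwise bounds to the limit and using lower semicontinuity of the H\"older seminorm, we obtain $t^{\mu_1} |f(t)| \le 1$ and $t^{\mu_1} [f]_{C^{\alpha_1}((t, t + 1))} \le 1$ for all $t > T$; as $\mu_0 < \mu_1$, $t \ge T \ge 1$, and $[ \, \cdot \, ]_{C^{\alpha_0}((t, t + 1))} \le [ \, \cdot \, ]_{C^{\alpha_1}((t, t + 1))}$ on intervals of length at most~$1$, it follows that $f \in \FF_{T, \mu_0}^{\alpha_0}$.

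It remains to upgrade the local convergence to convergence in $\| \cdot \|_{\FF_{T, \mu_0}^{\alpha_0}}$. Fix $\varepsilon > 0$. In the regime $t \ge R$ I would use the crude bound
$$
t^{\mu_0} \Big( |f^{(k)}(t) - f(t)| + [f^{(k)} - f]_{C^{\alpha_0}((t, t + 1))} \Big) \le t^{\mu_0 - \mu_1} \Big( t^{\mu_1} |f^{(k)}(t) - f(t)| + t^{\mu_1} [f^{(k)} - f]_{C^{\alpha_1}((t, t + 1))} \Big) \le C \, R^{\mu_0 - \mu_1},
$$
which is $< \varepsilon$ once $R$ is large, uniformly in $k$. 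In the regime $T < t < R$, the sup-norm contribution tends to $0$ because $f^{(k)} \to f$ uniformly on the compact set $[T, R + 1]$ while $t^{\mu_0} \le R^{\mu_0}$; for the H\"older contribution I would apply the interpolation inequality $[g]_{C^{\alpha_0}(I)} \le C \| g \|_{L^\infty(I)}^{1 - \alpha_0 / \alpha_1} [g]_{C^{\alpha_1}(I)}^{\alpha_0 / \alpha_1}$, valid on any bounded interval $I$, to $g = f^{(k)} - f$ on $I = (t, t + 1)$: the second factor stays bounded by Step~1, while the first goes to $0$ uniformly in $t \in (T, R)$. Hence both terms are $< \varepsilon$ for $k$ large, and combining the two regimes gives $\| f^{(k)} - f \|_{\FF_{T, \mu_0}^{\alpha_0}} \to 0$, proving the compactness.

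The only slightly delicate point is the patching in Step~1 --- turning the family of seminorm bounds on the overlapping unit windows $(t, t + 1)$ into a genuine uniform $C^{\alpha_1}$ bound on each fixed $[T, R]$, with due care near $t = T$ --- and the rest is bookkeeping with the weight $t^{\mu}$ and the elementary H\"older interpolation inequality.
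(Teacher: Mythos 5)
Your argument is correct and follows essentially the same route as the paper: a diagonal Ascoli--Arzel\`a/interpolation extraction giving convergence in $C^{\alpha_0}_{\loc}([T,+\infty);\R^N)$, identification of the limit in $\FF_{T,\mu_0}^{\alpha_0}$, and then a splitting of the weighted norm into a far region controlled uniformly by the weight gain $R^{\mu_0-\mu_1}$ and a compact region handled by the local convergence (the paper uses the $C^{\alpha_0}$ convergence on $[T,T_\varepsilon+1]$ directly where you invoke the H\"older interpolation inequality, a negligible difference). No gaps.
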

\begin{proof}
Let~$C_\star > 0$ and~$\{ f^{(k)} \} \subset \FF_{T, \mu_1}^{\alpha_1}$ be a sequence satisfying~$\| f^{(k)} \|_{\FF_{T, \mu_1}^{\alpha_1}} \le C_\star$ for every~$k \in \N$. By Ascoli-Arzel\`a theorem, a simple interpolation inequality, and a standard diagonal argument, there exists a subsequence~$\{ f^{(k_\ell)} \}$ of~$\{ f^{(k)} \}$ converging to a function~$f$ in~$C_\loc^{\alpha_0}([T, +\infty); \R^N)$ as~$\ell \rightarrow +\infty$. Clearly,~$f \in \FF_{T, \mu_1}^{\alpha_1} \subset \FF_{T, \mu_0}^{\alpha_0}$ with~$\| f \|_{\FF_{T, \mu_1}^{\alpha_1}} \le C_\star$.

Let now~$\varepsilon > 0$. Pick~$T_\varepsilon \ge T$ in a way that~$4 C_\star T_\varepsilon^{- (\mu_1 - \mu_0)} \le \varepsilon$. Since~$f^{(k_\ell)} \to f$ in~$C_\loc^{\alpha_0}([T, +\infty); \R^N)$, there exists~$L_\varepsilon \in \N$ such that~$\| f^{(k_\ell)} - f \|_{C^{\alpha_0}([T, T_\varepsilon + 1]; \R^N)} \le T_\varepsilon^{-\mu_0} \varepsilon / 2$ for all~$\ell \ge L_\varepsilon$. Accordingly,
\begin{align*}
\| f^{(k_\ell)} - f \|_{\FF_{T, \mu_0}^{\alpha_0}} & \le \sup_{t \in (T, T_\varepsilon)} \left( t^{\mu_0} \, |f^{(k_\ell)}(t) - f(t)| \right) + \sup_{t \in (T, T_\varepsilon)} \left( t^{\mu_0} \, [f^{(k_\ell)} - f]_{C^{\alpha_0}([t, t + 1]; \R^N)} \right) \\
& \quad + \sup_{t > T_\varepsilon} \left( t^{\mu_0} \, |f^{(k_\ell)}(t) - f(t)| \right) + \sup_{t > T_\varepsilon} \left( t^{\mu_0} \, [f^{(k_\ell)} - f]_{C^{\alpha_0}([t, t + 1]; \R^N)} \right) \\
& \le T_\varepsilon^{\mu_0} \, \| f^{(k_\ell)} - f \|_{C^{\alpha_0}([T, T_\varepsilon + 1]; \R^N)} + T_\varepsilon^{- (\mu_1 - \mu_0)} \left( \| f^{(k_\ell)} \|_{\FF_{T, \mu_1}^{\alpha_1}} + \| f \|_{\FF_{T, \mu_1}^{\alpha_1}} \right) \le \varepsilon.
\end{align*}
for all~$\ell \ge L_\varepsilon$. As~$\varepsilon$ is an arbitrary positive number, we conclude that~$f^{(k_\ell)} \rightarrow f$ in~$\FF_{T, \mu_0}^{\alpha_0}$ as~$\ell \rightarrow +\infty$ and therefore that the inclusion~$\FF_{T, \mu_1}^{\alpha_1} \hookrightarrow \FF_{T, \mu_0}^{\alpha_0}$ is compact.
\end{proof}

\begin{proof}[Proof of Lemma~\ref{Fcompactlem}]
Let~$\mu_1 \in (\mu_0, 4 s / (1 + 2 s))$ and~$\alpha_1 \in (0, \alpha)$ be fixed, with~$\alpha$ as in~\eqref{Fhderest}. We claim that~$\mathbf{F}: \bar{B}_1(\HH_{T, \mu}) \to \FF_{T, \mu_1}^{\alpha_1}$ is continuous. Notice that, once we have this, the conclusion of the lemma would then follow from Lemma~\ref{compemblem}.

Let~$\{ h^{(k)} \} \subset \bar{B}_1(\HH_{T, \mu})$ be a sequence converging to some~$h$ in~$\bar{B}_1(\HH_{T, \mu})$. In view of estimate~\eqref{Fhderest}, we know that~$\| \mathbf{F}[h^{(k)}] \|_{\FF_{T, 4 s / (1 + 2 s)}^\alpha}$ is bounded uniformly in~$k$. Thanks to Lemma~\ref{compemblem}, there exists then a diverging sequence~$\{ k_\ell \}$ along which~$\mathbf{F}[h^{(k_\ell)}]$ converges to some~$\widehat{\mathbf{F}}$ in~$\FF_{T, \mu_1}^{\alpha_1}$ as~$\ell \rightarrow +\infty$.

Writing now~$\mathbf{F} = \mathbf{F}^{(1)} + \mathbf{F}^{(2)}$ as in~\eqref{Fdef}-\eqref{F2def}, it is clear that~$\mathbf{F}^{(1)}[h^{(k)}](t) \rightarrow \mathbf{F}^{(1)}[h](t)$ as~$k \rightarrow +\infty$ for every~$t > T$. Using Lebesgue's dominated convergence theorem along with the fact that~$\Psi[h^{(j)}] \rightarrow \Psi[h]$ in~$\R \times (T, +\infty)$, by Lemma~\ref{Psicontlem}, it is also easy to see that~$\mathbf{F}^{(2)}[h^{(k)}](t)$ converges to~$\mathbf{F}^{(2)}[h](t)$ for every~$t > T$. Consequently,~$\widehat{\mathbf{F}} = \mathbf{F}[h]$ and the lemma is proved.
\end{proof}

We can now establish Theorem~\ref{Msystsolvthm}.

\begin{proof}[Proof of Theorem~\ref{Msystsolvthm}]
Write
$$
\K_{h^{(0)}}[h] := \mathbf{H}_{h^{(0)}}[\mathbf{F}[h]] \quad \mbox{for } h \in \bar{B}_1(\HH_{T, \mu}).
$$
Observe that~\eqref{hdot=f} admits a solution if and only if~$\K_{h^{(0)}}$ has a fixed point.

Thanks to Proposition~\ref{linearhprop} and Lemma~\ref{Fcompactlem}, we know that~$\K_{h^{(0)}}$ is a compact map from~$\bar{B}_1(\HH_{T, \mu})$ to~$\HH_{T, \mu_0}$, provided~$\mu > 3 s / (1 + 2 s)$ and~$\mu_0 \in (0, \min \{ 4 s / (1 + 2 s), 1 + \delta \})$. Assuming, in addition to these requirements, that~$\mu_0 > \mu$ (which can be done, as~$\mu$ satisfies~\eqref{mulimit1}, by hypothesis), we also have that
\begin{equation} \label{Kself}
\K_{h^{(0)}}: \bar{B}_1(\HH_{T, \mu}) \to \bar{B}_1(\HH_{T, \mu}),
\end{equation}
provided~$T \ge T_0$, for some generic constant~$T_0 \ge 1$. Indeed, this is immediate to verify, since it holds~$\| \widehat{h} \|_{\HH_{T, \mu}} \le T^{- (\mu_0 - \mu)} \| \widehat{h} \|_{\HH_{T, \mu_0}}$ for every~$\widehat{h} \in \HH_{T, \mu_0}$, and therefore, from estimates~\eqref{Hmapbounded} and~\eqref{Fhest} we get that
\begin{align*}
\| \K_{h^{(0)}}[h] \|_{\HH_{T, \mu}} & \le T^{- (\mu_0 - \mu)} \| \mathbf{H}_{h^{(0)}}[\mathbf{F}[h]] \|_{\HH_{T, \mu_0}} \le C \, T^{- (\mu_0 - \mu)} \left( \| \mathbf{F}[h] \|_{\FF_{T, \frac{4 s}{1 + 2 s}}} + T^{\mu_0 - 1} |h^{(0)}| \right) \\
& \le C \left( T_0^{- (\mu_0 - \mu)} + T^{\mu - 1} |h^{(0)}| \right),
\end{align*}
for some generic constant~$C > 0$. From this,~\eqref{Kself} readily follows by taking~$T_0$ sufficiently large and~$T^{\mu - 1} |h^{(0)}|$ sufficiently small.

In view of~\eqref{Kself} and the compactness of~$\K_{h^{(0)}}$, we may apply the Schauder fixed point theorem, deducing the existence of a fixed point for~$\K_{h^{(0)}}$ within~$\bar{B}_1(\HH_{T, \mu})$. The proof is thus complete.
\end{proof}

\section{Stability results} \label{stabsec}

\noindent
In this section, we address the validity of Theorem~\ref{asymptstabthm}.

Let~$\widetilde{u}: \R \times [T, +\infty) \to \R$ be the solution of
\begin{equation} \label{probforutilde}
\begin{cases}
\partial_t \widetilde{u} + (-\Delta)^s \widetilde{u} + W'(\widetilde{u}) = 0 & \quad \mbox{in } \R \times (T, +\infty) \\
\widetilde{u} = \widetilde{u}_0 & \quad \mbox{on } \R \times \{ T \},
\end{cases}
\end{equation}
with initial datum~$\widetilde{u}_0$ given by
\begin{equation} \label{u0given}
\widetilde{u}_0(x) = \sum_{j = 1}^N w(x - \xi_j^0(T)) + \eta(x) \quad \mbox{for } x \in \R,
\end{equation}
for some small odd function~$\eta$. The main step in the proof of Theorem~\ref{asymptstabthm} consists in proving that any such~$\widetilde{u}$ is actually one of the solutions constructed in Theorem~\ref{mainthm2}. This holds true for any~$s \in (0, 1)$, provided we take~$\widetilde{u}_0$ to be sufficiently regular. We will then cover the general case of a bounded initial datum through an approximation procedure.

The next result shows that~$\widetilde{u}_0$ can be rewritten in the form required by Theorem~\ref{mainthm2}.

\begin{lemma} \label{reparalem}
There exist two generic constants~$T_0 \ge 1$ and~$\delta_0 \in (0, 1)$ such that, if~$\eta \in \A_{\{ T \}}^1$ is an odd function satisfying
\begin{equation} \label{etasmall}
\| \eta \|_{\A_{\{ T \}}} \le \delta_0,
\end{equation}
for some~$T \ge T_0$, then the function~$\widetilde{u}_0$ defined by~\eqref{u0given} can be written as
\begin{equation} \label{u0desired}
\widetilde{u}_0(x) = \sum_{j = 1}^N w(x - \xi_j^0(T) - h^{(0)}_j) + \psi_0 \quad \mbox{for all } x \in \R.
\end{equation}
for some odd function~$\psi_0 \in \A^1_{\{ T \}}$ and some vector~$h^{(0)} \in \R$ satisfying~\eqref{h0odd},
\begin{equation} \label{psi0ortcond}
\int_\R \psi_0(x) w'(x - \xi_i^0(T) - h^{(0)}_i) \, dx = 0 \quad \mbox{for all } i = 1, \ldots, N,
\end{equation}
and~\eqref{h0psi0small}, with~$\varepsilon_0$ given by Theorem~\ref{mainthm2}.
\end{lemma}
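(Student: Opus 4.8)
The plan is to turn the two requirements — the orthogonality conditions \eqref{psi0ortcond} and the decomposition \eqref{u0desired} — into a single finite-dimensional fixed point problem for the shift vector $h^{(0)}$. For $h \in \R^N$ small, set $\psi_0^h(x) := \widetilde{u}_0(x) - \sum_{j=1}^N w(x - \xi_j^0(T) - h_j)$, which by \eqref{u0given} equals $\sum_{j=1}^N \big( w(x-\xi_j^0(T)) - w(x-\xi_j^0(T)-h_j) \big) + \eta(x)$, and define $G = (G_i)_{i=1}^N : \R^N \to \R^N$ by $G_i(h) := \int_\R \psi_0^h(x) \, w'(x - \xi_i^0(T) - h_i) \, dx$. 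Finding $h^{(0)}$ with $G(h^{(0)})=0$ and $\psi_0 := \psi_0^{h^{(0)}}$ is exactly \eqref{psi0ortcond}, so the lemma reduces to producing a small zero of $G$ enjoying the symmetry \eqref{h0odd}, and then checking the size and regularity of the resulting $\psi_0$.

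First I would estimate $G$ near the origin. Since $w' > 0$ with $\int_\R w' = 1$ and $|\eta| \le \delta_0 \Phi(\cdot, T) \le \delta_0 T^{-2s/(1+2s)}$, a change of variables gives $|G_i(0)| = \big|\int_\R \eta \, w'(\cdot - \xi_i^0(T))\big| \le \delta_0 \, T^{-2s/(1+2s)}$. Differentiating under the integral sign, $\partial_{h_k} G_i(h) = \int_\R w'(x - \xi_k^0(T) - h_k) \, w'(x - \xi_i^0(T) - h_i) \, dx - \delta_{ik} \int_\R \psi_0^h(x) \, w''(x - \xi_i^0(T) - h_i) \, dx$. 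By translation invariance the first integral equals $\|w'\|_{L^2(\R)}^2 = \gamma^{-1}$ when $k=i$, and, when $k \ne i$, it is $O(T^{-2s/(1+2s)})$ exactly as in \eqref{Aii=beta}--\eqref{Aijdecay}, the centres being separated by a fixed multiple of $T^{1/(1+2s)}$; the second integral is controlled by $\big( \|\eta\|_{L^\infty(\R)} + C|h| \big) \|w''\|_{L^1(\R)} \le C(\delta_0 + |h|)$, using \eqref{w''decay}. Hence $DG(h) = \gamma^{-1} I + O\big( T^{-2s/(1+2s)} + \delta_0 + |h| \big)$ for $|h| \le 1$. Choosing $T_0$ large and $\delta_0$ small, $DG(0)$ is invertible with $\|DG(0)^{-1}\| \le 2\gamma$, and the map $\Gamma(h) := h - DG(0)^{-1} G(h)$ satisfies $|\Gamma(0)| \le 2\gamma |G(0)| \le C \delta_0 T^{-2s/(1+2s)}$ and has $D\Gamma = DG(0)^{-1}(DG(0) - DG(\cdot))$ of norm $\le 1/2$ on the ball $\{|h| \le 2|\Gamma(0)|\}$; thus $\Gamma$ is a contraction there and yields a unique small $h^{(0)}$ with $G(h^{(0)}) = 0$ and $|h^{(0)}| \le C \delta_0 T^{-2s/(1+2s)}$.

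Next I would verify the quantitative conclusions. For the shift, $T^{\mu-1}|h^{(0)}| \le C\delta_0 \, T^{\mu - 1 - 2s/(1+2s)} \le C\delta_0$, since \eqref{mulimit1} forces $\mu - 1 < (2s-1)/(1+2s) < 2s/(1+2s)$. For $\psi_0 := \psi_0^{h^{(0)}}$, the $\eta$-term contributes $\le \delta_0 \Phi(\cdot, T)$, while each difference $w(\cdot - \xi_j^0(T)) - w(\cdot - \xi_j^0(T) - h_j^{(0)})$ is bounded pointwise by $|h_j^{(0)}| \sup w'$; splitting $\R$ into $\{|x| \ge 4\beta_N T^{1/(1+2s)}\}$, where $|x - \xi_j^0(T) - \theta h_j^{(0)}| \ge |x|/2$ and so \eqref{w'asympt} gives $w' \lesssim |x|^{-1-2s} \le \Phi(x,T)$, and its complement, where $\Phi(\cdot,T) \gtrsim T^{-2s/(1+2s)} \ge c\,|h_j^{(0)}|$, one obtains $|\psi_0| \le C\delta_0 \Phi(\cdot, T)$, i.e. $\|\psi_0\|_{\A_{\{ T \}}} \le C\delta_0$. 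Since $\eta \in C^1(\R)$ and $w$ is $C^2$ with $w'' \in L^\infty(\R)$, $\psi_0$ is $C^1$ and lies in $\A_{\{ T \}}^1$. Taking $\delta_0$ small enough that $C\delta_0 \le \varepsilon_0/2$ yields \eqref{h0psi0small}. Finally, for the oddness: because $\eta$ is odd, $w(x) = 1 - w(-x)$ so $w'$ is even (by \eqref{wodd}), and $\xi_i^0(T) = -\xi_{N-i+1}^0(T)$ by \eqref{betaodd}, the substitution $x \mapsto -x$ in the integral defining $G$ shows $G_i(\widetilde{h}) = -G_{N-i+1}(h^{(0)})$ for $\widetilde{h}_i := -h^{(0)}_{N-i+1}$; hence $\widetilde{h}$ is an equally small zero of $G$, and uniqueness forces $\widetilde{h} = h^{(0)}$, which is \eqref{h0odd}. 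The same symmetries applied directly to the formula for $\psi_0$ give $\psi_0(-x) = -\psi_0(x)$, so $\psi_0 \in \Adot_{\{ T \}}^1$ is odd.

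I expect the main technical point to be the weighted bound $|\psi_0| \le C\delta_0 \Phi(\cdot, T)$, i.e. dominating the finite shifts of $w$ by the spatial weight $\Phi(\cdot, T)$ uniformly in $T$ — a region-splitting argument of the type used repeatedly in Section~\ref{psisec}. The implicit-function step is routine once $DG(0)$ is identified as a small perturbation of $\gamma^{-1} I$, and the symmetry bookkeeping is a direct consequence of \eqref{wodd} and \eqref{betaodd}.
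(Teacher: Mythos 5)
Your argument is correct, and it reaches the same reduction as the paper --- a finite-dimensional problem for the shift vector $h^{(0)}$ whose solvability rests on the Gram matrix of the translated kernels being a small perturbation of $\gamma^{-1} I$ --- but it solves that problem by a different route. The paper first decomposes $\eta$ as $\sum_j \widehat{\eta}_j\, w'(\cdot - \xi_j^0(T) - h^{(0)}_j) + \eta^\perp$, writes $\psi_0$ explicitly in terms of $\widehat{\eta}$, $\eta^\perp$ and a quadratic remainder $\widetilde{\NN}[h^{(0)}]$, recasts the orthogonality conditions as a fixed point equation $\mathbf{J}(h^{(0)}) = h^{(0)}$, and applies the \emph{Brouwer} fixed point theorem on the symmetric ball $\{|h^{(0)}| \le \rho_0 T^{-2s/(1+2s)},\ h^{(0)}_i = -h^{(0)}_{N-i+1}\}$; since Brouwer gives no uniqueness, the symmetry \eqref{h0odd} has to be enforced by restricting the domain and checking that $\mathbf{J}$ preserves it. You instead work directly with $G_i(h) = \int_\R \psi_0^h\, w'(\cdot - \xi_i^0(T) - h_i)\,dx$, verify $DG(h) = \gamma^{-1}I + O(T^{-2s/(1+2s)} + \delta_0 + |h|)$, and run a Newton-type \emph{Banach} contraction; the uniqueness this buys lets you deduce \eqref{h0odd} a posteriori from the relation $G_i(\widetilde h) = -G_{N-i+1}(h)$ for $\widetilde h_i := -h_{N-i+1}$, which is a clean alternative to the paper's symmetric-domain bookkeeping, and it avoids the explicit decomposition of $\eta$ altogether. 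Your size estimates ($|h^{(0)}| \le C\delta_0 T^{-2s/(1+2s)}$, hence $T^{\mu-1}|h^{(0)}| \le C\delta_0$ since $\mu < 4s/(1+2s)$, and $|\psi_0| \le C\delta_0\,\Phi(\cdot, T)$ via the region splitting at $|x| \sim T^{1/(1+2s)}$) match the paper's and suffice for \eqref{h0psi0small} after shrinking $\delta_0$, so the two proofs are interchangeable; yours trades the paper's explicit formula for $\psi_0$ (useful for reading off its structure) for a shorter, uniqueness-based treatment of the symmetry.
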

\begin{proof}
Let~$h^{(0)} \in \R^N$ be any vector for which~\eqref{h0odd} holds true and write~$\eta$ as
\begin{equation} \label{etadecomp}
\eta(x) = \sum_{j = 1}^N \widehat{\eta}_j w'(x - \xi_j^0(T) - h^{(0)}_j) + \eta^\perp(x) \quad \mbox{for all } x \in \R,
\end{equation}
for some coefficients~$\{ \widehat{\eta}_j \}_{j = 1}^N \subset \R$ satisfying
\begin{equation} \label{etajodd}
\widehat{\eta}_j = - \widehat{\eta}_{N + 1 - j} \quad \mbox{for all } j = 1, \ldots, N
\end{equation}
and some odd function~$\eta^\perp: \R \to \R$ such that
\begin{equation} \label{etaperporth}
\int_{\R} \eta^\perp(x) w'(x - \xi_i^0(T) - h^{(0)}_i) \, dx = 0 \quad \mbox{for all } i = 1, \ldots, N.
\end{equation}
Note that~$\eta$ can be (uniquely) decomposed as in~\eqref{etadecomp}-\eqref{etaperporth}. Indeed, let
$$
\widetilde{Z}_i(x) = \widetilde{Z}_i[h^{(0)}](x) := w'(x - \xi_i^0(T) - h^{(0)}_i)
$$
and consider the symmetric matrix~$\widetilde{A} = \widetilde{A}[h^{(0)}]$ given by
$$
\widetilde{A}_{i j} := \int_\R \widetilde{Z}_i(x) \widetilde{Z}_j(x) \, dx \quad \mbox{for all } i, j = 1, \ldots, N.
$$
Arguing as in the proof of Lemma~\ref{clem}, we get that
\begin{equation} \label{Aepsalmostdiag}
\widetilde{A}_{i j} = \gamma^{-1} \delta_{i j} + (1 - \delta_{i j}) \, O \! \left( T^{-\frac{2 s}{1 + 2 s}} \right) \quad \mbox{for all } i, j = 1, \ldots, N.
\end{equation}
In particular,~$\widetilde{A}$ is invertible for~$T_0$ large enough. Therefore, defining
\begin{alignat*}{3}
\widehat{\eta}_i & := \sum_{j = 1}^N \widetilde{A}^{-1}_{i j} \int_\R \eta(x) \widetilde{Z}_j(x) \, dx && \qquad \mbox{for all } i = 1, \ldots, N, \\
\eta^\perp(x) & := \eta(x) - \sum_{j = 1}^N \widehat{\eta}_j \widetilde{Z}_j(x) && \qquad \mbox{for all } x \in \R,
\end{alignat*}
one easily sees that properties~\eqref{etadecomp}-\eqref{etaperporth} are satisfied. In addition, from the above definitions,~\eqref{Aepsalmostdiag}, and the fact that~$w' \in L^1(\R)$, it is immediate to verify that
\begin{equation} \label{etahatbounds}
|\widehat{\eta}_i| \le C \, T^{- \frac{2 s}{1 + 2 s}} \| \eta \|_{\A_{\{ T \}}} \quad \mbox{for all } i = 1, \ldots, N,
\end{equation}
for some generic~$C > 0$.

We now observe that, recalling~\eqref{u0given} and~\eqref{etadecomp}, the initial datum~$\widetilde{u}_0$ can be written as in~\eqref{u0desired} if and only if~$\psi_0$ is given by
\begin{equation} \label{psi0def}
\psi_0(x) = \widetilde{\NN}[h^{(0)}](x) + \sum_{j = 1}^N \left( \widehat{\eta}_j + h^{(0)}_j \right) \widetilde{Z}_j(x) + \eta^\perp(x),
\end{equation}
with
$$
\widetilde{\NN}[h^{(0)}](x) := \sum_{j = 1}^N \Big\{ w(x - \xi_j^0(T)) - w(x - \xi_j^0(T) - h^{(0)}_j) - h^{(0)}_j w'(x - \xi_j^0(T) - h^{(0)}_j) \Big\}.
$$
In addition, thanks to~\eqref{etaperporth}, the function~$\psi_0$ satisfies the orthogonality conditions~\eqref{psi0ortcond} if and only if
$$
\sum_{j = 1}^N \widetilde{A}_{i j} \left( \widehat{\eta}_j + h^{(0)}_j \right) = - \int_\R \widetilde{\NN}[h^{(0)}](x) \widetilde{Z}_i(x) \, dx \quad \mbox{for all } i = 1, \ldots, N.
$$
Taking advantage of~\eqref{Aepsalmostdiag} with~$i = j$, this is equivalent to the identity
$$
\mathbf{J}(h^{(0)}) = h^{(0)},
$$
where~$\mathbf{J}: \R^N \to \R^N$ is the map defined by
$$
\mathbf{J}_i(h^{(0)}) := - \widehat{\eta}_i - \gamma \left\{ \sum_{j \ne i} \widetilde{A}_{i j}[h^{(0)}] \! \left( \widehat{\eta}_j + h^{(0)}_j \right) + \int_\R \widetilde{\NN}[h^{(0)}](x) \widetilde{Z}_i[h^{(0)}](x) \, dx \right\},
$$
for all~$h^{(0)} \in \R^N$ and~$i = 1, \ldots, N$.

Let~$\rho_0 \in (0, 1)$ to be later chosen and define
$$
\dot{\D} := \left\{ h^{(0)} \in \R^N : h^{(0)} \mbox{ satisfies } \eqref{h0odd} \mbox{ and } |h^{(0)}| \le \rho_0 \, T^{- \frac{2 s}{1 + 2 s}} \right\}.
$$
We claim that~$\mathbf{J}$ has a fixed point within~$\dot{\D}$, provided~$T_0$ is large and~$\delta_0$ is small. As~$\mathbf{J}$ is continuous, in view of the Brouwer fixed point theorem it suffices to prove that
\begin{equation} \label{JfromDtoD}
\mathbf{J}: \dot{\D} \to \dot{\D}.
\end{equation}
On the one hand, the fact that
$$
\mathbf{J}_i(h^{(0)}) = - \mathbf{J}_{N - i + 1}(h^{(0)}) \quad \mbox{for all } i = 1, \ldots, N \mbox{ and } h^{(0)} \in \dot{\D}
$$
is a consequence of the symmetry relations~\eqref{etajodd},~\eqref{h0odd}, and
\begin{equation} \label{symmprop2}
\begin{alignedat}{3}
\widetilde{Z}_i(- x) & = \widetilde{Z}_{N - i + 1}(x) && \qquad \mbox{for all } x \in \R \mbox{ and } i = 1, \ldots, N, \\
\widetilde{\NN}[h^{(0)}](- x) & = - \widetilde{\NN}[h^{(0)}](x) && \qquad \mbox{for all } x \in \R, \\
\widetilde{A}_{i j} & = \widetilde{A}_{N - i + 1 \, N - j + 1} && \qquad \mbox{for all } i, j = 1, \ldots, N.
\end{alignedat}
\end{equation}
On the other hand, we have
\begin{equation} \label{Nh}
\widetilde{\NN}[h^{(0)}](x) = \sum_{j = 1}^N \int_0^{h_j^{(0)}} \left( \int_\tau^{h^{(0)}_j} w''(x - \xi_j^0(T) - \sigma) \, d\sigma \right) d\tau,
\end{equation}
so that, using that~$w'' \in L^\infty(\R)$, we estimate
$$
\| \widetilde{\NN}[h^{(0)}] \|_{L^\infty(\R)} \le C |h^{(0)}|^2.
$$
By this,~\eqref{Aepsalmostdiag},~\eqref{etahatbounds},~\eqref{etasmall}, and the fact that~$w' \in L^1(\R)$, we get, for every~$h^{(0)} \in \dot{\D}$,
\begin{align*}
|\mathbf{J}_i(h^{(0)})| & \le |\widehat{\eta}_i| + C \left\{ T^{- \frac{2 s}{1 + 2 s}} \sum_{j \ne i} \left( |\widehat{\eta}_j| + |h_j^{(0)}| \right) + |h^{(0)}|^2 \int_\R w'(x - \xi_i^0(T) - h_i^{(0)}) \, dx \right\} \\
& \le C \, T^{- \frac{2 s}{1 + 2 s}} \left\{ \delta_0 + (\rho_0 + \rho_0^2) T_0^{- \frac{2 s}{1 + 2 s}} \right\} \le \frac{\rho_0}{N} \, T^{- \frac{2 s}{1 + 2 s}},
\end{align*}
for any~$\rho_0 \in (0, 1)$, provided~$T_0$ is large and~$\delta_0 \le \delta_1 \rho_0$, with~$\delta_1 \in (0, 1)$ small and generic. Accordingly,~\eqref{JfromDtoD} is valid and~$\mathbf{J}$ has a fixed point inside~$\dot{\D}$.

Up to now, we proved that~$\widetilde{u}_0$ can be written as in~\eqref{u0desired} for some~$h^{(0)} \in \dot{\D}$ and~$\psi_0$ satisfying the orthogonal conditions~\eqref{psi0ortcond}. Notice that, as~$h^{(0)} \in \dot{\D}$, we also know that~$h^{(0)}$ satisfies~\eqref{h0odd} and that~$T^{\mu - 1} |h^{(0)}| \le \varepsilon_0 / 2$, provided~$T_0$ is large enough. In addition, recalling identity~\eqref{psi0def}, properties~\eqref{symmprop2}, the fact that the~$\widehat{\eta}_i$'s satisfy~\eqref{etajodd}, and the oddness of~$\eta^\perp$, it is immediate to verify that~$\psi_0$ is odd as well. To finish the proof we are thus left to check that~$\psi_0$ belongs to~$\A_{\{ T \}}^1$ and that it holds
\begin{equation} \label{psi0inA1est}
\| \psi_0 \|_{\A_{\{ T \}}} \le \frac{\varepsilon_0}{2}.
\end{equation}

First of all, notice that, by~\eqref{w'asympt} and taking~$\rho_0$ sufficiently small,
$$
|\widetilde{Z}_j(x)| \le C \left( \chi_{[- 2 \xi_N^0(T), 2 \xi_N^0(T)]}(x) + \frac{\chi_{\R \setminus [- 2 \xi_N^0(T), 2 \xi_N^0(T)]}(x)}{|x - \xi_j^0(T) - h_j^{(0)}|^{1 + 2 s}} \right) \le C \, T^{\frac{2 s}{1 + 2 s}} \Phi(x, T) \quad \mbox{for all } x \in \R.
$$
Similarly, we deduce from identity~\eqref{Nh} and estimate~\eqref{w''decay} of Proposition~\ref{w''decayprop} that
$$
|\widetilde{\NN}[h^{(0)}](x)| \le C \, T_0^{- \frac{2 s}{1 + 2 s}} \Phi(x, T) \quad \mbox{for all } x \in \R.
$$
In light of these two estimates,~\eqref{psi0def},~\eqref{etadecomp},~\eqref{etasmall}, and recalling that~$\delta_0 \le \rho_0$, we conclude that
$$
|\psi_0(x)| \le |\widetilde{\NN}[h^{(0)}](x)| + |\eta(x)| + \sum_{j = 1}^N |h_j^{(0)}| \widetilde{Z}_j(x) \le C \left( T_0^{- \frac{2 s}{1 + 2 s}} + \rho_0 \right) \Phi(x, T) \quad \mbox{for all } x \in \R.
$$
Claim~\eqref{psi0inA1est} readily follows by taking~$\rho_0$ suitably small and~$T_0$ suitably large. The fact that~$\psi_0$ is of class~$C^1$, with bounded derivative, can be easily verified by differentiating~\eqref{psi0def}. Hence,~$\psi_0 \in \Adot_{\{ T \}}^1$ and the proof of the lemma is complete.
\end{proof}

Thanks to Lemma~\ref{reparalem}, Theorem~\ref{asymptstabthm} is now an immediate consequence of Theorem~\ref{mainthm2} and of the unique solvability of problem~\eqref{probforutilde}---at least when~$\widetilde{u}_0$ is regular. The details---both in the regular and irregular case---are as follows.

\begin{proof}[Proof of Theorem~\ref{asymptstabthm}]
To begin with, assume that~$\widetilde{u}_0$ is given by~\eqref{u0given} with~$\eta \in \A_{\{ T \}}^1$. In this case, we may apply Lemma~\ref{reparalem} and rewrite~$\widetilde{u}_0$ as in~\eqref{u0desired}, with~$h^{(0)} \in \R$ and~$\psi_0 \in \A_{\{ T \}}^1$ satisfying~\eqref{h0odd},~\eqref{h0psi0small}, and~\eqref{psi0ortcond}. Hence, by the unique solvability of~\eqref{probforutilde}---see Proposition~\ref{existinLinftyprop}---,~$\widetilde{u}$ is the solution built in Theorem~\ref{mainthm2} in correspondence to the initial data~$h^{(0)}$ and~$\psi_0$. Accordingly, we have that
$$
\widetilde{u}(x, t) = \sum_{i = 1}^N w(x - \xi_i^0(t) - \widetilde{h}_i(t)) + \widetilde{\psi}(x, t) \quad \mbox{for } x \in \R, \, t \ge T,
$$
for some~$\widetilde{h} \in \bar{B}_1(\HH_{T, \mu})$, with~$\widetilde{h}(T) = h^{(0)}$, and some~$\widetilde{\psi} \in \A_T$, with~$\widetilde{\psi}(\cdot, T) = \psi_0$ and~$\| \widetilde{\psi} \|_{\A_T} \le C$, for some generic~$C \ge 1$. Using this and the properties of~$u$, we compute, for every~$x \in \R$ and~$t > T$,
\begin{align*}
\left| \widetilde{u}(x, t) - u(x, t) \right| & \le \sum_{i = 1}^N \left| w(x - \xi_i^0(t) - \widetilde{h}_i(t)) - w(x - \xi_i^0(t) - h_i(t)) \right| + |\widetilde{\psi}(x, t)| + |\psi(x, t)| \\
& \le \| w' \|_{L^\infty(\R)} \sum_{i = 1}^N \left( |h_i(t)| + |\widetilde{h}_i(t)| \right) + \left( \| \widetilde{\psi} \|_{\A_T} + \| \psi \|_{\A_T} \right)\Phi(x, t) \\
& \le C \left\{ \left( \| h \|_{\HH_{T, \mu}} + \| \widetilde{h} \|_{\HH_{T, \mu}} \right) t^{1 - \mu} + t^{- \frac{2 s}{1 + 2 s}} \right\} \le C \, t^{1 - \mu}.
\end{align*}
Since~$s > 1/2$ (and therefore~$\mu > 1$), this gives in particular~\eqref{u-utildeto0}. Hence, Theorem~\ref{asymptstabthm} is proved, under the assumption that~$\eta \in \A_{\{ T \}}^1$.

To get~\eqref{u-utildeto0} in the more general case of~$\eta \in \A_{\{ T \}}$, we argue as follows. First, we consider, for~$\varepsilon > 0$ small, the mollification~$\eta_\varepsilon$ of~$\eta$ via convolution against a smooth, compactly supported, and symmetric kernel. We have that~$\eta_\varepsilon$ is a smooth, odd function. As~$\eta$ is bounded,~$\eta_{\varepsilon}' \in L^\infty(\R)$. Moreover, it is easy to see that~$\eta_\varepsilon$ satisfies~\eqref{etasmall}, up to possibly replacing~$\delta_0$ with~$2 \delta_0$ and taking~$\varepsilon$ sufficiently small. Consequently,~$\eta_\varepsilon \in \A_{\{ T \}}^1$ and we may apply to it the result established earlier, deducing that the solution~$\widetilde{u}^{(\varepsilon)}$ of
\begin{equation} \label{eqforueps}
\begin{cases}
\partial_t \widetilde{u}^{(\varepsilon)} + (-\Delta)^s \widetilde{u}^{(\varepsilon)} + W'(\widetilde{u}^{(\varepsilon)}) = 0 & \quad \mbox{in } \R \times (T, +\infty) \\
\widetilde{u}^{(\varepsilon)}(\cdot, T) = u(\cdot, T) + \eta_\varepsilon & \quad \mbox{in } \R
\end{cases}
\end{equation}
satisfies
\begin{equation} \label{ueps-udecay}
|\widetilde{u}^{(\varepsilon)}(x, t) - u(x, t)| \le C \, t^{1 - \mu} \quad \mbox{for all } x \in \R, \, t > T,
\end{equation}
for a generic constant~$C \ge 1$. Now, as~$\eta_\varepsilon$ is bounded in~$L^\infty(\R)$ uniformly in~$\varepsilon$, equation~\eqref{eqforueps} and Proposition~\ref{regforstrongprop}\ref{intreg} in Section~\ref{solsec} give that, for every large~$k \in \N$, it holds~$\| \widetilde{u}^{(\varepsilon)} \|_{C^\alpha(T + 1/k, T + k)} \le C_k$, for some exponent~$\alpha \in (0, 1)$ and constant~$C_k > 0$ independent of~$\varepsilon$. Hence, by Ascoli-Arzel\`a theorem and a diagonal argument, there exists an infinitesimal sequence~$\{ \varepsilon_j \}_{j \in \N}$ such that~$\{ u^{(\varepsilon_j)} \}$ converges to some bounded function~$\widehat{u}$ a.e.~in~$\R \times (T, +\infty)$. Accordingly,~\eqref{ueps-udecay} passes to the limit and yields
\begin{equation} \label{uhat-udecay}
|\widehat{u}(x, t) - u(x, t)| \le C \, t^{1 - \mu} \quad \mbox{for all } x \in \R, \, t > T.
\end{equation}
Since~$\eta_\varepsilon \rightarrow \eta$ a.e.~in~$\R$, from the mild formulation of~\eqref{eqforueps} and Lebesgue's dominated convergence theorem, we deduce that~$\widehat{u}$ solves problem~\eqref{probforutilde}. By uniqueness (see Proposition~\ref{existinLinftyprop}), we then conclude that~$\widehat{u} = \widetilde{u}$ and therefore that~\eqref{u-utildeto0} follows from~\eqref{uhat-udecay}.
\end{proof}

\appendix

\section{Proofs of Propositions~\ref{uimprovasymptprop} and~\ref{w''decayprop}} \label{layerapp}

\noindent
In this first appendix, we address the validity of the results stated in Section~\ref{layersec}, about the asymptotic behavior of the layer solution~$w$ and of its derivatives.

The general strategy of the proof of Proposition~\ref{uimprovasymptprop} is based on techniques developed in~\cite[Sections~5 and~6]{GM12} and~\cite[Sections~4-7]{DPV15}. Our improvements of those arguments mainly consist in the establishment and application of a maximum principle for odd solutions of linear equations, used in conjunction with a particular odd barrier---the function~$\omega_3$ introduced below in~\eqref{omegasdef}. Our technique heavily relies on the parity of~$W$ (hypothesis~\eqref{Weven}) and, thus, cannot be applied in the more general frameworks of~\cite{GM12,DPV15,DFV14}. 

Estimate~\eqref{w'''decay} of Proposition~\ref{w''decayprop} also follows from an odd comparison principle, applied with the function~$\omega_2$ instead of~$\omega_3$---see again~\eqref{omegasdef} for its definition.

\subsection{Three auxiliary functions}

In this subsection, we obtain some information on the decay at infinity of the fractional Laplacian applied to the functions
\begin{equation} \label{omegasdef}
\omega_1(x) := \frac{1}{(1 + x^2)^{\frac{1 + 2 s}{2}}}, \quad \omega_2(x) := \frac{x}{(1 + x^2)^{\frac{3 + 2 s}{2}}}, \quad \mbox{and} \quad \omega_3(x) := \frac{x}{(1 + x^2)^{\frac{1 + 4 s}{2}}}.
\end{equation}
First, we compute the asymptotic expansion of~$\omega_1$ at infinity.

\begin{lemma} \label{philem}
It holds
\begin{equation} \label{expforomega1}
\frac{(-\Delta)^s \omega_1(x)}{\omega_1(x)} = - \int_\R \omega_1(y) \, dy + \lambda_{1, s} \, |x|^{- 2 s} + o \! \left( |x|^{- 2 s} \right) \quad \mbox{as } x \rightarrow \pm \infty,
\end{equation}
for some constant~$\lambda_{1, s} \in \R$. In particular,
\begin{equation} \label{Dsomega1leomega1}
\left| (-\Delta)^s \omega_1(x) \right| \le C_1 \, \omega_1(x) \quad \mbox{for all } x \in \R,
\end{equation}
for some constant~$C_1 > 0$.
\end{lemma}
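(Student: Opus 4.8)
The plan is to establish the asymptotic expansion~\eqref{expforomega1} by a direct analysis of the singular integral in~\eqref{fracLap}, and then to read off~\eqref{Dsomega1leomega1} as an immediate corollary. First, since $\omega_1$ is even, $(-\Delta)^s\omega_1$ is even as well, so it is enough to prove~\eqref{expforomega1} as $x\to+\infty$. Granting this, \eqref{Dsomega1leomega1} is easy: $(-\Delta)^s\omega_1$ is continuous and bounded on $\R$ (because $\omega_1\in C^\infty(\R)\cap L^1(\R)$ is bounded together with its derivatives), $\omega_1$ is smooth and strictly positive, so $(-\Delta)^s\omega_1/\omega_1$ is continuous on $\R$ and bounded near infinity by~\eqref{expforomega1}; hence it is bounded on all of $\R$.

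To prove the expansion, I would fix $x>0$ large and split the integral in~\eqref{fracLap} into the outer region $\{|z|\ge x/2\}$ and the inner region $\{|z|<x/2\}$. On the outer region, after the changes of variables $y=x\pm z$, the corresponding contribution becomes
\[
\omega_1(x)\int_{|z|\ge x/2}\frac{dz}{|z|^{1+2s}}-\int_{|y-x|\ge x/2}\frac{\omega_1(y)}{|y-x|^{1+2s}}\,dy .
\]
The first term is explicit, equal to $s^{-1}2^{2s}\,\omega_1(x)\,x^{-2s}$. In the second term I would separate $\{y\le x/2\}$ from $\{y\ge 3x/2\}$; on $\{y\le x/2\}$, writing $(x-y)^{-1-2s}=x^{-1-2s}(1-y/x)^{-1-2s}$ and using
\[
\int_{y\le x/2}\omega_1=\int_\R\omega_1-\int_{x/2}^{+\infty}\omega_1=\int_\R\omega_1+O\big(x^{-2s}\big),
\]
a rescaling $y=xt$ and a dominated convergence argument give $\int_{y\le x/2}\omega_1(y)\big[(1-y/x)^{-1-2s}-1\big]\,dy=c\,x^{-2s}+o(x^{-2s})$ for some constant $c$ (the borderline non-integrability at $t=0$ when $s\ge1/2$ being absorbed by exploiting the evenness of $\omega_1$). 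On $\{y\ge 3x/2\}$, the substitution $y=xt$ together with $x^{1+2s}\omega_1(xt)\le t^{-1-2s}$ and $x^{1+2s}\omega_1(xt)\to t^{-1-2s}$ yields a term of size $O(x^{-1-4s})$ with an identifiable leading coefficient. Summing up, the outer region contributes $-x^{-1-2s}\int_\R\omega_1+\Lambda_{1}\,x^{-1-4s}+o(x^{-1-4s})$ for some constant $\Lambda_1$.

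On the inner region I would rescale $z=xu$. For $|u|<1/2$ one has $x\pm z\in[x/2,3x/2]$, hence the uniform second–difference estimate $|2\omega_1(x)-\omega_1(x+z)-\omega_1(x-z)|\le C\,x^{-3-2s}z^2$ holds (using that $\omega_1''$ decays like $|\cdot|^{-3-2s}$), while pointwise $x^{1+2s}\big(2\omega_1(x)-\omega_1(x(1+u))-\omega_1(x(1-u))\big)\to 2-(1+u)^{-1-2s}-(1-u)^{-1-2s}$. A dominated convergence argument then gives
\[
\tfrac12\int_{|z|<x/2}\frac{2\omega_1(x)-\omega_1(x+z)-\omega_1(x-z)}{|z|^{1+2s}}\,dz=\Lambda_{2}\,x^{-1-4s}+o(x^{-1-4s})
\]
for some constant $\Lambda_2$. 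A plain Taylor expansion of $\omega_1$ at $x$ would \emph{not} suffice here, since the $k$-th even term is of size $\omega_1^{(2k)}(x)\,x^{2k-2s}\sim x^{-1-4s}$ for every $k$, so all of them contribute at the common order $x^{-1-4s}$; it is the rescaling that isolates the correct constant as a single convergent integral. Combining the two contributions, $(-\Delta)^s\omega_1(x)=-x^{-1-2s}\int_\R\omega_1+\Lambda\,x^{-1-4s}+o(x^{-1-4s})$ for a suitable constant $\Lambda$; dividing by $\omega_1(x)=x^{-1-2s}(1+O(x^{-2}))$ and using $s<1$ (so that the $O(x^{-2})$ factor is $o(x^{-2s})$) produces~\eqref{expforomega1} with $\lambda_{1,s}=\Lambda$.

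The hard part will be the bookkeeping at order $x^{-1-4s}$: carrying the dominated convergence arguments through the rescalings while controlling the borderline behaviour at the origin (which for $s\ge1/2$ is only integrable thanks to an odd-symmetry cancellation), and recognizing that the inner region must not be handled by a naive Taylor expansion. By contrast, the explicit outer integral, the tail bound $\int_{x/2}^{+\infty}\omega_1=O(x^{-2s})$, and the various changes of variables are routine.
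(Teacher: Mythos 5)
Your argument is correct and essentially reproduces the paper's proof: the paper writes the PV form of $(-\Delta)^s \omega_1 / \omega_1$ as four pieces $I_1,\dots,I_4$, rescales $t = y/x$ in each and passes to the limit by dominated convergence, with the delicate piece (your $\{y \le x/2\}$ term, the paper's $I_4$) handled via the precise tail asymptotics of $\int \omega_1$ together with exactly the odd-symmetry cancellation you invoke to absorb the borderline $|t|^{-2s}$ singularity when $s \ge 1/2$. Your use of the second-difference form with an inner/outer split in $z$, and dividing by $\omega_1(x) = x^{-1-2s}(1+O(x^{-2}))$ only at the end, is a cosmetic repackaging of the same scheme.
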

\begin{proof}
We focus on the expansion~\eqref{expforomega1}, as estimate~\eqref{Dsomega1leomega1} is an immediate consequence of it, taking also into account the continuity of~$\omega_1$ and~$(-\Delta)^s \omega_1$ as well as the positivity of~$\omega_1$.

First of all, we notice that, by symmetry, it is enough to compute the expansion as~$x \rightarrow +\infty$. Then, we write down the first and second derivatives of~$\omega_1$:
\begin{equation} \label{phi'phi''}
\omega_1'(x) = - (1 + 2 s) \frac{x}{(1 + x^2)^{\frac{3 + 2 s}{2}}} \quad \mbox{and} \quad \omega_1''(x) = (1 + 2 s) \frac{2(1 + s) x^2 - 1}{(1 + x^2)^{\frac{5 + 2 s}{2}}}.
\end{equation}

We have
$$
\frac{(- \Delta)^s \omega_1(x)}{\omega_1(x)} = I_1(x) + I_2(x) + I_3(x) + I_4(x),
$$
with
\begin{align*}
I_1(x) & := \frac{1}{\omega_1(x)} \int_{\frac{x}{2}}^{\frac{3x}{2}} \frac{\omega_1(x) - \omega_1(y) - \omega_1'(x) (x - y)}{|x - y|^{1 + 2 s}} \, dy, \\
I_2(x) & := \frac{1}{\omega_1(x)} \int_{\left( - \infty, - \frac{x}{2} \right) \cup \left( \frac{3 x}{2}, +\infty \right)} \frac{\omega_1(x) - \omega_1(y)}{|x - y|^{1 + 2 s}} \, dy, \\
I_3(x) & := \int_{- \frac{x}{2}}^{\frac{x}{2}} \frac{dy}{|x - y|^{1 + 2 s}}, \quad \mbox{and} \quad I_4(x) := - \frac{1}{\omega_1(x)} \int_{- \frac{x}{2}}^{\frac{x}{2}} \frac{\omega_1(y)}{|x - y|^{1 + 2 s}} \, dy.
\end{align*}

We now proceed to evaluate the terms~$I_j(x)$ one by one. We will frequently use the change of variables~$t := y/x$. We begin with~$I_1(x)$. Applying this change of coordinates, we have
$$
I_1(x) = \frac{x^{- 2 s}}{\omega_1(x)} \int_{\frac{1}{2}}^{\frac{3}{2}} \frac{\omega_1(x) - \omega_1(t x) - x \omega_1'(x) (1 - t)}{|1 - t|^{1 + 2 s}} \, dt.
$$
A Taylor expansion at~$t = 1$ gives that
$$
\omega_1(x) - \omega_1(t x) - x \omega_1'(x) (1 - t) = - \frac{x^2 \omega_1''(z(t, x))}{2} (t - 1)^2 \quad \mbox{for all } t \in \left[ \frac{1}{2}, \frac{3}{2} \right],
$$
for some~$z(t, x) \in [x/2, 3x/2]$. Hence, taking advantage of~\eqref{phi'phi''},
$$
\frac{\left| \omega_1(x) - \omega_1(t x) - x \omega_1'(x) (1 - t) \right|}{\omega_1(x)} \le C (t - 1)^2 \quad \mbox{for all } t \in \left[ \frac{1}{2}, \frac{3}{2} \right],
$$
for every large~$x$ and for some constant~$C > 0$ depending only on~$s$.
%Hence,
%$$
%\frac{x^{2 s}}{x^{3 + 2 s} \omega_1(x)} \int_{\frac{1}{2}}^{\frac{3}{2}} |t - 1|^{1 - 2 s} \, dt \le \frac{C}{x^{2(1 - s)}} \int_0^{\frac{1}{2}} \tau^{1 - 2 s} \, d\tau \le C,
%$$
%with~$C$ now possibly depending also on~$s$, but still independent of~$x$.
Consequently, we may apply Lebesgue's dominated convergence theorem and deduce that
\begin{equation} \label{I1limit}
\begin{aligned}
\lim_{x \rightarrow +\infty} x^{2 s} I_1(x) & = \int_{\frac{1}{2}}^{\frac{3}{2}} \left\{ \lim_{x \rightarrow +\infty} \frac{\omega_1(x) - \omega_1(t x) - x \omega_1'(x) (1 - t) }{\omega_1(x)} \right\} \frac{dt}{|1 - t|^{1 + 2 s}} \\
& = \int_{\frac{1}{2}}^{\frac{3}{2}} \left\{ \lim_{x \rightarrow +\infty} \left( 1 - \left( \frac{1 + x^2}{1 + t^2 x^2} \right)^{\frac{1 + 2 s}{2}} + (1 + 2 s) \frac{(1 - t) x^2}{1 + x^2} \right) \right\} \frac{dt}{|1 - t|^{1 + 2 s}} \\
& = \int_{\frac{1}{2}}^{\frac{3}{2}} \frac{1 - |t|^{- 1 - 2 s} + (1 + 2 s) (1 - t)}{|1 - t|^{1 + 2 s}} \, dt = \PV \int_{\frac{1}{2}}^{\frac{3}{2}} \frac{1 - |t|^{- 1 - 2 s}}{|1 - t|^{1 + 2 s}} \, dt,
\end{aligned}
\end{equation}
where~$\PV$ indicates that the integral has to be intended in the Cauchy principal value sense. A similar (but simpler) argument yields that
\begin{equation} \label{I2limit}
\lim_{x \rightarrow +\infty} x^{2 s} I_2(x) = \int_{\left( - \infty, - \frac{1}{2} \right) \cup \left( \frac{3}{2}, +\infty \right)} \frac{1 - |t|^{- 1 - 2 s}}{|1 - t|^{1 + 2 s}} \, dt,
\end{equation}
whereas
\begin{equation} \label{I3limit}
x^{2 s} I_3(x) \equiv \int_{- \frac{1}{2}}^{\frac{1}{2}} \frac{dt}{|1 - t|^{1 + 2 s}}.
\end{equation}

The expansion of~$I_4(x)$ is more involved. Set~$A := \int_\R \omega_1(y) \, dy$. Changing variables as before, we see that
$$
I_4(x) = - \frac{x^{- 2 s}}{\omega_1(x)} \int_{- \frac{1}{2}}^{\frac{1}{2}} \frac{\omega_1(t x)}{|1 - t|^{1 + 2 s}} \, dt = - x^{- 2 s} \int_{- \frac{1}{2}}^{\frac{1}{2}} \left( \frac{1 + x^2}{1 + t^2 x^2} \right)^{\frac{1 + 2 s}{2}} \frac{dt}{(1 - t)^{1 + 2 s}}.
$$
Notice now that the constant~$A$ can be written as
$$
A = \int_\R \omega_1(y) \, dy = \frac{x}{(1 + x^2)^{\frac{1 + 2 s}{2}}} \left\{ \int_{- \frac{1}{2}}^{\frac{1}{2}} \left( \frac{1 + x^2}{1 + t^2 x^2} \right)^{\frac{1 + 2 s}{2}} \, dt + \int_{\R \setminus \left( - \frac{1}{2}, \frac{1}{2} \right)} \left( \frac{1 + x^2}{1 + t^2 x^2} \right)^{\frac{1 + 2 s}{2}} \, dt \right\},
$$
for every~$x$. Using this, we find
\begin{align*}
\lim_{x \rightarrow +\infty} x^{2 s} \left\{ I_4(x) + A \right\} & = \lim_{x \rightarrow +\infty} \int_{- \frac{1}{2}}^{\frac{1}{2}} \left( \frac{1 + x^2}{1 + t^2 x^2} \right)^{\frac{1 + 2 s}{2}} \left\{ \left( \frac{x^2}{1 + x^2} \right)^{\frac{1 + 2 s}{2}} - (1 - t)^{- 1 - 2 s} \right\} dt \\
& \quad + \int_{\R \setminus \left( - \frac{1}{2}, \frac{1}{2} \right)} \frac{dt}{|t|^{1 + 2 s}}.
\end{align*}
Note that, by symmetry,
$$
(1 + 2 s) \int_{- \frac{1}{2}}^{\frac{1}{2}} \left( \frac{1 + x^2}{1 + t^2 x^2} \right)^{\frac{1 + 2 s}{2}} t \, dt = 0 \quad \mbox{for every } x.
$$
Hence, we have that
\begin{equation} \label{I4prelim}
\lim_{x \rightarrow +\infty} x^{2 s} \left\{ I_4(x) + A \right\} = \lim_{x \rightarrow +\infty} \int_{- \frac{1}{2}}^{\frac{1}{2}} \left( \frac{1 + x^2}{1 + t^2 x^2} \right)^{\frac{1 + 2 s}{2}} \left\{ \mu(t) + \nu(x) \right\}  dt + \int_{\R \setminus \left( - \frac{1}{2}, \frac{1}{2} \right)} \frac{dt}{|t|^{1 + 2 s}},
\end{equation}
with
$$
\mu(t) := 1 - (1 - t)^{- 1 - 2 s} + (1 + 2 s) t \quad \mbox{and} \quad \nu(x) := \left( \frac{x^2}{1 + x^2} \right)^{\frac{1 + 2 s}{2}} - 1.
$$
By Taylor expanding~$\mu$ at~$t = 0$, it is easy to see that
$$
|\mu(t)| \le C \, t^2 \quad \mbox{for all } t \in \left( -\frac{1}{2}, \frac{1}{2} \right).
$$
Therefore,
$$
\left( \frac{1 + x^2}{1 + t^2 x^2} \right)^{\frac{1 + 2 s}{2}} |\mu(t)| \le C \, |t|^{1 - 2 s} \in L^1 \! \left( - \frac{1}{2}, \frac{1}{2} \right),
$$
for every~$x \ge 1$, and the dominated convergence theorem gives that
\begin{equation} \label{mulimit}
\lim_{x \rightarrow +\infty} \int_{- \frac{1}{2}}^{\frac{1}{2}} \left( \frac{1 + x^2}{1 + t^2 x^2} \right)^{\frac{1 + 2 s}{2}} \mu(t) \, dt = \int_{- \frac{1}{2}}^{\frac{1}{2}} \frac{\mu(t)}{|t|^{1 + 2 s}} \, dt = \PV \int_{- \frac{1}{2}}^{\frac{1}{2}} \frac{1 - (1 - t)^{- 1 - 2 s}}{|t|^{1 + 2 s}} \, dt.
\end{equation}
On the other hand, since~$|\nu(x)| \le C x^{- 2}$ for every large~$x$, we obtain
$$
\left| \nu(x) \int_{- \frac{1}{2}}^{\frac{1}{2}} \left( \frac{1 + x^2}{1 + t^2 x^2} \right)^{\frac{1 + 2 s}{2}} dt \right| \le \frac{C}{x^{1 - 2 s}} \int_{- \frac{1}{2}}^{\frac{1}{2}} \frac{dt}{(1 + t^2 x^2)^{\frac{1 + 2 s}{2}}} \le \frac{C}{x^{2(1 - s)}} \int_{\R} \frac{dy}{(1 + y^2)^{\frac{1 + 2 s}{2}}} \longrightarrow 0,
$$
as~$x \rightarrow +\infty$. By combining this with~\eqref{mulimit} and~\eqref{I4prelim}, we conclude that
\begin{equation} \label{I4limit}
\lim_{x \rightarrow +\infty} x^{2 s} \left\{ I_4(x) + A \right\} = \PV \int_{- \frac{1}{2}}^{\frac{1}{2}} \frac{1 - (1 - t)^{- 1 - 2 s}}{|t|^{1 + 2 s}} \, dt + \int_{\R \setminus \left( - \frac{1}{2}, \frac{1}{2} \right)} \frac{dt}{|t|^{1 + 2 s}}.
\end{equation}
Identity~\eqref{expforomega1} follows from this,~\eqref{I1limit},~\eqref{I2limit}, and~\eqref{I3limit}.
\end{proof}

Similarly, we have the following expansion for~$\omega_2$.

\begin{lemma} \label{psilem}
It holds
\begin{equation} \label{Dspsiasympt}
\frac{(-\Delta)^s \omega_2(x)}{\omega_2(x)} = - \int_\R \omega_1(y) \, dy + \lambda_{2, s} |x|^{- 2 s} + o \! \left( |x|^{- 2 s} \right) \quad \mbox{as } x \rightarrow \pm \infty,
\end{equation}
for some constant~$\lambda_{2, s} \in \R$. In particular,
\begin{equation} \label{Dspsilepsi}
\left| (-\Delta)^s \omega_2(x) \right| \le C_2 |\omega_2(x)| \quad \mbox{for all } x \in \R,
\end{equation}
for some constant~$C_2 > 0$.
\end{lemma}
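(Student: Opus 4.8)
The plan is to mirror the proof of Lemma~\ref{philem}. Since $\omega_2$ is odd, both $\omega_2$ and $(-\Delta)^s\omega_2$ are odd, so the ratio $(-\Delta)^s\omega_2(x)/\omega_2(x)$ is even in $x$, and it suffices to prove \eqref{Dspsiasympt} as $x\to+\infty$. Writing $\omega_2'$ and $\omega_2''$ explicitly (they are already recorded, up to the factor $-(1+2s)$, in \eqref{phi'phi''}), I decompose exactly as for $\omega_1$,
\[
\frac{(-\Delta)^s\omega_2(x)}{\omega_2(x)} = J_1(x) + J_2(x) + J_3(x) + J_4(x),
\]
where $J_1$ is the integral over $(x/2,3x/2)$ of the second-order Taylor remainder $\omega_2(x)-\omega_2(y)-\omega_2'(x)(x-y)$ divided by $|x-y|^{1+2s}\omega_2(x)$ (the principal-value contribution of the correction term $\omega_2'(x)(x-y)$ vanishing by oddness around $y=x$), $J_2$ is the corresponding outer integral, $J_3(x)=\int_{-x/2}^{x/2}|x-y|^{-1-2s}\,dy$, and $J_4(x)=-\omega_2(x)^{-1}\int_{-x/2}^{x/2}\omega_2(y)|x-y|^{-1-2s}\,dy$. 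The pieces $J_1$, $J_2$, $J_3$ are treated by the same arguments as $I_1$, $I_2$, $I_3$: the substitution $t=y/x$, a Taylor expansion at $t=1$ (for $J_1,J_2$), and dominated convergence, using that $x^2\omega_2''(z)/\omega_2(x)$ stays bounded for $z\asymp x$. This shows $x^{2s}J_j(x)$ has a finite limit for $j=1,2,3$, so these three pieces contribute only at order $|x|^{-2s}$.

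The genuinely new point is $J_4$, and here the oddness of $\omega_2$ forces one extra order of expansion compared with the $\omega_1$ case: since $\int_{-x/2}^{x/2}\omega_2(y)\,dy=0$, the constant in \eqref{Dspsiasympt} cannot come directly from the mass of $\omega_2$. Instead, for $y\in(-x/2,x/2)$ I write $|x-y|^{-1-2s}=x^{-1-2s}(1-y/x)^{-1-2s}$ and expand in powers of $y/x$: the zeroth-order term vanishes by oddness, the first-order term gives $\frac{1+2s}{x}\int_{-x/2}^{x/2}y\,\omega_2(y)\,dy\to\frac{1+2s}{x}\int_\R y\,\omega_2(y)\,dy$ (the tail being $O(|x|^{-2s})$), the second-order (even) term vanishes again by oddness, and the binomial-truncation error on $(-x/2,x/2)$ is lower order. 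Combining with the large-$x$ expansion of $\omega_2(x)^{-1}=x^{-1}(1+x^2)^{(3+2s)/2}$ converts the first-order contribution into the constant $-(1+2s)\int_\R y\,\omega_2(y)\,dy$. Using the relation $\omega_2=-(1+2s)^{-1}\omega_1'$ visible from \eqref{phi'phi''} and integrating by parts,
\[
(1+2s)\int_\R y\,\omega_2(y)\,dy = -\int_\R y\,\omega_1'(y)\,dy = \int_\R\omega_1(y)\,dy,
\]
the boundary term being harmless since $y\,\omega_1(y)=O(|y|^{-2s})\to0$. This identifies the constant in $J_4$, hence in \eqref{Dspsiasympt}, as $-\int_\R\omega_1(y)\,dy$; carrying the $J_4$ expansion one step further (again exploiting the vanishing of the even-power moments of $\omega_2$ on symmetric intervals) produces the remainder $\lambda_{2,s}|x|^{-2s}+o(|x|^{-2s})$, and adding the $|x|^{-2s}$-order limits of $J_1,J_2,J_3$ yields \eqref{Dspsiasympt}.

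Estimate \eqref{Dspsilepsi} then follows: away from the origin, $\omega_2$ and $(-\Delta)^s\omega_2$ are continuous with $\omega_2\ne0$, and \eqref{Dspsiasympt} bounds their ratio at infinity; near the origin both functions are smooth and odd, so they vanish to the same (first) order and the ratio stays bounded there too. The main obstacle is the bookkeeping in $J_4$ — checking that the tails $\int_{x/2}^{\infty}y\,\omega_2(y)\,dy$, the truncation error of the binomial series on $(-x/2,x/2)$, and the error in expanding $\omega_2(x)^{-1}$ are all genuinely $o(|x|^{-2s})$ after normalization. This works precisely because every even-power moment $\int_{-a}^{a}y^{2k}\omega_2(y)\,dy$ vanishes, which is what lets the first nonzero contribution appear one order down and still be controlled.
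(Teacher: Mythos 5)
Your decomposition into $J_1,\dots,J_4$ and the treatment of $J_1,J_2,J_3$ coincide with the paper's, and your identification of the leading constant in $J_4$ is correct: extracting the first moment of $\omega_2$ and using $(1+2s)\omega_2=-\omega_1'$ together with an integration by parts over $\R$, so that $(1+2s)\int_\R y\,\omega_2(y)\,dy=\int_\R\omega_1(y)\,dy$, is equivalent to the paper's device of integrating by parts \emph{inside} $J_4$ before passing to the limit. Up to the statement $J_4(x)=-\int_\R\omega_1+O(|x|^{-2s})$, and hence up to \eqref{Dspsilepsi}, your argument goes through (near the origin it is cleaner to invoke L'H\^opital, since $\omega_2'(0)=1\neq0$, exactly as in the paper, but that is minor).

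The genuine gap is in the final step, where you claim that ``carrying the $J_4$ expansion one step further'' yields the remainder $\lambda_{2,s}|x|^{-2s}+o(|x|^{-2s})$. Truncating the binomial series $(1-y/x)^{-1-2s}=\sum_k c_k\,(y/x)^k$ at any finite order does \emph{not} leave an error of size $o(x^{-2-4s})$ in the inner integral: for every odd $k\ge3$ the moment $\int_{-x/2}^{x/2}y^k\omega_2(y)\,dy$ grows like $x^{k-1-2s}$ (the integrand behaves like $y^{k-2-2s}$ at infinity), so the $k$-th term contributes $c_k\,x^{-1-2s-k}\cdot O(x^{k-1-2s})=O(x^{-2-4s})$, which after dividing by $\omega_2(x)\approx x^{-2-2s}$ is exactly the order $|x|^{-2s}$ you are trying to resolve --- and this happens simultaneously for \emph{all} odd $k\ge3$ (the vanishing of the even moments does not help here). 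Hence no finite-order expansion isolates $\lambda_{2,s}$; you would have to sum the full series and justify exchanging the summation with the limit, including the subleading tails of every moment. The paper's route avoids this: integrate by parts in $J_4$ first, using $(1+2s)\omega_2=-\omega_1'$, which turns the inner integral into $\int_{-x/2}^{x/2}\omega_1(y)(x-y)^{-2-2s}\,dy$ plus an explicit boundary term, and then compute $\lim_{x\to+\infty}x^{2s}\{J_4(x)+A\}$ by the same rescaling $t=y/x$ and dominated convergence (with the odd-term subtraction trick) used for $I_4$ in Lemma~\ref{philem}. This second-order term is not decorative: the existence of $\lambda_{2,s}$ is precisely what is used in \eqref{V'V''V'''} to evaluate $\lim_{r\to0}V'''(r)$, so the proof of \eqref{Dspsiasympt} must produce it, not just the $O(|x|^{-2s})$ bound.
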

\begin{proof}
As in Lemma~\ref{philem}, it is enough to verify~\eqref{Dspsiasympt} for~$x \rightarrow +\infty$.
%we begin by computing the first and second derivatives of~$\omega_2$:
%\begin{equation} \label{phi'phi''}
%\omega_2'(x) = \frac{1 - 2 (1 + s) x^2}{(1 + x^2)^{\frac{5 + 2 s}{2}}} \quad \mbox{and} \quad \omega_2''(x) = (3 + 2 s) \frac{\left\{ 2 (1 + s) x^2 - 3 \right\} x}{(1 + x^2)^{\frac{7 + 2 s}{2}}}.
%\end{equation}
We decompose~$(-\Delta)^s \omega_2(x) / \omega_2(x)$ as
$$
\frac{(- \Delta)^s \omega_2(x)}{\omega_2(x)} = J_1(x) + J_2(x) + J_3(x) + J_4(x),
$$
with
\begin{align*}
J_1(x) & := \frac{1}{\omega_2(x)} \int_{\frac{x}{2}}^{\frac{3x}{2}} \frac{\omega_2(x) - \omega_2(y) - \omega_2'(x) (x - y)}{|x - y|^{1 + 2 s}} \, dy, \\
J_2(x) & := \frac{1}{\omega_2(x)} \int_{\left( - \infty, - \frac{x}{2} \right) \cup \left( \frac{3 x}{2}, +\infty \right)} \frac{\omega_2(x) - \omega_2(y)}{|x - y|^{1 + 2 s}} \, dy, \\
J_3(x) & := \int_{- \frac{x}{2}}^{\frac{x}{2}} \frac{dy}{|x - y|^{1 + 2 s}}, \quad \mbox{and} \quad J_4(x) := - \frac{1}{\omega_2(x)} \int_{- \frac{x}{2}}^{\frac{x}{2}} \frac{\omega_2(y)}{|x - y|^{1 + 2 s}} \, dy.
\end{align*}
Arguing as in Lemma~\ref{philem}, it is easy to see that
\begin{equation} \label{J123limit}
\lim_{x \rightarrow +\infty} x^{2 s} \left\{ J_1(x) + J_2(x) + J_3(x) \right\} = \PV \int_{\R \setminus \left( - \frac{1}{2}, \frac{1}{2} \right)} \frac{1 - |t|^{- 3 - 2 s} t}{|1 - t|^{1 + 2 s}} \, dt + \int_{- \frac{1}{2}}^{\frac{1}{2}} \frac{dt}{|1 - t|^{1 + 2 s}}.
\end{equation}

To deal with~$J_4(x)$, we first observe that~$(1 + 2 s) \omega_2 = - \omega_1'$ and integrate by parts, obtaining
$$
J_4(x) = \frac{2^{1 + 2 s} \left( 1 - 3^{- 1 - 2 s} \right)}{1 + 2 s} \frac{\omega_1(x/2)}{x^{1 + 2 s} \omega_2(x)} - \frac{1}{\omega_2(x)} \int_{- \frac{x}{2}}^{\frac{x}{2}} \frac{\omega_1(y)}{(x - y)^{2 + 2 s}} \, dy.
$$
Writing~$A := \int_\R \omega_1(y) \, dy$ and changing variables appropriately, we then have
\begin{align*}
x^{2 s} \left\{ J_4(x) + A \right\} & = \frac{2^{1 + 2 s} \left( 1 - 3^{- 1 - 2 s} \right)}{1 + 2 s} \frac{1 + x^2}{x^2} \left( \frac{1 + x^2}{1 + x^2/4} \right)^{\frac{1 + 2 s}{2}} + \int_{\R \setminus \left( - \frac{1}{2}, \frac{1}{2} \right)} \left( \frac{x^2}{1 + t^2 x^2} \right)^{\frac{1 + 2 s}{2}} \, dt \\
& \quad + \frac{1 + x^2}{x^2} \int_{- \frac{1}{2}}^{\frac{1}{2}} \left( \frac{1 + x^2}{1 + t^2 x^2} \right)^{\frac{1 + 2 s}{2}} \left\{ \left( \frac{x^2}{1 + x^2} \right)^{\frac{3 + 2 s}{2}} - (1 - t)^{- 2 - 2 s} \right\} dt,
\end{align*}
and therefore
\begin{equation} \label{J4prelim}
\begin{aligned}
\lim_{x \rightarrow +\infty} x^{2 s} \left\{ J_4(x) + A \right\} & = \frac{4^{1 + 2 s} \left( 1 - 3^{- 1 - 2 s} \right)}{1 + 2 s} + \int_{\R \setminus \left( - \frac{1}{2}, \frac{1}{2} \right)} \frac{dt}{|t|^{1 + 2 s}} \\
& \quad + \lim_{x \rightarrow +\infty} \int_{- \frac{1}{2}}^{\frac{1}{2}} \left( \frac{1 + x^2}{1 + t^2 x^2} \right)^{\frac{1 + 2 s}{2}} \left\{ \left( \frac{x^2}{1 + x^2} \right)^{\frac{3 + 2 s}{2}} - (1 - t)^{- 2 - 2 s} \right\} dt.
\end{aligned}
\end{equation}
As
$$
2 (1 + s) \int_{- \frac{1}{2}}^{\frac{1}{2}} \left( \frac{1 + x^2}{1 + t^2 x^2} \right)^{\frac{1 + 2 s}{2}} t \, dt = 0 \quad \mbox{for every } x,
$$
we may add this term to the argument of the limit on the second row of~\eqref{J4prelim} and, arguing as we did to get~\eqref{I4limit}, we deduce that
\begin{align*}
\lim_{x \rightarrow +\infty} x^{2 s} \left\{ J_4(x) + A \right\} & = \frac{4^{1 + 2 s} \left( 1 - 3^{- 1 - 2 s} \right)}{1 + 2 s} + \int_{\R \setminus \left( - \frac{1}{2}, \frac{1}{2} \right)} \frac{dt}{|t|^{1 + 2 s}} + \PV \int_{- \frac{1}{2}}^{\frac{1}{2}} \frac{1 - (1 - t)^{- 2 - 2 s}}{|t|^{1 + 2 s}} \, dt.
\end{align*}
This and~\eqref{J123limit} lead to the asymptotic expansion~\eqref{Dspsiasympt}.

In order to obtain~\eqref{Dspsilepsi}, we first observe that, by symmetry,~$\omega_2(0) = (-\Delta)^s \omega_2(0) = 0$. Also,
$$
\omega_2'(x) = \frac{1 - 2(1 + s) x^2}{(1 + x^2)^{\frac{5 + 2 s}{2}}}.
$$
Hence,~$\omega_2'(0) = 1$ and applying L'H\^opital's rule we have
$$
\lim_{x \rightarrow 0} \frac{(-\Delta)^s \omega_2(x)}{\omega_2(x)} = \lim_{x \rightarrow 0} \frac{(-\Delta)^s \omega_2'(x)}{\omega_2'(x)} = (-\Delta)^s \omega_2'(0) \in \R.
$$
From this,~\eqref{Dspsiasympt}, the continuity of~$\omega_2$ and~$(-\Delta)^s \omega_2$, as well as the positivity of~$|\omega_2|$ away from~$0$, we conclude the validity of~\eqref{Dspsilepsi}.
\end{proof}

Finally, we have the following estimate on the decay of~$(-\Delta)^s \omega_3$. This result is the most important of the subsection for the applications that will follow.

\begin{lemma} \label{omegabarrierlem}
It holds
\begin{equation} \label{slaplundercontrol}
\left| (-\Delta)^s \omega_3(x) \right| \le C_3 |\omega_3(x)| \quad \mbox{for all } x \in \R,
\end{equation}
for some constant~$C_3 > 0$. 
\end{lemma}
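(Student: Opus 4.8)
The plan is to adapt the argument used for Lemma~\ref{psilem}, exploiting crucially the fact that~$\omega_3$ is odd. Since~$\omega_3$ is odd, it is immediate from~\eqref{fracLap} that~$(-\Delta)^s\omega_3$ is odd as well, so it suffices to prove~\eqref{slaplundercontrol} for~$x\ge0$. As~$\omega_3$ is smooth with all its derivatives decaying at a polynomial rate, both~$(-\Delta)^s\omega_3$ and~$(-\Delta)^s\omega_3'$ are continuous and, by oddness,~$(-\Delta)^s\omega_3(0)=0$. Since~$\omega_3'(0)=1$ and~$\omega_3(x)\ne0$ for~$x\ne0$, L'H\^opital's rule gives~$\lim_{x\to0}(-\Delta)^s\omega_3(x)/\omega_3(x)=(-\Delta)^s\omega_3'(0)\in\R$, exactly as at the end of the proof of Lemma~\ref{psilem}. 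It follows that~$|(-\Delta)^s\omega_3|\le C_3\,|\omega_3|$ on every bounded interval, and the task reduces to establishing~\eqref{slaplundercontrol} for~$x\ge R_0$, with~$R_0$ a suitably large number.

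For such~$x$ I would split, as in the proofs of Lemmas~\ref{philem} and~\ref{psilem},
\[
(-\Delta)^s\omega_3(x)=K_1(x)+K_2(x)+K_3(x)+K_4(x),
\]
with
\[
K_1(x):=\int_{\frac{x}{2}}^{\frac{3x}{2}}\frac{\omega_3(x)-\omega_3(y)-\omega_3'(x)(x-y)}{|x-y|^{1+2s}}\,dy,\qquad K_2(x):=\int_{\left(-\infty,-\frac{x}{2}\right)\cup\left(\frac{3x}{2},+\infty\right)}\frac{\omega_3(x)-\omega_3(y)}{|x-y|^{1+2s}}\,dy,
\]
\[
K_3(x):=\omega_3(x)\int_{-\frac{x}{2}}^{\frac{x}{2}}\frac{dy}{|x-y|^{1+2s}},\qquad K_4(x):=-\int_{-\frac{x}{2}}^{\frac{x}{2}}\frac{\omega_3(y)}{|x-y|^{1+2s}}\,dy,
\]
where the linear term drops out of the principal-value integral over the symmetric interval~$(x/2,3x/2)$. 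A second-order Taylor expansion at~$y=x$ together with the bound~$|\omega_3''(z)|\le C(1+z^2)^{-1-2s}$ yields~$|K_1(x)|\le Cx^{-2-4s}\int_{x/2}^{3x/2}|x-y|^{1-2s}\,dy\le Cx^{-6s}$. Since~$\int_{-x/2}^{x/2}|x-y|^{-1-2s}\,dy\le Cx^{-2s}$ and~$\int_{(-\infty,-x/2)\cup(3x/2,\infty)}|x-y|^{-1-2s}\,dy\le Cx^{-2s}$, the term~$K_3$ and the~$\omega_3(x)$-part of~$K_2$ are bounded by~$Cx^{-2s}|\omega_3(x)|$; on the region of the remaining part of~$K_2$ one has~$|x-y|\ge\frac13|y|$ and~$|\omega_3(y)|\le|y|^{-4s}$ for~$|y|\ge1$, so that part is bounded by~$C\int_{|y|>x/2}|y|^{-1-6s}\,dy\le Cx^{-6s}$. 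Hence~$|K_1(x)|+|K_2(x)|+|K_3(x)|\le Cx^{-6s}\le C|\omega_3(x)|$ for~$x\ge R_0$.

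The decisive term is~$K_4$, and this is where the oddness of~$\omega_3$ is essential: because~$\int_{-x/2}^{x/2}\omega_3(y)\,dy=0$, one may rewrite~$K_4(x)=-\int_{-x/2}^{x/2}\omega_3(y)\bigl(|x-y|^{-1-2s}-x^{-1-2s}\bigr)\,dy$, and the mean value theorem gives~$\bigl||x-y|^{-1-2s}-x^{-1-2s}\bigr|\le Cx^{-2-2s}|y|$ for~$|y|\le x/2$, whence~$|K_4(x)|\le Cx^{-2-2s}\int_{-x/2}^{x/2}|\omega_3(y)|\,|y|\,dy$. Since~$|\omega_3(y)|\,|y|\sim|y|^{1-4s}$ as~$|y|\to\infty$, the last integral is~$O(1)$ if~$s>1/2$,~$O(\log x)$ if~$s=1/2$, and~$O(x^{2-4s})$ if~$s<1/2$, so that~$|K_4(x)|$ is bounded by~$Cx^{-2-2s}$, by~$Cx^{-3}\log x$, and by~$Cx^{-6s}$ respectively; recalling that~$|\omega_3(x)|\sim c\,|x|^{-4s}$ and that~$s\in(0,1)$, a short computation shows each of these is~$\le C|\omega_3(x)|$ for~$x\ge R_0$. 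Combining this with the bounds on~$K_1,K_2,K_3$ proves~\eqref{slaplundercontrol}.

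I expect the only genuine obstacle to be exactly this cancellation in~$K_4$: without using~$\int\omega_3=0$ one would only obtain the generic rate~$O(x^{-1-2s})$ for~$(-\Delta)^s\omega_3$, which is strictly weaker than~$|x|^{-4s}\sim|\omega_3(x)|$ precisely when~$s>1/2$ — that is, exactly in the regime where~$\omega_3$ is needed as an odd barrier for the improved asymptotics of~$w$ in Proposition~\ref{uimprovasymptprop}. Everything else in the argument is a routine repetition of the computations already carried out for~$\omega_1$ and~$\omega_2$.
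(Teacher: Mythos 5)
Your proof is correct, and its skeleton coincides with the paper's: the same reduction to $x>0$ by oddness, the same L'H\^opital argument at the origin using $\omega_3'(0)=1$ and $(-\Delta)^s\omega_3(0)=0$, the same four-term splitting over $(x/2,3x/2)$, its complement, and $(-x/2,x/2)$, and the same $O(x^{-6s})$ bounds for the first three pieces. The only genuine divergence is in the decisive term $K_4$ (the paper's $\ell_4$). The paper exploits that $\omega_3=\Omega_3'$ for an explicit antiderivative $\Omega_3(y)\sim |y|^{1-4s}$ and integrates by parts, then estimates the boundary term and the new integral, using the bound $|\Omega_3(y)|\le C_\varepsilon|y|^{1-4s+\varepsilon}$ to absorb the logarithm at $s=1/4$; this yields $|\ell_4(x)|\le Cx^{-4s}$ directly for all $s$. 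You instead use the zero-mean property $\int_{-x/2}^{x/2}\omega_3=0$ coming from oddness, subtract $x^{-1-2s}$ from the kernel, and apply the mean value theorem, which gives $|K_4(x)|\le Cx^{-2-2s}\int_{-x/2}^{x/2}|\omega_3(y)||y|\,dy$ and hence, after the (correct) case analysis in $s$ including the $\log$ at $s=1/2$, a bound that is always $\le C|\omega_3(x)|\sim x^{-4s}$. The two devices implement the same cancellation; yours is slightly more elementary and avoids the $\varepsilon$-trick at $s=1/4$, at the price of the explicit three-case discussion, while the paper's integration by parts produces the uniform rate $x^{-4s}$ in one stroke. Your closing remark is also on target: without this cancellation one only gets the generic rate $x^{-1-2s}$, which is insufficient precisely when $s>1/2$.
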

\begin{proof}
As~$w$ is odd, so is~$(-\Delta)^s \omega_3$. Consequently, we may reduce ourselves to verify~\eqref{slaplundercontrol} for~$x > 0$.

First, we compute the first and second derivatives of~$\omega_3$. We have
\begin{equation} \label{w'w''}
\omega_3'(x) =  \frac{1 - 4 s x^2}{(1 + x^2)^{\frac{4 s + 3}{2}}} \quad \mbox{and} \quad
\omega_3''(x) = \frac{(4 s + 1) (4 s x^2 - 3) x }{(1 + x^2)^{\frac{4 s + 5}{2}}}.
\end{equation}
Since~$\omega_3'(0) = 1$, using L'H\^opital's rule we infer that
$$
\lim_{x \rightarrow 0} \frac{(-\Delta)^s \omega_3(x)}{\omega_3(x)} = \lim_{x \rightarrow 0} \frac{(-\Delta)^s \omega_3'(x)}{\omega_3'(x)} = (- \Delta)^s \omega_3'(0) \in \R.
$$
In view of this, of the continuity of~$\omega_3$ and~$(-\Delta)^s \omega_3$, and of the positivity of~$\omega_3$ in~$(0, +\infty)$, it is enough to check~\eqref{slaplundercontrol} only for large values of~$x$.

We let~$x > 10$ and write
$$
(-\Delta)^s \omega_3(x) = \ell_1(x) + \ell_2(x) + \ell_3(x) + \ell_4(x),
$$
with
\begin{align*}
\ell_1(x) & := \int_{\frac{x}{2}}^{\frac{3x}{2}} \frac{\omega_3(x) - \omega_3(y) + \omega_3'(x) (y - x)}{|x - y|^{1 + 2 s}} \, dy, \quad \ell_2(x) := \int_{\left( -\infty, - \frac{x}{2} \right) \cup \left( \frac{3 x}{2}, +\infty \right)} \frac{\omega_3(x) - \omega_3(y)}{|x - y|^{1 + 2 s}} \, dy, \\
\ell_3(x) & := \omega_3(x) \int_{- \frac{x}{2}}^{\frac{x}{2}} \frac{dy}{|x - y|^{1 + 2 s}}, \quad \mbox{and} \quad \ell_4(x) := -\int_{- \frac{x}{2}}^{\frac{x}{2}} \frac{\omega_3(y)}{|x - y|^{1 + 2 s}} \, dy.
\end{align*}

We now proceed to estimate the terms~$\ell_i(x)$ one by one. We begin with~$\ell_1(x)$. By Taylor's theorem,
$$
\omega_3(x) - \omega_3(y) + \omega_3'(x) (y - x) = - \frac{\omega_3''(z(x, y))}{2} (y - x)^2,
$$
with~$z(x, y) \in [x/2, 3 x/2]$. Recalling~\eqref{w'w''}, we then have
$$
|\omega_3(x) - \omega_3(y) + \omega_3'(x) (y - x)| \le \frac{C}{x^{4 s + 2}} (y - x)^2,
$$
for some constant~$C > 0$ depending only on~$s$. Hence,
\begin{equation} \label{I1est}
|\ell_1(x)| \le \frac{C}{x^{4 s + 2}}\int_{\frac{x}{2}}^{\frac{3x}{2}} |y - x|^{1 - 2 s} \, dy \le \frac{C}{x^{6 s}}. 
\end{equation}
To bound~$\ell_2(x)$, we simply recall definition~\eqref{omegasdef} and change variables appropriately:
\begin{equation} \label{I2est}
|\ell_2(x)| \le \int_{\left( -\infty, - \frac{x}{2} \right) \cup \left( \frac{3 x}{2}, +\infty \right)} \frac{|\omega_3(x)| + |\omega_3(y)|}{|x - y|^{1 + 2 s}} \, dy \le \frac{C}{x^{4 s}} \int_{\frac{x}{2}}^{+\infty} \frac{dz}{z^{1 + 2 s}} \le \frac{C}{x^{6 s}}.
\end{equation}
The computation for~$\ell_3(x)$ is immediate and also gives
\begin{equation} \label{I3est}
|\ell_3(x)| \le \frac{C}{x^{6 s}}.
\end{equation}
We are thus left to deal with~$\ell_4(x)$. Notice that
$$
\omega_3(x) = \Omega_3'(x), \quad \mbox{with} \quad \Omega_3(x) = \begin{dcases} \frac{(1 + x^2)^{\frac{1 - 4 s}{2}}}{1 - 4 s} & \mbox{ if } s \ne 1/4, \\
\frac{\log (1 + x^2)}{2} & \mbox{ if } s = 1/4.
\end{dcases}
$$
Integrating by parts, we then have
$$
\ell_4(x) = - 2^{1 + 2 s} \left( 1 - 3^{- 1 -2 s} \right) x^{- 1 - 2 s} \, \Omega_3 \left( \frac{x}{2} \right) + (1 + 2 s) \int_{- \frac{x}{2}}^{\frac{x}{2}} \frac{\Omega_3(y)}{(x - y)^{2 + 2 s}} \, dy.
$$
Using that~$|\Omega_3(y)| \le C$ for every~$|y| \le 1$ and that~$|\Omega_3(y)| \le C_\varepsilon |y|^{1 - 4 s + \varepsilon}$ for every~$|y| \ge 1$ and~$\varepsilon > 0$ (obviously, one can even take~$\varepsilon = 0$ when~$s \ne 1/4$), we estimate
\begin{align*}
|\ell_4(x)| & \le C_\varepsilon \left( x^{- 6 s + \varepsilon} + \int_{0}^1 \frac{dy}{x^{2 + 2 s}} + \int_1^{\frac{x}{2}} \frac{y^{1 - 4 s + \varepsilon}}{(x - y)^{2 + 2 s}} \, dy \right) \\
& \le C_\varepsilon \left( x^{- 6 s + \varepsilon} + x^{- 2 - 2 s} + x^{- 4 s} \int_1^{\frac{x}{2}} \frac{dy}{y^{1 + 2 s - \varepsilon}} \right) \le \frac{C}{x^{4 s}},
\end{align*}
where the last inequality follows by taking, say,~$\varepsilon = s$. By combining this with~\eqref{I1est},~\eqref{I2est}, and~\eqref{I3est}, we infer that~$|(-\Delta)^s \omega_3(x)| \le C x^{- 4 s}$. Since~$\omega_3(x) \ge C^{-1} x^{- 4 s}$ for~$x > 10$, we conclude that~\eqref{slaplundercontrol} holds true.
\end{proof}

\subsection{Maximum principles}

We include here a series of maximum principles that will be used in the next subsection to obtain decay estimates for linear equations driven by the fractional Laplacian. As we will be mostly interested in~\emph{odd} solutions, the following remark is rather relevant---see also~\cite{JW16,FS19} for similar observations.

\begin{remark} \label{fraclapforoddrmk}
Let~$v: \R \to \R$ be an odd function and write
$$
\R_+ := (0, +\infty).
$$
At every point~$x \in \R_+$ for which~$(-\Delta)^s v(x)$ is well-defined we have
\begin{equation} \label{fraclapLKrel}
\begin{aligned}
(-\Delta)^s v(x) & = \PV \int_0^{+\infty} \frac{v(x) - v(y)}{|x - y|^{1 + 2 s}} \, dy + \int_{-\infty}^0 \frac{v(x) - v(z)}{|x - z|^{1 + 2 s}} \, dz \\
& = \PV \int_0^{+\infty} \frac{v(x) - v(y)}{|x - y|^{1 + 2 s}} \, dy + \int_0^{+\infty} \frac{v(x) + v(y)}{(x + y)^{1 + 2 s}} \, dy \\
%& = \L_{K_s} v(x) + 2 v(x) \int_0^{+\infty} \frac{dy}{(x + y)^{1 + 2 s}} \\
& = \L_{K_s} v(x) + \frac{x^{- 2 s}}{s} \, v(x),
\end{aligned}
\end{equation}
where the second identity follows from the change of variables~$y := - z$ and where we set
$$
\L_{K_s} v(x) := \PV \int_{\R_+} \left\{ v(x) - v(y) \right\} K_s(x, y) \, dy,
$$
with
\begin{equation} \label{Ksdef}
K_s(x, y) := \frac{1}{|x - y|^{1 + 2 s}} - \frac{1}{(x + y)^{1 + 2 s}} \quad \mbox{for all } x, y \in \R_+.
\end{equation}
Notice that
\begin{equation} \label{Kspositive}
K_s(x, y) > 0 \quad \mbox{for every } x, y \in \R_+.
\end{equation}
\end{remark}

Our first maximum principle holds for subsolutions of linear integro-differential equations in~$\R_+$ that vanish at zero and at infinity. Here, we can allow for very general operators of the form
\begin{equation} \label{LKdef}
\L_K \phi(x) := \PV \int_{\R_+} \left\{ \phi(x) - \phi(y) \right\} K(x, y) \, dy,
\end{equation}
for a non-negative kernel~$K: \R_+ \times \R_+ \to \R$. In view of Remark~\ref{fraclapforoddrmk}, it can be applied in particular to odd solutions of equations driven by the fractional Laplacian.

\begin{proposition} \label{maxprincprop}
Let~$K: \R_+ \times \R_+ \to \R$ and~$d: \R_+ \to \R$ be two measurable functions, with~$K$ non-negative and~$d$ satisfying
\begin{equation} \label{d>delta}
d(x) \ge \delta \quad \mbox{for all } x \in \R_+,
\end{equation}
for some constant~$\delta > 0$. Let~$\phi \in C^0(\R_+)$ be such that
\begin{equation} \label{philimits}
\lim_{x \rightarrow 0^+} \phi(x) = \lim_{x \rightarrow + \infty} \phi(x) = 0.
\end{equation}
Assume that, for every~$x \in \R_+$, the quantity~$\L_K \phi(x)$ as in~\eqref{LKdef} is well-defined and satisfies
\begin{equation} \label{LKv+dvle0}
\L_K \phi + d \phi \le 0 \quad \mbox{in } \R_+.
\end{equation}
Then,~$\phi \le 0$ in~$\R_+$.
\end{proposition}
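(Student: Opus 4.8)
The plan is to argue by contradiction using the classical principle that a subsolution cannot attain a positive interior maximum. First I would suppose, towards a contradiction, that
\[
M := \sup_{\R_+} \phi > 0,
\]
and show that $\phi$ attains this value at some point of $\R_+$, at which the inequality \eqref{LKv+dvle0} is violated.

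\textbf{Attainment of the maximum.} The slightly delicate point is that $\R_+=(0,+\infty)$ is not compact, so I would exploit the boundary behaviour \eqref{philimits}. Concretely, using the continuity of $\phi$ on $\R_+$ together with the two limits in \eqref{philimits}, extend $\phi$ to a continuous function on the (compact) one-point–type compactification $[0,+\infty]$ by setting $\phi(0):=0$ and $\phi(+\infty):=0$. Then $\phi$ attains its supremum $M$ on $[0,+\infty]$; since $M>0$ while $\phi$ vanishes at the two added endpoints, the maximum point must be an interior point $x_0\in\R_+$, with $\phi(x_0)=M>0$ and $\phi(x_0)-\phi(y)\ge0$ for every $y\in\R_+$.

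\textbf{Evaluation at $x_0$ and conclusion.} At this point I would evaluate $\L_K\phi$ at $x_0$. Because $K\ge0$ and $\phi(x_0)-\phi(y)\ge0$ for all $y$, the integrand $\big(\phi(x_0)-\phi(y)\big)K(x_0,y)$ in the definition \eqref{LKdef} is pointwise nonnegative; hence each truncated integral over $\{\,y\in\R_+ : |y-x_0|>\varepsilon\,\}$ is nonnegative, and therefore so is the principal-value limit $\L_K\phi(x_0)$ (which is finite by the standing assumption that $\L_K\phi(x_0)$ is well defined). Combining this with the lower bound \eqref{d>delta} yields
\[
\L_K\phi(x_0) + d(x_0)\,\phi(x_0) \;\ge\; 0 + \delta\,M \;>\; 0,
\]
which contradicts \eqref{LKv+dvle0}. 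Consequently $M\le0$, i.e.\ $\phi\le0$ in $\R_+$, as claimed. I do not expect any serious obstacle here: the only step requiring care is the one above that produces an \emph{interior} maximizer from the non-compact domain, and this is handled entirely by the compactification argument exploiting \eqref{philimits}; the sign of $\L_K\phi(x_0)$ is then immediate since the sign of the integrand survives the passage to the principal value.
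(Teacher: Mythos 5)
Your proposal is correct and follows essentially the same argument as the paper: contradiction via a positive interior maximum (guaranteed by the limits in \eqref{philimits}), nonnegativity of $\L_K\phi$ at that point from $K\ge 0$, and the lower bound $d\ge\delta$ to contradict \eqref{LKv+dvle0}. Your extra care about the attainment of the maximum and the sign of the principal value only makes explicit what the paper's proof leaves implicit.
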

\begin{proof}
Suppose, by contradiction that~$\phi(x_\star) > 0$ at some~$x_\star \in \R_+$. In view of assumption~\eqref{philimits} and the continuity of~$\phi$ in~$\R_+$, there exists~$x_M \in \R_+$ at which~$\phi(x_M) = \max_{\R_+} \phi > 0$. Thanks to the non-negativity of~$K$, hypothesis~\eqref{d>delta} on~$d$, and equation~\eqref{LKv+dvle0}, at this point it holds
$$
0 \ge \L_K \phi(x_M) + d(x_M) \phi(x_M) \ge \delta \phi(x_M),
$$
which is clearly a contradiction. Consequently,~$\phi \le 0$ in the whole~$\R_+$.
\end{proof}

Next is a modification of the previous result, which holds for bounded subsolutions in~$(L, +\infty)$, with~$L > 0$, regardless of their behavior at infinity. Notice that here we take~$K$ to be the kernel~$K_s$ defined by~\eqref{Ksdef}.

\begin{proposition} \label{weakMPprop2}
Let~$\phi \in L^\infty(\R_+) \cap C_\loc^\alpha(\R_+)$, for some~$\alpha > 2 s$, be such that
\begin{equation} \label{LKphi+dphi}
\L_{K_s} \phi + d \phi \le 0 \quad \mbox{in } (L, +\infty),
\end{equation}
for some measurable function~$d: (L, +\infty) \to \R$ satisfying
\begin{equation} \label{d>deltax>L}
d(x) \ge \delta \quad \mbox{for all } x > L,
\end{equation}
for some constants~$\delta, L > 0$. Assume that~$\phi \le 0$ in~$(0, L]$. Then,~$\phi \le 0$ in~$\R_+$.
\end{proposition}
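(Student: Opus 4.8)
The plan is a contradiction argument via a sliding barrier, resting on the elementary observation that $\L_{K_s}$ is nonnegative at a global maximum.

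Suppose $\phi(x_0)>0$ for some $x_0\in\R_+$. First I would construct a barrier $\beta\in C^2(\R_+)$, strictly positive on $\R_+$, with $\beta(x)\to+\infty$ as $x\to+\infty$, with $\beta(y)\,|x-y|^{-1-2s}$ integrable at infinity, and with
\begin{equation*}
\L_{K_s}\beta+\delta\beta\ge 0\qquad\text{in }(L,+\infty).
\end{equation*}
Granting such a $\beta$, set $\phi_\varepsilon:=\phi-\varepsilon\beta$ for $\varepsilon>0$ small. Since $\phi\le 0$ on $(0,L]$, $\phi_\varepsilon<0$ there; since $\phi$ is bounded and $\beta\to+\infty$, $\phi_\varepsilon\to-\infty$ as $x\to+\infty$; and $\phi_\varepsilon(x_0)>0$ once $\varepsilon<\phi(x_0)/\beta(x_0)$. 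As $\phi_\varepsilon$ is continuous, it then attains a positive maximum over $\R_+$ at some finite point $x_\varepsilon$, necessarily with $x_\varepsilon>L$. By \eqref{Kspositive} and $\phi_\varepsilon(x_\varepsilon)\ge\phi_\varepsilon(y)$ for all $y\in\R_+$, the quantity $\L_{K_s}\phi_\varepsilon(x_\varepsilon)=\PV\int_{\R_+}(\phi_\varepsilon(x_\varepsilon)-\phi_\varepsilon(y))K_s(x_\varepsilon,y)\,dy$ is a convergent, nonnegative integral: near $x_\varepsilon$ the integrand is $O([\phi_\varepsilon]_{C^\alpha}\,|x_\varepsilon-y|^{\alpha-1-2s})$ with $\alpha>2s$, while away from $x_\varepsilon$ it is controlled by $\beta(y)\,K_s(x_\varepsilon,y)$. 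Writing $\L_{K_s}\phi_\varepsilon=\L_{K_s}\phi-\varepsilon\L_{K_s}\beta$, using \eqref{LKphi+dphi} at $x_\varepsilon$ (legitimate since $x_\varepsilon>L$), and then the barrier inequality together with $d(x_\varepsilon)\ge\delta>0$ (by \eqref{d>deltax>L}) and $\beta(x_\varepsilon)>0$, I obtain
\begin{equation*}
d(x_\varepsilon)\,\phi(x_\varepsilon)\le-\varepsilon\,\L_{K_s}\beta(x_\varepsilon)\le\varepsilon\,d(x_\varepsilon)\,\beta(x_\varepsilon),
\end{equation*}
that is $\phi_\varepsilon(x_\varepsilon)=\phi(x_\varepsilon)-\varepsilon\beta(x_\varepsilon)\le 0$, contradicting $\phi_\varepsilon(x_\varepsilon)>0$. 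Hence $\phi\le 0$ in $\R_+$. (The situation in which $\phi$ itself attains a positive interior maximum at a finite point is just the instance $\varepsilon=0$ of this computation and needs no barrier.)

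The main obstacle is the construction of $\beta$. The natural choice is $\beta(x):=(1+x)^\sigma$ with $\sigma\in(0,2s)$; the restriction $\sigma<2s$ ensures that $\L_{K_s}\beta$ is well-defined, the tail $\int^{+\infty}(1+y)^\sigma|x-y|^{-1-2s}\,dy$ being convergent. One then estimates $\L_{K_s}\beta(x)$ by splitting the domain of integration into $\{y<x/2\}$, $\{x/2<y<2x\}$ and $\{y>2x\}$ and bounding each piece, in the spirit of Lemmas~\ref{philem}--\ref{omegabarrierlem}, by $C\,(1+x)^{-2s}\beta(x)$. Crucially, as $\sigma\to 0^+$ one has $\beta\to 1$ and $\L_{K_s}\beta/\beta\to 0$ uniformly on $(L,+\infty)$, so fixing $\sigma$ sufficiently small (depending only on $s$, $\delta$ and $L$) forces $\L_{K_s}\beta+\delta\beta\ge\tfrac{\delta}{2}\beta>0$ there. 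Carrying out this tail estimate carefully is the only genuine computation; everything else is the maximum-principle bookkeeping above.
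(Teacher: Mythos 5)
Your argument is correct, but it runs along a different track than the paper's. The paper first proves an Omori--Yau type lemma (Lemma~\ref{weakOYMPlem}): it perturbs by $\varepsilon_j\,x^\sigma$ with $\sigma\in(0,2s)$ only to force attainment of an almost-maximum, and for that it merely needs the \emph{boundedness} $|\L_{K_s}(x^\sigma)|\le C_\star$ on $(L,+\infty)$, obtained from a bound on $(-\Delta)^s(|x|^{\sigma-1}x)$ via the odd-reflection identity~\eqref{fraclapLKrel}; the proposition then follows by evaluating~\eqref{LKphi+dphi} along the resulting sequence and letting $k\to+\infty$, so no smallness of the barrier error relative to $\delta$ is ever required. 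You instead ask for a genuine barrier inequality $\L_{K_s}\beta+\delta\beta\ge 0$ with $\beta(x)=(1+x)^\sigma$ and derive the contradiction at a single touching point $x_\varepsilon>L$, which is cleaner (no sequence, no auxiliary lemma) but needs the stronger quantitative input that the negative part of $\L_{K_s}\beta$ is $o_\sigma(1)\,\beta$ uniformly on $(L,+\infty)$. That input is true, but note it does not follow from the crude tail bound $\int_{2x}^\infty(1+y)^\sigma|x-y|^{-1-2s}\,dy\le C_\sigma x^{\sigma-2s}$ alone, since $C_\sigma$ stays bounded away from $0$ as $\sigma\to 0^+$; you must exploit the cancellation $\beta(y)-\beta(x)=(1+x)^\sigma\bigl[\bigl(\tfrac{1+y}{1+x}\bigr)^\sigma-1\bigr]$ (and, for the $-(x+y)^{-1-2s}$ piece, the factor $1-(1+x)^{-\sigma}\le\sigma\log(1+x)$ against $x^{-2s}$) to see that the relevant constants vanish as $\sigma\to 0^+$, with $\sigma$ then fixed in terms of $s$, $\delta$, $L$. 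The remaining bookkeeping (attainment of the maximum of $\phi_\varepsilon$ at some $x_\varepsilon>L$, nonnegativity and convergence of $\L_{K_s}\phi_\varepsilon(x_\varepsilon)$, and the chain of inequalities yielding $\phi_\varepsilon(x_\varepsilon)\le 0$) is sound, so the proposal stands as a valid, slightly less robust but more direct alternative to the paper's two-step argument.
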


This result is an immediate consequence of the following lemma, which can be seen as a one-dimensional, nonlocal version of the weak Omori-Yau maximum principle studied, e.g., in~\cite{PRS03}.

\begin{lemma} \label{weakOYMPlem}
Let~$\phi \in L^\infty(\R_+) \cap C_\loc^{\alpha}(\R_+)$, for some~$\alpha > 2 s$, and assume that
\begin{equation} \label{u*notattin0L}
\phi^* := \sup_{\R_+} \phi > \sup_{(0, L)} \phi,
\end{equation}
for some~$L > 0$. Then, there exists a sequence of points~$\{ x_k \} \subset (L, +\infty)$ such that
\begin{equation} \label{weakMPforLKs}
\phi^* - \phi(x_k) < \frac{1}{k} \quad \mbox{and} \quad \L_{K_s} \phi(x_k) > - \frac{1}{k},
\end{equation}
for every~$k \in \N$.
\end{lemma}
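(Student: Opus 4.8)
The plan is to prove the lemma by an Omori--Yau type penalization argument, using a slowly growing barrier adapted to the operator $\L_{K_s}$. Set $\phi^{*} := \sup_{\R_+}\phi$. By the continuity of $\phi$ and hypothesis~\eqref{u*notattin0L} we have $\phi(L) \le \sup_{(0,L)}\phi < \phi^{*}$, hence $\sup_{[L,+\infty)}\phi = \phi^{*}$. Fix $\beta := s$, which belongs to $(0,2s)$ for every $s \in (0,1)$, and consider the barrier $\gamma(x) := x^{\beta}$, which is positive, smooth on $\R_+$ and diverges as $x \to +\infty$. Two preliminary facts have to be recorded. First, $\L_{K_s}\phi(x)$ is well defined for each $x \in \R_+$: near $y = x$ the integrand is absolutely integrable because $\alpha > 2s$; near $y = 0$ one has $K_s(x,y) = O(y)$; and near $y = +\infty$ one has $K_s(x,y) = O(y^{-1-2s})$ while $\phi$ is bounded. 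Second, and more crucially, the barrier satisfies
\[
C_L \;:=\; \sup_{x > L}\,\bigl|\L_{K_s}\gamma(x)\bigr| \;<\; +\infty .
\]
This follows from splitting the integral defining $\L_{K_s}\gamma(x)$ into the three regions $\{|y-x| < x/2\}$, $\{y < x/2\}$ and $\{y > 3x/2\}$: using $\|\gamma''\|_{L^{\infty}(x/2,\,3x/2)} \lesssim x^{\beta-2}$, the bound $K_s(x,y) \lesssim x^{-1-2s}$ away from the diagonal, and the convergence of $\int_{3x/2}^{+\infty} y^{\beta-1-2s}\,dy$ (which uses $\beta < 2s$), one gets $|\L_{K_s}\gamma(x)| \lesssim x^{\beta - 2s}$ for large $x$, and this is bounded on $(L,+\infty)$ because $\beta - 2s < 0$.

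Next I would run the penalization. For $\varepsilon > 0$ put $\phi_{\varepsilon} := \phi - \varepsilon\gamma$. Since $\phi$ is bounded and $\gamma(x) \to +\infty$, the function $\phi_{\varepsilon}$ restricted to $[L,+\infty)$ is continuous and tends to $-\infty$ at $+\infty$, so its supremum $s_{\varepsilon} := \sup_{[L,+\infty)}\phi_{\varepsilon}$ is attained at some $x_{\varepsilon} \in [L,+\infty)$. From $\sup_{[L,+\infty)}\phi = \phi^{*}$ one checks that $s_{\varepsilon}$ increases to $\phi^{*}$ as $\varepsilon \downarrow 0$; therefore, for all sufficiently small $\varepsilon$ we have $s_{\varepsilon} > \sup_{(0,L)}\phi \ge \phi_{\varepsilon}(L)$, which forces $x_{\varepsilon} \in (L,+\infty)$ and, since $\phi_{\varepsilon} \le \phi \le \sup_{(0,L)}\phi < s_{\varepsilon}$ on $(0,L)$, makes $x_{\varepsilon}$ a global maximum point of $\phi_{\varepsilon}$ over the whole of $\R_+$. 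Moreover $\phi^{*} \ge \phi(x_{\varepsilon}) = \phi_{\varepsilon}(x_{\varepsilon}) + \varepsilon\gamma(x_{\varepsilon}) \ge s_{\varepsilon}$, so $\phi(x_{\varepsilon}) \to \phi^{*}$ as $\varepsilon \to 0$ with no further work.

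The maximum principle input is now immediate: as $x_{\varepsilon}$ is an interior global maximum of $\phi - \varepsilon\gamma$, we have $\phi(x_{\varepsilon}) - \phi(y) \ge \varepsilon\bigl(\gamma(x_{\varepsilon}) - \gamma(y)\bigr)$ for every $y \in \R_+$; multiplying by $K_s(x_{\varepsilon},y) > 0$ (recall~\eqref{Kspositive}), integrating over $\R_+ \setminus (x_{\varepsilon}-\rho,\,x_{\varepsilon}+\rho)$ and letting $\rho \to 0^{+}$ --- a passage legitimate precisely because $\alpha > 2s$ guarantees convergence of both integrals near $x_{\varepsilon}$ --- one obtains $\L_{K_s}\phi(x_{\varepsilon}) \ge \varepsilon\,\L_{K_s}\gamma(x_{\varepsilon}) \ge -\varepsilon C_L$. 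To conclude, given $k \in \N$ I would pick $\varepsilon = \varepsilon_k$ small enough that both $\phi^{*} - \phi(x_{\varepsilon_k}) < 1/k$ and $\varepsilon_k C_L < 1/k$ hold, and set $x_k := x_{\varepsilon_k} \in (L,+\infty)$; then $\phi^{*} - \phi(x_k) < 1/k$ and $\L_{K_s}\phi(x_k) \ge -\varepsilon_k C_L > -1/k$, which is~\eqref{weakMPforLKs}.

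The main obstacle is the construction of the barrier $\gamma$: it must diverge at infinity --- a bounded perturbation would not force the perturbed supremum to be attained when, e.g., $\phi$ increases monotonically towards $\phi^{*}$ --- yet keep $\L_{K_s}\gamma$ uniformly bounded on $(L,+\infty)$, and the two requirements are reconciled exactly by the sublinear growth exponent $\beta < 2s$ together with the three-region estimate. A secondary technical point is the justification of the limit $\rho \to 0$ in the comparison step, i.e. that $\L_{K_s}\phi$ is realized at the contact point by an absolutely convergent integral, which is where the regularity assumption $\alpha > 2s$ is used.
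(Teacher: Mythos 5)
Your proof is correct and follows essentially the same penalization argument as the paper: perturb $\phi$ by a sublinearly growing barrier $x^{\sigma}$ with $\sigma \in (0, 2s)$ whose $\L_{K_s}$ is uniformly bounded on $(L, +\infty)$, locate an interior global maximum of the perturbed function for small $\varepsilon$, and exploit the positivity of $K_s$ there. The only cosmetic difference is that you bound $\L_{K_s}(x^{\beta})$ directly via a three-region split, whereas the paper estimates $(-\Delta)^s$ of the odd extension $|x|^{\sigma - 1} x$ and transfers the bound through the identity~\eqref{fraclapLKrel}; both give the same $O(x^{\sigma - 2s})$ control and the rest of the argument is identical.
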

\begin{proof}
Let~$\sigma \in (0, 2 s)$ be fixed and
$$
\eta(x) := |x|^{\sigma - 1} x \quad \mbox{for } x \in \R.
$$
We claim that there exists a constant~$C > 0$, depending only on~$s$,~$\sigma$, and~$L$, for which
\begin{equation} \label{Dsetabounded}
|(-\Delta)^s \eta(x)| \le C \quad \mbox{for all } x \ge L.
\end{equation}
This follows from a simple computation. Indeed, since
$$
\eta''(x) = \sigma (\sigma - 1) x^{\sigma - 2} \quad \mbox{for all } x > 0
$$
and
$$
|\eta(y)| = |y|^\sigma \le C_\sigma \left( x^\sigma + |x - y|^\sigma \right) \quad \mbox{for all } x > 0, \, y \in \R,
$$
it is immediate to see that
\begin{align*}
| (-\Delta)^s \eta(x) | & \le \int_{\frac{x}{2}}^{\frac{3 x}{2}} \frac{|\eta(x) - \eta(y) - \eta'(x) (x - y)|}{|x - y|^{1 + 2 s}} \, dy + \int_{\left( -\infty, \frac{x}{2} \right) \cup \left( \frac{3 x}{2}, +\infty \right)} \frac{|\eta(x)| + |\eta(y)|}{|x - y|^{1 + 2 s}} \, dy \\
& \le C \left( x^{\sigma - 2} \int_{0}^{\frac{x}{2}} z^{1 - 2 s} \, dz + x^\sigma \int_{\frac{x}{2}}^{+\infty} \frac{dz}{z^{1 + 2 s}} + \int_{\frac{x}{2}}^{+\infty} z^{\sigma - 1 - 2s} \, dz \right) \le C x^{\sigma - 2 s} \le C.
\end{align*}
As~$\eta$ is odd, by relation~\eqref{fraclapLKrel} estimate~\eqref{Dsetabounded} yields that
\begin{equation} \label{LKetabounded}
|\L_{K_s} \eta(x)| \le C_\star \quad \mbox{for all } x \ge L,
\end{equation}
for some constant~$C_\star > 0$ depending only on~$s$,~$\sigma$, and~$L$.

Let now~$\{ \varepsilon_j \}$ be an infinitesimal sequence of positive numbers and define
$$
\phi_j(x) := \phi(x) - \varepsilon_j \eta(x) \quad \mbox{for } x \in \R_+.
$$
Since~$\phi_j$ is continuous in~$\R_+$ and we have
$$
\sup_{(0, L)} \phi_j \le \sup_{(0, L)} \phi \quad \mbox{and} \quad \lim_{x \rightarrow +\infty} \phi_j(x) = -\infty,
$$
in view of hypothesis~\eqref{u*notattin0L} it is clear that there exists a point~$y_j > L$ such that
\begin{equation} \label{yjmaxofuj}
\phi_j(y_j) \ge \phi_j(x) \quad \mbox{for all } x \in \R_+,
\end{equation}
at least for~$j$ large enough. Recalling~\eqref{Kspositive}, we deduce that~$\L_{K_s} \phi_j(y_j) \ge 0$ and, therefore, by the linearity of~$\L_{K_s}$ and the bound~\eqref{LKetabounded}, that
\begin{equation} \label{LKsugeCepsj}
\L_{K_s} \phi(y_j) \ge \varepsilon_j \L_{K_s} \eta(y_j) \ge - C_\star \varepsilon_j.
\end{equation}
On the other hand, let~$\{ z_k \} \subset (L, +\infty)$ be any sequence of points satisfying~$\phi^* - \phi(z_k) < 1/(2k)$ for every~$k \in \N$. Then, applying~\eqref{yjmaxofuj} with~$x = z_k$, we get
\begin{equation} \label{uyjge}
\phi(y_j) \ge \phi(z_k) - \varepsilon_j \eta(z_k) + \varepsilon_j \eta(y_j) \ge \phi^* - \frac{1}{2 k} - \varepsilon_j |z_k|^\sigma \quad \mbox{ for every large } j \mbox{ and every } k.
\end{equation}
For any fixed~$k$, let now~$j = j_k$ be sufficiently large to have~$C_\star \varepsilon_j \le 1/k$ and~$\varepsilon_j |z_k|^\sigma < 1/(2 k)$. From~\eqref{LKsugeCepsj} and~\eqref{uyjge} it then follows that the conclusion~\eqref{weakMPforLKs} holds true with~$x_k := y_{j_k}$.
\end{proof}

With this in hand, we may now proceed to establish Proposition~\ref{weakMPprop2}.

\begin{proof}[Proof of Proposition~\ref{weakMPprop2}]
Suppose, by contradiction, that~$\phi^* := \sup_{\R_+} \phi > 0$. As~$\sup_{(0, L)} \phi \le 0 < \phi^*$, we can apply Lemma~\ref{weakOYMPlem} and deduce the existence of a sequence of points~$\{ x_k \} \subset (L, +\infty)$ for which~\eqref{weakMPforLKs} holds true. Evaluating~\eqref{LKphi+dphi} along this sequence, we get
$$
0 \ge \L_{K_s} \phi(x_k) + d(x) \phi(x_k) \ge - \frac{1}{k} + \delta \left( \phi^* - \frac{1}{k} \right),
$$
for every~$k$ sufficiently large. Letting~$k \rightarrow +\infty$, we obtain~$0 \ge \delta \phi^*$, which contradicts our assumptions. We conclude that~$\phi \le 0$ in the whole~$\R_+$.
\end{proof}

Through similar techniques, we also have the following result, which holds for general subsolutions of equations set in the whole real line and driven by the fractional Laplacian, under no symmetry assumptions.

\begin{proposition} \label{weakMPprop3}
Let~$\phi \in L^\infty(\R) \cap C_\loc^{\alpha}(\R)$, for some~$\alpha > 2 s$, be such that
\begin{equation} \label{phisubsolinRminusL}
(-\Delta)^s \phi + d \phi \le 0 \quad \mbox{in } \R \setminus [- L, L],
\end{equation}
for some measurable function~$d: \R \setminus [- L, L] \to \R$ satisfying
\begin{equation} \label{d>delta|x|>L}
d(x) \ge \delta \quad \mbox{for all } x \in \R \setminus [- L, L],
\end{equation}
for some constants~$\delta, L > 0$. Assume that~$\phi \le 0$ in~$[-L, L]$. Then,~$\phi \le 0$ in~$\R$.
\end{proposition}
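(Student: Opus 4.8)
The plan is to mimic the Omori--Yau type argument used for Proposition~\ref{weakMPprop2}, but working directly with $(-\Delta)^s$ on the whole line and with an \emph{even} auxiliary barrier in place of the odd function $\eta(x) = |x|^{\sigma - 1} x$ from Lemma~\ref{weakOYMPlem}. Suppose, towards a contradiction, that $\phi^\ast := \sup_\R \phi > 0$. Since by hypothesis $\phi \le 0$ on $[-L, L]$, we have $\phi^\ast > \sup_{[-L, L]} \phi$, so any near-maximizing sequence of $\phi$ eventually lies in $\R \setminus [-L, L]$. The goal is to produce a sequence $\{ x_k \} \subset \R \setminus [-L, L]$ along which simultaneously $\phi^\ast - \phi(x_k) < 1/k$ and $(-\Delta)^s \phi(x_k) > - 1/k$; once we have it, evaluating the differential inequality~\eqref{phisubsolinRminusL} at $x_k$ and using~\eqref{d>delta|x|>L} gives $0 \ge (-\Delta)^s \phi(x_k) + d(x_k) \phi(x_k) \ge - 1/k + \delta ( \phi^\ast - 1/k )$, and letting $k \rightarrow +\infty$ we reach $0 \ge \delta \phi^\ast > 0$, a contradiction. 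Note that $(-\Delta)^s \phi(x)$ is finite at every $x \in \R$ thanks to $\phi \in L^\infty(\R) \cap C^\alpha_\loc(\R)$ with $\alpha > 2 s$, so all the pointwise evaluations make sense.

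To build $\{ x_k \}$, I would fix $\sigma \in (0, 2 s)$ and take the smooth barrier $\eta(x) := (1 + x^2)^{\sigma/2}$, which behaves like $|x|^\sigma$ at infinity; a computation entirely parallel to the one proving~\eqref{Dsetabounded} in Lemma~\ref{weakOYMPlem} --- split the defining integral for $(-\Delta)^s \eta(x)$ into the region $\{ |x - y| < |x|/2 \}$, where one uses the second-order Taylor remainder and $|\eta''(x)| \le C |x|^{\sigma - 2}$, and its complement, where one bounds $|\eta(x)| + |\eta(y)|$ crudely --- yields $|(-\Delta)^s \eta(x)| \le C_\star \, |x|^{\sigma - 2 s} \le C_\star$ for all $|x| \ge L$, for a constant $C_\star$ depending only on $s$, $\sigma$, $L$. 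Then, for an infinitesimal sequence $\varepsilon_j \downarrow 0$, set $\phi_j := \phi - \varepsilon_j \eta$. Since $\eta \ge 1$ we have $\phi_j \le \| \phi \|_{L^\infty(\R)}$, $\phi_j$ is continuous, and $\phi_j(x) \rightarrow - \infty$ as $|x| \rightarrow +\infty$, so $\phi_j$ attains its global maximum at some $y_j \in \R$; moreover $\sup_{[-L, L]} \phi_j \le \sup_{[-L, L]} \phi \le 0$, whereas $\sup_\R \phi_j \ge \phi(\bar z) - \varepsilon_j \eta(\bar z)$ for any fixed $\bar z$ with $\phi(\bar z)$ close to $\phi^\ast$, so $\sup_\R \phi_j > 0$ for $j$ large and hence $|y_j| > L$. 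At the global maximum $y_j$, the elementary inequality $(-\Delta)^s \phi_j(y_j) \ge 0$ (immediate from the representation~\eqref{fracLap}, since every integrand $2 \phi_j(y_j) - \phi_j(y_j + z) - \phi_j(y_j - z)$ is $\ge 0$) together with linearity of $(-\Delta)^s$ and the barrier bound gives $(-\Delta)^s \phi(y_j) \ge \varepsilon_j (-\Delta)^s \eta(y_j) \ge - C_\star \varepsilon_j$. Finally, choosing an auxiliary sequence $z_k$ with $\phi^\ast - \phi(z_k) < 1/(2 k)$ and using $\phi_j(y_j) \ge \phi_j(z_k)$ we get $\phi(y_j) \ge \phi^\ast - 1/(2 k) - \varepsilon_j |z_k|^\sigma$ for $j$ large; for each fixed $k$, picking $j = j_k$ so large that $C_\star \varepsilon_{j_k} \le 1/k$ and $\varepsilon_{j_k} |z_k|^\sigma < 1/(2 k)$ and setting $x_k := y_{j_k}$ produces the desired sequence.

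The only genuinely new work relative to Proposition~\ref{weakMPprop2} is the barrier estimate $|(-\Delta)^s \eta| \le C$ on $\{ |x| \ge L \}$ for the even function $\eta = (1 + x^2)^{\sigma/2}$, and this is a routine variant of the already-established bound~\eqref{Dsetabounded}; everything else is a line-by-line transcription of the proofs of Lemma~\ref{weakOYMPlem} and Proposition~\ref{weakMPprop2} with $(-\Delta)^s$ replacing $\L_{K_s}$ and the even barrier replacing the odd one. Accordingly, I do not expect any substantive obstacle; the one point requiring a little care is making sure the maximum point $y_j$ of $\phi_j$ lands strictly outside $[-L, L]$, which is exactly where the hypothesis $\phi \le 0$ on $[-L, L]$ and the strict positivity $\phi^\ast > 0$ are used.
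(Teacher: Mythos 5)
Your proposal is correct and follows essentially the same approach as the paper: both argue by contradiction, use the even barrier $\eta(x) = (1+x^2)^{\sigma/2}$ with $\sigma \in (0,2s)$ to produce an Omori--Yau type near-maximizing sequence $\{x_k\} \subset \R \setminus [-L,L]$ satisfying $\phi^* - \phi(x_k) < 1/k$ and $(-\Delta)^s\phi(x_k) > -1/k$, and then evaluate the differential inequality along this sequence to reach $0 \ge \delta\phi^* > 0$. Your write-up is more detailed (the paper compresses the sequence construction to a one-line reference to Lemma~\ref{weakOYMPlem} and the claim $|(-\Delta)^s\eta|\le C$), but the underlying argument is identical.
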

\begin{proof}
The argument is almost identical to those used to prove Lemma~\ref{weakOYMPlem} and Proposition~\ref{weakMPprop2}.

Assume by contradiction that~$\phi^* := \sup_{\R} \phi > 0$. First, one shows that there exists a sequence of points~$\{ x_k \} \subset \R \setminus [-L, L]$ such that~$\phi^* - \phi(x_k) < 1/k$ and~$(-\Delta)^s \phi(x_k) > - 1/k$ for every~$k \in \N$. This can be done as in Lemma~\ref{weakOYMPlem}, using that, by hypothesis,~$\phi \le 0$ in~$[- L, L]$ and considering perturbations~$\phi_j := \phi - \varepsilon_j \eta$ determined by, e.g., the function~$\eta(x) := (1 + x^2)^{\sigma / 2}$, with~$\sigma \in (0, 2 s)$. Observe that~$|(-\Delta)^s \eta(x)| \le C$ for all~$x \in \R$ and for some constant~$C$ depending only on~$s$ and~$\sigma$.

Then, evaluating~\eqref{phisubsolinRminusL} at~$x_k$, we obtain~$0 \ge (-\Delta)^s \phi(x_k) + d(x_k) \phi(x_k) \ge - 1/k + \delta (\phi^* - 1/k)$ for~$k$ large enough. Letting~$k \rightarrow +\infty$, we get a contradiction.
\end{proof}

\subsection{Decay estimates}

We now deal with the proofs of the main results of the sections. Before moving forward, we spend a few words on the regularity properties of the layer solution~$w$.

\begin{remark} \label{regrmk}
In view of, say,~\cite[Lemma~4.4]{CS14}, since~$W$ is of class~$C^3$ we know that~$w \in C^{2, \alpha}(\R)$, for some~$\alpha \in (0, 1)$. By differentiating equation~\eqref{ellPNeq} twice, we see that~$w''$ satisfies
\begin{equation} \label{eqforw''}
(-\Delta)^s w'' = - W''(w) w'' - W'''(w) (w')^2 \quad \mbox{in } \R.
\end{equation}
If~$W \in C^{3, 1}(\R)$, then the right-hand side of this equation lies in~$C^\alpha(\R)$. Using~\cite[Proposition~2.8]{S07}, we then get that~$w'' \in C^{2 s + \alpha}(\R)$. Accordingly, the regularity of the right-hand side of~\eqref{eqforw''} improves to~$C^{2 s + \alpha}$ and thus the solution~$w''$ belongs to~$C^{\min \{ 4 s + \alpha, 1 + 2 s \}}$. By applying this argument a finite number of times (as in the proofs of~\cite[Lemma~4.4]{CS14} or~\cite[Proposition~3.13]{CP16}), we eventually obtain that~$w'' \in C^{1 + 2 s}(\R)$. Differentiating~\eqref{eqforw''}, we have that~$w'''$ solves
$$
(-\Delta)^s w''' = - W''(w) w''' - 3 W'''(w) w' w'' - W''''(w) (w')^3 \quad \mbox{in } \R.
$$
Assuming the potential~$W$ to be of class~$C^{4 ,1}$ and proceeding as before, one gets that~$w''' \in C^{1 + 2 s}(\R)$.
\end{remark}

Knowing this, we now proceed with the proofs of Propositions~\ref{uimprovasymptprop} and~\ref{w''decayprop}.

Following the approach of~\cite[Section~6]{DPV15}, we take~$\omega_1$ as in~\eqref{omegasdef} and define
\begin{equation} \label{Adef}
A := \int_0^{+\infty} \omega_1(x) \, dx
\end{equation}
and
\begin{equation} \label{Thetadef}
\Omega(x) := \frac{1}{2} \left( 1 + \frac{1}{A} \int_0^x \omega_1(y) \, dy \right).
\end{equation}
The function~$\Omega$ is strictly increasing and it satisfies~$\Omega(0) = 1/2$,
\begin{equation} \label{Omega1odd}
\Omega(x) = 1 - \Omega(- x) \quad \mbox{for all } x \in \R,
\end{equation}
and~$\lim_{x \rightarrow +\infty} \Omega(x) = 1$. Moreover,
$$
\Omega'(x) = \frac{\omega_1(x)}{2 A} \quad \mbox{and} \quad \Omega''(x) = \frac{\omega_1'(x)}{2 A} = - \frac{1 + 2 s}{2 A} \, \omega_2(x) \quad \mbox{for all } x \in \R,
$$
where~$\omega_2$ is defined in~\eqref{omegasdef}. Finally, a straightforward computation shows that
\begin{equation} \label{Thetaasympt}
\left| \Omega(x) - \chi_{(0, +\infty)}(x) + \frac{1}{4 s A} \frac{x}{|x|^{1 + 2 s}} \right| \le \frac{1}{2 A} \frac{1}{|x|^{2 + 2 s}} \quad \mbox{for all } x \in \R.
\end{equation}

Set now
$$
L(x) := - (-\Delta)^s \Omega(x) \quad \mbox{for } x \in \R.
$$
Since~$\Omega$ is injective, we can consider its inverse~$\Omega^{-1}: (0, 1) \to \R$ and define
\begin{equation} \label{Vdef}
V(r) := \int_{-1}^r L(\Omega^{-1}(\rho)) \, d\rho \quad \mbox{for } r \in (0, 1).
\end{equation}
It is clear that, with this choice,~$\Omega$ solves the equation
\begin{equation} \label{eqforTheta}
(-\Delta)^s \Omega + V'(\Omega) = 0 \quad \mbox{in } \R.
\end{equation}
In addition, from~\eqref{Omega1odd} we infer that~$\Omega^{-1}$ and~$L$ are both odd (w.r.t.~$1/2$ and~$0$, respectively). Consequently,~$V' = L \circ \Omega^{-1}$ is also odd w.r.t.~$1/2$ and~$V$ is even w.r.t.~$1/2$. Finally, we have
\begin{align*}
V'(r) & = L(\Omega^{-1}(r)) = - (-\Delta)^s \Omega (\Omega^{-1}(r)), \\
V''(r) & = \frac{L'(\Omega^{-1}(r))}{\Omega'(\Omega^{-1}(r))}  = - \frac{(-\Delta)^s \omega_1(\Omega^{-1}(r))}{\omega_1(\Omega^{-1}(r))}, \\
V'''(r) & = \frac{\Omega'(\Omega^{-1}(r)) L''(\Omega^{-1}(r)) - \Omega''(\Omega^{-1}(r)) L'(\Omega^{-1}(r))}{\Omega'(\Omega^{-1}(r))^3} \\
& = 2 (1 + 2 s) A \frac{\omega_1(\Omega^{-1}(r)) (-\Delta)^s \omega_2(\Omega^{-1}(r)) - \omega_2(\Omega^{-1}(r)) (-\Delta)^s \omega_1(\Omega^{-1}(r))}{\omega_1(\Omega^{-1}(r))^3},
\end{align*}
for all~$r \in (0, 1)$. From this, recalling Lemmas~\ref{philem} and~\ref{psilem}, it follows that
\begin{equation} \label{V'V''V'''}
\begin{aligned}
\lim_{r \rightarrow 0} V'(r) & = - \lim_{x \rightarrow -\infty} (-\Delta)^s \Omega(x) = 0, \\
\lim_{r \rightarrow 0} V''(r) & = - \lim_{x \rightarrow -\infty} \frac{(-\Delta)^s \omega_1(x)}{\omega_1(x)} = 2 A, \\
\lim_{r \rightarrow 0} V'''(r) & = 2 (1 + 2 s) A \lim_{x \rightarrow -\infty} \frac{\omega_1(x) (-\Delta)^s \omega_2(x) - \omega_2(x) (-\Delta)^s \omega_1(x)}{\omega_1(x)^3} \\
& = - 2 (1 + 2 s) A \lim_{x \rightarrow -\infty} |x|^{2 s} \left\{ \frac{(-\Delta)^s \omega_2(x)}{\omega_2(x)} - \frac{(-\Delta)^s \omega_1(x)}{\omega_1(x)} \right\} \\
& = 2 (1 + 2 s) A \left( \lambda_{1, s} - \lambda_{2, s} \right).
\end{aligned}
\end{equation}
Consequently,~$V$ can be extended to a function of class~$C^3([0, 1])$, which is even w.r.t.~$1/2$ and satisfies
\begin{equation} \label{Vprops}
V(0) = V(1) = V'(0) = V'(1) = 0 \quad \mbox{and} \quad V''(0) = V''(1) = 2 A.
\end{equation}

To prove Proposition~\ref{uimprovasymptprop}, we will make use of the following abstract decay estimate.

\begin{proposition} \label{vdecayprop}
Let~$d, f: \R_+ \to \R$ be two measurable functions satisfying~\eqref{d>delta} and
\begin{equation} \label{fleMomega}
|f(x)| \le M \frac{x}{(1 + x^2)^{\frac{4 s + 1}{2}}} \quad \mbox{for all } x \in \R_+,
\end{equation}
for two constants~$\delta, M > 0$. Let~$v \in C_\loc^\alpha(\R)$, for some~$\alpha > 2 s$, be an odd function satisfying
\begin{equation} \label{vlimitatinfty}
\lim_{x \rightarrow \pm \infty} v(x) = 0
\end{equation}
and
\begin{equation} \label{vequation}
(-\Delta)^s v + d v = f \quad \mbox{in } \R_+.
\end{equation}
Then, there exists a constant~$C > 0$, depending only on~$s$,~$\delta$, and~$M$, such that
\begin{equation} \label{vbounds}
|v(x)| \le C \frac{|x|}{(1 + x^2)^{\frac{4 s + 1}{2}}} \quad \mbox{for all } x \in \R.
\end{equation}
\end{proposition}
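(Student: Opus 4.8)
The plan is to run a comparison argument against a multiple of the odd barrier $\omega_3$ from \eqref{omegasdef}, using the sharp bound $|(-\Delta)^s\omega_3|\le C_3\,\omega_3$ of Lemma~\ref{omegabarrierlem} together with the maximum principles of the previous subsection; the oddness of $v$ is essential because it is exactly what allows \eqref{vequation} to be rewritten on $\R_+$ via the kernel $K_s$.

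\textbf{Step 1: $v$ is bounded, with an explicit bound.} Since $v\in C^\alpha_{\mathrm{loc}}(\R)$ with $\alpha>2s$ and $v\to0$ at $\pm\infty$ by \eqref{vlimitatinfty}, $v$ attains $\|v\|_{L^\infty(\R)}=:V_0$. If $V_0>0$ it is attained at some point $x_0\ne0$ (as $v(0)=0$ by oddness); evaluating \eqref{vequation} there — or at $-x_0\in\R_+$ if $x_0<0$ — and using that $(-\Delta)^s v\ge0$ at a global maximum (resp.\ $\le0$ at a global minimum), together with $d\ge\delta$ from \eqref{d>delta} and $|f|\le M\,\omega_3\le M\|\omega_3\|_{L^\infty(\R)}$ from \eqref{fleMomega}, gives $V_0\le M\|\omega_3\|_{L^\infty(\R)}/\delta$. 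Thus $V_0$ is controlled by $s$, $M$ and $\delta$ alone.

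\textbf{Step 2: reduction to $\R_+$ and the equation for the barrier error.} By oddness, \eqref{vbounds} is equivalent to the two one-sided inequalities $\pm v\le C\,\omega_3$ on $\R_+$, so we fix a large constant $C$ and set $\phi:=\pm v-C\,\omega_3$; $\phi$ is odd, bounded, and in $C^\alpha_{\mathrm{loc}}(\R_+)$. By Remark~\ref{fraclapforoddrmk}, for $x>0$ we have $(-\Delta)^s\phi(x)=\L_{K_s}\phi(x)+\frac{x^{-2s}}{s}\phi(x)$ with $\L_{K_s}$, $K_s$ as in \eqref{fraclapLKrel}--\eqref{Ksdef}, so \eqref{vequation} becomes
\[
\L_{K_s}\phi+\hat d\,\phi \;=\; \pm f-C\big((-\Delta)^s\omega_3+d\,\omega_3\big)\qquad\text{in }\R_+,
\]
where $\hat d:=d+\tfrac{x^{-2s}}{s}\ge\delta$. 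Moreover, since $v$ is continuous with $v(0)=0=\omega_3(0)$ and $v,\omega_3$ vanish at $+\infty$, $\phi(x)\to0$ as $x\to0^+$ and as $x\to+\infty$, so $\phi$ satisfies \eqref{philimits}.

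\textbf{Step 3: choosing $C$ so that $C\omega_3$ is a supersolution, and concluding.} By \eqref{fleMomega} and Lemma~\ref{omegabarrierlem}, $\L_{K_s}\phi+\hat d\,\phi\le\big(M+C\,g(x)-C\,d(x)\big)\omega_3(x)$, where $g:=(-\Delta)^s\omega_3/\omega_3$ is continuous on $\R_+$, bounded by $C_3$, tends to the strictly positive number $(-\Delta)^s\omega_3'(0)$ as $x\to0^+$, and is of lower order than $1$ as $x\to+\infty$ (this last point is read off from the proof of Lemma~\ref{omegabarrierlem}: its dominant term $\ell_4$ is in fact $o(x^{-4s})$). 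Hence there exist $0<a<b<+\infty$, depending only on $s$ and $\delta$, with $g\ge-\delta/2$ on $(0,a]\cup[b,+\infty)$; on that set $(-\Delta)^s\omega_3+d\,\omega_3\ge\tfrac{\delta}{2}\omega_3$, so $\L_{K_s}\phi+\hat d\,\phi\le(M-\tfrac{C\delta}{2})\omega_3\le0$ as soon as $C\ge2M/\delta$. On the compact set $[a,b]$, where $\omega_3\ge c_{a,b}>0$, we further take $C\ge V_0/c_{a,b}$, so that $C\omega_3\ge V_0\ge|v|$ and hence $\phi\le0$ on $[a,b]$. Then $\phi\le0$ on all of $\R_+$ follows from the weak maximum principles of the previous subsection: Proposition~\ref{weakMPprop2} propagates $\phi\le0$ from $(0,b]$ onto $(b,+\infty)$, while the bounded interval $(0,a)$ is dealt with by Proposition~\ref{maxprincprop}-type reasoning (or by absorbing it into a strict supersolution obtained by adding to $C\omega_3$ a suitable nonnegative corrector that decays at least as fast as $\omega_3$). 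Since the final $C$ depends only on $s$, $\delta$ (through $a,b,c_{a,b}$) and $M$, $V_0$, and $V_0$ was bounded in Step~1 in terms of $s,M,\delta$, estimate \eqref{vbounds} follows, the $\R_-$ half being recovered by oddness.

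\textbf{Main obstacle.} The delicate point is precisely the handling of the intermediate/compact region where $(-\Delta)^s\omega_3$ need not dominate $-\delta\,\omega_3$: one cannot simply enlarge $C$ there, and building a \emph{decaying} corrector that is strictly $(-\Delta)^s+\delta$-positive on that region is constrained by the fact that $(-\Delta)^s$ of any function decaying like $\omega_3$ decays no faster than $x^{-1-2s}$ \emph{unless that function is odd}. This is exactly why the barrier is the odd function $\omega_3$ and why the odd-function maximum principles (Propositions~\ref{maxprincprop}--\ref{weakMPprop2}, via the $\L_{K_s}$ formulation of Remark~\ref{fraclapforoddrmk}) are the right tools; matching the two regimes cleanly, and justifying the asymptotics of $g$ at $0$ and $+\infty$, is where the real work lies.
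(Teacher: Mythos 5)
Your Steps 1--2 are sound, but Step 3 contains two genuine gaps. First, the gluing of the three regions is logically circular as written: Proposition~\ref{weakMPprop2} with $L=b$ requires $\phi\le 0$ on \emph{all} of $(0,b]$, whereas you only know $\phi\le 0$ on $[a,b]$; and Proposition~\ref{maxprincprop} requires the subsolution inequality on all of $\R_+$, which fails on $(a,b)$, where $g:=(-\Delta)^s\omega_3/\omega_3$ may be as negative as $-C_3<-\delta$. The suggested repair (``a suitable nonnegative corrector that decays at least as fast as $\omega_3$'') is exactly the difficulty you yourself flag and is never constructed. The argument can in fact be salvaged by re-running the \emph{proof} of Proposition~\ref{maxprincprop} rather than its statement: since $\phi\to0$ at $0^+$ and $+\infty$ and $\phi\le0$ on $[a,b]$, a positive supremum would be attained at some $x_M\in(0,a)\cup(b,+\infty)$, where $\L_{K_s}\phi(x_M)\ge0$ and your inequality gives $0\ge\delta\,\phi(x_M)>0$, a contradiction -- but you do not say this, and neither cited proposition applies as stated. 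Second, the claim that $g\ge-\delta/2$ on $[b,+\infty)$ with $b=b(s,\delta)$ is not ``read off'' from Lemma~\ref{omegabarrierlem}: its proof only yields $|\ell_4(x)|\le C x^{-4s}$, i.e.\ $|g|\le C_3$ with $C_3$ bearing no relation to $\delta$. Showing $\ell_4(x)=o(x^{-4s})$ (hence $g\to0$ at infinity) is plausible via the odd cancellation $\int_{-x/2}^{x/2}\omega_3=0$, but it is an additional estimate you would have to prove; likewise the strict positivity of $\lim_{x\to0^+}g(x)=(-\Delta)^s\omega_3'(0)$, which your small-$x$ regime uses, needs a (short) verification not contained in the paper.

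The paper's proof avoids all of this with a scaling trick you did not use: it takes the rescaled barrier $\omega_3^{(\varepsilon)}(x):=\omega_3(\varepsilon x)$, for which $(-\Delta)^s\omega_3^{(\varepsilon)}\ge-C_3\varepsilon^{2s}\omega_3^{(\varepsilon)}$, and chooses $\varepsilon^{2s}\le\delta/(2C_3)$ so that $(-\Delta)^s\omega_3^{(\varepsilon)}+d\,\omega_3^{(\varepsilon)}\ge\tfrac{\delta}{2}\,\omega_3^{(\varepsilon)}$ on \emph{all} of $\R_+$. Then $\phi:=v-\tfrac{2M}{\delta\varepsilon}\,\omega_3^{(\varepsilon)}$ is a subsolution of $\L_{K_s}\phi+\widetilde d\,\phi\le0$ on the whole half-line, Proposition~\ref{maxprincprop} applies directly (no bound on $\|v\|_{L^\infty}$, no compact-region matching, no refined asymptotics of $(-\Delta)^s\omega_3$ are needed), and the comparison $\varepsilon\,\omega_3\le\omega_3^{(\varepsilon)}\le\varepsilon^{-4s}\omega_3$ converts the conclusion into \eqref{vbounds} with $C=C(s,\delta,M)$. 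If you adopt either the rescaling or the corrected single maximum-principle argument (together with an actual proof of the decay of $g$ at infinity, should you keep your route), your proof becomes complete.
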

\begin{proof}
Let~$\omega_3$ be the function defined in~\eqref{omegasdef}. For~$\varepsilon \in (0, 1]$ to be decided later, consider its rescaling
$$
\omega_3^{(\varepsilon)}(x) := \omega_3(\varepsilon x).
$$
Using that~$\varepsilon \le 1$, it is easy to see that
\begin{equation} \label{omegaepsomega}
\varepsilon \omega_3(x) \le \omega_3^{(\varepsilon)}(x) \le \varepsilon^{- 4 s} \omega_3(x) \quad \mbox{for all } x \in \R_+.
\end{equation}
Moreover, in light of Lemma~\ref{omegabarrierlem}, the function~$\omega_3^{(\varepsilon)}$ satisfies
$$
(-\Delta)^s \omega_3^{(\varepsilon)}(x) = \varepsilon^{2 s} (-\Delta)^s \omega_3(\varepsilon x) \ge - C_3 \varepsilon^{2 s} \omega_3(\varepsilon x) = - C_3 \varepsilon^{2 s} \omega_3^{(\varepsilon)}(x) \quad \mbox{for all } x \in \R_+.
$$
Therefore, taking~$\varepsilon := \min \{ (2 C_3 / \delta)^{-1/2s}, 1 \}$ and using hypothesis~\eqref{d>delta}, we get that
$$
(-\Delta)^s \omega_3^{(\varepsilon)} + d \omega_3^{(\varepsilon)} \ge \frac{\delta}{2} \, \omega_3^{(\varepsilon)} \quad \mbox{in } \R_+.
$$
Thanks to this, assumption~\eqref{fleMomega} on~$f$, the equation~\eqref{vequation} for~$v$, the definition~\eqref{omegasdef} of~$\omega_3$, and the left-hand inequality in~\eqref{omegaepsomega}, we have that~$\phi := v - 2 M (\delta \varepsilon)^{-1} \omega_3^{(\varepsilon)}$ satisfies
\begin{equation} \label{phisubharmonic}
(- \Delta)^s \phi + d \phi \le 0 \quad \mbox{in } \R_+.
\end{equation}

Observe that~$\phi$ is a bounded odd function of class~$C^\alpha$, with~$\alpha > 2 s$. In view of Remark~\ref{fraclapforoddrmk}, we may then rewrite~\eqref{phisubharmonic} as
$$
\L_{K_s} \phi + \widetilde{d} \phi \le 0 \quad \mbox{in } \R_+,
$$
with~$\widetilde{d}(x) := d(x) + x^{- 2 s} / s$. Notice that~$\widetilde{d} \ge d \ge \delta$ in~$\R_+$ and that~$K_s$ is positive. Furthermore,~$\phi$ satisfies~\eqref{philimits}. Hence, we can apply Proposition~\ref{maxprincprop} and deduce that~$\phi \le 0$ in~$\R_+$. Taking into account the right-hand inequality in~\eqref{omegaepsomega}, this means that~$v \le C \omega_3$ in~$\R_+$, for some constant~$C > 0$ depending only on~$s$,~$\delta$, and~$M$. As the same argument can be applied to~$- v$ instead of~$v$, we conclude that~\eqref{vbounds} holds true.
\end{proof}

We can now address Proposition~\ref{uimprovasymptprop}.

\begin{proof}[Proof of Proposition~\ref{uimprovasymptprop}]
For~$a > 0$, consider the rescaling~$\omega_1^{(a)}(x) := \omega_1(a x)$ of the function~$\omega_1$ defined by~\eqref{omegasdef}. Associated to this function, we also have the constant~$A_a$ and the function~$\Omega_a$, defined analogously to~\eqref{Adef} and~\eqref{Thetadef}. It is immediate to verify that~$A_a = A/a$ and~$\Omega_a = \Omega(a \, \cdot \,)$. Recalling~\eqref{eqforTheta}, it is also easy to see that~$\Omega_a$ satisfies the equation
$$
(-\Delta)^s \Omega_a + V_a'(\Omega_a) = 0 \quad \mbox{in } \R,
$$
where~$V_a(r) := a^{2 s} V(r)$ for every~$r \in [-1, 1]$ and~$V$ is defined by~\eqref{Vdef}. Notice that~$V_a \in C^3([0, 1])$ and that it is even w.r.t.~$1/2$. Furthermore, as a consequence of~\eqref{Vprops}, it satisfies
$$
V_a(0) = V_a(1) = V_a'(0) = V_a'(1) = 0 \quad \mbox{and} \quad V_a''(0) = V_a''(1) = 2 a^{2 s} A.
$$
The choice~$a := (W''(0) / (2 A))^{1/2s}$ gives that~$V_a''(0) = V_a''(1) = W''(0) = W''(1)$. Moreover, the asymptotic expansion~\eqref{Thetaasympt} for~$\Omega$ translates into
$$
\left| \Omega_a(x) - \chi_{(0, +\infty)}(x) + \frac{1}{2 s W''(0)} \frac{x}{|x|^{1 + 2 s}} \right| \le \frac{C}{|x|^{2 + 2 s}} \quad \mbox{for all } x \in \R,
$$
for some constant~$C> 0$ depending only on~$s$ and~$W$. Thus,~\eqref{uimprovasympt} will be proved if we show that
\begin{equation} \label{u-Thetaclaim}
\left| w(x) - \Omega_a(x) \right| \le \frac{C}{|x|^{4 s}} \quad \mbox{for all } x \in \R.
\end{equation}

To do this, we consider the difference~$v := w - \Omega_a$. In light of~\eqref{wodd} and~\eqref{Omega1odd}, we know that~$v$ is odd. Moreover, it satisfies
\begin{equation} \label{veq1}
(- \Delta)^s v = - W'(w) + V_a'(\Omega_a) \quad \mbox{in } \R.
\end{equation}
Using that~$W'(1) = V_a'(1) = 0$ and~$W''(1) = V_a''(1)$, we have the Taylor expansions
\begin{equation} \label{WVataylor}
\begin{aligned}
W'(r) & = W''(1) (r - 1) + \frac{W'''(1 - \theta_1(r) (1 - r))}{2} (r - 1)^2, \\
V_a'(r) & = W''(1) (r - 1) + \frac{V_a'''(1 - \theta_2(r) (1 - r))}{2} (r - 1)^2,
\end{aligned}
\end{equation}
for all~$r \in (0, 1)$ and for some~$\theta_1(r), \theta_2(r) \in [0, 1]$. Consequently, in view of~\eqref{wasympt}, of the the fact that~$\Omega_a - \chi_{(0, +\infty)}$ has the same decay, and of the~$C^3([0, 1])$ regularity of~$W$ and~$V_a$, we have 
$$
- W'(w(x)) + V'_a(\Omega_a(x)) = - W''(1) \left( w(x) - \Omega_a(x) \right) + O \! \left( |x|^{- 4 s} \right) \quad \mbox{as } x \rightarrow +\infty.
$$
Hence, equation~\eqref{veq1} can be rewritten as
\begin{equation} \label{deltav+W''v=f}
(- \Delta)^s v + W''(1) v = f \quad \mbox{in } \R,
\end{equation}
for some odd~$C^1$ function~$f$ satisfying the decay estimate~\eqref{fleMomega}, with~$M > 0$ depending only on~$s$ and~$W$. Since~$v$ also satisfies the limit condition~\eqref{vlimitatinfty}, we can apply Proposition~\ref{vdecayprop}, from which~\eqref{u-Thetaclaim} plainly follows.
\end{proof}

\begin{remark} \label{impros=12rmk}
When~$s = 1/2$, the auxiliary potential~$V$ satisfies~$V'''(0) = 0$. This can be checked, through the last identity in~\eqref{V'V''V'''}, by verifying that the two constants~$\lambda_{1, 1/2}$ and~$\lambda_{2, 1/2}$ found in Lemmas~\ref{philem} and~\ref{psilem} coincide. Alternatively---and more easily---one can deduce it from definition~\eqref{Vdef}, by realizing that~$\Omega$ is the layer solution corresponding to the explicit potential~$V(r) = \frac{1}{4\pi} (1 - \cos (2 \pi r))$---see~\cite{T97,CS05}.

As a consequence of the vanishing of~$V'''(0)$, when~$W \in C^4(\R)$ one can get a sharp version of~\eqref{uimprovasympt}, in which its right-hand side decays as~$|x|^{- 3}$, instead of~$|x|^{- 4s} = |x|^{- 2}$. To see this, it suffices to notice that, in this case, the Taylor expansions~\eqref{WVataylor} give that~$W'(r)$ and~$V_a'(r)$ are approximated by their linear parts with an error of the order of~$(1 - r)^3$. From this, it follows that the function~$f$ appearing in~\eqref{deltav+W''v=f} decays as~$x^{- 6s} = x^{- 3}$ as~$x \rightarrow +\infty$. One then concludes by observing that this decay is transferred to~$v$, a fact that can be established via a conveniently modified version of Proposition~\ref{vdecayprop}, proved using~$\omega_2$ instead of~$\omega_3$ as a barrier.

For~$s \ne 1/2$, it seems that~$\lambda_{1, s}$ and~$\lambda_{2, s}$ are generally not equal, and therefore that~$V'''(0) \ne 0$. As a result, the improvement just described cannot be performed as is. We believe it would be very interesting to understand whether a different auxiliary function can be used in place of~$\Omega$, to change~$V$ into a new even potential that still satisfies~\eqref{Vprops}, but with vanishing third derivative at~$0$. This would lead to the improvement of~\eqref{uimprovasympt}.
\end{remark}

We now move on to the proof of Proposition~\ref{w''decayprop}. To carry it through, we first need the following two auxiliary estimates.

\begin{proposition} \label{vdecayprop2}
Let~$d, f: \R_+ \to \R$ be two measurable functions satisfying~\eqref{d>deltax>L} and
\begin{equation} \label{gleNpsi}
|f(x)| \le \frac{M}{(1 + |x|)^{2 + 2 s}} \quad \mbox{for all } x > L,
\end{equation}
for three constants~$\delta, L, M > 0$. Let~$v \in L^\infty(\R) \cap C_\loc^\alpha(\R)$, for some~$\alpha > \max \{ 2 s, 1 \}$, be an odd function satisfying
\begin{equation} \label{wequation}
(-\Delta)^s v + d v = f \quad \mbox{in } (L, +\infty).
\end{equation}
Then, there exists a constant~$C > 0$, depending only on~$s$,~$\delta$,~$L$,~$M$, and~$\| v' \|_{L^\infty(0, L)}$, such that
$$
|v(x)| \le \frac{C}{(1 + |x|)^{2 + 2 s}} \quad \mbox{for all } x \in \R_+.
$$
\end{proposition}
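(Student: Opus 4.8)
The plan is to compare $v$ with a suitably rescaled copy of the odd barrier $\omega_2$ from Lemma~\ref{psilem}, exactly as in the proof of Proposition~\ref{vdecayprop} but with $\omega_2$ in place of $\omega_3$ and with the \emph{weak} maximum principle of Proposition~\ref{weakMPprop2} in place of the elementary one of Proposition~\ref{maxprincprop}. The reason for this substitution is that here $v$ is only assumed bounded, not vanishing at infinity, so we cannot close the argument with Proposition~\ref{maxprincprop}; this is the main structural difference with respect to Proposition~\ref{vdecayprop}, and it is the point that requires the Omori--Yau type statement of Proposition~\ref{weakMPprop2}.

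Concretely, I would first fix $\varepsilon := \min\{(\delta/(2 C_2))^{1/(2s)},\, 1\}$, with $C_2$ the constant of Lemma~\ref{psilem}, and set $\omega_2^{(\varepsilon)}(x) := \omega_2(\varepsilon x)$. Scaling and Lemma~\ref{psilem} give $(-\Delta)^s \omega_2^{(\varepsilon)}(x) = \varepsilon^{2s} (-\Delta)^s \omega_2(\varepsilon x) \ge - \varepsilon^{2s} C_2\, \omega_2^{(\varepsilon)}(x)$ for $x > 0$, whence, by the choice of $\varepsilon$ and~\eqref{d>deltax>L},
\[
(-\Delta)^s \omega_2^{(\varepsilon)} + d\, \omega_2^{(\varepsilon)} \ge \frac{\delta}{2}\, \omega_2^{(\varepsilon)} \quad \mbox{in } (L, +\infty).
\]
In parallel, a direct inspection of~\eqref{omegasdef} yields the elementary two-sided bounds $\widetilde{c}_L\, x \le \omega_2^{(\varepsilon)}(x) \le C_\varepsilon (1 + |x|)^{- 2 - 2 s}$ on $(0, L]$ and $c'_{\varepsilon, L}(1 + x)^{-2-2s} \le \omega_2^{(\varepsilon)}(x) \le C_\varepsilon (1 + x)^{- 2 - 2 s}$ on $(L, +\infty)$, with $\widetilde{c}_L = \varepsilon (1 + L^2)^{-(3+2s)/2}$, $C_\varepsilon$ depending only on $s$ and $\varepsilon$, and $c'_{\varepsilon, L} > 0$ since $x \mapsto \omega_2^{(\varepsilon)}(x)(1+x)^{2+2s}$ is continuous, positive on $[L, +\infty)$, and tends to $\varepsilon^{-2-2s}$ at infinity.

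Then I would pick $C_0 := \max\{\|v'\|_{L^\infty(0, L)}/\widetilde{c}_L,\; 2M/(\delta\, c'_{\varepsilon, L})\}$ and set $\phi := v - C_0\, \omega_2^{(\varepsilon)}$, which is odd, bounded, and of class $C^\alpha_{\loc}(\R_+)$ with $\alpha > 2s$. On $(0, L]$, since $\alpha > 1$ we have $v \in C^1$ near the origin and $v(0) = 0$ by oddness, so $|v(x)| \le \|v'\|_{L^\infty(0,L)}\, x \le C_0\, \omega_2^{(\varepsilon)}(x)$, i.e.\ $\phi \le 0$ there. On $(L, +\infty)$, using~\eqref{wequation}, the displayed supersolution inequality, and~\eqref{gleNpsi},
\[
(-\Delta)^s \phi + d\, \phi = f - C_0\bigl( (-\Delta)^s \omega_2^{(\varepsilon)} + d\, \omega_2^{(\varepsilon)} \bigr) \le \frac{M}{(1 + x)^{2 + 2 s}} - \frac{C_0 \delta}{2}\, \omega_2^{(\varepsilon)} \le 0,
\]
by the choice of $C_0$ and the lower bound on $\omega_2^{(\varepsilon)}$. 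By Remark~\ref{fraclapforoddrmk} this rewrites as $\L_{K_s} \phi + \widetilde{d}\, \phi \le 0$ in $(L, +\infty)$ with $\widetilde{d} := d + x^{-2s}/s \ge \delta$, so Proposition~\ref{weakMPprop2} gives $\phi \le 0$ in $\R_+$, that is $v \le C_0\, \omega_2^{(\varepsilon)}$. Applying the same reasoning to $-v$ — which solves $(-\Delta)^s(-v) + d(-v) = -f$ in $(L, +\infty)$, with $-f$ satisfying the same bound and $\|(-v)'\|_{L^\infty(0, L)} = \|v'\|_{L^\infty(0, L)}$ — yields $|v| \le C_0\, \omega_2^{(\varepsilon)}$ in $\R_+$, and the upper bound $\omega_2^{(\varepsilon)}(x) \le C_\varepsilon (1 + |x|)^{- 2 - 2 s}$ finishes the proof with $C := C_0 C_\varepsilon$, which by construction depends only on $s$, $\delta$, $L$, $M$, and $\|v'\|_{L^\infty(0, L)}$.

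I expect the only mildly delicate point to be the bookkeeping of the elementary comparisons between $\omega_2^{(\varepsilon)}$ and $(1 + |x|)^{-2-2s}$ on $(0, L]$ and on $(L, +\infty)$ — in particular verifying that the relevant constants have exactly the claimed dependence and, near the origin, exploiting that $\alpha > 1$ to control $v$ linearly. Everything else is a transcription of the scheme already carried out for Proposition~\ref{vdecayprop}.
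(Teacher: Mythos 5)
Your proposal is correct and follows essentially the same route as the paper's proof: the same rescaled odd barrier $\omega_2^{(\varepsilon)}$ with Lemma~\ref{psilem}, the same linear control of $v$ near the origin via $\|v'\|_{L^\infty(0,L)}$, and the same reduction through Remark~\ref{fraclapforoddrmk} to an application of Proposition~\ref{weakMPprop2}, applied to $\pm v$. The only difference is cosmetic: the paper compares $\omega_2^{(\varepsilon)}$ with $\omega_2$ via the two-sided bound $\varepsilon\,\omega_2 \le \omega_2^{(\varepsilon)} \le \varepsilon^{-2-2s}\omega_2$, whereas you compare it directly with $(1+|x|)^{-2-2s}$, which amounts to the same bookkeeping.
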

\begin{proof}
Let~$\omega_2$ be as in~\eqref{omegasdef} and, like in the proof of Proposition~\ref{vdecayprop}, let
$$
\omega_2^{(\varepsilon)}(x) := \omega_2(\varepsilon x),
$$
for~$\varepsilon \in (0, 1]$. We have
\begin{equation} \label{psiepsbounds}
\varepsilon \omega_2(x) \le \omega_2^{(\varepsilon)}(x) \le \varepsilon^{-2 - 2s } \omega_2(x) \quad \mbox{for all } x \in \R_+
\end{equation}
and, using inequality~\eqref{Dspsilepsi} of Lemma~\ref{Dspsiasympt},
$$
(-\Delta)^s \omega_2^{(\varepsilon)}(x) = \varepsilon^{2s} (-\Delta)^s \omega_2(\varepsilon x) \ge - C_2 \varepsilon^{2 s} \omega_2(\varepsilon x) = - C_2 \varepsilon^{2 s} \omega_2^{(\varepsilon)}(x) \quad \mbox{for all } x \in \R_+.
$$
Hence, taking~$\varepsilon := \min \{ (2 C_2 / \delta)^{- 1/2s}, 1 \}$ and using that~$d$ satisfies~\eqref{d>deltax>L}, we conclude that
\begin{equation} \label{psiepseq}
(-\Delta)^s \omega_2^{(\varepsilon)} + d \omega_2^{(\varepsilon)} \ge \frac{\delta}{2} \, \omega_2^{(\varepsilon)} \quad \mbox{for all } x > L.
\end{equation}

Let now
\begin{equation} \label{Ccircdef}
C_\circ := \frac{1}{\varepsilon} \max \left\{ (1 + L^2)^{\frac{3 + 2 s}{2}} \| v' \|_{L^\infty(0, L)}, \frac{1 + L}{L} \frac{2 M}{\delta} \right\}
\end{equation}
and define~$\phi := - v - C_\circ \omega_2^{(\varepsilon)}$. By the right-hand bound in~\eqref{psiepsbounds} and the fact that a similar estimate can be proved with~$v$ in place of~$-v$, the proof will be over if we show that
\begin{equation} \label{phile0claim}
\phi \le 0 \quad \mbox{in } \R_+.
\end{equation}
First, by the left-hand inequality in~\eqref{psiepsbounds}, the definition~\eqref{omegasdef} of~$\omega_2$, and~\eqref{Ccircdef}, it holds
$$
- v(x) = - \int_0^x v'(y) \, dy \le \| v' \|_{L^\infty(0, L)} x \le C_\circ \frac{\varepsilon x}{(1 + L^2)^{\frac{3 + 2 s}{2}}} \le C_\circ \omega_2^{(\varepsilon)}(x) \quad \mbox{for all } x \in [0, L].
$$
That is,~$\phi \le 0$ in~$[0, L]$. Using~\eqref{gleNpsi},~\eqref{wequation},~\eqref{psiepseq}, the left-hand bound in~\eqref{psiepsbounds}, and~\eqref{Ccircdef}, it is easy to see that
$$
(-\Delta)^s \phi + d \phi \le 0 \quad \mbox{for all } x > L.
$$
As~$\phi$ is odd, recalling identity~\eqref{fraclapLKrel} the above inequality can be rephrased as
$$
\L_{K_s} \phi + \widetilde{d} \phi \le 0 \quad \mbox{for all } x > L,
$$
where~$\widetilde{d}(x) := d(x) + x^{-2 s} / s$. As, by~\eqref{d>deltax>L},~$\widetilde{d} \ge \delta$ in~$(L, +\infty)$, and~$\phi \le 0$ in~$(0, L]$, we are in position to apply Proposition~\ref{weakMPprop2} and conclude that~\eqref{phile0claim} holds true.
\end{proof}

Similarly, we can prove the next estimate for solutions that are not necessarily odd.

\begin{proposition} \label{vdecayprop3}
Let~$d, f: \R \to \R$ be two measurable functions satisfying~\eqref{d>delta|x|>L} and
\begin{equation} \label{fleMomega1}
|f(x)| \le \frac{M}{(1 + |x|)^{1 + 2 s}} \quad \mbox{for all } x \in \R \setminus [-L, L],
\end{equation}
for three constants~$\delta, L, M > 0$. Let~$v \in L^\infty(\R) \cap C_\loc^\alpha(\R)$, for some~$\alpha > 2 s$, be a solution of
\begin{equation} \label{vequation3}
(-\Delta)^s v + d v = f \quad \mbox{in } \R \setminus [- L, L].
\end{equation}
Then, there exists a constant~$C > 0$, depending only on~$s$,~$\delta$,~$L$,~$M$, and~$\| v \|_{L^\infty(\R)}$, such that
\begin{equation} \label{vbounds3}
|v(x)| \le \frac{C}{(1 + |x|)^{1 + 2 s}} \quad \mbox{for all } x \in \R.
\end{equation}
\end{proposition}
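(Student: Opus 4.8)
The plan is to mimic the proof of Proposition~\ref{vdecayprop2}, using the even barrier $\omega_1$ defined in~\eqref{omegasdef} in place of the odd barriers $\omega_3$ and $\omega_2$, and invoking the non-symmetric maximum principle of Proposition~\ref{weakMPprop3} rather than Proposition~\ref{weakMPprop2}. First I would introduce, for $\varepsilon\in(0,1]$, the rescaled barrier $\omega_1^{(\varepsilon)}(x):=\omega_1(\varepsilon x)$. Since $\varepsilon\le1$, one has the elementary two-sided bound
\begin{equation} \label{omega1epsbounds}
\varepsilon^{1+2s}\,\omega_1(x) \le \omega_1^{(\varepsilon)}(x) \le \omega_1(x) \quad \mbox{for all } x \in \R,
\end{equation}
and, by the scaling of the fractional Laplacian together with estimate~\eqref{Dsomega1leomega1} of Lemma~\ref{philem},
$$
(-\Delta)^s \omega_1^{(\varepsilon)}(x) = \varepsilon^{2s} (-\Delta)^s \omega_1(\varepsilon x) \ge - C_1 \varepsilon^{2s} \omega_1(\varepsilon x) = - C_1 \varepsilon^{2s} \omega_1^{(\varepsilon)}(x) \quad \mbox{for all } x \in \R .
$$
Choosing $\varepsilon:=\min\{(2C_1/\delta)^{-1/2s},1\}$ and using~\eqref{d>delta|x|>L}, we obtain
$$
(-\Delta)^s \omega_1^{(\varepsilon)} + d\,\omega_1^{(\varepsilon)} \ge \frac{\delta}{2}\,\omega_1^{(\varepsilon)} \quad \mbox{in } \R \setminus [-L,L].
$$

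Next I would pick a constant $C_\circ>0$, depending only on $s$, $\delta$, $L$, $M$ and $\|v\|_{L^\infty(\R)}$, large enough that two things hold simultaneously: that $C_\circ\omega_1^{(\varepsilon)}\ge \|v\|_{L^\infty(\R)}$ on $[-L,L]$ (possible because $\omega_1^{(\varepsilon)}$ is bounded below by a positive constant on the compact set $[-L,L]$ thanks to~\eqref{omega1epsbounds}), and that, using~\eqref{fleMomega1}, the left inequality in~\eqref{omega1epsbounds}, the definition~\eqref{omegasdef} of $\omega_1$, and the displayed differential inequality above, the function $\phi:=v-C_\circ\omega_1^{(\varepsilon)}$ satisfies
$$
(-\Delta)^s \phi + d\,\phi \le 0 \quad \mbox{in } \R \setminus [-L,L].
$$
Indeed $(-\Delta)^s\phi + d\phi \le f - C_\circ\frac{\delta}{2}\omega_1^{(\varepsilon)} \le \frac{M}{(1+|x|)^{1+2s}} - C_\circ\frac{\delta}{2}\varepsilon^{1+2s}\frac{1}{(1+x^2)^{(1+2s)/2}}\le0$ once $C_\circ$ is large, since $(1+x^2)^{(1+2s)/2}\le (1+|x|)^{1+2s}$. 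By construction $\phi\le0$ on $[-L,L]$, so Proposition~\ref{weakMPprop3} (applicable because $\phi\in L^\infty(\R)\cap C^\alpha_{\loc}(\R)$ with $\alpha>2s$, and $d\ge\delta$ off $[-L,L]$) yields $\phi\le0$ in $\R$, i.e.\ $v\le C_\circ\omega_1^{(\varepsilon)}\le C_\circ\omega_1$ in $\R$ by the right inequality in~\eqref{omega1epsbounds}. Applying the same argument to $-v$ (which solves the analogous equation with right-hand side $-f$, still satisfying~\eqref{fleMomega1}) gives the matching lower bound, and recalling $\omega_1(x)=(1+x^2)^{-(1+2s)/2}\le C(1+|x|)^{-1-2s}$ we arrive at~\eqref{vbounds3}.

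This proof is essentially routine once the ingredients are in place; the only point requiring a little care is the choice of $C_\circ$, which must dominate $v$ on $[-L,L]$ (this is where the dependence on $\|v\|_{L^\infty(\R)}$ enters, and where one uses that $\omega_1^{(\varepsilon)}$ does not vanish on the bounded interval) while simultaneously absorbing the forcing term $f$ off $[-L,L]$; both are possible because $\omega_1$ decays at exactly the rate $|x|^{-1-2s}$ prescribed for $f$ in~\eqref{fleMomega1}. I do not anticipate a genuine obstacle here — unlike Proposition~\ref{uimprovasymptprop}, no delicate improvement of the barrier exponent is needed, since $|x|^{-1-2s}$ is precisely the generic decay rate of $(-\Delta)^s$ applied to a nice one-dimensional profile, and $\omega_1$ realizes it.
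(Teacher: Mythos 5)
Your proof is essentially the paper's own argument: the same rescaled even barrier $\omega_1^{(\varepsilon)}=\omega_1(\varepsilon\,\cdot\,)$, the same supersolution inequality obtained from Lemma~\ref{philem} with $\varepsilon$ chosen so that $C_1\varepsilon^{2s}\le\delta/2$, the same choice of a large constant dominating $v$ on $[-L,L]$ and absorbing $f$ outside, the same application of Proposition~\ref{weakMPprop3}, and the same repetition for $-v$. The only slip is that your two-sided comparison between $\omega_1^{(\varepsilon)}$ and $\omega_1$ is reversed — for $\varepsilon\le 1$ one has $\omega_1(x)\le\omega_1^{(\varepsilon)}(x)\le\varepsilon^{-1-2s}\,\omega_1(x)$, so the upper bound $\omega_1^{(\varepsilon)}\le\omega_1$ invoked in your final line is false — but this is harmless: the lower bound you actually use to absorb $f$ is still true (indeed with the better constant $1$ in place of $\varepsilon^{1+2s}$), and the concluding step only needs $\omega_1^{(\varepsilon)}\le\varepsilon^{-1-2s}\omega_1$, where $\varepsilon$ depends only on $s$ and $\delta$, so the constant in~\eqref{vbounds3} retains the stated dependence.
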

\begin{proof}
The argument is similar to those used to prove Propositions~\ref{vdecayprop} and~\ref{vdecayprop2}. Given~$\varepsilon \in (0, 1]$ and~$\omega_1$ as in~\eqref{omegasdef}, we define~$\omega_1^{(\varepsilon)} := \omega_1(\varepsilon \, \cdot \,)$. It is easy to check that, if~$\varepsilon \le (2 C_1 / \delta)^{- 1/2s}$ with~$C_1$ as in~\eqref{Dsomega1leomega1}, then~$\omega_1$ satisfies
$$
(-\Delta)^s \omega_1^{(\varepsilon)} + d \omega_1^{(\varepsilon)} \ge \frac{\delta}{2} \, \omega_1^{(\varepsilon)} \quad \mbox{in } \R \setminus [-L, L].
$$
Hence, setting~$\phi := v - C_{\mbox{\tiny $\bullet$}} \omega_1^{(\varepsilon)}$, with~$C_{\mbox{\tiny $\bullet$}} := \max \{ (2M) / (\delta \varepsilon), (1 + L)^{1 + 2 s} \| v \|_{L^\infty(-L, L)} \}$, from~\eqref{vequation3} and~\eqref{fleMomega1} it follows that
$$
(-\Delta)^s \phi + d \phi \le 0 \quad \mbox{in } \R \setminus [-L, L].
$$
In addition, taking~$\varepsilon \le 1$, we get
$$
\phi(x) = v(x) - C_{\mbox{\tiny $\bullet$}} \omega_1^{(\varepsilon)}(x) \le \| v \|_{L^\infty(-L, L)} - C_{\mbox{\tiny $\bullet$}} (1 + \varepsilon^2 x^2)^{- \frac{1 + 2 s}{2}} \le 0 \quad \mbox{for all } x \in [-L, L].
$$
Thus, we can apply Proposition~\ref{weakMPprop3} to~$\phi$ and deduce that~$\phi \le 0$, that is,~$v \le C_{\mbox{\tiny $\bullet$}} \omega_1^{(\varepsilon)}$ in the whole of~$\R$. Since the analogous bound from below for~$v$ can be established in a similar way, we conclude that~\eqref{vbounds3} holds true.
\end{proof}

Thanks to the last two results, we are in position to establish Proposition~\ref{w''decayprop}.

\begin{proof}[Proof of Proposition~\ref{w''decayprop}]
The second derivative~$w''$ of the layer solution is odd, bounded, of class~$C^{2 s + 1}$ (recall Remark~\ref{regrmk}), and it satisfies equation~\eqref{eqforw''}, which we rewrite as
$$
(-\Delta)^s w'' + d w'' = f_1 \quad \mbox{in } \R,
$$
with~$d := W''(w)$ and~$f_1 := - W'''(w) (w')^2$. Notice that, thanks to the boundedness of~$W'''$ and the decay estimate~\eqref{w'asympt} for~$w'$, we have that
$$
\left| f_1(x) \right| \le \frac{M_1}{(1 + |x|)^{2 + 4 s}} \quad \mbox{for all } x \in \R,
$$
for some constant~$M_1 > 0$ depending only on~$s$ and~$W$. Furthermore, as~$w(x) \rightarrow 1$ as~$x \rightarrow +\infty$ and~$W''$ is continuous, the zeroth order coefficient~$d$ satisfies~\eqref{d>deltax>L} for some~$L > 0$ and with~$\delta := W''(1) /2 > 0$. Consequently, we can apply Proposition~\ref{vdecayprop2} with~$v := w''$ and conclude that~\eqref{w''decay} holds true.

We now deal with~\eqref{w'''decay}. Recalling again Remark~\ref{regrmk}, under the assumption that~$W \in C^{4, 1}(\R)$ we have that~$w''' \in C^{2 s + 1}(\R)$ and that it satisfies
$$
(-\Delta)^s w''' + d w''' = f_2 \quad \mbox{in } \R,
$$
with~$d := W''(w)$ and~$f_2 := - 3 W'''(w) w' w'' - W''''(w) (w')^3$. Arguing as before, we know that~$d$ satisfies~\eqref{d>delta|x|>L} for some~$L > 0$ and with~$\delta := W''(0) / 2 = W''(1) / 2 > 0$. Moreover, using~\eqref{w'asympt},~\eqref{w''decay}, and the boundedness of~$W'''$ and~$W''''$, we get that
$$
\left| f_2(x) \right| \le \frac{M_2}{(1 + |x|)^{3 + 4 s}} \quad \mbox{for all } x \in \R,
$$
for some~$M_2 > 0$. Estimate~\eqref{w'''decay} then follows after an application of Proposition~\ref{vdecayprop3}.
\end{proof}

\section{Proof of Proposition~\ref{regforstrongprop}} \label{estforstrongapp}

\noindent
We address here the regularity estimates of Proposition~\ref{regforstrongprop}. Recalling Definition~\ref{strongsoldef}, we write the mild solution~$u$ of problem~\eqref{Gprob} (with~$f = f(x, t)$ in place of~$G[u]$) as
$$
u = U_0 + U_1,
$$
with
\begin{alignat}{3}
\label{U0def}
U_0(x, t) & := \int_\R p(x - z, t - t_0) u_0(z) \, dz && \qquad \mbox{for } x \in \R, \, t > t_0,\\
\label{U1def}
U_1(x, t) & := \int_{t_0}^t \int_\R p(x - z, t - \sigma) f(z, \sigma) \, dz d\sigma && \qquad \mbox{for } x \in \R, \, t \in (t_0, t_1).
\end{alignat}

To establish Proposition~\ref{regforstrongprop}, we will use the following bounds on the heat kernel~$p$, in addition to the properties~\ref{preg}-\ref{pbounds} listed in Section~\ref{solsec}:
\begin{align}
\label{pxbound}
|\partial_x^k p(x, t)| & \le C \, t \left( |x| + t^{1/2s} \right)^{- 1 - 2 s - k} \quad \mbox{for } k \in \N,\\
\label{ptbound}
|\partial_t p(x, t)| & \le C \, t^{-1} p(x, t),
\end{align}
for all~$x \in \R$,~$t > 0$, and for some constant~$C>0$ depending only on~$s$ and possibly~$k$. A proof of~\eqref{pxbound} can be found in~\cite[Lemma~2.2]{CZ16}, while~\eqref{ptbound} follows from~\cite[Proposition~2.1]{VDQR17}.

We begin by dealing with~$U_0$.

\begin{lemma} \label{U0lem}
Let~$u_0 \in L^\infty(\R)$ and~$U_0$ be given by~\eqref{U0def}. Then, for every~$t_\star > t_0$ and~$k \in \N$, it holds
\begin{equation} \label{Ulocest}
\sup_{t > t_\star} \| U_0(\cdot, t) \|_{C^{k}(\R)} + \sup_{x \in \R} \| U_0(x, \cdot) \|_{C^1(t_\star, +\infty)} \le C_{t_\star} \| u_0 \|_{L^\infty(\R)},
\end{equation}
for some constant~$C_{t_\star}$ depending only on~$s$,~$k$, and~$t_\star$. Moreover, if~$u_0 \in C^\alpha(\R)$ for some~$\alpha \in (0, 2 s \wedge 1)$, then
\begin{equation} \label{Uglobest}
\sup_{t > t_0} \| U_0(\cdot, t) \|_{C^\alpha(\R)} + \sup_{x \in \R} \| U_0(x, \cdot) \|_{C^{\frac{2s \wedge 1}{2s} \alpha}(t_0, t_0 + 1)} \le C \| u_0 \|_{C^\alpha(\R)},
\end{equation}
for some constant~$C$ depending only on~$s$ and~$\alpha$.
\end{lemma}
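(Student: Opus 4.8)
The plan is to estimate $U_0$ and its derivatives directly from the convolution representation \eqref{U0def}, exploiting the scaling property \ref{pscales} and the pointwise kernel bounds \ref{pbounds}, \eqref{pxbound}, \eqref{ptbound}. For the interior estimate \eqref{Ulocest}, I would differentiate under the integral sign in $x$: since $\int_\R p(x-z,t-t_0)\,dz = 1$ by \ref{pmass=1}, writing $\partial_x^k U_0(x,t) = \int_\R \partial_x^k p(x-z,t-t_0) u_0(z)\,dz$ and applying \eqref{pxbound} gives $|\partial_x^k U_0(x,t)| \le C\|u_0\|_{L^\infty(\R)} (t-t_0)\int_\R (|y| + (t-t_0)^{1/2s})^{-1-2s-k}\,dy \le C (t-t_0)^{-k/2s}\|u_0\|_{L^\infty(\R)}$, which is bounded by $C_{t_\star}\|u_0\|_{L^\infty(\R)}$ for $t > t_\star > t_0$. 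For the time regularity, I would use the equation: since $U_0$ is a mild (hence, for $t > t_0$, classical) solution of $\partial_t U_0 + (-\Delta)^s U_0 = 0$, one has $|\partial_t U_0| = |(-\Delta)^s U_0| \le C\|U_0(\cdot,t)\|_{C^{2\sigma}(\R)}$ for $\sigma \in (0, s)$ (or $\sigma = s$ if $s \neq 1/2$), and this last quantity is already controlled by the spatial estimate just proved; alternatively one differentiates \eqref{U0def} in $t$ using \eqref{ptbound}. Combining the $C^0$ bound $\|U_0\|_{L^\infty} \le \|u_0\|_{L^\infty}$ (again \ref{pmass=1}) with these gives \eqref{Ulocest}.

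For the global estimate \eqref{Uglobest} with $u_0 \in C^\alpha(\R)$, the key is that $C^\alpha$ regularity of the datum is preserved uniformly up to $t = t_0$. For the spatial part, I would write, for $x_1, x_2 \in \R$,
$$
U_0(x_1,t) - U_0(x_2,t) = \int_\R p(x_1 - z, t - t_0)\big(u_0(z) - u_0(z + x_2 - x_1)\big)\,dz,
$$
so that $|U_0(x_1,t) - U_0(x_2,t)| \le [u_0]_{C^\alpha(\R)}|x_1 - x_2|^\alpha \int_\R p(x_1-z,t-t_0)\,dz = [u_0]_{C^\alpha(\R)}|x_1-x_2|^\alpha$, uniformly in $t > t_0$. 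For the temporal Hölder bound, for $t_0 < \tau < t < t_0 + 1$ I would use the semigroup property \ref{psemi} to write $U_0(x,t) = \T_{t-\tau}[U_0(\cdot,\tau)](x)$ and then
$$
U_0(x,t) - U_0(x,\tau) = \int_\R p(y, t - \tau)\big(U_0(x - y,\tau) - U_0(x,\tau)\big)\,dy,
$$
bounding the increment by $[U_0(\cdot,\tau)]_{C^\alpha(\R)} \int_\R |y|^\alpha p(y,t-\tau)\,dy$ when $\alpha < 2s$, and by $\|\partial_x U_0(\cdot,\tau)\|_{L^\infty} \int_\R |y| p(y,t-\tau)\,dy$ when $2s \le 1$ and $\alpha \le 1$; using \ref{pscales} and \ref{pbounds} the moment integral $\int_\R |y|^\beta p(y,s)\,dy$ scales like $s^{\beta/2s}$ for $\beta < 2s$ and like $s^{1/2s}$ (up to a logarithm that one avoids by taking $\beta$ slightly below the threshold, or directly since $2s \le 1$ forces the first-derivative route) otherwise. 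This yields $|U_0(x,t) - U_0(x,\tau)| \le C\|u_0\|_{C^\alpha(\R)} |t - \tau|^{\frac{2s\wedge 1}{2s}\alpha}$.

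The main obstacle I anticipate is the borderline behavior of the moment integrals when $\alpha$ approaches $2s$ (or when $2s = 1$), where $\int_\R |y|^\alpha p(y,s)\,dy$ picks up a logarithmic factor; the remedy is precisely the appearance of the exponent $\frac{2s\wedge 1}{2s}\alpha$ in the statement, which is strictly smaller than $\alpha$ exactly in the regime $2s \le 1$, and in the case $\alpha < 2s \le 1$ one replaces the $C^\alpha$ increment estimate by the gradient estimate from \eqref{Ulocest} valid for $t > t_0$ combined with a short-time argument near $t_0$; one must be slightly careful that the constant depends only on $s$ and $\alpha$ and not on $t_\star$, which is why for \eqref{Uglobest} one cannot simply invoke \eqref{Ulocest} but must redo the computation directly from the convolution with the $C^\alpha$ datum as above. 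The remaining verification—that $U_0$ is genuinely differentiable in $x$ and $t$ for $t > t_0$ so that differentiating under the integral is legitimate—follows from the smoothness of $p$ on $\R \times (0,+\infty)$ (property \ref{preg}) together with the decay bounds \eqref{pxbound}, \eqref{ptbound} providing integrable majorants, a routine dominated-convergence argument.
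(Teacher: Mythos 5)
Your argument is correct, and for \eqref{Ulocest} it is essentially the paper's proof: differentiate under the integral, use \eqref{pxbound} together with \ref{pbounds} and \ref{pmass=1} for the spatial derivatives, and either \eqref{ptbound} (the paper's choice, your ``alternative'') or the equation itself for $\partial_t U_0$. Where you genuinely diverge is the temporal H\"older bound in \eqref{Uglobest}: the paper rescales via \ref{pscales}, writing $U_0(x,t)=\int_\R p(w,1)\,u_0(x-t^{1/2s}w)\,dw$, and puts the H\"older continuity of $u_0$ directly on the shifted argument, picking up $|t^{1/2s}-\tau^{1/2s}|^\alpha\int_\R |w|^\alpha p(w,1)\,dw$; you instead use the semigroup property \ref{psemi} and test the increment $U_0(\cdot,t)-U_0(\cdot,\tau)=\T_{t-\tau}[U_0(\cdot,\tau)]-U_0(\cdot,\tau)$ against the (uniformly propagated) spatial seminorm $[U_0(\cdot,\tau)]_{C^\alpha}\le[u_0]_{C^\alpha}$ and the moment $\int_\R|y|^\alpha p(y,t-\tau)\,dy\simeq(t-\tau)^{\alpha/2s}$. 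Both routes need exactly $\alpha<2s$, both yield the exponent $\frac{2s\wedge1}{2s}\alpha$ (in your version because $(t-\tau)^{\alpha/2s}\le(t-\tau)^{\alpha}$ when $2s\le1$ and $t-\tau\le1$), and both give constants depending only on $s$ and $\alpha$; yours has the mild advantage of not needing the explicit self-similar form, the paper's of acting directly on the datum.

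One caveat: your fallback for the case $2s\le1$ is both unnecessary and would not work as written. Since the hypothesis is $\alpha<2s\wedge1$, your primary moment estimate already covers every case, with a constant that degenerates only as $\alpha\to2s$, which is harmless because $C$ is allowed to depend on $\alpha$; no logarithm ever appears. The proposed alternative, bounding the increment by $\|\partial_x U_0(\cdot,\tau)\|_{L^\infty}\int_\R|y|\,p(y,t-\tau)\,dy$, fails on two counts when $2s\le1$: by \ref{pbounds} the first moment of $p(\cdot,m)$ diverges (the tail behaves like $m|y|^{-2s}$), and $\|\partial_x U_0(\cdot,\tau)\|_{L^\infty}$ is not uniformly controlled as $\tau\downarrow t_0$ with merely $C^\alpha$ data. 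Simply delete that branch and keep the moment argument throughout.
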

\begin{proof}
Up to a translation in the variable~$t$, we may assume that~$t_0 = 0$. We first address~\eqref{Ulocest}. On the one hand, using~\eqref{pxbound},~\ref{pbounds},~\ref{pmass=1}, and a change of coordinates, we have
\begin{equation} \label{Ujxbound}
\begin{aligned}
|\partial^k_x U_0(x, t)| & \le \| u_0 \|_{L^\infty(\R)} \int_\R | \partial^k_x p(x - z, t)| \, dz \le C \, t^{- k/2s} \| u_0 \|_{L^\infty(\R)} \int_\R p(y, t) \, dy \\
& \le C \, t_\star^{-k/2s} \| u_0 \|_{L^\infty(\R)},
\end{aligned}
\end{equation}
for all~$x \in \R$,~$t > t_\star > 0$, and~$k \in \N \cup \{ 0\}$. On the other hand, by~\eqref{ptbound} and~\ref{pmass=1},
$$
|\partial_t U_0(x, t)| \le \| u_0 \|_{L^\infty(\R)} \int_\R |\partial_t p(x - z, t)| \, dz \le C \, t^{- 1} \| u_0 \|_{L^\infty(\R)} \int_\R p(y, t) \, dy \le C \, t_\star^{-1} \| u_0 \|_{L^\infty(\R)},
$$
for all~$x \in \R$ and~$t > t_\star > 0$. These two inequalities give~\eqref{Ulocest}.

To prove~\eqref{Uglobest}, we use once again~\ref{pmass=1} to find that
\begin{equation} \label{UglobCalphaspace}
\left| U_0(x, t) - U_0(y, t) \right| \le \int_\R p(z, t) |u_0(x - z) - u_0(y - z)| \, dz \le [u_0]_{C^\alpha(\R)} |x - y|^\alpha,
\end{equation}
for all~$x, y \in \R$ and~$t > 0$. On the other hand, taking into account~\ref{pscales} and changing variables appropriately, we write
$$
U_0(x, t) = t^{-1/2s} \int_{\R} p(t^{-1/2s} (x - z), 1) u_0(z) \, dz = \int_{\R} p(w, 1) u_0(x - t^{1/2s} w) \, dw.
$$
But then, by~\ref{pbounds} we get
\begin{align*}
|U_0(x, t) - U_0(x, \tau)| & \le \int_{\R} p(w, 1) |u_0(x - t^{1/2s} w) - u_0(x - \tau^{1/2s} w)| \, dw \\
& \le C [u_0]_{C^\alpha(\R)} \left| t^{1/2s} - \tau^{1/2s} \right|^\alpha \int_{\R} \frac{|w|^\alpha}{(1 + |w|)^{1 + 2 s}} \, dw \le C [u_0]_{C^\alpha(\R)} |t - \tau|^{\frac{2s \wedge 1}{2s} \alpha},
\end{align*}
for all~$x \in \R$ and~$t, \tau \in (0, 1)$. Claim~\eqref{Uglobest} follows from this,~\eqref{UglobCalphaspace}, and~\eqref{Ujxbound} with~$k = 0$.
\end{proof}

We now address the regularity of~$U_1$.

\begin{lemma} \label{U1lem}
Let~$f \in L^\infty(\R \times (t_0, t_1))$ and~$U_1$ be given by~\eqref{U1def}. Then, for every~$\sigma \in (0, s]$ ($\sigma < s$, if~$s = 1/2$),~$\theta \in (0, 1)$, and~$T \ge t_1 - t_0$, it holds
\begin{equation} \label{U1est}
\sup_{t \in (t_0, t_1)} \| U_1(\cdot, t) \|_{C^{2 \sigma}(\R)} + \sup_{x \in \R} \| U_1(x, \cdot) \|_{C^\theta(t_0, t_1)} \le C_T \| f \|_{L^\infty(\R \times (t_0, t_1)},
\end{equation}
for some constant~$C_T$ depending only on~$s$,~$\sigma$,~$\theta$, and~$T$.
\end{lemma}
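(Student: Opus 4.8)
The plan is to estimate the two seminorms separately, relying on the heat-kernel bounds~\eqref{pxbound}, \eqref{ptbound}, properties~\ref{preg}--\ref{pbounds}, and a Duhamel-type splitting. The starting observation is that $U_1(x,t)=\int_0^{t-t_0}\int_\R p(x-z,\sigma)f(z,t-\sigma)\,dz\,d\sigma$ after a change of the time variable, so that the $L^\infty$ bound $|U_1(x,t)|\le \|f\|_{L^\infty}\int_0^{t-t_0}\!\int_\R p(z,\sigma)\,dz\,d\sigma=(t-t_0)\|f\|_{L^\infty}\le T\|f\|_{L^\infty}$ follows immediately from~\ref{pmass=1}. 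The genuine work is in the H\"older estimates.

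For the spatial $C^{2\sigma}$ estimate, I would fix $x,y\in\R$ with $r:=|x-y|$ and split the $\sigma$-integral at level $\sigma=r^{2s}$. On the inner region $\sigma\in(0,r^{2s}\wedge(t-t_0))$ I bound the increment crudely by $\int_0^{r^{2s}}\int_\R\bigl(p(x-z,\sigma)+p(y-z,\sigma)\bigr)|f(z,t-\sigma)|\,dz\,d\sigma\le 2r^{2s}\|f\|_{L^\infty}$, which is exactly of size $r^{2s}\|f\|_{L^\infty}\le r^{2\sigma}\|f\|_{L^\infty}$ when $r$ is bounded (and one handles large $r$ separately using the $L^\infty$ bound just obtained, since $T$ bounds the time interval). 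On the outer region $\sigma>r^{2s}$ I use the mean value theorem in $x$ together with~\eqref{pxbound} for $k=1$: $|p(x-z,\sigma)-p(y-z,\sigma)|\le r\sup_{\xi}|\partial_x p(\xi-z,\sigma)|$ and then integrate $\int_{r^{2s}}^{t-t_0}\!\int_\R|\partial_x p(w,\sigma)|\,dw\,d\sigma\le C\int_{r^{2s}}^{\infty}\sigma^{-1/2s}\,d\sigma\le C\,r^{1-2s}$ (valid because $2\sigma\le 2s<\cdots$, with the borderline case $s=1/2$ excluded as in the statement via $\sigma<s$, where one instead gets a logarithmically bounded or $r^{2\sigma}$ bound by stopping the $w$-integral over $|w|\le C\sigma^{1/2s}$ and using a finer split). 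Combining, $|U_1(x,t)-U_1(y,t)|\le C_T\|f\|_{L^\infty}r^{2\sigma}$ for all $\sigma\in(0,s]$, $\sigma<s$ when $s=1/2$.

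For the temporal $C^\theta$ estimate, I would fix $t_0<\tau<t<t_1$, set $h:=t-\tau$, and write $U_1(x,t)-U_1(x,\tau)=\int_\tau^t\int_\R p(x-z,t-\sigma)f(z,\sigma)\,dz\,d\sigma+\int_{t_0}^\tau\int_\R\bigl(p(x-z,t-\sigma)-p(x-z,\tau-\sigma)\bigr)f(z,\sigma)\,dz\,d\sigma$. The first term is bounded by $h\|f\|_{L^\infty}\le h^\theta T^{1-\theta}\|f\|_{L^\infty}$ using $h\le T$. For the second, split the $\sigma$-integral at $\sigma=\tau-h$: on $(\tau-h,\tau)$ bound each kernel separately to get $\le 2h\|f\|_{L^\infty}$; on $(t_0,\tau-h)$ use~\eqref{ptbound}, namely $|p(x-z,t-\sigma)-p(x-z,\tau-\sigma)|\le\int_{\tau-\sigma}^{t-\sigma}|\partial_t p(x-z,\rho)|\,d\rho\le C\int_{\tau-\sigma}^{t-\sigma}\rho^{-1}p(x-z,\rho)\,d\rho$, and integrate in $z$ via~\ref{pmass=1} to obtain $C\int_{t_0}^{\tau-h}\log\!\frac{t-\sigma}{\tau-\sigma}\,d\sigma\le C\int_{t_0}^{\tau-h}\frac{h}{\tau-\sigma}\,d\sigma\le Ch\log(T/h)\le C_{T,\theta}h^\theta$. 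Collecting the pieces yields the bound $\sup_x\|U_1(x,\cdot)\|_{C^\theta(t_0,t_1)}\le C_T\|f\|_{L^\infty}$.

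The main obstacle I anticipate is bookkeeping the borderline case $s=1/2$ (where $2s=1$ makes the integral $\int_{r^{2s}}^\infty\sigma^{-1/2s}\,d\sigma$ only logarithmically divergent rather than convergent) and, more generally, making the splitting levels interact cleanly with the constraint $2\sigma\le 2s$; a secondary nuisance is keeping all constants explicitly dependent only on $s,\sigma,\theta,T$ and not on $t_1-t_0$ beyond the stated bound $t_1-t_0\le T$. Once Lemmas~\ref{U0lem} and~\ref{U1lem} are in place, Proposition~\ref{regforstrongprop} follows by writing $u=U_0+U_1$ and adding the estimates, with the choice $\sigma=s$ legitimate precisely when $s\ne 1/2$.
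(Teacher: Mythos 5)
Your treatment of the $L^\infty$ bound and of the temporal $C^\theta$ estimate is essentially the paper's argument (the paper integrates $\log\frac{t-\sigma}{\tau-\sigma}$ directly rather than splitting at $\sigma=\tau-h$, but both yield $h\bigl(1+\log(T/h)\bigr)\le C_{T,\theta}h^\theta$), and your spatial splitting at $\sigma=r^{2s}$ reproduces the paper's computation when $s\le 1/2$, including the logarithmic correction at $s=1/2$. A minor slip: for $s<1/2$ one has $\int_{r^{2s}}^{\infty}\sigma^{-1/2s}\,d\sigma\le C\,r^{2s-1}$, not $C\,r^{1-2s}$; after multiplying by the factor $r$ from the mean value theorem this gives $r^{2s}$, consistent with your stated conclusion, so the error is only in the displayed exponent.

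The genuine gap is the case $s\in(1/2,1)$, where the statement allows $\sigma=s$, i.e.\ $2\sigma\in(1,2s]$. An increment bound $|U_1(x,t)-U_1(y,t)|\le C\,|x-y|^{2\sigma}$ with exponent $2\sigma>1$ cannot hold for a nonconstant function, so your ``Combining'' step cannot deliver the $C^{2\sigma}$ norm in that range: your outer-region estimate for $s>1/2$ only gives $\int_{r^{2s}}^{t-t_0}\sigma^{-1/2s}\,d\sigma\le C_T$, hence a Lipschitz bound on $U_1(\cdot,t)$, which controls $C^{0,\alpha}$ seminorms with $\alpha\le 1$ but not the H\"older seminorm of the spatial derivative. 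To close this you must differentiate under the integral sign (legitimate since $\int_0^t\tau^{-1/2s}\,d\tau<\infty$ for $s>1/2$, which also gives $\|\partial_x U_1\|_{L^\infty}\le C_T\|f\|_{L^\infty}$ via \eqref{pxbound} with $k=1$) and then run the same split-at-$r^{2s}$ argument on $\partial_x U_1$, using the crude $k=1$ bound on the inner region and the Lipschitz-type kernel estimate \eqref{pkxLip} with $j=1$ on the outer region to obtain $[\partial_x U_1(\cdot,t)]_{C^{2s-1}(\R)}\le C\|f\|_{L^\infty}$; this is exactly how the paper handles $s\in(1/2,1)$.
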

\begin{proof}
As in Lemma~\ref{U0lem}, we assume without loss of generality that~$t_0 = 0$. We start dealing with the first term on the left-hand side of~\eqref{U1est}. By property~\ref{pmass=1}, we have
\begin{equation} \label{Vbounded}
|U_1(x, t)| \le T \| f \|_{L^\infty(\R \times (0, t_1))} \quad \mbox{for all } x \in \R, \, t \in (0, t_1).
\end{equation}

In order to bound the spatial~$C^{2 \sigma}$ seminorm of~$U_1$, we first observe that~\eqref{pxbound} and~\ref{pbounds} give
\begin{equation} \label{pkxLip}
\left| \partial_x^j p(x, t) - \partial_x^j p(y, t) \right| \le C \, t^{-j/2s} \left( t^{-1/2s} |x - y| \wedge 1 \right) \! \Big( p(x, t) + p(y, t) \Big),
\end{equation}
for all~$x, y \in \R$,~$t > 0$, and~$j \in \N \cup \{ 0 \}$. We then consider separately the two cases~$s \in (0, 1/2]$ and~$s \in (1/2, 1)$.

When~$s \in (0, 1/2]$, we employ~\eqref{pkxLip} with~$j = 0$ in combination with~\ref{pmass=1} and compute
\begin{align*}
\left| U_1(x, t) - U_1(y, t) \right| & \le \| f \|_{L^\infty(\R \times (0, t_1))} \int_0^t \int_{\R} |p(x - z, \tau) - p(y - z, \tau)| \, dz d\tau \\
& \le C \| f \|_{L^\infty(\R \times (0, t_1))} \int_0^t \int_{\R} \left( \tau^{-1/2s} |x - y| \wedge 1 \right) \! \Big( p(x - z, \tau) + p(y - z, \tau) \Big) \, dz d\tau \\
& \le C \| f \|_{L^\infty(\R \times (0, t_1))} \left( \int_0^{|x - y|^{2 s}} d\tau + |x - y| \int_{|x - y|^{2 s}}^t \frac{d\tau}{\tau^{1/2s}} \right),
\end{align*}
for all~$t \in (0, t_1)$ and~$x, y \in \R$ such that~$|x - y| < t^{1/2s}$. It follows that
\begin{equation} \label{U1spaces<1/2}
\left| U_1(x, t) - U_2(y, t) \right| \le C \| f \|_{L^\infty(\R \times (0, t_1))}
\begin{dcases}
|x - y|^{2 s} & \quad \mbox{if } s \in (0, 1/2), \\
|x - y| \left( 1 + \log \frac{T}{|x - y|} \right) & \quad \mbox{if } s = 1/2,
\end{dcases}
\end{equation}
for all such~$x$,~$y$, and~$t$.

Suppose now that~$s \in (1/2, 1)$. In this case, using~\eqref{pxbound} with~$k = 1$,~\ref{pbounds}, and~\ref{pmass=1} we get
\begin{equation} \label{VLipest}
\begin{aligned}
|\partial_x U_1(x, t)| & \le \| f \|_{L^\infty(\R \times (0, t_1))} \int_0^t \int_{\R} |\partial_x p(x - z, \tau)| \, dz d\tau \\
& \le C \| f \|_{L^\infty(\R \times (0, t_1))} \int_0^t \tau^{-1/2s} \left( \int_{\R} p(x - z, \tau) \, dz \right) d\tau \le C \, T^{\frac{2 s - 1}{2s}} \| f \|_{L^\infty(\R \times (0, t_1))},
\end{aligned}
\end{equation}
for all~$x \in \R$ and~$t \in (0, T)$. In addition, applying~\eqref{pkxLip} with~$j = 1$ and arguing similarly to before, we obtain
$$
|\partial_x U_1(x, t) - \partial_x U_1(y, t)| \le C \| f \|_{L^\infty(\R \times (0, t_1))} |x - y|^{2 s - 1},
$$
for all~$t \in (0, t_1)$ and~$x, y \in \R$ such that~$|x - y| < t^{1/2s}$. From this,~\eqref{Vbounded},~\eqref{U1spaces<1/2}, and~\eqref{VLipest}, we conclude that the bound for~$\sup_{t \in (0, t_1)} \| U_1(\cdot, t) \|_{C^{2 \sigma}(\R)}$ in~\eqref{U1est} holds true.

We now move to the time regularity of~$U_1$. From~\eqref{ptbound} it follows that
$$
|p(x, t) - p(x, \tau)|
%\le C \int_\tau^t \frac{p(x, \sigma)}{\sigma} \, d\sigma
\le C |\log t - \log \tau| \left( p(x, t) + p(x, \tau) \right),
%\le C |t - \tau| \left( \frac{p(x, t)}{t} + \frac{p(x, \tau)}{\tau} \right),
$$
for all~$x \in \R$ and~$t, \tau \in (0, +\infty)$. Using this and~\ref{pmass=1}, we compute, for~$x \in \R$ and~$0 < \tau \le t < t_1$,
\begin{align*}
|U_1(x, t) - U_1(x, \tau)| & \le \int_0^\tau \int_\R |p(x - z, t - \sigma) - p(x - z, \tau - \sigma)| |f(z, \sigma)| \, dz d\sigma \\
& \quad + \int_\tau^t \int_\R p(x - z, t - \sigma) |f(z, \sigma)| \, dz d\sigma \\
& \le C \| f \|_{L^\infty(\R \times (0, t_1))} \left\{ \int_0^\tau \left( \log (t - \sigma) - \log (\tau - \sigma) \right) d\sigma + (t - \tau) \right\} \\
%& \le C \| f \|_{L^\infty(\R \times (0, t_1))} \left\{ t \log t - (t - \tau) \log (t - \tau) - \tau \log \tau + t - \tau \right\} \\
%& \le C \| f \|_{L^\infty(\R \times (0, t_1))} \left\{ t \log \frac{t}{t - \tau} - \tau \log \frac{\tau}{t - \tau} + t - \tau \right\} \\
& \le C \| f \|_{L^\infty(\R \times (0, t_1))} (t - \tau) \left( 1 + \log \frac{T}{t - \tau} \right).
\end{align*}
Estimate~\eqref{U1est} for~$\sup_{x \in \R} \| U_1(x, \cdot) \|_{C^\theta(t_0, t_1)}$ follows from this and~\eqref{Vbounded}.
\end{proof}

Proposition~\ref{regforstrongprop} is an immediate consequence of Lemmas~\ref{U0lem} and~\ref{U1lem}.

\section{Non-existence of stationary multiple dislocations} \label{nomultiapp}

\noindent
In Theorem~\ref{mainthm}, we constructed a solution~$u$ of the parabolic Peierls-Nabarro equation~\eqref{maineqfromT} that models the evolution of~$N \ge 2$ equally oriented dislocations. In particular, at each time~$t > T$, the function~$u(\cdot, t)$ connects the two zeroes~$0$ and~$N$ of the potential~$W$ at~$\pm \infty$.

One may wonder whether there exist \emph{stationary} solutions of~\eqref{maineqfromT} that have the same property, that is, a solution~$v$ of
\begin{equation} \label{ellPNeqapp}
(-\Delta)^s v + W'(v) = 0 \quad \mbox{in } \R,
\end{equation}
such that~$\lim_{x \rightarrow - \infty} v(x) = 0$ and~$\lim_{x \rightarrow +\infty} v(x) = N$, for some integer~$N \ge 2$. The answer to this question is known to be negative if we additionally require~$v$ to be monotone---this is a simple consequence of the Modica type estimates of~\cite{CS05,CS14}---see, for instance,~\cite[Theorem~2.2]{CS14}. The same holds true regardless of the monotonicity assumption when~$s = 1/2$ and~$W(r) = 1 - \cos(2 \pi r)$, as in this case bounded solutions of~\eqref{ellPNeqapp} have been completely classified by Toland~\cite{T97}.

The aim of this short appendix is to show that no such solution exists (monotone or not) for a general~$s \in (0, 1)$ and periodic multi-well potential~$W$. Differently from the rest of the paper, the parity of~$W$ plays no role here. In fact, we only assume~$W$ to satisfy
\begin{equation} \label{Wpropapp}
\begin{alignedat}{3}
& W(r + k) = W(r) && \quad \mbox{for all } r \in \R, \, k \in \Z, \\
& W(r) > 0 && \quad \mbox{for all } r \in (0, 1), \\
& W(0) = W'(0) = 0, && \quad \mbox{and} \quad W''(0) > 0.
\end{alignedat}
\end{equation}

%Call~$w_+$ and~$w_-$ the monotone increasing and decreasing layer solutions of~\eqref{ellPNeqapp}. Namely,~$w_\pm$ is the unique solution of~\eqref{ellPNeqapp} satisfying~$\lim_{x \rightarrow \mp \infty} w_\pm(x) = 0$,~$\lim_{x \rightarrow \pm \infty} w_\pm(x) = 1$,~$\pm w_\pm' > 0$ in~$\R$, and~$w_\pm(0) = 1/2$---see, e.g.,~\cite[Theorem~2.4]{CS15}. Of course,~$w_+$ is the solution that was previously indicated with~$w$ and it holds~$w_-(x) = w_+(- x)$ for all~$x \in \R$ if~$W$ is even w.r.t.~$1/2$.

We then have the following result. Recall that~$w$ is the layer solution of~\eqref{ellPNeqapp}, i.e., the unique increasing solution of~\eqref{ellPNeqapp} satisfying~\eqref{wcond}.

\begin{proposition} \label{classprop}
Let~$s \in (0, 1)$ and~$W \in C^{2, 1}(\R)$ be a potential satisfying~\eqref{Wpropapp}. If~$v$ is a bounded solution of~\eqref{ellPNeqapp} for which
\begin{equation} \label{vliminZ}
\lim_{x \rightarrow \pm \infty} v(x) \in \Z,
\end{equation}
then either~$v$ is constant or it holds
\begin{equation} \label{v=w+k}
v(x) = w(a x + b) + k \quad \mbox{for all } x \in \R, 
\end{equation}
for some~$a \in \{ -1, 1 \}$,~$b \in \R$, and~$k \in \Z$.
\end{proposition}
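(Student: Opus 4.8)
The plan is to combine the strong maximum principle for $(-\Delta)^s$ with a sliding argument, using as comparison functions the integer constants and the translates $w(a\,\cdot\,+b)+k$ of the layer solution -- each of which solves \eqref{ellPNeqapp}, the reflected one because $(-\Delta)^s$ commutes with $x\mapsto-x$ -- and controlling the behaviour at infinity through the tail maximum principle of Proposition~\ref{weakMPprop3}. \emph{Step 1 (normalization).} Since $W(r+k)=W(r)$, replacing $v$ by $v-k$ with $k=\lim_{x\to-\infty}v(x)\in\Z$ we may assume $\lim_{x\to-\infty}v(x)=0$; set $m:=\lim_{x\to+\infty}v(x)\in\Z$. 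Standard regularity for the fractional Laplacian (bootstrapping as in Remark~\ref{regrmk}) gives $v\in C^{2,\alpha}(\R)$, so \eqref{ellPNeqapp} holds pointwise and $(-\Delta)^s v$ is given by \eqref{fracLap}. If $v$ is constant we are in the first alternative of the statement, so assume $v$ non-constant.

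\emph{Step 2 (touching principle).} If $u_1,u_2$ are bounded solutions of \eqref{ellPNeqapp} with $u_1\le u_2$ on $\R$ and $u_1(x_0)=u_2(x_0)$ for some $x_0$, then $u_1\equiv u_2$: the nonnegative function $\phi:=u_2-u_1$ attains its minimum $0$ at $x_0$, whence $(-\Delta)^s\phi(x_0)\le 0$ with equality only if $\phi\equiv0$, while at the same time $(-\Delta)^s\phi(x_0)=W'(u_1(x_0))-W'(u_2(x_0))=0$. Taking $u_2$ or $u_1$ to be a constant integer, it follows in particular that a non-constant $v$ cannot attain $\sup_\R v$ or $\inf_\R v$ at a point where its value lies in $\Z$.

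\emph{Step 3 (trapping into a fundamental interval).} Here I would show $v(\R)\subseteq(n,n+1)$ for some $n\in\Z$; since $v(-\infty)=0$ and $v$ is continuous, this forces $v(\R)\subseteq(0,1)$ or $v(\R)\subseteq(-1,0)$. Suppose, for contradiction, that $\sup_\R v>n$ for some $n\ge1$ (the case $\inf_\R v<-n$ being symmetric). Consider the translates $w(\cdot-t)+n$, which connect $n$ at $-\infty$ to $n+1$ at $+\infty$; as $v(\pm\infty)\in\{0,m\}$ there is a positive gap between $w(\cdot-t)+n$ and $v$ near $\pm\infty$ once $m\le n$ (the borderline $m=n$ is reduced, after subtracting $n$, to the $m=0$ analysis below). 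For $t$ very negative $w(\cdot-t)+n\approx n+1>\sup_\R v$ on every compact set, and Proposition~\ref{weakMPprop3} applied to $\phi:=v-(w(\cdot-t)+n)$ -- whose zeroth-order coefficient $\int_0^1 W''(\,\cdot\,)\,ds$ is bounded below by a positive constant outside a large interval, since there both functions are close to the adjacent integers where $W''>0$ -- upgrades this to $w(\cdot-t)+n\ge v$ on all of $\R$. Sliding $t$ upward, the admissible $t$'s are bounded above because $w(x-t)+n\to n<\sup_\R v$; at the critical $t^*$ the ordered solutions $w(\cdot-t^*)+n$ and $v$ touch at a finite point (the gaps at $\pm\infty$ persist), so $v\equiv w(\cdot-t^*)+n$ by Step~2, forcing $m=n+1$, a contradiction. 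Hence $\sup_\R v<1$, $\inf_\R v>-1$, and the claim follows.

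\emph{Step 4 (identification) and the main difficulty.} Up to replacing $v$ by $v+1$ and, if needed, by $v(-\cdot)$ (both preserving \eqref{ellPNeqapp}), we may assume $v(\R)\subseteq(0,1)$, so $v$ solves \eqref{ellPNeqapp} for the double-well $W|_{[0,1]}$ obeying \eqref{Wdoublewell}, with $m\in\{0,1\}$. If $m=1$, a sliding argument against the translates $v(\cdot+h)$, started exactly as in Step~3 with the tails handled by Proposition~\ref{weakMPprop3}, shows $v$ is monotone increasing, hence $v=w(\cdot+b)$ by the uniqueness of the monotone layer recalled in the Introduction (\cite{CS05,CS14,PSV13}). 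If $m=0$, then $0<v<1$, $v(\pm\infty)=0$, $\sup_\R v\in(0,1)$ is attained, and sliding $w(\cdot-t)$ against $v$ (the gap at $+\infty$ coming from $v(+\infty)=0<1$, the tail at $-\infty$ again supplied by Proposition~\ref{weakMPprop3}) forces $v\equiv w(\cdot-t^*)$, contradicting $v(+\infty)=0$; so this case does not occur. Unwinding the reductions yields \eqref{v=w+k} with $a\in\{1,-1\}$ and $k\in\{0,-1\}$ pinned down by $v(-\infty)=0$. The crux of the whole argument is precisely the passage, in Steps~3--4, from the obvious ordering on compact sets to ordering on all of $\R$ and the verification that the eventual contact point is finite: this is where Proposition~\ref{weakMPprop3} and the precise asymptotics \eqref{wasymptDPFV} of $w$ are used, and where one must carefully bookkeep the several integer levels that an a priori non-monotone $v$ could visit -- the borderline configurations in which the limits of $v$ coincide with those of the comparison layer (no gap at one end) being resolved by first subtracting the appropriate integer so that the relevant end becomes a genuine gap.
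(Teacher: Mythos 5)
The overall structure you propose (comparison with translated layer solutions, strong maximum principle at a touching point) is the right idea and matches the paper's in spirit, and your Steps~1 and 2 are fine. But there are two genuine gaps, both in the places where the actual work happens, and the key mechanism the paper uses to close the argument is missing from your proposal.

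First, the appeal to Proposition~\ref{weakMPprop3} to upgrade the compact-set comparison to a global one is unjustified. The zeroth-order coefficient
$d(x)=\int_0^1 W''\!\bigl(\theta v(x)+(1-\theta)(w(x-t)+n)\bigr)\,d\theta$
is \emph{not} bounded below by a positive constant near $\pm\infty$ once $v$ and $w(\cdot-t)+n$ approach \emph{different} integers: if $v\to m$ and $w(\cdot-t)+n\to n+1$ with $m\le n$, the segment of integration crosses at least one full well of $W$ and $d(x)\to \frac{W'(n+1)-W'(m)}{(n+1)-m}=0$. Your justification ("both functions are close to the adjacent integers where $W''>0$") fails precisely because they are close to \emph{distinct} integers. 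As it happens, the inequality $v<w(\cdot-t)+n$ for $t$ very negative can be obtained by a direct elementary estimate without any maximum principle, so this part is repairable, but the misapplication is a sign of the deeper issue below.

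Second, the borderline configurations (no gap at one end, i.e.\ $\lim_{+\infty}v=m=n+1$, or $n=0$ so $\lim_{-\infty}v$ matches) are not resolved, and "first subtracting the appropriate integer so that the relevant end becomes a genuine gap" does not escape them. For any integer level $n$ of comparison you simultaneously need $n\ge 1$ (gap at $-\infty$), $n+1>m$ (gap at $+\infty$), and $n<\sup_\R v$ (so the sliding terminates); these are mutually incompatible exactly in the situations you must rule out, e.g.\ when $v$ tends to $m=\sup_\R v\in\Z$ at $+\infty$. The same no-gap difficulty recurs in your Step~4: in the $m=0$ case the gap at $-\infty$ is zero, and the monotonicity argument in the $m=1$ case is itself a sliding of $v(\cdot+h)$ against $v$ with vanishing gap at \emph{both} ends. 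The paper circumvents all of this at once by comparing $v$ with $w_{b,j}:=w(\cdot+b)+k+1/j$ rather than $w(\cdot+b)+k$. The small vertical offset $1/j$ guarantees a strictly positive gap at \emph{both} ends for every $j$, regardless of the values of $m$ and $k$, and then boundedness of the touching points $\bar x_j$ follows from a one-line argument: if $|\bar x_j|\to\infty$, then $v(\bar x_j)$ and $v(\bar x_j)-1/j$ both lie in a neighbourhood of the same integer, where $W'$ is strictly increasing, forcing $W'(v(\bar x_j)-1/j)<W'(v(\bar x_j))$ and contradicting the touching inequality. Passing $j\to\infty$ at the end delivers $v=w(\cdot+\bar b)+k$ in a single sliding, with no two-stage "trap then identify" reduction, no tail maximum principle, and no bookkeeping of intermediate integer levels. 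Without this offset or an equivalent device producing a strict separation at infinity, your sliding scheme cannot be closed.
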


Proposition~\ref{classprop} highlights the rigidity of equation~\eqref{ellPNeqapp}. This is in sharp contrast with other closely related models---such as, for instance, arbitrarily small heterogeneous perturbations of~\eqref{ellPNeqapp}---, which possess a plethora of qualitatively different solutions connecting the zeroes of the potential~$W$---see, e.g.,~\cite{DPV17} and~\cite[Section~5]{CDV16}.

We do not know whether other solutions of~\eqref{ellPNeqapp} exist, besides those mentioned in Proposition~\ref{classprop} and the periodic ones that will be constructed in~\cite{CMS19}---i.e., whether a classification result as that established in~\cite{T97} for the case~$s = 1/2$,~$W(r) = 1 - \cos(2 \pi r)$ could be established in this more general setting too.

The proof of Proposition~\ref{classprop} is based on an application of the sliding method, similar to the one used to verify the uniqueness statement of~\cite[Theorem~2]{PSV13}. Following are the details.

\begin{proof}[Proof of Proposition~\ref{classprop}]
First of all, by the results of~\cite[Section~2]{S07}, we know that~$v \in C^2(\R)$---see also~\cite[Lemma~4.4]{CS14}. Up to a vertical translation, we can suppose that~$\lim_{x \rightarrow -\infty} v(x) = 0$. Then, we either have that~$v \equiv 0$, in which case we are done, or at least one between~$\sup_{\R} v$ and~$- \inf_{\R} v$ is strictly positive. We assume the former to hold, as the latter case can be dealt with analogously.

Set~$M := \sup_{\R} v$ and let~$k$ be the largest integer strictly smaller than~$M$, that is,~$k := \lceil M \rceil - 1$. As~$M > 0$, we have that~$k \ge 0$. For~$b \in \R$ and~$j \in \N$, consider the function~$w_{b, j}(x) := w(x + b) + k + 1/j$.
We claim that
\begin{equation} \label{v<wbjclaim}
v < w_{b, j} \mbox{ in } \R, \mbox{ if } b \mbox{ is sufficiently large.}
\end{equation}
Indeed, as~$\lim_{x \rightarrow -\infty} v(x) = 0$, there exists~$L_j > 0$ such that~$v(x) < 1/j$ for every~$x < -L_j$. Thus, in particular,~$v(x) < w_{b, j}(x)$ for every~$x < - L_j$ and~$b \in \R$. But then, as~$\lim_{b \rightarrow +\infty} w_{b, j}(-L_j) \ge M + 1/j$ and~$w_{b, j}$ is increasing, we have that~$v(x) \le M < w_{b, j}(x)$ for all~$x \ge - L_j$, provided~$b$ is sufficiently large. This proves~\eqref{v<wbjclaim}.

Assuming~$j > (M - \lceil M \rceil + 1)^{-1}$, we now take~$b$ smaller and smaller, until~$w_{b, j}$ first touches~$v$ from above. That is, we consider~$b_j \in \R$ and~$\bar{x}_j \in \R$ such that
\begin{equation} \label{w=v}
w_{b_j, j}(\bar{x}_j) = v(\bar{x}_j)
\end{equation}
and
\begin{equation} \label{w>v}
w_{b_j, j}(y) \ge v(y) \quad \mbox{for all } y \in \R.
\end{equation}
The existence of~$b_j$ and~$\bar{x}_j$ easily follows from the fact that~$\lim_{b \rightarrow - \infty} w_{b, j} \equiv k + 1/j < M$. Since both~$v$ and~$w$ solve~\eqref{ellPNeqapp} and~\eqref{w=v}-\eqref{w>v} hold true, we get that
\begin{equation} \label{eqforvandw}
W' \! \left( \! v(\bar{x}_j) - \frac{1}{j} \right) - W'(v(\bar{x}_j)) = (-\Delta)^s (v - w_{b_j, j})(\bar{x}_j) =\int_\R \frac{w_{b_j, j}(y) - v(y)}{|\bar{x}_j - y|^{1 + 2 s}} \, dy \ge 0.
\end{equation}

We now claim that~$\{ \bar{x}_j \}$ is a bounded sequence. If this is not the case, then we can extract a subsequence~$\{ \bar{x}_{j_i} \}$ such that
\begin{equation} \label{xjidiverges}
\lim_{i \rightarrow +\infty} |\bar{x}_{j_i}| = +\infty.
\end{equation}
As~$W''(0) > 0$ and~$W'$ is~$1$-periodic---by assumption~\eqref{Wpropapp}---, we have that~$W'$ is strictly increasing in all intervals~$[k - \varepsilon, k + \varepsilon]$ with~$k \in \Z$, for some small~$\varepsilon > 0$. In view of~\eqref{vliminZ} and~\eqref{xjidiverges}, both~$v(\bar{x}_{j_i}) - 1/j_i$ and~$v(\bar{x}_{j_i})$ lie in one of those intervals for~$i$ large enough. Hence,~$W'(v(\bar{x}_{j_i}) - 1/j_i) < W'(v(\bar{x}_{j_i}))$ for all such~$i$'s, which clearly contradicts~\eqref{eqforvandw}.

From its boundedness, we infer that, up to a subsequence,~$\{ \bar{x}_j \}$ converges to some~$\bar{x} \in \R$. By virtue of this, we easily deduce that~$\{ b_j \}$ must be bounded. Indeed, if~$b_{j_i} \rightarrow -\infty$ as~$i \rightarrow +\infty$ along some diverging sequence~$\{ j_i \}$, then from~\eqref{w>v} we get that~$v \le \lceil M \rceil - 1$ in~$\R$, in contradiction with the fact that~$M = \sup_{\R} v$. If, otherwise,~$b_{j_i} \rightarrow +\infty$, then~\eqref{w=v} gives that~$v(\bar{x}) = \lceil M \rceil$. As~$v(\bar{x}) \le \sup_\R v = M$ and~$\lceil M \rceil \ge M$, we infer that~$M = \lceil M \rceil$. Consequently,~$M \in \N$ and, using~\eqref{ellPNeqapp} and~\eqref{Wpropapp},
$$
\int_{\R} \frac{M - v(y)}{|\bar{x} - y|^{1 + 2 s}} \, dy = - W'(M) = 0.
$$
Since~$v \le M$ in~$\R$, this yields in turn that~$v \equiv M \ge 1$, contradicting the fact that~$\lim_{x \rightarrow -\infty} v(x) = 0$.

The sequence~$\{ b_j \}$ is thus bounded. Up to a subsequence, it then converges to some~$\bar{b} \in \R$. As both~\eqref{w=v} and~\eqref{w>v} pass to the limit, we infer that~$w(\bar{x} + \bar{b}) + k = v(\bar{x})$ and~$w(y + \bar{b}) + k \ge v(y)$ for all~$y \in \R$. Accordingly, using once again that~$w$ and~$v$ satisfy~\eqref{ellPNeqapp}---or letting~$j \rightarrow +\infty$ in~\eqref{eqforvandw}---we obtain
$$
\int_{\R} \frac{w(y + \bar{b}) + k - v(y)}{|\bar{x} - y|^{1 + 2 s}} \, dy = 0.
$$
Therefore,~$v(y) = w(y + \bar{b}) + k$ for all~$y \in \R$. As this means that~$v$ takes the form~\eqref{v=w+k}, the proof of the proposition is complete.
\end{proof}

\end{document}